 \documentclass[11pt]{article}
\usepackage{amsthm}
\usepackage{amsfonts}
\usepackage{xr}
\usepackage{graphicx}
\usepackage{amsmath,bm}
\usepackage{epsfig,subfigure}
\usepackage{xcolor}
\usepackage{tikz}
\usetikzlibrary{arrows.meta,calc,decorations.markings}
\usepackage{amssymb}
\usepackage{theoremref}
\usepackage{hyperref}
\hypersetup{hidelinks}
\usepackage{mathrsfs}

\newcommand{\N}{\mathbb{N}} 
\newcommand{\Z}{\mathbb{Z}} 
\newcommand{\R}{\mathbb{R}} 
\newcommand{\T}{\mathbb{T}} 

\newcommand{\Dcal}{\mathcal{D}}

\newcommand{\Lcal}{\mathcal{L}}
\newcommand{\Mcal}{\mathcal{M}}

\def\epsilon{\varepsilon}

\def\tilde{\widetilde}

\def\XXint#1#2#3{{\setbox0=\hbox{$#1{#2#3}{\int}$ }
		\vcenter{\hbox{$#2#3$ }}\kern-.6\wd0}}

\newcommand{\be}{\begin{equation}}
\newcommand{\ee}{\end{equation}}
\newcommand{\baa}{\begin{array}}
\newcommand{\eaa}{\end{array}}
\newcommand{\ba}{\begin{eqnarray}}
\newcommand{\ea}{\end{eqnarray}}

\newtheorem{theorem}{Theorem}[section]     
\newtheorem{assumption}{Assumption}[section]
\newtheorem{corollary}{Corollary}[section]
\newtheorem{proposition}{Proposition}[section]
\newtheorem{lemma}{Lemma}[section]

\newtheorem{example}{Example}[section]

\newtheorem{remark}{Remark}[section]

\numberwithin{equation}{section}
\newcommand{\supp}{\operatorname{supp}}

\definecolor{Ink}{HTML}{17384D}
\definecolor{Flow}{HTML}{126B82}
\definecolor{Sink}{HTML}{236DB0}
\definecolor{Source}{HTML}{BE523B}
\definecolor{Deg}{HTML}{B38025}
\definecolor{Fixed}{HTML}{86538F}
\definecolor{Muted}{HTML}{697B85}
\definecolor{Soft}{HTML}{EDF4F6}
\newcommand{\arrarc}[2]{\draw[Flow,line width=1.15pt,-{Stealth[length=2.1mm]}] (#1:1.22) arc[start angle=#1,end angle=#2,radius=1.22];}
\newcommand{\degpoint}[1]{\draw[Deg,fill=white,line width=1.3pt] (#1:1.22) circle (2.6pt);}
\newcommand{\sourcepoint}[1]{\draw[Source,fill=white,line width=1.3pt] (#1:1.22) circle (2.6pt);}
\newcommand{\sinkpoint}[1]{\fill[Sink] (#1:1.22) circle (2.8pt);}
\newcommand{\CircleFlow}[1]{%
	\begin{tikzpicture}[scale=1.05]
		\path[use as bounding box] (-1.75,-1.70) rectangle (1.90,1.65);
		\draw[Muted!45,line width=.7pt] (0,0) circle(1.22);
		\ifcase#1\or
		\arrarc{10}{85}\arrarc{100}{175}\arrarc{190}{265}\arrarc{280}{355}
		\node at (0,0) {$q>0$};
		\or
		\arrarc{15}{100}\arrarc{125}{230}\arrarc{255}{345}\degpoint{0}
		\node[right] at (1.30,0) {$0$};
		\node at (0,0) {$q\geq0$};
		\or
		\arrarc{15}{80}\arrarc{100}{165}\arrarc{195}{260}\arrarc{280}{345}
		\degpoint{0}\degpoint{180}
		\node[right] at (1.30,0) {$0$};
		\node[left] at (-1.30,0) {$\frac12$};
		\node at (0,0) {$q\geq0$};
		\or
		\arrarc{15}{80}\arrarc{100}{165}\arrarc{345}{280}\arrarc{260}{195}
		\sourcepoint{0}\sinkpoint{180}
		\node[right] at (1.30,0) {$0$};
		\node[left] at (-1.30,0) {$\frac12$};
		\node at (0,.35) {$+$};\node at (0,-.35) {$-$};
		\or
		\draw[Fixed,line width=4pt] (45:1.22) arc[start angle=45,end angle=135,radius=1.22];
		\arrarc{150}{225}\arrarc{250}{330}\arrarc{0}{30}
		\node[Fixed] at (0,1.55) {$K$};
		\node at (0,0) {$q=0\text{ on }K$};
		\node at (0,-.43) {$q>0\text{ off }K$};
		\or
		\draw[Fixed,line width=3.2pt] (0.0000:1.22) arc[start angle=0.0000,end angle=13.3333,radius=1.22];
		\draw[Fixed,line width=3.2pt] (26.6667:1.22) arc[start angle=26.6667,end angle=40.0000,radius=1.22];
		\draw[Fixed,line width=3.2pt] (80.0000:1.22) arc[start angle=80.0000,end angle=93.3333,radius=1.22];
		\draw[Fixed,line width=3.2pt] (106.6667:1.22) arc[start angle=106.6667,end angle=120.0000,radius=1.22];
		\draw[Fixed,line width=3.2pt] (240.0000:1.22) arc[start angle=240.0000,end angle=253.3333,radius=1.22];
		\draw[Fixed,line width=3.2pt] (266.6667:1.22) arc[start angle=266.6667,end angle=280.0000,radius=1.22];
		\draw[Fixed,line width=3.2pt] (320.0000:1.22) arc[start angle=320.0000,end angle=333.3333,radius=1.22];
		\draw[Fixed,line width=3.2pt] (346.6667:1.22) arc[start angle=346.6667,end angle=360.0000,radius=1.22];
		\arrarc{135}{225}\arrarc{45}{75}\arrarc{285}{315}
		\node[Fixed] at (0,.25) {$K$};
		\node at (0,-.30) {$q>0\text{ off }K$};
		\fi
\end{tikzpicture}}
\newcommand{\ThreeGraph}[1]{%
	\begin{tikzpicture}[x=1.15cm,y=1cm]
		\path[use as bounding box] (-.5,-.85) rectangle (5.5,1.0);
		\ifnum#1=1
		\foreach \x in {0,2.5,5}{\draw[Deg,fill=white,line width=1.2pt] (\x,0) circle(3pt);}
		\draw[Flow,thick,-Stealth] (.18,0)--(2.32,0);
		\draw[Flow,thick,-Stealth] (2.68,0)--(4.82,0);
		\else
		\foreach \x in {0,5}{\draw[Source,fill=white,line width=1.2pt] (\x,0) circle(3pt);}
		\fill[Sink] (2.5,0) circle(3pt);
		\draw[Flow,thick,-Stealth] (.18,0)--(2.32,0);
		\draw[Flow,thick,-Stealth] (4.82,0)--(2.68,0);
		\fi
		\node[below=7pt] at (0,0) {$Z_0$};
		\node[below=7pt] at (2.5,0) {$Z_{L/2}$};
		\node[below=7pt] at (5,0) {$Z_L$};
		\node[above] at (1.25,.15) {$u_1(t,\cdot)$};
		\node[above] at (3.75,.15) {$u_2(t,\cdot)$};
\end{tikzpicture}}

\newcommand{\RealizationPanel}[4]{%
	\par\medskip\noindent
	\begin{minipage}{\linewidth}
		\normalfont
		\noindent\textbf{#1}\par\smallskip
		\noindent
		\begin{minipage}[c]{.25\linewidth}
			\centering\CircleFlow{#2}
		\end{minipage}\hfill
		\begin{minipage}[c]{.71\linewidth}
			\centering
			#3\par\smallskip
			\raggedright
			#4
		\end{minipage}
	\end{minipage}\par\medskip
}

\begin{document}
\date{}
\title{\bf{Bistable fronts in periodic media: phase reduction and realization of circle flows}}
\author{Weiwei Ding\footnote{School of Mathematical Sciences, South China Normal University, Guangzhou 510631, China (dingweiwei@m.scnu.edu.cn).} \qquad Xing Liang\footnote{School of Mathematical Sciences, University of Science and Technology of China, Hefei, Anhui, 230026, China (xliang@ustc.edu.cn).}  
\qquad Linfeng Xu\footnote{School of Mathematical Sciences, University of Science and Technology of China, Hefei, Anhui, 230026, China (xlf\_hzfy@mail.ustc.edu.cn)} 
}

\maketitle
\begin{abstract}
	We establish a phase reduction and a converse realization theorem for front dynamics in one-dimensional periodic reaction–diffusion–advection equations. For sufficiently large periods under local bistability, and sufficiently small periods when the homogenized equation admits a stationary front, we construct strictly ordered invariant front families that are normally exponentially attracting and locally complete for entire orbits. Modulo spatial-period translations, the dynamics on each family reduces exactly to a circle flow: rotations correspond to propagating pulsating fronts, equilibria to stationary fronts, and connecting orbits to transition fronts between stationary fronts. Conversely, every smooth circle flow can be realized, up to time-preserving topological conjugacy, as the quotient dynamics of such a family for a periodic, locally bistable equation with constant diffusion and no advection. Realizations with a Cantor set of stationary phases and a connecting orbit in each complementary interval illustrate the richness of front dynamics in these media. We also obtain first-order phase-velocity expansions in both period limits, quantitative propagation laws near zero frozen speed, and global exponential orbital stability of propagating pulsating fronts.
	\end{abstract}
\noindent\textbf{Keywords:} bistable front; periodic medium; invariant manifold;
circle flow; topological realization; homogenization; propagation failure.
\par\smallskip
\noindent\textbf{Mathematics Subject Classification:} 35K57, 35C07, 35B27, 37L25.
\medskip

\section{Introduction}
\label{sec:intro}
In this paper,
we investigate how spatial periodicity can cause a bistable interface to propagate through the medium,
remain pinned, 
or move between distinct stationary fronts.
We consider the one-dimensional reaction--diffusion--advection equation
with periodic coefficients and uniformly stable equilibria $1$ and $0$,
\begin{equation}\label{intro:pde}
	u_t=F_L(u):=\partial_x\big(a(x/L)u_x\big)+b(x/L)u_x+f(x/L,u),
	\qquad x\in\R,\quad L>0.
\end{equation}
Under the regime-specific hypotheses stated below,
we describe these behaviors within a unified phase-reduction framework for transition fronts connecting $1$ and $0$ in the large- and small-period regimes.
Following \cite{BH2012,DHZ2015}, 
a \emph{transition front} connecting $1$ to $0$ is a classical entire solution $0<u<1$ for which there is an interface position $\xi(t)$ such that
\[
u(t,\xi(t)+z)\to1\quad(z\to-\infty),\qquad
u(t,\xi(t)+z)\to0\quad(z\to+\infty),
\]
uniformly in $t\in\R$. A \emph{stationary front} is a time-independent transition front. 
A \emph{propagating pulsating front} is a transition front of the form
\[
u(t,x)=\Phi(x-c_Lt,x/L),\qquad
\Phi(\xi,y+1)=\Phi(\xi,y),\qquad
\Phi(-\infty,y)=1,\quad\Phi(+\infty,y)=0,
\]
where $c_L\ne0$, the limits are uniform in $y$, and $0<\Phi<1$. 
Throughout the paper, 
``pulsating front'' is used in the broad sense,
including both stationary fronts and propagating pulsating fronts.
A bounded function is called a \emph{front-like profile} if its limits at $-\infty$ and $+\infty$ are $1$ and $0$, respectively.

The study of front propagation in reaction--diffusion equations began with the works of Fisher  \cite{Fisher1937} and with Kolmogorov, Petrovskii and Piskunov \cite{KPP1937} on monostable problem.
Many subsequent works developed the theory of front propagation in homogeneous and heterogeneous media \cite{AronsonWeinberger1978,FreidlinGartner1979,BerestyckiHamel2002}.
For a homogeneous scalar bistable reaction, 
the traveling front has a selected speed rather than a continuum of admissible speeds. 
Fife and McLeod \cite{FifeMcLeod1977} studied the unique traveling front and convergence of suitable solutions to its translates.

In periodic media, front existence requires additional hypotheses.
Xin \cite{Xin1991} proved existence of bistable pulsating fronts for coefficients close to constants and, in one dimension, their stability.
Fang and Zhao \cite{FangZhao} developed an abstract existence theory for bistable traveling fronts of monotone semiflows, including applications to periodic habitats.

For small periods, Heinze \cite{Heinze2001} constructed propagating pulsating fronts in periodically perforated domains
from a nondegenerate traveling front of the homogenized equation.
For one-dimensional periodic reaction--diffusion equations without drift,
Ding, Hamel, and Zhao \cite{DHZ} proved existence of propagating pulsating fronts for sufficiently small periods
when the homogenized front has nonzero speed.
Ding and Xu \cite{DingXu2026} established small-period existence and homogenization results
for multidimensional equations with periodic advection and general diffusion,
also assuming a homogenized front with nonzero speed.

In the large-period regime, 
we assume \emph{local bistability}:
for each spatial phase $Y$, 
the frozen equation
\[
v_t=a(Y)v_{xx}+b(Y)v_x+f(Y,v)
\]
has the bistable structure with stable equilibria $0$ and $1$.
Its decreasing traveling wave $v(t,x)=\phi(Y,x-c(Y)t)$ defines the \emph{frozen profile} $\phi(Y,\cdot)$ and the \emph{frozen speed} $c(Y)$,
with $\phi(Y,0)=1/2$.
These frozen fronts provide a local reference for front motion in the periodic medium.
With $b=0$ and a general positive periodic diffusion coefficient $a$, Ding, Hamel, and Zhao \cite{DHZ} proved existence of propagating pulsating fronts
when the reaction integrals $\int_0^1 f(y,u)\,du$ have a common strict sign, 
under further assumptions.
Ding, Hamel, and Liang \cite{DHL2025} established the harmonic-mean speed limit and profile convergence under their respective hypotheses.
When the reaction integrals change sign, Ding, Hamel, and Zhao \cite{DHZ2015}
constructed multiple stationary fronts and transition fronts connecting them.

Ducrot \cite{Ducrot2016} obtained, under suitable hypotheses, either propagating pulsating fronts or stationary fronts in every direction
for a two-dimensional periodic cubic equation without drift, in both period limits.
Local bistability alone,
however,
does not guarantee transition-front existence.
Zlato\v{s} \cite{Zlatos2017} constructed spatially periodic bistable reactions with no transition front at all.

Reference front profiles also provide a basis for reduced descriptions of front motion.
Scheel and Tikhomirov \cite{ST2017} formulated an invariant-manifold framework
for positional dynamics and depinning in heterogeneous media.
In their discussion of slowly varying media,
they proposed a leading-order position law governed by the speed of the homogeneous front obtained by freezing the medium at the current front position,
while noting that a rigorous justification of this description was not known to them.
More recently, 
starting from homogeneous Allen--Cahn front profiles,
Bastiaansen, Doelman, and Kaper \cite{BastiaansenDoelmanKaper2026} derived leading-order position equations incorporating weak spatial heterogeneity and front interactions.

Our large-period theorem makes the frozen-speed description discussed by Scheel and Tikhomirov  \cite{ST2017} rigorous in the periodic setting.
Under the corresponding hypotheses of Subsection~\ref{intro:assumptions},
we construct an invariant front family whose exact position velocity has the frozen front speed as its leading term,
even when that speed vanishes or changes sign.
In the opposite small-period regime, we construct an analogous family around a stationary front of the homogenized equation.

Both constructions start from an approximately invariant family of front-like profiles.
Separating interface motion from stable transverse modes is a classical approach in metastability,
as in Carr and Pego \cite{CarrPego1989}.
For the passage from approximate to exact invariance,
Bates, Lu, and Zeng~\cite{BLZ2008} developed an abstract persistence theory for approximately invariant, 
approximately normally hyperbolic manifolds, 
with graph transforms as a central tool. 
In the present work, 
the frozen profiles at large periods and the homogenization-corrected profiles at small periods provide the approximate families.
Uniform phase--normal estimates in the respective singular limits then yield a contracting time-map graph transform and hence a true invariant family.

In either regime, 
write $\mathcal M_L=\{Z_p:p\in\R\}$, 
where $p$ is the physical front position.
The exact dynamics on this family are $u(t,x)=Z_{p(t)}(x)$ with
\[
 \dot p=v_L(p),\qquad v_L(p+L)=v_L(p).
\]

The families are strictly ordered by position and attract nearby solutions exponentially
in the normal coordinates of \eqref{lp:tube} and \eqref{sp:chart}.
They are locally complete:
every entire orbit that remains in the corresponding phase--normal neighborhood belongs to the family.

Each zero of $v_L$ gives a stationary front. If the zero set is nonempty, each component of its complement gives a transition front connecting the stationary fronts at its boundary phases. When $v_L$ has no zero, the front propagates through successive spatial periods as a pulsating front, with signed mean speed

\begin{equation}\label{intro:passage}
	c_L=\frac{L}{\displaystyle\int_0^L v_L(p)^{-1}\,dp}.
\end{equation}
For periods for which the corresponding invariant family has been constructed,
the phase classification together with \cite[Theorem~1.2]{DingXu2026}
gives the exact propagation criterion:
a propagating pulsating front exists if and only if $v_L$ has no zero,
and is then unique up to time translation.

The two constructions yield velocity estimates at different scales.
On the large-period family,
$p=X$ and
$v_L(X)=s_\epsilon(\epsilon X)$, 
where $\epsilon=L^{-1}$ and
\begin{equation}\label{intro:reduction}
 s_\epsilon=c+\epsilon d+O(\epsilon^2)
 \quad\text{in }C^1(\T), \qquad \T = \R / \Z.
\end{equation}
The coefficient $d$ is the explicit adjoint projection defined in
\eqref{lp:d}. 
Where $c$ touches zero without changing sign, the correction $d$ can decide whether stationary or propagating fronts occur.
Example~\ref{lp:touching-example} shows that identical frozen profiles and speeds do not determine the front dynamics:
one of the two invariant families contains stationary and connecting fronts,
while the other propagates.
The correction also determines the speed of slow propagation near degenerate zeros of $c$.
In particular, suppose $c\geq0$ has a unique zero $Y_0$,
with $c(Y_0+r)=Ar^2+o(r^2)$, $A>0$, and $d(Y_0)=d_0>0$.
Then the corrected velocity is positive for all sufficiently large $L$,
and the mean speed satisfies $c_L\sim\sqrt{A d_0}/(\pi\sqrt L)$.

In the small-period regime, $p=\alpha$ and the $L$-periodic velocity satisfies
$v_L(\alpha)=s_L(\alpha)=\kappa L+O(L^2)$ uniformly in $\alpha$,
with $\kappa$ given in Proposition~\ref{sp:first-order-speed}.
Thus $\kappa\ne0$ yields propagation for all sufficiently small $L$,
although the homogenized front is stationary.

For a propagating front, a squeezing argument also yields global exponential orbital stability for front-like initial data.
Solutions converge exponentially to a time translate of the front.
Related global stability results for bistable pulsating fronts were obtained by Ding, Hamel, and Zhao \cite{DHZ} in spatially periodic reaction--diffusion media,  
and by Chen, Liang, and Tsai \cite{CLT2021} for pulsating fronts in a dissipative medium with periodically located Delta sources.
Their comparison and squeezing methods can be adapted to the present scalar periodic reaction--diffusion--advection equation.
Moreover, above results imply uniform boundedness of the width of propagating pulsating fronts in large periods. This conclusion is proved in \cite{DHL2025} by a zero-number discussion.

Attraction to an invariant family need not extend to all front-like initial data:
Section~\ref{sec:stability-uniqueness} constructs a stationary front with three spatial transitions outside the large-period family,
whose solution stays a positive distance from that family in the uniform norm.

This phase reduction also raises an inverse question:
can any smooth circle flows occur as front dynamics?
We give a positive answer in this paper.
More precisely,
we realize every smooth circle flow, 
up to time-preserving topological conjugacy,
as the quotient dynamics of a normally attracting invariant front family for a suitably chosen locally bistable periodic equation with constant diffusion and no advection.
Proposition~\ref{real:period-sensitivity} shows that normal attraction can persist while the internal pinning structure changes:
a Cantor set of stationary phases disappears under every sufficiently small nonzero change of the spatial period,
with the reaction fixed, and the quotient flow becomes a rotation.

\subsection{Common and regime-specific hypotheses}\label{intro:assumptions}
Throughout this paper, 
we assume that the following \textbf{common conditions} \eqref{coefficient1}-\eqref{intro:extended-stable-states} hold.

Write $\T=\R/\Z$ and identify functions on $\T$ with periodic functions on $\R$.
We assume that $a,b\in C^3(\T)$ and $f\in C^4(\T\times[0,1])$ are periodic in the spatial argument,
and
\begin{equation}\label{coefficient1}
0<a_*\leq a(y)\leq a^*,\qquad f(y,0)=f(y,1)=0.
\end{equation}
Assume there are $\gamma_0>0$ and $0<\delta_0<1/2$ such that
\begin{equation}\label{intro:stable-states}
 f_u(y,u)\leq-\gamma_0
 \quad\text{for }y\in\T,
 \quad u\in[0,\delta_0]\cup[1-\delta_0,1].
\end{equation}
Thus $0$ and $1$ are uniformly stable equilibria for the reaction term.
For the semiflow estimates we choose a $C^4$ extension of $f$ to $\T\times\R$, 
with at most linear growth and
\begin{equation}\label{intro:extended-stable-states}
 f_u(y,u)\leq-\gamma_0
 \quad\text{whenever }u\leq\delta_0\text{ or }u\geq1-\delta_0.
\end{equation}

\begin{assumption}[Large-period regime]\label{intro:large-assumption}
For each $y\in\T$ there is $\vartheta(y)\in(0,1)$ such that
\begin{equation}\label{intro:pointwise-bistable}
 \begin{gathered}
 f(y,\vartheta(y))=0,\\
 f(y,u)<0\quad(0<u<\vartheta(y)),\qquad
 f(y,u)>0\quad(\vartheta(y)<u<1).
 \end{gathered}
\end{equation}
\end{assumption}

In this paper, 
``locally bistable'' refers to the combination of the common linear stability condition \eqref{intro:stable-states} and the pointwise bistable sign structure \eqref{intro:pointwise-bistable}. 
It is a local condition on the frozen reactions and does not assert a global classification of all periodic stationary states.

For the small-period regime, define the effective coefficients
\begin{equation}\label{sp:effective}
 a_h=\left(\int_0^1a^{-1}\right)^{-1},\qquad
 b_h=a_h\int_0^1\frac{b}{a},\qquad
 \bar f(u)=\int_0^1f(y,u)\,dy.
\end{equation}
\begin{assumption}[Small-period regime]\label{intro:small-assumption}
The homogenized equation has a strictly decreasing stationary front $U$,
normalized by $U(0)=1/2$:
\begin{equation}\label{sp:standing}
 a_hU''+b_hU'+\bar f(U)=0,\qquad
 U(-\infty)=1,\quad U(+\infty)=0.
\end{equation}
\end{assumption}

\begin{remark}The two additional assumptions are used only in their respective period regimes;
when both hold, 
both constructions apply to the same equation.

Under Assumption~\ref{intro:small-assumption}, 
we restrict our attention to the case in which the homogenized equation admits a stationary front $U$. 
Its strict monotonicity, 
together with \eqref{intro:stable-states},
implies that $U$ is spectrally stable modulo translations.
The complementary case, 
in which it admits a propagating front, 
was studied in \cite{DingXu2026}.
\end{remark}

\subsection{Main results}\label{intro:main-results}
For the large-period statement, 
let $(\phi(Y,\cdot),c(Y))$ be the normalized frozen profile--speed pair at $Y\in\T$:
\begin{equation}\label{lp:frozen}
	\begin{cases}
		a(Y)\phi_{zz}+(b(Y)+c(Y))\phi_z+f(Y,\phi)=0,\\
		\phi(Y,-\infty)=1,\quad\phi(Y,+\infty)=0,\quad\phi(Y,0)=1/2.
	\end{cases}
\end{equation}
Assumption~\ref{intro:large-assumption} and the scalar homogeneous front theorem give this unique pair;
see \cite{FifeMcLeod1977}. 

Fix a smooth function $\rho$ equal to $|z|$ for $|z|\geq1$ and define
\[
H=L^2_\eta(\R),\qquad W=H^1_\eta(\R),\qquad D=H^2_\eta(\R),
\qquad \|v\|_{H^j_\eta}=\|e^{\eta\rho}v\|_{H^j}.
\]
The parameter $\eta>0$ is chosen sufficiently small. 
These spaces apply to \emph{perturbations} of a front profile.
The underlying affine phase space is
\[
\mathcal X_\eta=\phi(0,\cdot)+W.
\]
Every finite translate of every frozen profile belongs to this space.

\begin{theorem}[Large-period invariant family]\label{lp:main}
Assume that the common conditions and
Assumption~\ref{intro:large-assumption} hold.
Then 
there exist constants $L_0,C>0$ and $\eta>0$ such that,
for every $L\geq L_0$, equation~\eqref{intro:pde}
admits a one-dimensional totally invariant manifold
$\Mcal_\epsilon$, where $\epsilon=L^{-1}$.

More precisely, 
there exist functions $h_\epsilon(Y,z)$ and $s_\epsilon(Y)$, 
both $1$-periodic in $Y$, 
such that,
with
\[
    U_\epsilon(Y,z)=\phi(Y,z)+h_\epsilon(Y,z),
\]
the manifold has the parametrization
\begin{equation}\label{lp:manifold}
    Z_{\epsilon,X}(x)=U_\epsilon(\epsilon X,x-X),
    \qquad
    \Mcal_\epsilon=\{Z_{\epsilon,X}:X\in\R\}.
\end{equation}
Moreover, the functions $h_\epsilon,s_\epsilon$ and the manifold
$\Mcal_\epsilon$ satisfy the following properties.	
\begin{enumerate}
	\item The functions $h_\epsilon$ and $s_\epsilon$ satisfy
	\begin{equation}\label{lp:main-est}
		\|h_\epsilon\|_{C^1(\T;W)}
		+\|h_\epsilon\|_{C(\T;D)}
		+\|s_\epsilon-c\|_{C^1(\T)}\leq C\epsilon.
	\end{equation}
	The parametrization $X\mapsto Z_{\epsilon,X}$ is a $C^1$ embedding
	in $\mathcal X_\eta$ and satisfies
	\begin{equation}\label{lp:covariance}
		Z_{\epsilon,X+L}(x+L)=Z_{\epsilon,X}(x).
	\end{equation}
	The profiles are strictly increasing in the position phase:
	\begin{equation}\label{lp:phase-order}
		\begin{aligned}
			\partial_X Z_{\epsilon,X}(x)&>0\qquad(X,x\in\R),\\
			X_1<X_2&\quad\Longrightarrow\quad
			Z_{\epsilon,X_1}(x)<Z_{\epsilon,X_2}(x)\qquad(x\in\R).
		\end{aligned}
	\end{equation}
	\item Every solution on $\Mcal_\epsilon$ has the form
	\begin{equation}\label{lp:reduced}
		X'=s_\epsilon(\epsilon X),\qquad
		u(t,x)=Z_{\epsilon,X(t)}(x).
	\end{equation}
	Zeros of $s_\epsilon$ give stationary fronts, 
	and complementary intervals give connecting orbits. 
	If $s_\epsilon$ has no zero, 
	$\Mcal_\epsilon$ consists of the time slices of a single propagating pulsating front, 
	with signed speed given by \eqref{intro:passage} for $v_L(X)=s_\epsilon(X/L)$.
	\item The profiles satisfy
	\begin{equation}\label{lp:range}
		0<U_\epsilon(Y,z)<1,\qquad
		U_\epsilon(Y,-\infty)=1,\quad U_\epsilon(Y,+\infty)=0
	\end{equation}
	uniformly in $Y$, with uniform exponential convergence as
	$z\to\pm\infty$.
	\item $\Mcal_\epsilon$ attracts nearby solutions exponentially.
	Write $X(t)$ and $w(t)$ for the position and transverse perturbation in the chart \eqref{lp:tube}.
	There are $r,r_1,C_1,\nu>0$, independent of $L$,
	with $r_1<r$, such that solutions initially in $\mathcal U_{\epsilon,r_1}$ defined by \eqref{lp:tube} exist for all positive times, 
	remain in $\mathcal U_{\epsilon,r}$, 
	and satisfy
	\begin{equation}\label{lp:attraction}
		\|w(t)-h_\epsilon(\epsilon X(t))\|_W
		\leq C_1e^{-\nu t}
		\|w(0)-h_\epsilon(\epsilon X(0))\|_W.
	\end{equation}
	Every complete orbit which remains in $\mathcal U_{\epsilon,r}$ for all $t\in\R$ belongs to $\Mcal_\epsilon$. 
	\end{enumerate}
\end{theorem}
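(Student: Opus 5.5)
We construct $\Mcal_\epsilon$ as a perturbation of the approximate family $\{\phi(\epsilon X,\cdot-X)\}$ by a graph transform in slow--fast coordinates. In the moving frame $z=x-X$ with slow phase $Y=\epsilon X$, write a solution as
\[
u(t,x)=\phi(Y(t),x-X(t))+w(t,x-X(t)),
\]
and impose the orthogonality condition $\langle w,\psi(Y)\rangle=0$. Here $\psi(Y)$ is the adjoint eigenfunction of the frozen linearization
\[
\Lcal_Y=a(Y)\partial_z^2+(b(Y)+c(Y))\partial_z+f_u(Y,\phi(Y,\cdot)),
\]
normalized by $\langle\phi_z,\psi\rangle=-1$. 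For $\eta$ small, $\Lcal_Y$ is sectorial on $H$ and has a simple zero eigenvalue with eigenfunction $\phi_z$, by \cite{FifeMcLeod1977} and \eqref{intro:stable-states}. The rest of its spectrum lies in $\{\Re\lambda\le-2\nu\}$, uniformly in $Y\in\T$ by compactness and the $C^4$ regularity of $f$. This defines the tube \eqref{lp:tube}.

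Substituting into \eqref{intro:pde}, the coefficients evaluated at $(x)/L$ become $a(Y+\epsilon z)$, and so on. Taylor expansion around $Y$ gives, after projecting,
\[
X'=c(Y)+\epsilon\,\mathcal N_1(Y,w)+\mathcal N_2(Y,w),\qquad
w_t=\Lcal_Y w+\epsilon R(Y,w)+Q(Y,w)-\epsilon X'\,\partial_Y\phi .
\]
Here $Q$ is quadratic in $w$. The residual $\epsilon R$ comes from the terms $\epsilon z\,a'(Y)\phi_{zz}$ and similar ones, which lie in $H$ because $\phi_z$ decays exponentially faster than $e^{-\eta|z|}$. The $\partial_Y\phi$ term also carries a factor $\epsilon$.

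The important structural point is that the phase $X$ moves at speed $O(1)$, but the coefficients depend only on $Y=\epsilon X$. The family is therefore approximately invariant with error $O(\epsilon)$, and the base dynamics has Lipschitz rate $O(\epsilon)$, since $\partial_X$ of the vector field is $\epsilon\,\partial_Y$. The spectral gap $2\nu$ is $O(1)$, so the normal hyperbolicity ratio is as large as we like for $\epsilon$ small. The unstable direction is trivial: there is only a neutral phase and a contracting normal direction.

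We then seek an invariant graph $w=h_\epsilon(Y)$ in the space of Lipschitz maps $\T\to W$ with $\|h\|\le K\epsilon$. We use either a Lyapunov--Perron fixed point for backward-bounded orbits, or the time-$T$ graph transform in the style of \cite{BLZ2008}. We prefer Lyapunov--Perron: for each $Y_0$ solve
\[
w(t)=\int_{-\infty}^t e^{\Lcal(t-\tau)}\big[\epsilon R+Q-\epsilon X'\partial_Y\phi\big]\,d\tau ,
\]
with $X$ driven by the reduced equation. The propagator here is the evolution family of the time-dependent operator $\Lcal_{Y(\tau)}$ restricted to the range of the projections. It is an exponential dichotomy with rate $\nu$, because $Y(\tau)$ varies slowly: $|Y'|=O(\epsilon)$, which is standard for slowly varying sectorial families.

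Contraction in a weighted space $\sup_{t\le0}e^{\mu t}\|\cdot\|$ with $0<\mu<\nu$ gives $h_\epsilon$ with $\|h_\epsilon\|_{C(\T;D)}\le C\epsilon$, using parabolic smoothing from $W$ into $D$. A second fixed-point argument for the derivative in $Y$ gives $C^1(\T;W)$. This step is the main obstacle. The derivative equation contains $\partial_Y$ of the flow, which grows like $e^{C\epsilon|t|}$ backward. This growth is dominated by $e^{\nu|t|}$ only because the base Lipschitz rate is $O(\epsilon)$, so the gap condition must be checked with constants uniform in $L$. Care is also needed that $\partial_Y\Lcal_Y$ involves $f_{uu}\,\partial_Y\phi$, which is bounded in the weighted norms.

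We then set $s_\epsilon(Y)$ to be the reduced velocity on the graph, giving \eqref{lp:reduced}. The estimate on $s_\epsilon-c$ follows from those on $h_\epsilon$. Periodicity in $Y$ comes from uniqueness of the fixed point, which yields \eqref{lp:covariance}.

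Attraction \eqref{lp:attraction} follows from the standard invariant-foliation argument, or directly from the dichotomy: the difference $w-h_\epsilon(\epsilon X)$ satisfies a linear equation with contracting propagator plus perturbations of size $O(\epsilon+r)$. Local completeness follows because any orbit bounded in the tube for all $t\in\R$ satisfies the Lyapunov--Perron integral equation, and so is the fixed point. The range and tails \eqref{lp:range} follow since $h_\epsilon\in W$ with norm $O(\epsilon)$, so $|h_\epsilon(Y,z)|\le C\epsilon e^{-\eta|z|}$, while $\phi$ has exponential tails. Together with the comparison principle, which preserves $0<u<1$ along invariant entire orbits, this gives strict inequalities.

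Monotonicity \eqref{lp:phase-order} uses the identity
\[
\partial_X Z_{\epsilon,X}=-\partial_z U_\epsilon+\epsilon\,\partial_Y U_\epsilon=-\phi_z+O(\epsilon)
\]
in $W$, which is positive on compact $z$-sets. In the tails, note that $\partial_X Z$ solves the linearized equation along the entire orbit. Near $0$ and $1$ we have $f_u\le-\gamma_0$, so a maximum-principle argument shows that negativity cannot occur in the tails. Alternatively, strict ordering follows from comparison applied to two manifold orbits, together with local completeness. Integrating the pointwise inequality gives the order. Finally, the classification in part 2 follows from the scalar ODE $X'=s_\epsilon(\epsilon X)$: zeros give stationary fronts and intervals give heteroclinic orbits. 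When $s_\epsilon$ has no zero, the passage time over one period is $\int_0^L v_L^{-1}$, and the covariance \eqref{lp:covariance} shows that the orbit is a pulsating front with speed \eqref{intro:passage}.
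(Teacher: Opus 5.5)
Your architecture (moving frame with slow phase $Y=\epsilon X$, the constraint $\langle w,\psi_Y\rangle=0$, an $O(\epsilon)$ graph over $\T$ found by a contraction, then reduction to $X'=s_\epsilon(\epsilon X)$) is the paper's. Using Lyapunov--Perron instead of the time-$T$ graph transform would be a legitimate variant.

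The gap is in the linear part of your normal equation. In the moving frame the coefficients are $a(Y+\epsilon z)$, $b(Y+\epsilon z)$ and $f_u(Y+\epsilon z,\phi)$, so the actual linearization is $A_{\epsilon,Y}$ of \eqref{lp:A}, not the frozen $\Lcal_Y$. You absorb $A_{\epsilon,Y}-\Lcal_Y$ into ``$\epsilon R(Y,w)$'' by Taylor expansion. That is justified only when the difference acts on the exponentially localized $\phi$, as in the residual. Acting on $w\in W$, it contains $[a(Y+\epsilon z)-a(Y)]w_{zz}$, $[b(Y+\epsilon z)-b(Y)]w_z$ and $[f_u(Y+\epsilon z,\phi)-f_u(Y,\phi)]w$. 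Their multipliers have the size of the oscillation of the coefficients once $|z|\gtrsim1/\epsilon$. Testing on a fixed bump translated to $z_0=1/(2\epsilon)$ shows that $\|A_{\epsilon,Y}-\Lcal_Y\|_{D\to H}$ does not tend to zero, unless the coefficients are independent of $y$. The first-order Taylor term $\epsilon z\,a'(Y)\partial_z^2$ is not even bounded from $D$ to $H$ in the weighted norms.

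Consequences:
\begin{itemize}
\item The exponential dichotomy of the frozen family $\Lcal_{Y(\tau)}$ does not govern the true transverse dynamics.
\item Your Lyapunov--Perron map is therefore not a contraction.
\item Your attraction argument (``contracting propagator plus perturbations of size $O(\epsilon+r)$'') and local completeness fail for the same reason.
\end{itemize}
Your phase equation is fine: $\ell_YA_{\epsilon,Y}w=\langle(A_{\epsilon,Y}^*-\Lcal_Y^*)\psi_Y,w\rangle$ is $O(\epsilon)\|w\|_H$ because $\psi_Y$ is localized.

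The missing ingredient is a uniform-in-$(\epsilon,Y)$ stable resolvent bound, hence semigroup bound, for the projected \emph{non-frozen} operator, e.g.\ $\widehat A_\epsilon(Y)=Q_YA_{\epsilon,Y}Q_Y-P_Y$. This bound must be proved non-perturbatively. The paper does this in Lemma~\ref{lp:stable}:
\begin{itemize}
\item The couplings $\ell_YA_{\epsilon,Y}$ and $A_{\epsilon,Y}q_Y$ are $O(\epsilon)$ by localization.
\item Large $|\lambda|$ is handled by comparison with the self-adjoint operator $-\partial_z(a(Y+\epsilon z)\partial_z)+1$, which carries the true variable diffusion.
\item Bounded $\lambda$ is handled in three steps. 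First, tail coercivity holds because $f_u\le-\gamma_0$ wherever $\phi$ is near $0$ or $1$, \emph{for every spatial phase} $y$. Second, the operator is Fredholm of index zero after a compact core perturbation. Third, a contradiction argument with $\epsilon_n\to0$ would produce, in the limit, an eigenfunction of some frozen $\Lcal_{Y_*}$ in $E_{Y_*}$ with $\operatorname{Re}\lambda\ge-2\kappa_0$, violating \eqref{lp:frozen-gap}.
\end{itemize}
With that estimate and the resulting evolution bounds for slowly varying $Y$ (Lemma~\ref{lp:MR-lemma}), your Lyapunov--Perron and fiber-contraction steps could be carried out. Without it, the construction, the attraction estimate and local completeness are unjustified. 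The main obstacle is this transverse stability, not the $C^1$ regularity you single out.
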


For the small-period statement, 
write $\mathsf X=L^2(\R)$, 
$V=H^1(\R)$, 
and $V'=H^{-1}(\R)$.
Let $\chi$ be the zero-mean periodic solution of
\begin{equation}\label{sp:chi}
	(a(1+\chi'))'= 0,
\end{equation}
and define
\begin{equation}\label{sp:approx}
	U_\alpha(x)=U(x-\alpha),\qquad
	V_{L,\alpha}=U_\alpha+L\chi(x/L)U_\alpha'.
\end{equation}
The affine phase space at phase $\alpha$ is
\[
\mathcal X_{0,\alpha}=U_\alpha+\mathsf X,
 \qquad
 \operatorname{dist}_{2}(u,U_\alpha)=\|u-U_\alpha\|_2.
\]

\begin{theorem}[Small-period invariant family]\label{sp:main}
	Under the common conditions and Assumption~\ref{intro:small-assumption},
	there exist $L_0,C>0$ such that, for
	$0<L<L_0$, equation \eqref{intro:pde} admits a one-dimensional totally invariant manifold
	\[
	\mathcal M_L=\{Z_{L,\alpha}=V_{L,\alpha}+g_L(\alpha):\alpha\in\R\}.
	\]
	It has the following properties.
	\begin{enumerate}
\item The map $g_L:\R\to\mathsf X$ is Lipschitz and
		\begin{equation}\label{sp:graphbound}
			\sup_\alpha\|g_L(\alpha)\|_2+
\operatorname{Lip}_{\mathsf X}(g_L)\leq CL.
		\end{equation}
		The parametrization is injective and satisfies
		\begin{equation}\label{sp:covariance}
			Z_{L,\alpha+L}(x+L)=Z_{L,\alpha}(x).
		\end{equation}
		The profiles are strictly increasing in the position phase:
		\begin{equation}\label{sp:phase-order}
			\alpha_1<\alpha_2\quad\Longrightarrow\quad
			Z_{L,\alpha_1}(x)<Z_{L,\alpha_2}(x)\qquad(x\in\R).
		\end{equation}
		\item There is an $L$-periodic Lipschitz function $s_L$ with $\|s_L\|_\infty+\operatorname{Lip}(s_L)\leq CL$ 
		such that every solution on this curve is
		\begin{equation}\label{sp:reduction}
			u(t,x)=Z_{L,\alpha(t)}(x),\qquad \dot\alpha=s_L(\alpha).
		\end{equation}
		The zeros and complementary intervals of $s_L$ give the stationary and connecting fronts described by Theorem~\ref{phase:classification}. 
		If $s_L$ has no zero, 
		the curve consists of the time slices of a single propagating pulsating front, 
		with signed speed given by \eqref{intro:passage} for $v_L=s_L$.
		\item The solutions on $\Mcal_L$ are defined for all $t\in\R$,
		are classical, take values strictly between $0$ and $1$, 
		and converge to $1$ and $0$ at the corresponding spatial infinities,
		uniformly in the phase:
		\begin{equation}\label{sp:tails}
			\lim_{R\to\infty}\sup_\alpha\left\{
			\sup_{x-\alpha\geq R}|Z_{L,\alpha}(x)|+
			\sup_{x-\alpha\leq-R}|1-Z_{L,\alpha}(x)|\right\}=0.
		\end{equation}
		\item The curve attracts sufficiently close data exponentially in the normal coordinate defined in \eqref{sp:chart}. 
		There is a radius $r>0$, independent of $L$, such that every complete orbit remaining in $\mathcal U_{L,r}$ for all $t\in\R$ lies on $\mathcal M_L$.
		If $s_L\equiv0$, 
each $Z_{L,\alpha}$ is Lyapunov stable in the affine topology of
 $\mathcal X_{0,\alpha}$, 
		and every sufficiently close datum converges exponentially in $L^2$ to a possibly different,
		nearby member of $\mathcal M_L$.
	\end{enumerate}
\end{theorem}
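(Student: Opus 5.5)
The plan is to build $\mathcal M_L$ by a graph transform over the approximate family $\{V_{L,\alpha}\}$ in the phase--normal chart \eqref{sp:chart}, in the spirit of Bates--Lu--Zeng, and then read off the remaining properties from comparison arguments. The first step is an approximate-invariance estimate. Insert $V_{L,\alpha}=U_\alpha+L\chi(x/L)U_\alpha'$ into $F_L$. The cell problem \eqref{sp:chi} is chosen so that the $O(L^{-1})$ terms coming from $\partial_x(a(x/L)\partial_x)$ cancel. The $O(1)$ terms then reduce to $a_hU''+b_hU'+\bar f(U)=0$, up to terms with zero cell mean. Those remaining oscillatory terms are $O(1)$ in $L^2$ but $O(L)$ in $V'=H^{-1}$: write them as $g(x/L)G(x)$ with $g$ of zero mean, and note that $g(x/L)=L\partial_x[\tilde g(x/L)]$. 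The conclusion is that
\[
\|F_L(V_{L,\alpha})\|_{V'}\leq CL
\]
uniformly in $\alpha$. This is the reason the manifold is estimated in $\mathsf X=L^2$ with data in $V'$, and not in $D$.

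The second step gives uniform phase--normal estimates. Decompose $u=V_{L,\alpha}+w$, where $w$ is orthogonal (in $L^2$) to an adjoint eigenfunction $\psi_\alpha$. The natural choice is $\psi_\alpha=e^{(b_h/a_h)x}U_\alpha'$ suitably normalized, possibly with its own corrector. The linearization $\partial_x(a(x/L)\partial_x)+b(x/L)\partial_x+f_u(x/L,V_{L,\alpha})$ is a small-period perturbation of the homogenized linearization $a_h\partial_{xx}+b_h\partial_x+\bar f'(U_\alpha)$. The latter has a simple zero eigenvalue with eigenfunction $U'$ (by the Remark) and the rest of its spectrum in $\{\Re\lambda\le-2\nu\}$. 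I will transfer the spectral gap through a weak ($V$--$V'$) energy estimate. Concretely, I plan to show that the periodic operator restricted to $\psi_\alpha^\perp$ satisfies a coercive G\aa rding-type inequality $\langle -\mathcal L w,w\rangle\ge \nu\|w\|_V^2-C\|w\|_2^2$. Then a contradiction/compactness argument, using the homogenization convergence $L\to0$ (div-form operators $H$-converge to $a_h$), gives exponential decay $e^{-\nu t}$ of the linearized normal semigroup. The same estimate controls Lipschitz dependence on $\alpha$, because $\|\partial_\alpha V_{L,\alpha}-(-U_\alpha')\|_2=O(L)$.

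The third step is the graph transform. Fix a time $T$ with $C_1e^{-\nu T}\le1/4$. On the space of $L$-covariant Lipschitz graphs $g:\R\to\mathsf X$ satisfying $\sup\|g\|_2+\operatorname{Lip}(g)\le KL$, define the time-$T$ map through the semiflow. Using $L^2$ well-posedness with the linear-growth extension \eqref{intro:extended-stable-states}, the local existence of the phase coordinate, and the two previous steps, the map sends this ball into itself and is a contraction. Its fixed point $g_L$ gives \eqref{sp:graphbound}. Covariance \eqref{sp:covariance} follows from uniqueness of the fixed point, since the equation commutes with $x\mapsto x+L$. Invariance for all times follows from the semigroup property, and local completeness and attraction follow from the standard cone/contraction argument. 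The reduced velocity is $s_L(\alpha)=\langle F_L(Z_{L,\alpha}),\psi_\alpha\rangle/\langle \partial_\alpha Z,\psi_\alpha\rangle$, so the bound $\|s_L\|_\infty+\operatorname{Lip}\le CL$ follows from the $V'$ estimate of the first step.

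The fourth step collects the qualitative properties. Parabolic regularity upgrades solutions on $\mathcal M_L$ to classical ones. The bounds $0<Z<1$ and the tails \eqref{sp:tails} follow from comparison with the stable states (using \eqref{intro:stable-states}) together with uniform $L^2$ smallness of $g_L$. For the ordering \eqref{sp:phase-order}, note that the phase coordinate is strictly monotone along $\partial_\alpha Z\approx -U'_\alpha$ in $L^2$; however, pointwise order needs more. I will obtain it by comparison: take the entire solutions through $Z_{L,\alpha_1},Z_{L,\alpha_2}$, observe that they are ordered at $t\to-\infty$ in the tails, and apply the strong maximum principle together with a sliding argument over $\alpha$, which uses local completeness. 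The statements in the case $s_L\equiv0$ follow because then $\mathcal M_L$ is a curve of equilibria with normal attraction, which gives asymptotic phase. The classification is Theorem~\ref{phase:classification} applied to $\dot\alpha=s_L(\alpha)$, and the mean speed \eqref{intro:passage} is the passage time over one period. I expect the main obstacle to be the second step: obtaining a spectral gap uniform in $L\to0$ for operators with rapidly oscillating coefficients. Here I would first try the energy/compactness argument in $V$ with $V'$ forcing, rather than resolvent perturbation, since the latter fails in $L^2$.
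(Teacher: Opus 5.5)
Your architecture matches the paper's: an $O(L)$ residual in $V'$ via zero-mean primitives, a graph transform in $\mathsf X=L^2$ with $V'$ forcing, compactness for the uniform gap, and Theorem~\ref{phase:classification} for the dynamics. However, the step you flag as the main obstacle does not close as proposed.

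Your G\aa rding inequality is true, but its error term $-C\|w\|_2^2$ is global. In the contradiction argument the normalized sequence can therefore carry all its mass to spatial infinity. What you need is uniform-in-$L$ coercivity on the tails (where $f_u\le-\gamma$), that is, a localization estimate like \eqref{sp:tightness}. In the unweighted pairing this fails for large drift, because $\operatorname{Re}\int b(x/L)w'\bar w=-\frac1{2L}\int b'(x/L)|w|^2$. Take $a\equiv1$, $b(y)=\beta\cos(2\pi y)$, $w^2=E^2(1+\epsilon\sin(2\pi x/L))$ with $E$ a wide envelope and $\epsilon=\beta L/(2\pi)$. Then the real part of $\langle(\partial_x^2+b(x/L)\partial_x-\gamma)w,w\rangle$ is about $(\beta^2/8-\gamma)\|w\|_2^2$. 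This is positive for $\beta^2>8\gamma$, independently of $L$.

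The missing idea is the paper's periodic density $m_L=1+O(L)$ with $a m_L'-b m_L=J_L$ constant. Testing against $m_L\bar w$ turns the whole drift into $J_L\int w'\bar w$, which is purely imaginary. This gives uniform invertibility of the tail-modified operator, hence \eqref{sp:tightness}. Only then does the compactness step close, with the limit identified through test functions $\varphi+L\chi\varphi'+L\zeta\varphi$, which produce $a_h$ and $b_h=a_h\langle b/a\rangle$.

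For the same reason, the adjoint corrector you call optional is necessary. For nonconstant $a,b$, the uncorrected $\psi_\alpha$ has $\|A_{L,\alpha}^*\psi_\alpha\|_2$ of order $L^{-1}$. The phase velocity is then not uniformly Lipschitz on $\mathsf X$, and $s_L=O(L)$ does not follow from $\|g_L\|_2\le CL$. You need $\psi_{L,\alpha}=\psi_\alpha+L\chi\psi_\alpha'+L\zeta\psi_\alpha$ with the drift corrector $a\zeta'=b-b_h$; its flux identity \eqref{sp:adjointidentity} gives $\|A_{L,\alpha}^*\psi_{L,\alpha}\|_2\le C$. Also, your formula for $s_L$ uses $\partial_\alpha Z$, which need not exist since $g_L$ is only Lipschitz into $\mathsf X$. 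The paper instead sets $s_L(\alpha)=\nu_L(\alpha,g_L(\alpha))$.

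Your ordering step is not yet an argument either. No order ``at $t\to-\infty$'' is available a priori: when $s_L$ has zeros, the backward limits are stationary members whose mutual order is exactly what must be proved. Moreover, $L^2$-smallness of $g_L$ gives no pointwise control, and Schauder bounds degenerate for coefficients oscillating at scale $L$. So neither the placement of the tails in the stable region nor positivity of $u_2-u_1$ between the interfaces comes with $L$-independent constants. The paper obtains both in three steps:
\begin{enumerate}
\item De Giorgi--Nash--Aronson H\"older bounds, uniform in $L$, interpolated with \eqref{phase:small-error}, give $\sup_\alpha\|Z_{L,\alpha}-U_\alpha\|_\infty\le CL^{2\beta/(2\beta+1)}$.
\item The bound $\operatorname{Lip}_{L^2}(e_L)\le CL$ makes the $L^2$ error in $u_2-u_1$ at time $t-1$ at most $CL\min\{d(t-1),1\}$. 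Aronson's two-sided Gaussian bounds over one unit of time then let the reference difference $U(\cdot-\alpha_2(t-1))-U(\cdot-\alpha_1(t-1))$ dominate. Hence $u_2-u_1\ge c\min\{d(t-1),1\}$ on $[\alpha_1(t)-R,\alpha_2(t)+R]$ for every $t$.
\item The tails are handled by comparison on $[-T,0]$ with $-e^{-\gamma_0(t+T)}$, letting $T\to\infty$, which requires no initial ordering.
\end{enumerate}
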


\begin{remark}
Theorem~\ref{sp:main} does not require the homogenized
reaction to satisfy the usual bistable sign condition.
Under the common hypotheses,
Assumption~\ref{intro:small-assumption} only requires
the homogenized equation to admit a strictly decreasing
stationary front connecting $1$ to $0$.
\end{remark}

The two preceding theorems reduce front dynamics to scalar periodic flows, 
which can be considered as circle flows equivalently. 
The next theorem gives the converse: every smooth circle flow can be realized by a locally bistable equation.

For the equation constructed in the next theorem, 
let $\phi_f(0,\cdot)$ denote its normalized frozen-front profile at phase $0$,
and choose the exponential weight $\eta>0$ as in Theorem~\ref{lp:main}.
We use the affine weighted phase space
\[
 \mathcal X_{\eta,f}^{\rm aff}
 =\phi_f(0,\cdot)+H^1_\eta(\R),
 \qquad
 \|u-v\|_{\mathcal X_{\eta,f}^{\rm aff}}
 :=\|u-v\|_{H^1_\eta}.
\]
Here a $C^1$ embedded family means that the map $p\mapsto Z_p$ is $C^1$ and injective, 
has nonzero tangent at every $p$, 
and is a homeomorphism from $\R$ onto its image with the subspace topology of $\mathcal X_{\eta,f}^{\rm aff}$.

\begin{theorem}[Realization of arbitrary smooth circle flows]\label{real:circle-flow}
Let $q\in C^\infty(\T)$, 
and let $\Phi_t^q$ be the flow generated by
\[
 \dot\theta=q(\theta),\qquad \theta\in\T.
\]
Then there exist a spatial period $L>0$ and a smooth reaction $f:\T\times[0,1]\to\R$,
satisfying the common conditions and Assumption~\ref{intro:large-assumption}, 
such that the locally bistable equation
\begin{equation}\label{real:pde}
 u_t=u_{xx}+f(x/L,u),\qquad x\in\R,
\end{equation}
admits a $C^1$ embedded one-dimensional invariant family $\mathcal M=\{Z_p:p\in\R\}$ of front-like profiles.
This family is strictly ordered by $p$, satisfies $Z_{p+L}(x+L)=Z_p(x)$, 
and attracts sufficiently close solutions exponentially in phase--normal coordinates as in Theorem~\ref{lp:main}. 
Every complete trajectory on $\mathcal M$ is a transition front connecting $1$ to $0$.

Furthermore,
let $\widehat{\mathcal M}=\mathcal M/\!\sim$, 
where $w\sim\widetilde w$ if $\widetilde w(x)=w(x-kL)$ for some $k\in\Z$,
and let $\overline S_t$ be the induced flow on $\widehat{\mathcal M}$. 
There is an orientation-preserving homeomorphism $\mathcal H:\T\to\widehat{\mathcal M}$ such that
\begin{equation}\label{real:conjugacy}
 \mathcal H\bigl(\Phi_t^q(\theta)\bigr)
 =\overline S_t\bigl(\mathcal H(\theta)\bigr)
 \qquad(\theta\in\T,\ t\in\R).
\end{equation}
The quotient $\widehat{\mathcal M}$ carries the quotient topology induced
by $\mathcal X_{\eta,f}^{\rm aff}$ and the spatial-translation relation
$\sim$.
\end{theorem}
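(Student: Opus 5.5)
The plan is to obtain the realization from the large-period theorem, Theorem~\ref{lp:main}, applied to a reaction that is built by reverse-engineering its frozen speed $c$. On $\mathcal M_\epsilon$ the reduced dynamics is $X'=s_\epsilon(\epsilon X)$. In the slow phase $Y=\epsilon X$ this reads $\dot Y=\epsilon s_\epsilon(Y)$. After the time change $\tau=\epsilon t$ the quotient flow on $\widehat{\mathcal M}$ is therefore conjugate to $\dot Y=s_\epsilon(Y)$ on $\T$. However, the statement asks for a \emph{time-preserving} conjugacy with $\Phi^q_t$.

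\textbf{Step 1: remove the time scale.} Put $\tilde q=q/\epsilon$ with $\epsilon=L^{-1}$ still to be fixed. We want $s_\epsilon$ to be topologically equivalent to $\tilde q$ in a way that respects time. Two scalar circle flows $\dot Y=g_1(Y)$, $\dot Y=g_2(Y)$ are time-preservingly conjugate by an orientation-preserving homeomorphism when three conditions hold:
\begin{itemize}
\item $g_1$ and $g_2$ have the same zero set $K$;
\item they have the same sign on each complementary arc of $K$;
\item on each arc they have the same passage structure.
\end{itemize}
On an arc $J$ with $g_i\neq0$, the map $H$ is defined by matching times, i.e. $\int_{y_0}^{H(y)}g_2^{-1}=\int_{y_0}^{y}g_1^{-1}$. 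This gives a homeomorphism of $J$ onto itself provided both integrals diverge at the endpoints of $J$, which holds automatically at zeros of smooth (Lipschitz) $g_i$. For the extension to all of $\T$ to be continuous we fix $H$ on $K$. If $K=\emptyset$ we need only equal periods, $\int_\T g_1^{-1}=\int_\T g_2^{-1}$, and then the conjugacy is $H=$ identity plus a time-matching reparametrization. Since the arcs can be arbitrary (including Cantor $K$), continuity of $H$ at accumulation points of $K$ must be checked. My first attempt would be to prove it by forcing $H$ to be close to the identity, which follows if $g_2-g_1$ is controlled in $C^1$ relative to $g_1$.

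\textbf{Step 2: construct $f$ so that $s_\epsilon$ is exactly a multiple of $q$.} This is the main obstacle, because Theorem~\ref{lp:main} only gives $s_\epsilon=c+O(\epsilon)$ and we need exact zero sets. I would use a one-parameter family of reactions $f(y,u)=f_0(u)+\mu(y)\,g(u)$. Here $f_0$ is a balanced cubic with frozen speed zero, and $g$ is a bump supported in $(\delta_0,1-\delta_0)$, so that the frozen speed $c(Y)$ depends on $\mu(Y)$ through a smooth function $\mu\mapsto c(\mu)$ with $c'(0)\neq0$. The stable-state conditions and Assumption~\ref{intro:large-assumption} are then preserved for small $\mu$, with constant diffusion and $b=0$. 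The key point is that the map $\mu\mapsto s_\epsilon[\mu]$ from $C^1(\T)$ to $C^1(\T)$ should be $C^1$-close to $\mu\mapsto c(\mu(\cdot))$ uniformly in $\epsilon$. This would follow from smooth dependence of the graph transform fixed point on parameters, which I would re-prove in the contraction framework. A Newton/implicit-function argument then solves $s_\epsilon[\mu]=\lambda\,q$ exactly for a small $\lambda>0$. Concretely, the target is $\lambda q$ with $\lambda$ small enough that $\mu$ remains small; we set $\mu=c^{-1}(\lambda q)+\text{correction}$. The correction is found by a fixed point on the space of functions $\mu$ for which $s_\epsilon[\mu]-\lambda q=0$. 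Since $\mu$ itself is unconstrained, surjectivity is easy; the loss of derivatives between $\mu$ and $s_\epsilon$ is the real difficulty. If that fails, I would fall back on a weaker target: $s_\epsilon=\lambda q\cdot\omega$ with $\omega>0$ continuous. Combined with Step~1, this gives the conjugacy only up to a time change on arcs, so it would need a separate period adjustment.

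\textbf{Step 3: fix the time scale and conclude.} With $s_\epsilon=\lambda q$, the quotient flow in the variable $Y=X/L$ is $\dot Y=\epsilon\lambda q(Y)$. Choosing $L=\epsilon^{-1}$ with $\epsilon\lambda=1$ is impossible for small $\epsilon$ and $\lambda$. Instead, use parabolic rescaling $x\mapsto \sigma x$, $t\mapsto\sigma^2 t$, combined with a reaction rescaling $f\mapsto \sigma^2 f(\cdot)$. The equation stays of the form \eqref{real:pde} with constant diffusion $1$, but the new period is $\sigma L$ and the quotient flow is $\dot Y=\sigma^{-2}\epsilon\lambda q$. Choosing $\sigma^2=\epsilon\lambda$ makes the speed exactly $q$. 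The hypotheses of the common conditions are scale-invariant up to constants, so they continue to hold.

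The remaining properties follow from Theorem~\ref{lp:main}, transported through the rescaling:
\begin{itemize}
\item ordering and covariance from \eqref{lp:phase-order} and \eqref{lp:covariance};
\item attraction from \eqref{lp:attraction};
\item the front-like character from \eqref{lp:range}.
\end{itemize}
Finally, $\mathcal H(\theta)=[Z_{L\theta}]$ is a homeomorphism onto the quotient, because $X\mapsto Z_{\epsilon,X}$ is a $C^1$ embedding. It satisfies \eqref{real:conjugacy} by \eqref{lp:reduced}.
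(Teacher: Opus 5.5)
\textbf{The gap is in Step 2, and everything else rests on it.} Your claim that $\mu\mapsto s_\epsilon[\mu]$ is $C^1$-close to the pointwise map $\mu\mapsto c(\mu(\cdot))$ uniformly in $\epsilon$ is false in operator norm. Linearize at $\mu=0$, where the balanced homogeneous $f_0$ makes every translate $\phi(\cdot-X)$ stationary. Projecting the perturbation $\nu(x/L)\,g(u)$ (your bump $g$) onto the adjoint mode gives, with $k(z)=C\psi(z)\,g(\phi(z))$,
\[
 D_\mu s_\epsilon[0]\,\nu\,(Y)=\int_\R k(z)\,\nu(Y+\epsilon z)\,dz .
\]
This is a convolution on $\T$ with a smooth kernel of width $\epsilon$, because the front samples the medium over an $O(1)$ window in $x$. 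It multiplies the mode $e^{2\pi inY}$ by the rapidly decaying factor $\hat k(-2\pi n/L)$, whereas $c'(0)\nu$ does not damp that mode. So the linearization is compact on $C^1(\T)$, or on any of the usual H\"older or Sobolev spaces, and has no bounded inverse. Hence no implicit-function or Newton scheme solves $s_\epsilon[\mu]=\lambda q$ near $\mu=0$, which is exactly the regime you need. The derivative loss you flag is structural, not technical.

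More fundamentally, Theorem~\ref{lp:main} only gives $s_\epsilon=c+O(\epsilon)$, which says nothing about the \emph{exact} zero set of $s_\epsilon$. For $q$ with flat zeros, an arc of zeros, or a Cantor zero set, that exact zero set is precisely what must be prescribed. Your fallback $s_\epsilon=\lambda q\,\omega$ with $\omega>0$ would suffice, but it faces the same obstruction and you give no mechanism for it.

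\textbf{Your own Step 1 shows exact matching is unnecessary.} For $K\neq\emptyset$, equal zero sets and equal signs on every gap already give a time-preserving conjugacy. The gapwise time-matching map $B_I^{-1}\circ A_I$ sends each gap onto itself, and continuity at accumulation points of $K$ is automatic because gap lengths tend to zero; no closeness to the identity is needed. For $K=\emptyset$ only the period must match. Your parabolic rescaling does that, as does the paper's homogeneous traveling front with $L=|c_\sigma|T_q$.

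\textbf{The paper gets exact zero-set control by design, not by inversion.} It uses the cubic $g(u)=u(1-u)(2u-1)$, $U(z)=(1+e^z)^{-1}$, and $r(u)=\log\frac{1-u}{u}$, so that $r(U_p(x))=x-p$. It also uses the flat encodings $H=e^{-1/q^2}$ and $Q=\operatorname{sgn}(q)H$. The reaction $f(y,u)=g(u)+\delta\chi(u)H(y)Q(y-r(u)/L)$ then satisfies $U_p''+f(x/L,U_p)=\delta\chi(U_p)H(x/L)Q(p/L)$. So $U_p$ is an exact stationary front precisely when $p/L \bmod 1\in K$, and local completeness places it on $\mathcal M$. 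Strict ordering, together with a two-sided variation-of-constants argument for $B=w'+P(w)$, excludes stationary members inside a gap. Energy dissipation, compared with the slope of $p\mapsto\mathcal E_L(U_p)$, forces the direction of motion on each gap to be $\operatorname{sgn}q$. The conjugacy is then $\theta\mapsto[Z_{L\beta(\theta)}]$, with $\beta$ the gapwise time-matching homeomorphism, rather than $[Z_{L\theta}]$.
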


The spatial period and the reaction are chosen as part of the realization,
which applies to every smooth circle flow, 
including those with degenerate or nonisolated equilibria.
Here \emph{topological conjugacy} means the time-preserving relation
in \eqref{real:conjugacy}: the same time parameter $t$ appears in both flows.
The conjugating map is a homeomorphism and may be only continuous in phase.

\begin{example}[Realized circle flows and front dynamics]\label{real:examples}
	\normalfont
	The reaction and the spatial period in Theorem~\ref{real:circle-flow} may be chosen separately for each prescribed smooth vector field $q$.
	The six diagrams below illustrate the resulting dynamics, 
	in order.
	In each diagram, 
	 the left panel shows the prescribed flow on $\T$ and the right panel shows its lift to the ordered front family.
	The correspondence is a time-preserving topological conjugacy; 
	it does not assert equality between $q$ and the exact normalized phase velocity.
	For flows with equilibria, 
	the construction in Section~\ref{sec:realization} fixes the singular phases: 
	a zero $\theta$ of $q$ corresponds to the stationary front $Z_{L\theta}$, 
	with phases interpreted through their lifts.
	
	\RealizationPanel{(a) Rotation: a propagating pulsating front.}{1}{%
		\(q(\theta)\equiv1\)\par\medskip
		\begin{tikzpicture}[x=1.3cm,font=\small]
			\draw[Flow,thick,-Stealth] (0,0)--(5.6,0);
			\foreach \x/\s in {.4/p,2.6/{p+L},4.8/{p+2L}}{
				\draw[Muted] (\x,-.08)--(\x,.08);
				\node[below] at (\x,-.1) {$Z_{\s}$};}
			\node[above] at (2.8,.18) {$u(t+T,x+L)=u(t,x)$};
		\end{tikzpicture}
		\[
		T=\int_0^L\frac{dp}{v_L(p)},\qquad c_L=\frac{L}{T}.
		\]
	}{%
		Here $v_L>0$ is the lifted position-phase velocity.
		All profiles lie on a single right-moving pulsating-front orbit.
		Time preservation gives $T=1$ for $q\equiv1$.
	}
	
	\RealizationPanel{(b) One stationary class: a circle homoclinic orbit.}{2}{%
		\(q(\theta)=1-\cos(2\pi\theta)\)\par\medskip
		\begin{tikzpicture}[x=1.3cm,font=\small]
			\foreach \x in {0,5}{\draw[Deg,fill=white,thick] (\x,0) circle(3pt);}
			\draw[Flow,thick,-Stealth] (.2,0)--(4.8,0);
			\node[below=7pt] at (0,0) {$Z_0$};
			\node[below=7pt] at (5,0) {$Z_L$};
			\node[above] at (2.5,.15) {$t:-\infty\longrightarrow+\infty$};
		\end{tikzpicture}
		\[
		Z_L(x)=Z_0(x-L),\qquad [Z_L]=[Z_0].
		\]
	}{%
		The unique nonstationary circle orbit is homoclinic to the stationary class.
		Its lift is a transition front connecting the distinct stationary profiles
		$Z_0$ and $Z_L$; its spatial-period translates give all the other lifts.
	}
	
	\RealizationPanel{(c) Two stationary classes: connections in the same direction.}{3}{%
		\(q(\theta)=\sin^2(2\pi\theta)\)\par\smallskip
		\ThreeGraph{1}
	}{%
		The two complementary arcs lift to
		$Z_0\longrightarrow Z_{L/2}\longrightarrow Z_L$.
		These arrows represent two distinct entire orbits, both moving to the right.
		Neither orbit crosses the intermediate stationary front in finite time.
		Each stationary phase attracts from one side and repels from the other.
	}
	
	\RealizationPanel{(d) One attracting and one repelling class: opposite directions.}{4}{%
		\(q(\theta)=\sin(2\pi\theta)\)\par\smallskip
		\ThreeGraph{2}
	}{%
		Both circle orbits run from the repelling class to the attracting class.
		Their lifts are $Z_0\longrightarrow Z_{L/2}\longleftarrow Z_L$,
		so the two interfaces move in opposite directions.
		Attraction and repulsion here refer to phase dynamics along the family;
		normal perturbations decay exponentially.
	}
	
	\RealizationPanel{(e) A closed arc of stationary phases: a continuum of fronts.}{5}{%
		\(K=[a,b],\quad 0<a<b<1,\quad q^{-1}(0)=K,\quad q\geq0\)\par\medskip
		\begin{tikzpicture}[x=1.3cm,font=\small]
			\draw[Muted!40] (0,0)--(5.8,0);
			\draw[Fixed,line width=4pt] (0,0)--(1.25,0);
			\draw[Fixed,line width=4pt] (4.3,0)--(5.55,0);
			\draw[Flow,thick,-Stealth] (1.45,0)--(4.1,0);
			\node[below=8pt] at (0,0) {$Z_{La}$};
			\node[below=8pt] at (1.25,0) {$Z_{Lb}$};
			\node[below=8pt] at (4.3,0) {$Z_{L(a+1)}$};
			\node[below=8pt] at (5.55,0) {$Z_{L(b+1)}$};
			\node[above] at (2.8,.15) {$u(t,\cdot)$};
		\end{tikzpicture}\par\medskip
	}{%
		Purple intervals represent the continuous, strictly ordered stationary
		family $\{Z_{L\theta}:\theta\in[a,b]\}$ and its translates.
		The complementary circle arc gives a connecting front from
		$Z_{Lb}$ to $Z_{L(a+1)}$.
	}
	
	\RealizationPanel{(f) A Cantor set of stationary phases.}{6}{%
		\(K\subset\T\text{ is Cantor},\quad q\in C^\infty(\T),\quad
		q^{-1}(0)=K,\quad q\geq0\)\par\medskip
		\begin{tikzpicture}[x=1.3cm,font=\small]
			\draw[Muted!35] (0,0)--(5.4,0);
			\draw[Fixed,line width=3.3pt] (0.0000,0)--(0.2000,0);
			\draw[Fixed,line width=3.3pt] (0.4000,0)--(0.6000,0);
			\draw[Fixed,line width=3.3pt] (1.2000,0)--(1.4000,0);
			\draw[Fixed,line width=3.3pt] (1.6000,0)--(1.8000,0);
			\draw[Fixed,line width=3.3pt] (3.6000,0)--(3.8000,0);
			\draw[Fixed,line width=3.3pt] (4.0000,0)--(4.2000,0);
			\draw[Fixed,line width=3.3pt] (4.8000,0)--(5.0000,0);
			\draw[Fixed,line width=3.3pt] (5.2000,0)--(5.4000,0);
			\draw[Flow,thick,-Stealth] (1.95,0)--(3.45,0);
			\draw[Flow,thick,-Stealth] (.67,0)--(1.13,0);
			\draw[Flow,thick,-Stealth] (4.27,0)--(4.73,0);
			\node[below=8pt] at (0,0) {$Z_0$};
			\node[below=8pt] at (5.4,0) {$Z_L$};
		\end{tikzpicture}\par\medskip
	}{%
		Choose the phase origin in $K$.
		There are uncountably many stationary classes, with no isolated points.
		Each lifted gap $(a_j,b_j)$ gives one connection
		$Z_{La_j}\longrightarrow Z_{Lb_j}$, up to time translation.
		Thus the connections are countable modulo time and spatial-period
		translations. The purple pieces show only a finite stage of the Cantor
		construction; the limiting set contains no interval.
	}
	
	In cases (b)--(f), each nonconstant lifted orbit has bounded position and
	connects stationary fronts. Consequently,
	\[
	\lim_{|t|\to\infty}\frac{p(t)-p(0)}{t}=0.
	\]
	Such a temporal connection is a transition front, but it is not a
	propagating pulsating front. All its spatial-period translates are also
	realized.
\end{example}

\begin{proposition}[Sensitivity of Cantor pinning to the spatial period]\label{real:period-sensitivity}
Let $q\in C^\infty(\T)$ be nonnegative with Cantor zero set $K$,
and fix the reaction $f$ constructed from $q$ in Section~\ref{sec:realization} at a sufficiently large period $L_0$.
For all $L$ sufficiently close to $L_0$, equation~\eqref{real:pde}
admits a normally attracting invariant front family $\mathcal M_L$.
At $L=L_0$, its stationary set modulo spatial-period translations is homeomorphic to $K$.
At every nearby $L\ne L_0$, its quotient flow is topologically conjugate to a rotation,
and $\mathcal M_L$ consists of the time slices of a single propagating pulsating front.
\end{proposition}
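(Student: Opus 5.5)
The plan is to follow the dependence of the exact phase velocity on the spatial period $L$, with the reaction $f$ fixed, and show that the zero set $K$ present at $L=L_0$ disappears for every nearby $L\neq L_0$. I am assuming something about the construction of Section~\ref{sec:realization} that I have not checked: that $L_0$ and $f$ are tuned so the exact normalized velocity $\sigma_{L_0}(\theta):=v_{L_0}(L_0\theta)/L_0$ is a positive smooth reparametrization of $q$, namely $\sigma_{L_0}=m\,q$ with $m>0$ smooth. The argument has three steps; the whole proof depends on the third.

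\emph{Step 1 (persistence in $L$).} Since $f$ is fixed, the frozen profiles $\phi(Y,\cdot)$ and speeds $c(Y)$ do not depend on $L$. The graph-transform construction behind Theorem~\ref{lp:main} uses $\epsilon=L^{-1}$ only as a parameter in a contraction. I will therefore rerun the contraction with $\epsilon$ ranging over a neighborhood of $\epsilon_0=L_0^{-1}$; the contraction constants are uniform because $L_0$ is large. This gives normally attracting, locally complete families $\mathcal M_L$ for $|L-L_0|$ small, with $h_\epsilon$ and $s_\epsilon$ depending $C^2$ on $\epsilon$ with values in $C(\T;D)$ and $C(\T)$ respectively. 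Obtaining $C^2$ (not merely $C^1$) smoothness in the parameter requires a fiber-contraction argument for parameter derivatives. This is routine given the uniform phase--normal estimates, since $f\in C^4$.

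\emph{Step 2 ($L=L_0$).} At $L=L_0$ the stationary phases are exactly $\{\theta:\sigma_{L_0}(\theta)=0\}=q^{-1}(0)=K$. By the phase classification (Theorem~\ref{phase:classification}) and $\mathcal H$, the stationary set modulo spatial-period translations is homeomorphic to $K$.

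\emph{Step 3 (the main obstacle): $\sigma_L>0$ on all of $\T$ for $0<|L-L_0|\le\delta$.} Write
\[
\sigma_L(\theta)=\sigma_{L_0}(\theta)+(L-L_0)\,\beta(\theta)+\tfrac12(L-L_0)^2\gamma_L(\theta).
\]
Away from a fixed neighborhood $N$ of $K$ we have $\sigma_{L_0}\ge m_N>0$, so positivity follows by continuity. Near $K$ the difficulty is that $\sigma_{L_0}$ is arbitrarily small while $\beta$ might have either sign, so a first-order perturbation would create zeros on one side of $L_0$. I expect the construction to make pinning a resonance between the reaction and the lattice $L_0\Z$: the stationary profiles at phases in $K$ are exact solutions built from $f$ at period $L_0$. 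I will first check directly that $\beta=0$ on $K$, by differentiating the stationary equation in $L$ along $K$. The plan is then to strengthen this to $|\beta|\le C\,\sigma_{L_0}$ near $K$ (flatness of $\beta$ inherited from $q$) and to $\gamma_L\ge 2\gamma_0>0$ near $K$. Given these bounds, the discriminant inequality $\beta^2<2\sigma_{L_0}\gamma_0$ holds near $K$ for small $\sigma_{L_0}$, so
\[
\sigma_L\ge \sigma_{L_0}-|L-L_0|\,C\sigma_{L_0}+\tfrac12\gamma_0(L-L_0)^2>0
\]
for $0<|L-L_0|$ small. If flatness of $\beta$ fails, the fallback is to read off from the explicit form of $f$ in Section~\ref{sec:realization} how the mismatch $L-L_0$ enters the reduced velocity. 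The aim would be to identify a term of the form $\kappa(L-L_0)^2$ with $\kappa>0$ uniform on $K$, or else a first-order term of constant sign together with the opposite-side argument.

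Once $\sigma_L$ has no zero, part~2 of Theorem~\ref{lp:main} (in the form of Step~1) shows that $\mathcal M_L$ is the set of time slices of a single propagating pulsating front. The quotient flow $\dot\theta=\sigma_L(\theta)$ with $\sigma_L>0$ is conjugate to the rotation with period $\int_\T\sigma_L^{-1}$, via $\theta\mapsto T^{-1}\int_0^\theta\sigma_L^{-1}$. This is time-preserving after identifying it with the rotation of that period.
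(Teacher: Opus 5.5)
Step~3 is the step you flag as the main obstacle, and it cannot be completed as proposed: the bounds it needs are false. Fix $\theta\in K$ and take the profile $U_{L\theta}$ at period $L$. At a phase in $K$ the frozen reaction is exactly $g$, so $U_{L\theta}$ has zero normal coordinate in the tube. Since $r(U_{L\theta}(x))=x-L\theta$, its residual for $f=f_{1/L_0}$ is
\[
U_{L\theta}''+f(x/L,U_{L\theta})
=\delta\chi(U_{L\theta})H(x/L)\,Q\bigl(\theta+(x-L\theta)(L^{-1}-L_0^{-1})\bigr),
\]
and $\chi(U_{L\theta})=0$ unless $|x-L\theta|<\log 2$. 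Because $Q$ is flat on $K$, this residual is $O(|L-L_0|^N)$ for every $N$, uniformly in $\theta\in K$. Comparing the orbit from $U_{L\theta}$ with the normal contraction \eqref{lp:attraction} over a fixed time bounds $\|h_{1/L}(\theta)\|_W$, and hence $\sigma_L(\theta)$, by the same order. Since $\sigma_{L_0}=\beta=0$ on $K$, your expansion then forces $\gamma_L=2\sigma_L/(L-L_0)^2\to0$ on $K$. So no $\gamma_0>0$ exists, and your fallback (a quadratic coefficient positive uniformly on $K$, or a signed first-order term there) fails for the same reason. On $K$ the expansion is a tautology. Positivity of $\sigma_L$ there for $L\neq L_0$ is a beyond-all-orders effect: the window $\theta+(x-L\theta)(L^{-1}-L_0^{-1})$ always meets a gap of the nowhere dense set $K$. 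No computation of Taylor coefficients in $L-L_0$ can detect this. Separately, the construction does not provide your assumption $\sigma_{L_0}=mq$ with smooth $m>0$. It gives only $\omega^{-1}(0)=K$ and $\omega q>0$ off $K$, with $q$ entering only through the flat functions $H,Q$. Your Step~2 conclusion still holds via \eqref{real:matching-orbits}, but that does not help Step~3.

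The missing idea is an exact rigidity argument that uses the sign of the encoding instead of any expansion. Since $q\ge0$, one has $Q=H\ge0$. A stationary profile $w$ on $\mathcal M_L$ therefore satisfies $w''+g(w)+\mathcal R_L=0$ with $\mathcal R_L=\delta\chi(w)H(x/L)Q(x/L-r(w)/L_0)\ge0$. Set $B=w'+P(w)$; then $B'=P'(w)B-\mathcal R_L$. The variation-of-constants formulas from $-\infty$ and from $+\infty$ give $B\le0$ and $B\ge0$, hence $\mathcal R_L\equiv0$. Take an interval $J$ around a half-level point on which $\chi(w)>0$. The set where $H(x/L)>0$ is dense in $J$ because $K$ is nowhere dense, so continuity gives $x/L-r(w(x))/L_0\in\widetilde K$ on $J$. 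Total disconnectedness makes this function constant, so $w(x)=U((L_0/L)x-C)$ on $J$. Then $w''+g(w)=0$ forces $(L_0/L)^2=1$. This excludes zeros of the phase velocity for every admissible $L\neq L_0$ at once, and Theorem~\ref{phase:classification} then gives the rotation and the pulsating front. Your Step~1 also needs no $C^2$ parameter dependence. The threshold $\epsilon_0$ in Section~\ref{sec:realization} is uniform over the reaction family, so the fixed reaction $f_{1/L_0}$ already admits the large-period family at every period $L>\epsilon_0^{-1}$.
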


\subsection{Organization and conventions}
Section~\ref{sec:existence} constructs the large- and small-period invariant front families and establishes their transverse estimates.
Section~\ref{sec:dynamics} classifies the induced phase dynamics and proves strict ordering by phase, 
then develops the large-period speed correction, 
propagation-threshold laws, 
and the small-period first-order velocity formula.
Section~\ref{sec:stability-uniqueness} discusses local stability and proves global stability of propagating fronts,
then gives front-like initial data whose solutions do not approach the large-period invariant family.
Finally, Section~\ref{sec:realization} proves the realization theorem for arbitrary smooth circle flows
and Proposition~\ref{real:period-sensitivity} on sensitivity to the spatial period.
Throughout, 
$C$ denotes a positive constant whose value may change between occurrences; 
its uniformity with respect to the period is stated where used. 
All unqualified pairings are the usual distributional or $L^2$ duality pairings.

\section{Existence of invariant front families}\label{sec:existence}

We construct an invariant family in each period regime under the corresponding assumptions of Subsection~\ref{intro:assumptions}.
We choose the extension in \eqref{intro:extended-stable-states} with bounded derivatives of orders one through four.

\subsection{Large-period construction}\label{sec:large-complete}

\subsubsection{Moving coordinates and the phase--normal equations}

Throughout this subsection we assume the common conditions and Assumption~\ref{intro:large-assumption}, 
and put $\epsilon=L^{-1}$.
For a front position $X(t)$, 
introduce
\begin{equation}\label{lp:moving-coordinates}
 z=x-X(t),\qquad Y(t)=\epsilon X(t)\pmod 1.
\end{equation}
The medium is therefore evaluated at $x/L=Y+\epsilon z$.
In the moving coordinate the spatial operator is
\begin{equation}\label{lp:moving-operator}
 \Dcal_{\epsilon,Y}(v)
 =\partial_z\bigl(a(Y+\epsilon z)v_z\bigr)
  +b(Y+\epsilon z)v_z+f(Y+\epsilon z,v).
\end{equation}
Let $(\phi(Y,\cdot),c(Y))$ be the normalized frozen profile--speed pair in \eqref{lp:frozen}, 
and set $q_Y=\phi_z(Y,\cdot)$.
We write $\phi_Y=\partial_Y\phi$ and $\dot\ell_Y=\partial_Y\ell_Y$, 
reserving subscripts $z$ for spatial derivatives.
Define the normalized adjoint functional and the phase projections by
\begin{equation}\label{lp:projection}
 \begin{split}
  \psi_Y(z)&=
  \frac{e^{(b(Y)+c(Y))z/a(Y)}q_Y(z)}
  {\displaystyle\int_\R e^{(b(Y)+c(Y))r/a(Y)}q_Y(r)^2\,dr},\\
  \ell_Yv&=\int_\R\psi_Yv,\qquad
  P_Y=q_Y\ell_Y,\qquad Q_Y=I-P_Y,\qquad E_Y=\ker\ell_Y.
 \end{split}
\end{equation}
Thus $\ell_Yq_Y=1$.
Recall
\[
H=L^2_\eta(\R),\qquad W=H^1_\eta(\R),\qquad D=H^2_\eta(\R),
\qquad \|v\|_{H^j_\eta}=\|e^{\eta\rho}v\|_{H^j},
\]
and
\[
\mathcal X_\eta=\phi(0,\cdot)+W.
\]
We seek solutions in the form
\begin{equation}\label{lp:ansatz}
 u(t,x)=\phi(Y(t),z)+w(t,z),\qquad \ell_{Y(t)}w(t)=0.
\end{equation}
The associated chart and its tube are
\begin{equation}\label{lp:tube}
 \begin{split}
  \Theta_\epsilon(X,w)(x)&=\phi(\epsilon X,x-X)+w(x-X),\\
  \mathcal U_{\epsilon,r}
  &=\{\Theta_\epsilon(X,w):X\in\R,\
       \ell_{\epsilon X}w=0,\ \|w\|_W<r\}.
 \end{split}
\end{equation}

Define, 
on the common domain $D\subset H$,
\begin{equation}\label{lp:A}
 A_{\epsilon,Y}v
 =\partial_z\bigl(a(Y+\epsilon z)v_z\bigr)
  +(b(Y+\epsilon z)+c(Y))v_z
  +f_u(Y+\epsilon z,\phi(Y,z))v,
\end{equation}
and put
\begin{equation}\label{lp:residual}
 R_\epsilon(Y)
 =\Dcal_{\epsilon,Y}(\phi)+c(Y)q_Y-\epsilon c(Y)\phi_Y,
\end{equation}
\[
 N_{\epsilon,Y}(w)
 =f(Y+\epsilon z,\phi+w)-f(Y+\epsilon z,\phi)
  -f_u(Y+\epsilon z,\phi)w.
\]
Writing $X'=c(Y)+\sigma$,
we obtain the exact phase--normal equations
\begin{equation}\label{lp:exact}
 \begin{split}
  Y'&=\epsilon(c(Y)+\sigma),\\
  w_t&=A_{\epsilon,Y}w+R_\epsilon+N_{\epsilon,Y}(w)
       +\sigma(q_Y+w_z-\epsilon\phi_Y).
 \end{split}
\end{equation}
The derivative of the constraint is $\ell_Yw_t+\epsilon(c+\sigma)\dot\ell_Yw=0$.
Consequently, 
wherever its denominator is nonzero, 
the phase correction is
\begin{equation}\label{lp:sigma}
 \sigma_\epsilon(Y,w)
 =-\frac{\ell_Y(A_{\epsilon,Y}w+R_\epsilon+N_{\epsilon,Y}(w))
          +\epsilon c(Y)\dot\ell_Yw}
 {1-\epsilon\ell_Y\phi_Y+\ell_Yw_z+\epsilon\dot\ell_Yw}.
\end{equation}
These identities are initially derived for regular perturbations.
The estimates below extend the scalar formula and the evolution to $w\in W$.


\subsubsection{Frozen fronts, the coordinate chart, and residual estimates}

Throughout the spectral and resolvent arguments below, 
a superscript $^*$ denotes the formal adjoint with respect to the unweighted $L^2(\R)$ pairing.

For $Y\in\T$, 
realize the frozen linearization as a densely defined closed operator on $H=L^2_\eta(\R)$ with domain $D=H^2_\eta(\R)$:
\[
 \Lcal_Yv=a(Y)v_{zz}+(b(Y)+c(Y))v_z+f_u(Y,\phi(Y,z))v.
\]
Its formal adjoint is
\[
 \Lcal_Y^*v
 =a(Y)v_{zz}-(b(Y)+c(Y))v_z+f_u(Y,\phi(Y,z))v.
\]
The closed subspace and the domain of the restricted operator are
\[
 E_Y=\{v\in H:\ell_Yv=0\},
 \qquad
 \operatorname{Dom}(\Lcal_Y|_{E_Y})=D\cap E_Y.
\]
The restriction acts on $E_Y$.
Its invariance is verified below.
Spectra and resolvents are understood on the complexifications of these spaces.
\begin{lemma}[Frozen-front regularity and spectral gap]\label{lp:frozen-properties}
For every $Y$, one has, 
\[
q_Y,\psi_Y<0, \quad 
\Lcal_Yq_Y=\Lcal_Y^*\psi_Y=0, \quad 
\hbox{ and } \quad \ell_Yq_Y=1.
\]
Moreover, 
$0$ is an algebraically simple eigenvalue of $\Lcal_Y$, 
and there is $\kappa_f>0$, 
independent of $Y$, 
such that
\begin{equation}\label{lp:frozen-gap}
	\operatorname{spec}(\Lcal_Y|_{E_Y})
	\subset\{\lambda:\operatorname{Re}\lambda\leq-\kappa_f\}.
\end{equation}

The maps $Y\mapsto c(Y)$ and $Y\mapsto\phi(Y,\cdot)$ are $C^3$ and periodic.
Uniformly in $Y$, $q_Y,\psi_Y,\phi_Y$ and the derivatives used below decay exponentially as $|z|\to\infty$, 
as do the front tails $\phi(Y,z)$ as $z\to+\infty$ and $1-\phi(Y,z)$ as $z\to-\infty$.
\end{lemma}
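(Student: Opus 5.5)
The proof has four parts: the kernel and sign properties, the spectral gap on $H$, uniformity in $Y$, and then smoothness in $Y$.

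\textbf{Kernel and signs.} Fix $Y$. By Assumption~\ref{intro:large-assumption} and the Fife--McLeod theory \cite{FifeMcLeod1977}, the pair $(\phi(Y,\cdot),c(Y))$ exists and is unique. Differentiating the profile equation \eqref{lp:frozen} in $z$ gives $\Lcal_Yq_Y=0$.
\begin{itemize}
\item The phase-plane analysis at the saddle points $1$ and $0$ gives $q_Y<0$, using \eqref{intro:stable-states} to make both equilibria hyperbolic.
\item The roots $\mu_\pm$ of $a\mu^2+(b+c)\mu+f_u(Y,0\text{ or }1)=0$ are real and of opposite signs, since $f_u\le-\gamma_0$. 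Hence $\phi$ and $q_Y$ converge to their limits at exponential rates bounded below by some $\mu_0>0$ uniform in $Y$.
\item Put $\beta=(b+c)/a$. Conjugation gives $\Lcal_Y^*(e^{\beta z}v)=e^{\beta z}\Lcal_Y v$, so $\Lcal_Y^*(e^{\beta z}q_Y)=0$. Thus $\psi_Y$ is a negative multiple-free kernel element of the adjoint: $\psi_Y<0$ and $\Lcal_Y^*\psi_Y=0$.
\item The normalization in \eqref{lp:projection} gives $\ell_Yq_Y=1$.
\item The integrals defining $\psi_Y$ and $\ell_Y$ converge for $v\in H$: the tail rates of $e^{\beta z}q_Y$ are the adjoint roots, and these dominate the weight $e^{-\eta|z|}$ once $\eta<\mu_0/2$.
\end{itemize}

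\textbf{Spectral gap on $H$.} The plan is to show that $\Lcal_Y$ on $L^2_\eta$ has essential spectrum in $\{\operatorname{Re}\lambda\le -\gamma_0/2\}$ and a simple eigenvalue at $0$.
\begin{itemize}
\item For the essential spectrum, conjugate by $e^{\eta\rho}$. The asymptotic operators become $a\partial^2+(b+c\mp2a\eta)\partial+(a\eta^2\mp(b+c)\eta+f_u(Y,0/1))$. Their Fourier symbols have real part $\le-\gamma_0+C\eta\le-\gamma_0/2$ for small $\eta$ uniform in $Y$, since $b$, $c$, $a$ are bounded.
\item Point spectrum with $\operatorname{Re}\lambda\ge0$: eigenfunctions in $L^2_\eta$ lie in $L^2$ and decay exponentially, because $\lambda$ lies to the right of the essential spectrum. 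After the Liouville transform $v\mapsto e^{\beta z/2}v$ the operator becomes self-adjoint Sturm--Liouville, and $q_Y$ turns into a positive ground state (up to sign) in the kernel. Sturm--Liouville theory then puts all other eigenvalues strictly below $0$, and makes $0$ geometrically simple.
\item Algebraic simplicity: a generalized eigenvector $\Lcal_Y v=q_Y$ would give $0=\langle \psi_Y,\Lcal_Y v\rangle=\ell_Yq_Y=1$, a contradiction.
\item The projection $P_Y=q_Y\ell_Y$ is then the Riesz projection. Therefore $E_Y$ is invariant and $\operatorname{spec}(\Lcal_Y|_{E_Y})$ consists of the remaining spectrum, contained in some $\{\operatorname{Re}\lambda\le-\kappa(Y)\}$.
\end{itemize}

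\textbf{Uniformity in $Y$ (the main obstacle).} The hard step is showing $\kappa(Y)$ is bounded below uniformly. I would use continuity of $Y\mapsto\Lcal_Y$ as a bounded map $D\to H$, together with compactness of $\T$.
\begin{itemize}
\item The map is continuous once $c$ and $\phi$ are known to be continuous in $Y$ (next step).
\item Upper semicontinuity of the spectrum on compact sets of $\lambda$ handles a bounded region, say $\{\operatorname{Re}\lambda\ge-\kappa,\ |\lambda|\le M\}$.
\item Large $|\lambda|$ in that half-plane is excluded by a uniform sectorial (energy) resolvent estimate, which follows from $a\ge a_*$.
\item Continuity of the Riesz projection in $Y$ then transfers the gap to $E_Y$ with a uniform $\kappa_f$.
\end{itemize}

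\textbf{Regularity in $Y$.} Apply the implicit function theorem to
$$G(Y,\phi,c)=a(Y)\phi''+(b(Y)+c)\phi'+f(Y,\phi)$$
with the normalization $\phi(0)=1/2$, on the affine space $\phi(Y_0,\cdot)+D$ times $\R$. The derivative in $(\phi,c)$ is $(v,\dot c)\mapsto\Lcal_Yv+\dot c\,q_Y$, which is an isomorphism onto $H$ restricted to normalized perturbations, by Fredholm index zero and the simple-kernel result above.
\begin{itemize}
\item Since $a,b\in C^3$ and $f\in C^4$, the map $G$ is $C^3$; uniqueness then gives that $Y\mapsto(\phi,c)$ is $C^3$ and periodic.
\item Exponential decay of $\phi_Y$ and of the other derivatives follows because they solve $\Lcal_Y v=$ (exponentially decaying forcing) and lie in the weighted space.
\item Pointwise decay is obtained via Sobolev embedding and ODE tail estimates, with constants uniform by compactness of $\T$.
\end{itemize}
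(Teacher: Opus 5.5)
Your proposal is correct and follows essentially the same route as the paper. Both arguments use the Fife--McLeod front and the Liouville conjugation with Sturm oscillation for the point spectrum, the asymptotic constant-coefficient operators for the essential spectrum, algebraic simplicity from $\ell_Yq_Y=1$, the implicit function theorem for the normalized linearization $(v,\dot c)\mapsto(\Lcal_Yv+\dot c\,q_Y,v(0))$, and spectral perturbation with sectorial bounds and compactness of $\T$ for uniformity; the only difference is that you carry out the spectral analysis directly in $L^2_\eta$, whereas the paper proves the uniform gap in unweighted $L^2$ and then passes to a sufficiently small common weight.
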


\begin{proof}
The identities for $\psi_Y$ and $\ell_Y$ follow from the frozen equation and \eqref{lp:projection}.
Integration by parts gives $\ell_Y\Lcal_Yv=0$ for $v\in D$, 
so $\Lcal_Y(D\cap E_Y)\subset E_Y$ and the stated restriction is well defined.
The constant equilibria $0$ and $1$ are uniformly linearly stable by \eqref{intro:stable-states}.
For each fixed $Y$, 
the translation eigenfunction $q_Y$ has no zeros.
Conjugating the eigenvalue equation by $e^{(b(Y)+c(Y))z/(2a(Y))}$ removes the drift term.
For eigenvalues in a sufficiently small right half-plane,
the stable tail exponents make the conjugated eigenfunctions square integrable.
The Sturm oscillation theorem \cite[Theorem~9.40]{Teschl2009} gives zero as the simple largest eigenvalue of the resulting self-adjoint operator.
Thus $\Lcal_Y$ has no other eigenvalue with nonnegative real part.
Sandstede's criterion for the essential spectrum of a front \cite[Section~3.4.3]{Sandstede2002}, 
together with the linear stability of $0$ and $1$, 
places that spectrum strictly to the left of zero.
Since $\ell_Yq_Y=1$, zero is also algebraically simple.
For \eqref{lp:frozen} with its normalization, 
the linearized map is
\[
(v,\delta c)\longmapsto
(\Lcal_Yv+\delta c\,q_Y,v(0)),
\qquad H^2_\eta\times\R\longrightarrow L^2_\eta\times\R.
\]
Pairing with $\ell_Y$, 
then using invertibility on $E_Y$ and $q_Y(0)\ne0$, 
shows that it is an isomorphism.
The implicit function theorem gives $C^3$ dependence.
Operator continuity, 
the compactness of $\T$, 
spectral perturbation \cite[Chapter~IV, Section~3]{Kato1995}, 
and uniform sectorial bounds \cite[Sections~2.4 and~3.1]{Lunardi1995} make the gap uniform in unweighted $L^2$.
The uniform linear stability of $0$ and $1$ gives the stated exponential tails.
A sufficiently small common weight preserves the gap and simplicity, 
yielding \eqref{lp:frozen-gap}.
\end{proof}

\begin{lemma}[Large-period phase--normal decomposition]\label{lp:chart-unique}
There are $\epsilon_0,r_0>0$ such that, 
for $0<\epsilon\leq\epsilon_0$ and $0<r\leq r_0$, every $u\in\mathcal U_{\epsilon,r}$
has a unique representation $u=\Theta_\epsilon(X,w)$ with $\ell_{\epsilon X}w=0$ and $\|w\|_W<r$.
The inverse coordinate map $u\mapsto(X,w)$ is continuous.
\end{lemma}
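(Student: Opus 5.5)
The plan is to reduce the representation problem to a scalar equation for the position $X$ and solve it by a uniform implicit-function/monotonicity argument. For fixed $u\in\mathcal U_{\epsilon,r}$, a representation $u=\Theta_\epsilon(X,w)$ means $w=u(\cdot+X)-\phi(\epsilon X,\cdot)$. The constraint $\ell_{\epsilon X}w=0$ then becomes the scalar equation
\[
G_u(X):=\ell_{\epsilon X}\bigl(u(\cdot+X)-\phi(\epsilon X,\cdot)\bigr)=0 .
\]
Conversely, every zero $X$ of $G_u$ with $\|u(\cdot+X)-\phi(\epsilon X,\cdot)\|_W<r$ gives a representation. So existence is given by assumption, and uniqueness amounts to showing that $G_u$ has only one zero in the relevant range. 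By the covariance $\Theta_\epsilon(X+L,w)(x+L)=\Theta_\epsilon(X,w)(x)$ and the periodicity of $\ell_Y$ in $Y$, it suffices to work near a given representation $(X_0,w_0)$.

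\textbf{Step 1 (local uniqueness by differentiation).} Write $X=X_0+\tau$. Then
\[
G_u(X_0+\tau)=\ell_{\epsilon X_0+\epsilon\tau}\bigl(\phi(\epsilon X_0,\cdot+\tau)-\phi(\epsilon X_0+\epsilon\tau,\cdot)+w_0(\cdot+\tau)\bigr).
\]
Differentiating in $\tau$ gives
\[
\partial_\tau G=\ell_Y q_{Y_0}(\cdot+\tau)+\ell_Y\partial_z w_0(\cdot+\tau)+O(\epsilon).
\]
For $\tau$ small, $\ell_Yq\approx\ell_{Y_0}q_{Y_0}=1$ by Lemma~\ref{lp:frozen-properties}. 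The term $|\ell_Y w_{0,z}|\le C\|w_0\|_W$ is handled by integrating by parts onto $\psi_Y$, which decays exponentially fast enough to absorb the weight $e^{\eta\rho}$ when $\eta$ is small; uniform decay of $\psi_Y$ comes from the same lemma. Hence $\partial_\tau G\ge 1/2$ for $|\tau|\le\tau_0$ once $r_0,\epsilon_0$ are small, with $\tau_0$ independent of $u$. So $G_u$ has at most one zero in $[X_0-\tau_0,X_0+\tau_0]$.

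\textbf{Step 2 (no distant second representation).} This is the main obstacle, because translates are not small perturbations in the weighted norm. Suppose $u=\Theta_\epsilon(X_1,w_1)=\Theta_\epsilon(X_2,w_2)$ with $|X_1-X_2|\ge\tau_0$. Then
\[
\phi(\epsilon X_1,\cdot)-\phi(\epsilon X_2,\cdot-(X_2-X_1))=w_2(\cdot-(X_2-X_1))-w_1 .
\]
I would compare unweighted sup norms, using $\|v\|_\infty\le C\|v\|_{H^1}\le C\|v\|_W$, so the right-hand side is bounded in sup norm by $Cr$. On the left, the two normalized profiles $\phi(Y_i,\cdot)$ are decreasing with a uniform interface region: their values at $z=0$ equal $1/2$, and the tails are uniform by Lemma~\ref{lp:frozen-properties}. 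Hence there is $\beta(\tau_0)>0$ such that any two such profiles with interface centers at least $\tau_0$ apart differ in sup norm by at least $\beta$. Concretely, evaluate at the midpoint where one profile is $1/2$: a uniform bound on $\phi_z$ from below on compacts, via $q_Y<0$ and continuity in $Y$, makes the other profile differ by at least $\beta$ there. Choosing $r_0<\beta/(2C)$ gives a contradiction. Combined with Step 1, the representation is unique.

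\textbf{Step 3 (continuity).} Apply the implicit function theorem to $G$, viewed as a function of $(u,X)$ on $\mathcal X_\eta\times\R$, which is $C^1$ in $X$ with derivative bounded below. This needs continuity of $(u,X)\mapsto u(\cdot+X)$ into $W$, which holds for $u$ in the affine space since $\phi(0,\cdot+X)-\phi(0,\cdot)\in W$ depends continuously on $X$. Together with the uniqueness, this shows that $u\mapsto X$ is continuous on $\mathcal U_{\epsilon,r}$. Then $w=u(\cdot+X)-\phi(\epsilon X,\cdot)$ is continuous in $W$, since $Y\mapsto\phi(Y,\cdot)$ is continuous into the relevant space by the $C^3$ dependence in Lemma~\ref{lp:frozen-properties}.
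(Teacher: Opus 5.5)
Your proposal is correct and takes essentially the same approach as the paper. The zero $X$ of the scalar phase function $\ell_{\epsilon X}\bigl(u(\cdot+X)-\phi(\epsilon X,\cdot)\bigr)$ is locally unique and varies continuously with $u$, because its derivative is $1+O(\epsilon+r+|X-X_0|)$; a distant second center is excluded by evaluating both representations at the center of one profile and combining $W\hookrightarrow L^\infty$ with the uniform bound $|\phi(Y,z)-1/2|\geq m_\delta$ for $|z|\geq\delta$.
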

\begin{proof}
For a fixed physical profile $u$ and a trial center $X$, 
set $Y=\epsilon X\pmod1$, $w_X(z)=u(z+X)-\phi(Y,z)$, 
and $\dot\ell_Y=\partial_Y\ell_Y$.
The scalar phase function is
\[
\mathcal P_\epsilon(X,u)=\ell_Yw_X
=\int_\R\psi_Y(z)[u(z+X)-\phi(Y,z)]\,dz.
\]
Changing variables to $y=z+X$ writes the translation term as $\int_\R\psi_Y(y-X)u(y)\,dy$.
Its $X$-derivative acts on the smooth, exponentially decaying kernel, so $\mathcal P_\epsilon(X,u)$ is differentiable even for $u\in\mathcal X_\eta$.
Using the weak derivative $u_x(\cdot+X)=q_Y+(w_X)_z$ and $\ell_Yq_Y=1$, differentiation gives
\begin{equation}\label{lp:phase-derivative}
\partial_X\mathcal P_\epsilon(X,u)
=1-\epsilon\ell_Y\phi_Y(Y)
 +\ell_Y(w_X)_z
 +\epsilon\dot\ell_Yw_X.
\end{equation}
Uniform exponential decay of $\psi_Y$ and its parameter derivative, and compactness of $\T$, imply
\begin{equation}\label{lp:phase-derivative-bound}
\bigl|\partial_X\mathcal P_\epsilon(X,u)-1\bigr|
\leq C\bigl(\epsilon+\|w_X\|_W\bigr).
\end{equation}

If $u=\Theta_\epsilon(X_0,w)$ with $\|w\|_W<r$ and $Y_0=\epsilon X_0\pmod1$, then for $|s|\leq1$,
\[
\begin{split}
w_{X_0+s}(z)
 &=w(z+s)+\phi(Y_0,z+s)-\phi(Y_0+\epsilon s,z),\\
\|w_{X_0+s}\|_W
 &\leq C(r+|s|).
\end{split}
\]
This uses bounded weighted translations for $|s|\leq1$ and the uniform $W$ bounds on $q_Y=\phi_z$ and $\phi_Y$.
Choose $\delta>0$, then $r>0$ and $\epsilon>0$ sufficiently small.
Equation \eqref{lp:phase-derivative-bound} gives $\partial_X\mathcal P_\epsilon(X_0+s,u)\geq1/2$ for $|s|\leq\delta$, 
with the same choices for every $X_0\in\R$.
The implicit function theorem gives a unique local phase that depends continuously on $u$.
The constants above are uniform in $X_0$, 
since their dependence on $X_0$ is only through $Y_0\in\T$.

To prove uniqueness in the whole tube, 
suppose $u=\Theta_\epsilon(X_1,w_1)=\Theta_\epsilon(X_2,w_2)$, 
where $\|w_i\|_W<r$, 
and put $s=X_2-X_1$ and $Y_i=\epsilon X_i\pmod1$.
Strict decrease, the normalization $\phi(Y,0)=1/2$, 
and compactness of $\T$ give $m_\delta>0$ such that $|\phi(Y,z)-1/2|\geq m_\delta$ for $|z|\geq\delta$, 
uniformly in $Y$.
If $|s|\geq\delta$, 
compare the two representations at $x=X_1$:
\[
m_\delta\leq |w_1(0)|+|w_2(-s)|\leq 2C_{\rm emb}r,
\]
which is impossible for small $r$.
If $|s|<\delta$, 
both centers are zeros of $\mathcal P_\epsilon(\cdot,u)$, 
which is strictly increasing between them by \eqref{lp:phase-derivative-bound} and the preceding bound on $\|w_{X_0+s}\|_W$.
Hence $s=0$ and then $w_1=w_2$.
Local phase continuity and strong continuity of translations on $W$ give continuity of the inverse coordinates.
\end{proof}

\begin{lemma}[Phase-derivative estimates near the stable equilibria]
\label{lp:stable-equilibrium-derivative-decay}
Assume $f\in C^4(\T\times[0,1])$ and $f(y,0)=f(y,1)=0$ for every $y\in\T$.
For $k=0,1,2,3$, one has
\[
 \partial_y^k f(y,0)=\partial_y^k f(y,1)=0.
\]
Furthermore,
there is a constant $C>0$ such that
\begin{equation}\label{lp:stable-equilibrium-derivative-bound}
 |\partial_y^k f(y,u)|\leq C\min\{u,1-u\},
 \qquad (y,u)\in\T\times[0,1],\quad 0\leq k\leq3.
\end{equation}
\end{lemma}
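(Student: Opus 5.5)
The plan has two parts: the vanishing identities, obtained by differentiating the boundary conditions in $y$, and the linear bound, obtained from a one-variable mean value estimate in $u$. No earlier results are needed beyond the regularity $f\in C^4(\T\times[0,1])$.

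\emph{Vanishing.} Since $f(y,0)=0$ for every $y\in\T$, the function $y\mapsto f(y,0)$ is identically zero on $\T$. Because $f\in C^4$ up to the boundary $u=0$, we may differentiate this identity in $y$ up to four times. This gives $\partial_y^kf(y,0)=0$ for $0\le k\le 3$, and in fact for $k=4$ as well. The same argument applied at $u=1$ gives $\partial_y^kf(y,1)=0$.

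\emph{Linear bound.} Fix $0\le k\le 3$. Then $\partial_y^kf$ is $C^1$, since it retains at least one derivative of the $C^4$ function $f$, and in particular $\partial_u\partial_y^kf$ is continuous on the compact set $\T\times[0,1]$. Set
\[
M=\max_{0\le k\le3}\,\sup_{\T\times[0,1]}\bigl|\partial_u\partial_y^kf\bigr|,
\]
which is finite by continuity and compactness.

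\begin{itemize}
\item For $u\in[0,1]$, the fundamental theorem of calculus in $u$ together with the vanishing at $u=0$ gives
\[
\partial_y^kf(y,u)=\int_0^u\partial_u\partial_y^kf(y,s)\,ds .
\]
Hence $|\partial_y^kf(y,u)|\le Mu$.
\item Integrating instead from $u$ to $1$ and using the vanishing at $u=1$ gives $|\partial_y^kf(y,u)|\le M(1-u)$.
\end{itemize}

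Taking the better of the two estimates yields \eqref{lp:stable-equilibrium-derivative-bound} with $C=M$.

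The only point requiring care is that the mixed derivative $\partial_u\partial_y^kf$ exists and is continuous up to the boundary $u\in\{0,1\}$ for every $k\le 3$. This follows from $f\in C^4$, since the mixed derivative has total order $k+1\le4$. The argument is otherwise elementary, with no real obstacle.
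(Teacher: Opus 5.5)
Your proof is correct and follows the paper's argument: differentiate the identities $f(y,0)=f(y,1)=0$ in $y$, then integrate the bounded mixed derivative $\partial_u\partial_y^k f$ from $u=0$ and from $u=1$ and keep the smaller of the two bounds.
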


\begin{proof}
Differentiating the two equilibrium identities with respect to $y$ gives the stated vanishing.
The $C^4$ assumption implies that $\partial_u\partial_y^k f$ is bounded for $0\leq k\leq3$.
Integrating this derivative from either stable equilibrium gives
\[
 \partial_y^k f(y,u)
 =\int_0^u\partial_u\partial_y^k f(y,s)\,ds
 =-\int_u^1\partial_u\partial_y^k f(y,s)\,ds.
\]
Taking the smaller of the two resulting bounds proves \eqref{lp:stable-equilibrium-derivative-bound}.
\end{proof}

\begin{lemma}[Residual and nonlinear estimates]\label{lp:residual-lemma}
The residual $R_\epsilon$ belongs to $C^2(\T;H)$ and satisfies
\begin{equation}\label{lp:residual-est}
 \|R_\epsilon\|_{C^2(\T;H)}\leq C\epsilon.
\end{equation}
Moreover,
on a sufficiently small fixed $W$ ball, 
$N_{\epsilon,Y}(w)$ is jointly $C^2$ in $(Y,w)$, with uniformly bounded second derivatives, 
and
\begin{equation}\label{lp:nonlinear-est}
 \begin{split}
  \|N_{\epsilon,Y}(w)\|_H&\leq C\|w\|_W^2,\\
  \|N_{\epsilon,Y}(w)-N_{\epsilon,Y}(v)\|_H
  &\leq Cr\|w-v\|_W\qquad(\|w\|_W,\|v\|_W\leq r).
 \end{split}
\end{equation}
Furthermore, its first and second derivatives with respect to $Y$ vanish at $w=0$.
For sufficiently small $h\in C^1(\T;W)$,
one has
\begin{equation}\label{lp:nonlinear-C1-est}
 \|N_{\epsilon,\cdot}(h(\cdot))\|_{C^1(\T;H)}
 \leq C\|h\|_{C^1(\T;W)}^2.
\end{equation}
\end{lemma}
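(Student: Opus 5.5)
The plan is to expand the residual $R_\epsilon(Y)$ by Taylor's formula in the slow variable and use the frozen equation \eqref{lp:frozen} to cancel the zeroth-order terms. Write $\Dcal_{\epsilon,Y}(\phi)$ with coefficients evaluated at $Y+\epsilon z$. Subtracting the frozen identity $a(Y)\phi_{zz}+(b(Y)+c(Y))\phi_z+f(Y,\phi)=0$ gives
\[
R_\epsilon(Y)=\bigl[a(Y+\epsilon z)-a(Y)\bigr]\phi_{zz}+\epsilon a'(Y+\epsilon z)\phi_z+\bigl[b(Y+\epsilon z)-b(Y)\bigr]\phi_z+\bigl[f(Y+\epsilon z,\phi)-f(Y,\phi)\bigr]-\epsilon c(Y)\phi_Y .
\]
Every bracket can be written as $\epsilon z\int_0^1 g'(Y+\tau\epsilon z)\,d\tau$. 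The $a,b$ terms are then bounded pointwise by $C\epsilon|z|(|\phi_z|+|\phi_{zz}|)$. The reaction term is bounded by $C\epsilon|z|\sup_\tau|\partial_yf(Y+\tau\epsilon z,\phi)|$, which Lemma~\ref{lp:stable-equilibrium-derivative-bound} controls by $C\epsilon|z|\min\{\phi,1-\phi\}$. This is the key point: the $y$-dependence of $f$ is unbounded in $z$, but it is multiplied by a quantity that decays exponentially at both ends, uniformly in $Y$, by Lemma~\ref{lp:frozen-properties}.

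For the weighted norm, I would choose $\eta$ smaller than the tail decay rates of $\phi_z$, $\phi_{zz}$, $\phi$ at $+\infty$ and $1-\phi$ at $-\infty$ (possible by Lemma~\ref{lp:frozen-properties}). Then $e^{\eta\rho}|z|$ times these quantities is in $L^2$ uniformly in $Y$, and $\epsilon\phi_Y$ is also $O(\epsilon)$ in $H$. For the $C^2(\T;H)$ bound, differentiate in $Y$ up to twice. Each derivative falls either on the coefficients, giving $a'',a''',b'',b'''$ or $\partial_y^{k}f$ with $k\le3$, or on $\phi$, giving $\phi_Y,\phi_{YY}$ and their $z$-derivatives. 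In the reaction term a derivative may also produce $\partial_u\partial_y f\cdot\phi_Y$. The Taylor factor $\epsilon z$ is preserved in every term, since differentiation commutes with the integral representation. The only non-Taylor terms are $\epsilon a'\phi_z$ and $\epsilon c\phi_Y$, which already carry $\epsilon$. The remaining concern is regularity: we need $\phi\in C^3$ in $Y$ with derivatives in weighted spaces, which Lemma~\ref{lp:frozen-properties} grants, and $\partial_y^k f$ up to $k=3$ together with $\partial_u\partial_y^k f$, available since $f\in C^4$.

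\textbf{Main obstacle.} I expect the main obstacle to be uniform exponential decay of $\phi_Y$ and $\phi_{YY}$ with a rate exceeding $\eta$, including their $z$-derivatives. I would take this from Lemma~\ref{lp:frozen-properties}, as stated there ("the derivatives used below decay exponentially"), shrinking $\eta$ if needed.

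For the nonlinear term, write
\[
N_{\epsilon,Y}(w)=w^2\int_0^1(1-\tau)f_{uu}(Y+\epsilon z,\phi+\tau w)\,d\tau .
\]
Bound this pointwise by $C|w|^2\le C\|w\|_{L^\infty}|w|$. The embedding $H^1_\eta\hookrightarrow L^\infty$ with $\|w\|_\infty\le C\|w\|_W$ then gives the first bound in \eqref{lp:nonlinear-est}. Here the extension of $f$ with bounded derivatives of orders up to four is used. The Lipschitz bound follows the same way from
\[
N(w)-N(v)=\int_0^1\bigl[f_u(\cdot,\phi+v+\tau(w-v))-f_u(\cdot,\phi)\bigr]\,d\tau\,(w-v),
\]
whose bracket is $\le C(\|w\|_\infty+\|v\|_\infty)\le Cr$. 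Joint $C^2$ regularity in $(Y,w)$ comes from the same Nemytskii estimates, using $L^\infty$ control of $w$ and bounded derivatives of $f$ up to order four, together with $C^2$ dependence of $\phi$ on $Y$.

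That $\partial_Y N$ and $\partial_Y^2 N$ vanish at $w=0$ is immediate, since $N_{\epsilon,Y}(0)\equiv0$ for all $Y$. For \eqref{lp:nonlinear-C1-est}, use
\[
\frac{d}{dY}N(h(Y))=\partial_YN+D_wN\,h'(Y).
\]
By the integral formula, $\partial_YN(Y,h)$ is quadratic in $h$: it carries $w^2$ times bounded derivatives of $f$, using $\phi_Y\in L^\infty$. Next, $D_wN(Y,h)=f_u(\cdot,\phi+h)-f_u(\cdot,\phi)$ is $O(\|h\|_\infty)$. So both pieces are bounded by $C\|h\|_{C^1(\T;W)}^2$.
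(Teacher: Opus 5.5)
Your proposal is correct and follows essentially the same route as the paper. Both arguments rewrite $R_\epsilon$ via the frozen equation as coefficient and reaction differences plus $\epsilon a'\phi_z-\epsilon c\phi_Y$. Both use the mean-value representation, with the vanishing of $\partial_y^kf$ at $u=0,1$, to offset the unbounded factor $\epsilon z$, then take the weighted $L^2$ norm with $\eta$ below the uniform tail rates. Both handle $N_{\epsilon,Y}$ through the integral Taylor formula and the embedding $W\hookrightarrow L^\infty$.
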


\begin{proof}
To prove \eqref{lp:residual-est}, first use the frozen equation to write
\[
\begin{aligned}
 R_\epsilon={}&[a(Y+\epsilon z)-a(Y)]\phi_{zz}
 +\epsilon a'(Y+\epsilon z)\phi_z\\
 &+[b(Y+\epsilon z)-b(Y)]\phi_z
 +\Delta_\epsilon-\epsilon c(Y)\phi_Y,\\
 \Delta_\epsilon(Y,z)={}&f(Y+\epsilon z,\phi(Y,z))-f(Y,\phi(Y,z)).
\end{aligned}
\]
The coefficient differences and their first two $Y$ derivatives are bounded by $C\epsilon|z|$, using $a,b\in C^3(\T)$.
For a derivative $g$ of $f$, abbreviate $[g]=g(Y+\epsilon z,\phi(Y,z))-g(Y,\phi(Y,z))$.
The chain rule gives
\[
 \partial_Y\Delta_\epsilon=[f_y]+[f_u]\phi_Y,
 \qquad
 \partial_Y^2\Delta_\epsilon
 =[f_{yy}]+2[f_{yu}]\phi_Y
   +[f_{uu}]\phi_Y^2+[f_u]\phi_{YY}.
\]
For $0\leq k\leq2$, Lemma~\ref{lp:stable-equilibrium-derivative-decay} and the mean-value formula yield
\[
\begin{aligned}
 \left[\partial_y^k f\right]
 &=\epsilon z\int_0^1
   \partial_y^{k+1}f(Y+\theta\epsilon z,\phi(Y,z))\,d\theta,\\
 \left|\left[\partial_y^k f\right]\right|
 &\leq C\epsilon|z|\min\{\phi(Y,z),1-\phi(Y,z)\}.
\end{aligned}
\]
The remaining differences $[f_u]$, $[f_{yu}]$, and $[f_{uu}]$ are bounded by $C\epsilon|z|$ and multiply exponentially decaying phase derivatives of $\phi$.
The uniform frozen-tail bounds in Lemma~\ref{lp:frozen-properties} therefore give,
for some $\nu>\eta$,
\[
 |\partial_Y^jR_\epsilon(Y,z)|
 \leq C\epsilon(1+|z|)e^{-\nu|z|},\qquad 0\leq j\leq2.
\]
Taking the weighted $L^2$ norm proves \eqref{lp:residual-est}.

Taylor's integral formula gives
\[
 N_{\epsilon,Y}(w)
 =w^2\int_0^1(1-s)f_{uu}(Y+\epsilon z,\phi+sw)\,ds.
\]
The embedding $W\hookrightarrow L^\infty$ and the bounded reaction derivatives on the range under consideration prove \eqref{lp:nonlinear-est}.
Differentiating the same formula proves joint $C^2$ regularity, 
the vanishing at $w=0$, 
and \eqref{lp:nonlinear-C1-est}.
\end{proof}

\subsubsection{Uniform transverse resolvents and the ambient extension}

The formal unweighted adjoint of \eqref{lp:A} is
\[
 A_{\epsilon,Y}^*v
 =\partial_z\bigl(a(Y+\epsilon z)v_z\bigr)
  -\partial_z\bigl((b(Y+\epsilon z)+c(Y))v\bigr)
  +f_u(Y+\epsilon z,\phi(Y,z))v.
\]
Define the ambient operator on $H$ with domain $D$ by
\begin{equation}\label{lp:ambient-operator}
 \widehat A_\epsilon(Y)=Q_YA_{\epsilon,Y}Q_Y-P_Y.
\end{equation}

\begin{lemma}[Uniform ambient resolvent and semigroup estimates]
\label{lp:stable}\label{common:ambient-extension}
For sufficiently small $\epsilon\geq0$, 
the operators $\widehat A_\epsilon(Y)$ are uniformly sectorial and exponentially stable on $H$, 
with domain $D$.
Furthermore, 
there are $M,\kappa>0$, 
independent of $\epsilon,Y$, 
such that, 
for $t>0$,
\begin{equation}\label{lp:semigroup}
 \begin{split}
 \|e^{t\widehat A_\epsilon(Y)}\|_{H\to H}
 +\|e^{t\widehat A_\epsilon(Y)}\|_{W\to W}
 &\leq Me^{-\kappa t},\\
 \|e^{t\widehat A_\epsilon(Y)}\|_{H\to W}
 +\|e^{t\widehat A_\epsilon(Y)}\|_{W\to D}
 &\leq M(1+t^{-1/2})e^{-\kappa t}.
 \end{split}
\end{equation}
Moreover, 
we have
\begin{equation}\label{lp:ambient-linear-estimates}
 \|\partial_Y^j\widehat A_\epsilon(Y)\|_{D\to H}\leq C_j,
 \qquad 0\leq j\leq2.
\end{equation}
\end{lemma}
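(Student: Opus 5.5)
We treat $\widehat A_\epsilon(Y)=Q_YA_{\epsilon,Y}Q_Y-P_Y$ as a perturbation of the frozen block operator
\[
\widehat A_0(Y)=Q_Y\Lcal_YQ_Y-P_Y .
\]
This operator is block diagonal with respect to the $Y$-dependent splitting $H=\operatorname{span}\{q_Y\}\oplus E_Y$. On $\operatorname{span}\{q_Y\}$ it acts as $-1$. On $E_Y$ it acts as $\Lcal_Y|_{E_Y}$, because $\ell_Y\Lcal_Y=0$ and $\Lcal_Yq_Y=0$ give $Q_Y\Lcal_YQ_Y=\Lcal_YQ_Y$ and $P_Y\Lcal_Y=0$. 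By Lemma~\ref{lp:frozen-properties} its spectrum therefore lies in $\{\operatorname{Re}\lambda\le-\min(1,\kappa_f)\}$. It is sectorial on $H$ with domain $D$, being a bounded, finite-rank, exponentially localized perturbation of the constant-coefficient leading part $a(Y)\partial_{zz}$ conjugated by $e^{\eta\rho}$.

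The first step is a uniform frozen resolvent bound. Let $\Sigma=\{\operatorname{Re}\lambda\ge-\kappa_1\}\cup\{|\arg\lambda|\le\pi/2+\theta\}$ with $0<\kappa_1<\min(1,\kappa_f)$. We want
\[
\|(\lambda-\widehat A_0(Y))^{-1}\|_{H\to H}\le\frac{C}{1+|\lambda|},\qquad
\|(\lambda-\widehat A_0(Y))^{-1}\|_{H\to D}\le C,\qquad \lambda\in\Sigma,
\]
uniformly in $Y$.
\begin{itemize}
\item For $|\lambda|$ large, this comes from standard elliptic energy estimates in the weighted space. Conjugating by $e^{\eta\rho}$ only adds lower-order terms of size $O(\eta)$.
\item On the remaining compact set of $\lambda$ and the compact set $\T$, it follows from continuity of $Y\mapsto\widehat A_0(Y)\in\mathcal L(D,H)$. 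This continuity holds by the $C^3$ regularity of $\phi,c$ and of $\psi_Y$ in Lemma~\ref{lp:frozen-properties}.
\end{itemize}

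The second step, which we expect to be the main obstacle, is controlling the perturbation
\[
\widehat A_\epsilon(Y)-\widehat A_0(Y)=Q_Y(A_{\epsilon,Y}-\Lcal_Y)Q_Y .
\]
The coefficient differences $a(Y+\epsilon z)-a(Y)$ and $b(Y+\epsilon z)-b(Y)$ are not small uniformly in $z$; they are only $O(\epsilon|z|)$. So the difference is not small in $\mathcal L(D,H)$ globally. We split it with a cutoff at $|z|\le R$ and treat the two regions separately.
\begin{itemize}
\item On $|z|\le R$ the difference has operator norm $O(\epsilon R)$ in $\mathcal L(D,H)$. It also contributes the term $\epsilon a'(Y+\epsilon z)\partial_z$, which is $O(\epsilon)$.
\item On $|z|\ge R$ we have $\phi$ within $\delta_0$ of $0$ or $1$, so $f_u\le-\gamma_0$ by \eqref{intro:extended-stable-states}. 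There the full variable-coefficient operator $A_{\epsilon,Y}$ satisfies a uniform coercive estimate, since $a\ge a_*$ and the weight $\eta$ is small. We obtain this by energy identities for $e^{\eta\rho}v$, including the $\epsilon$-dependent coefficients.
\item The projection terms $P_Y$ and $Q_Y$ involve $\psi_Y$, which decays exponentially. Their interaction with the far region is therefore $O(e^{-\nu R})$.
\end{itemize}
We glue the two regions with a partition of unity, a standard localization lemma for elliptic operators. The commutator terms with the cutoffs are $O(1/R)$ in $\mathcal L(D,H)$ and relatively small. Choosing first $R$ large and then $\epsilon\le\epsilon_0(R)$ small transfers the resolvent bounds of the first step to $\widehat A_\epsilon(Y)$ on $\Sigma$, uniformly in $(\epsilon,Y)$, possibly with smaller $\kappa,\theta$. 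This gives uniform sectoriality with domain $D$.

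Finally, we represent the semigroup by the Dunford integral over the boundary of $\Sigma$ shifted to $\operatorname{Re}\lambda=-\kappa$. This yields the $H\to H$ bound and the $H\to D$ bound of order $t^{-1}$. Interpolation, using $W=[H,D]_{1/2}$ and commutation of the semigroup with itself, gives the $H\to W$ and $W\to D$ bounds of order $t^{-1/2}$ and the $W\to W$ bound; these are the estimates in \eqref{lp:semigroup}. For \eqref{lp:ambient-linear-estimates} we differentiate $\widehat A_\epsilon(Y)$ in $Y$. The derivatives fall on the coefficients $a(Y+\epsilon z)$, $b(Y+\epsilon z)$ and $c(Y)$, on $f_u(Y+\epsilon z,\phi(Y,z))$ (at most two derivatives, using $f\in C^4$ and $\phi\in C^3$ in $Y$), and on $P_Y,Q_Y$ (through $\psi_Y,q_Y$ and their $Y$-derivatives, which are exponentially localized). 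Each resulting term is bounded from $D$ to $H$ uniformly in $Y$ and $\epsilon$.
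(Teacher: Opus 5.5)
Your proposal is correct in outline, but it takes a different route from the paper.

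\textbf{The paper's route.} The paper never compares $\widehat A_\epsilon(Y)$ with the frozen block operator $\widehat A_0(Y)$ over the whole resolvent region.
\begin{itemize}
\item For large $|\lambda|$ it compares the conjugated operator with the self-adjoint reference $T_{\epsilon,Y}=-\partial_z(a(Y+\epsilon z)\partial_z)+1$. This reference already contains the $\epsilon$-dependent diffusion, and the estimate closes with a Neumann series.
\item For bounded $\lambda$ it first makes $\lambda-\widehat A_\epsilon(Y)$ Fredholm of index zero, by subtracting a compactly supported potential to restore coercivity. It then gets the uniform lower bound by contradiction. Along $\epsilon_n\to0$, $Y_n\to Y_*$, $\lambda_n\to\lambda_*$, the tail energy inequality prevents mass from escaping. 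A weak limit then yields $u_*\ne0$ with $(\lambda_*-\Lcal_{Y_*})u_*=0$, $\ell_{Y_*}u_*=0$ and $\operatorname{Re}\lambda_*\geq-2\kappa_0$, which contradicts the frozen gap.
\end{itemize}

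\textbf{Your route.} You glue, via a partition of unity at scale $R$, the uniform resolvent of $\widehat A_0(Y)$ in the core to a coercive tail operator. This uses three facts: $\widehat A_\epsilon-\widehat A_0$ is $O(\epsilon R)$ in $\mathcal L(D,H)$ on $|z|\le 2R$; the cutoff commutators are $O(1/R)$ from $W$ to $H$; and the rank-one terms $P_Y$ and $\ell_YA_{\epsilon,Y}$ see the tails only through exponentially small factors.

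\textbf{Trade-offs.} Your approach handles the whole sector at once and is quantitative, with no compactness step. The cost is commutator bookkeeping with the nonlocal terms and the need for uniform frozen $H\to D$ resolvent bounds on the entire region. The paper's argument uses the frozen spectral gap only qualitatively, through the limiting eigenvalue problem.

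\textbf{Points to tighten.} When you write this out, three points need care.
\begin{itemize}
\item The localized estimate gives an a priori bound, so injectivity with closed range, but not surjectivity by itself. For existence you should do one of the following.
 \begin{itemize}
 \item Build a genuine right parametrix. This needs a tail operator defined on all of $\R$, for instance $A_{\epsilon,Y}$ with the potential replaced on the core by a function $\leq-\gamma_0/2$, whose resolvent you control by the weighted energy identity.
 \item Continue in $\lambda$ from a large real value, using the uniform bound to show the invertible set is closed in the connected region.
 \end{itemize}
 A continuation in $\epsilon$ is not available. The map $\epsilon\mapsto\widehat A_\epsilon(Y)$ is not continuous in $\mathcal L(D,H)$ at $\epsilon=0$, since $\sup_z|a(Y+\epsilon z)-a(Y)|$ does not tend to zero; this is exactly the obstruction you identified.
\item The interpolation step needs the $D\to D$ semigroup bound. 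That bound comes from commutation of the semigroup with its generator together with equivalence of the $D$-norm and the graph norm, as in the paper, not from the semigroup property alone.
\item $\widehat A_0(Y)-a(Y)\partial_{zz}$ is a lower-order perturbation (drift, potential, weight terms) plus a finite-rank term, not a bounded one. Its sectoriality still follows from relative boundedness with relative bound zero.
\end{itemize}
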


\begin{proof}
Although $A_{\epsilon,Y}-\Lcal_Y$ need not be small from $D$ to $H$, 
exponential localization gives the unified estimate
\begin{equation}\label{lp:leakage}
 \|\ell_{\cdot}A_{\epsilon,\cdot}\|_{C^2(\T;H\to\mathbb R)}
 +\|A_{\epsilon,\cdot}q_{\cdot}\|_{C^2(\T;H)}
 \leq C\epsilon.
\end{equation}
Indeed, 
integration by parts and the frozen kernel identities give, 
for $v\in D$,
\[
 \ell_YA_{\epsilon,Y}v
 =\langle(A_{\epsilon,Y}^*-\Lcal_Y^*)\psi_Y,v\rangle,\qquad
 A_{\epsilon,Y}q_Y=(A_{\epsilon,Y}-\Lcal_Y)q_Y.
\]
Coefficient differences and their first two $Y$-derivatives are bounded by $C\epsilon|z|$, 
while the coefficient derivatives in $z$ arising in the adjoint are $O(\epsilon)$.
Using $a,b\in C^3$, $f\in C^4$, 
and the differentiated frozen-tail bounds in Lemma~\ref{lp:frozen-properties},
we obtain, 
for $0\leq j\leq2$ and some $\mu>\eta$,
\[
 \bigl|\partial_Y^j[(A_{\epsilon,Y}^*-\Lcal_Y^*)\psi_Y]\bigr|
 +\bigl|\partial_Y^j[(A_{\epsilon,Y}-\Lcal_Y)q_Y]\bigr|
 \leq C\epsilon(1+|z|)e^{-\mu|z|}.
\]
Taking the dual-$H$ and $H$ norms extends $\ell_YA_{\epsilon,Y}$ to $H$ and proves \eqref{lp:leakage}.

The identities $\ell_YQ_Y=0$ and $\ell_Yq_Y=1$ give, for $v\in D$,
\begin{equation}\label{lp:sum-action}
 \ell_Y\widehat A_\epsilon(Y)v=-\ell_Yv,\qquad
 \widehat A_\epsilon(Y)q_Y=-q_Y.
\end{equation}
Expanding $Q_Y=I-P_Y$ gives
\[
 \widehat A_\epsilon(Y)-A_{\epsilon,Y}
 =-P_YA_{\epsilon,Y}Q_Y-(A_{\epsilon,Y}q_Y)\ell_Y-P_Y.
\]
By \eqref{lp:leakage},
$\|\widehat A_\epsilon(Y)-A_{\epsilon,Y}+P_Y\|_{H\to H}\leq C\epsilon$,
so $\widehat A_\epsilon(Y)-A_{\epsilon,Y}$ is uniformly bounded and finite rank.
We next prove the resolvent estimate for large $|\lambda|$.
After conjugation by $Jv=e^{\eta\rho}v$, 
take the self-adjoint reference operator
	\[
	T_{\epsilon,Y}=-\partial_z(a(Y+\epsilon z)\partial_z)+1
	\quad\hbox{in }L^2(\R).
	\]
Its closed quadratic form has domain $H^1(\R)$, and its operator domain is $H^2(\R)$.
Let the graph norm of $T=T_{\epsilon,Y}$ be $\|v\|_{D(T)}:=\|v\|_2+\|Tv\|_2$.
Uniform ellipticity and the boundedness of $a$ and $a'$ give constants independent of $\epsilon,Y$ such that
	\[
	 C^{-1}\|v\|_{H^2}\leq\|v\|_{D(T)}
	 \leq C\|v\|_{H^2},\qquad v\in H^2(\R).
	\]
For a real vertex parameter $\beta$ and $0<\theta<\pi/2$, set
	\begin{equation}\label{lp:sector}
	 S_{\beta,\theta}
	 =\{-\beta-re^{i\varphi}:r\geq0,\ |\varphi|\leq\theta\},
	 \qquad \Omega_{\beta,\theta}=\mathbb C\setminus S_{\beta,\theta}.
	\end{equation}
Thus $S_{\beta,\theta}$ is the closed left-pointing sector with	vertex $-\beta$.
The spectral theorem defines the positive square root $T^{1/2}$, 
whose domain is the form domain $H^1(\R)$.
Its norm uniformly controls the $H^1$ norm by the quadratic form and ellipticity.
The same functional calculus \cite[Chapter~5]{Schmudgen2012} yields
	\[
	\|(\lambda+T)^{-1}\|_{2\to2}\leq C_\delta|\lambda|^{-1},
	\quad
	\|T^{1/2}(\lambda+T)^{-1}\|_{2\to2}
	\leq C_\delta|\lambda|^{-1/2},
	\quad
	\|T(\lambda+T)^{-1}\|_{2\to2}\leq C_\delta
	\]
for $\lambda\in\overline{\Omega_{0,\delta}}$ with $|\lambda|\geq1$.
To make the perturbation explicit, 
write $a_\epsilon=a(Y+\epsilon z)$, 
$d_\epsilon=b(Y+\epsilon z)+c(Y)$, 
$V_\epsilon=f_u(Y+\epsilon z,\phi(Y,z))$, 
and $p=\eta\rho'$.
Since $J\partial_zJ^{-1}=\partial_z-p$, 
direct expansion gives
	\[
	JA_{\epsilon,Y}J^{-1}
	 =\partial_z(a_\epsilon\partial_z)
	 +(d_\epsilon-2a_\epsilon p)\partial_z
	 +V_\epsilon-d_\epsilon p+a_\epsilon p^2-(a_\epsilon p)'.
	\]
Consequently, 
since $J:H\to L^2$ is an isometry,
	\[
	K_{\epsilon,Y}:=J(\widehat A_\epsilon(Y)-A_{\epsilon,Y})J^{-1},
	\qquad \|K_{\epsilon,Y}\|_{2\to2}\leq C.
	\]
Combining the two identities with the definition of $T$ yields
	\[
	J\widehat A_\epsilon(Y)J^{-1}+T
	=(d_\epsilon-2a_\epsilon p)\partial_z
	 +\bigl[1+V_\epsilon-d_\epsilon p
	       +a_\epsilon p^2-(a_\epsilon p)'\bigr]
	 +K_{\epsilon,Y}.
	\]
Both displayed coefficient functions are bounded uniformly in $\epsilon,Y$, 
since $a,b,f_u,\rho'$ and $\rho''$ are bounded on the relevant ranges.
The form identity gives $\|\partial_z w\|_2\leq C\|T^{1/2}w\|_2$.
Hence the spectral resolvent bounds above imply
	\[
	\|(J\widehat A_\epsilon(Y)J^{-1}+T)
	     (\lambda+T)^{-1}\|_{2\to2}
	\leq C_\delta\bigl(|\lambda|^{-1/2}+|\lambda|^{-1}\bigr).
	\]
For sufficiently large $|\lambda|$, 
the preceding norm is less than $1/2$, 
and
	\[
	\lambda-J\widehat A_\epsilon(Y)J^{-1}
	=\bigl[I-(J\widehat A_\epsilon(Y)J^{-1}+T)
	(\lambda+T)^{-1}\bigr](\lambda+T).
	\]
A Neumann series and the preceding estimates then give
	\begin{equation}\label{lp:parameter}
		|\lambda|\|v\|_H+|\lambda|^{1/2}\|v\|_W+\|v\|_D
		\leq C\|(\lambda-\widehat A_\epsilon(Y))v\|_H.
	\end{equation}
This holds for $\lambda\in\overline{\Omega_{0,\delta}}$ and $|\lambda|\geq R_\delta$, with $R_\delta$ independent of $\epsilon,Y$.

For bounded $\lambda$, 
we first obtain a coercive estimate on the spatial tails.
With $p$ and $d_\epsilon$ as above, 
direct integration by parts gives
	\begin{equation}\label{lp:energy}
		\begin{split}
			\operatorname{Re}(JA_{\epsilon,Y}J^{-1}v,v)_{L^2}
			={}&-\int_\R a_\epsilon|v_z|^2\,dz\\
			&+\int_\R\left(V_\epsilon-pd_\epsilon+a_\epsilon p^2
			-\frac{\epsilon}{2}b'(Y+\epsilon z)\right)|v|^2\,dz.
		\end{split}
	\end{equation}
By uniform frozen tails and \eqref{intro:stable-states}, 
choose $R$ independent of $\epsilon,Y$ so that $V_\epsilon\leq-\gamma_0$ on $|z|>R$.
First choose $\eta$ small and then $\epsilon$ small.
The coefficient	in parentheses is then at most $-\gamma_0/2$ on these tails.

Choose $\kappa_0<\min(\gamma_0/4,\kappa_f,1)/4$.
Fix $\chi_R\in C_c^\infty(\R)$ with $0\leq\chi_R\leq1$, 
$\chi_R\equiv1$ on $[-R,R]$, 
and $\operatorname{supp}\chi_R\subset[-R-1,R+1]$.
Subtracting a sufficiently large nonnegative smooth compactly supported potential $M_{\mathrm{coer}}\chi_R$ from $A_{\epsilon,Y}$
makes $\lambda-A_{\epsilon,Y}+M_{\mathrm{coer}}\chi_R$ coercive on $H^1_\eta$ whenever $\operatorname{Re}\lambda\geq-2\kappa_0$.
Lax--Milgram and elliptic regularity make it an isomorphism $D\to H$.
Compactness of multiplication by $\chi_R:D\to H$ and the finite-rank extension above show that $\lambda-\widehat A_\epsilon(Y)$ is Fredholm of index zero in this half-plane.

Suppose the uniform lower bound
$\|u\|_H\leq C\|(\lambda-\widehat A_\epsilon(Y))u\|_H$
fails for bounded $\lambda$ in this half-plane and all sufficiently small $\epsilon$.
There would be $\epsilon_n\to0$, $Y_n\to Y_*$, 
$\lambda_n\to\lambda_*$ and $u_n\in D$ with $\|u_n\|_H=1$ such that
\[
 g_n=(\lambda_n-\widehat A_{\epsilon_n}(Y_n))u_n\longrightarrow0
 \quad\hbox{in }H.
\]
By \eqref{lp:sum-action},
$(\lambda_n+1)\ell_{Y_n}u_n=\ell_{Y_n}g_n$.
Since $|\lambda_n+1|\geq1-2\kappa_0>0$, 
this gives $\ell_{Y_n}u_n\to0$ and $P_{Y_n}u_n\to0$ in $H$.
The bounded perturbation estimate above therefore yields
\begin{equation}\label{lp:contradiction-equation}
 (\lambda_n-A_{\epsilon_n,Y_n})u_n
 =g_n+(\widehat A_{\epsilon_n}(Y_n)-A_{\epsilon_n,Y_n})u_n
 \longrightarrow0
 \quad\hbox{in }H.
\end{equation}
Elliptic estimates give a uniform $D$ bound.
Applying \eqref{lp:energy} to $Ju_n$ gives
	\[
	a_*\|(Ju_n)_z\|_2^2+
	(\gamma_0/2-2\kappa_0)\|Ju_n\|_{L^2(|z|>R)}^2
	\leq C\|Ju_n\|_{L^2(|z|\leq R)}^2+o(1).
	\]
Consequently a positive amount of mass remains in a fixed compact interval.
A subsequence converges weakly in $D$ and strongly locally in $H^1$ to a nonzero $u_*$.
Passing to the limit in \eqref{lp:contradiction-equation} and in $\ell_{Y_n}u_n\to0$ yields
	\[
	(\lambda_*-\Lcal_{Y_*})u_*=0,\qquad \ell_{Y_*}u_*=0.
	\]
This contradicts \eqref{lp:frozen-gap}.
Thus $\lambda-\widehat A_\epsilon(Y)$ has a uniform lower bound.
Fredholm index zero gives surjectivity and a uniform $H\to H$ resolvent bound on bounded subsets of this half-plane.
Fix $\delta=\pi/4$.
Choose $0<\kappa_1<\kappa_0$ and $\theta_{\mathrm{lp}}\in(\delta,\pi/2)$ close enough to $\pi/2$.
Elementary sector geometry places the bounded part of $\overline{\Omega_{\kappa_1,\theta_{\mathrm{lp}}}}$ in the preceding half-plane and its sufficiently large part in $\overline{\Omega_{0,\delta}}$.
The resolvent bound for bounded $\lambda$ and \eqref{lp:parameter} therefore give a uniform stable resolvent on this sector.
The Dunford integral gives, 
for a possibly smaller $\kappa>0$,
\begin{equation}\label{lp:semigroup-HH,HD}
 \|e^{t\widehat A_\epsilon(Y)}\|_{H\to H}\leq Ce^{-\kappa t},\qquad
 \|e^{t\widehat A_\epsilon(Y)}\|_{H\to D}\leq C(1+t^{-1})e^{-\kappa t}.
\end{equation}
Uniform elliptic regularity and the bounded finite-rank difference $\widehat A_\epsilon(Y)-A_{\epsilon,Y}$ give
$\|v\|_D\asymp\|v\|_H+\|\widehat A_\epsilon(Y)v\|_H$.
Commutation of the semigroup with its generator on $D$ therefore yields
\begin{equation}\label{lp:semigroup-DD}
 \|e^{t\widehat A_\epsilon(Y)}\|_{D\to D}\leq Ce^{-\kappa t}.
\end{equation}
Conjugation by $J$ gives $(H,D)_{1/2,2}=W$.
Interpolating \eqref{lp:semigroup-HH,HD} and \eqref{lp:semigroup-DD} in the domain and range proves \eqref{lp:semigroup}.
Differentiating \eqref{lp:ambient-operator}, 
using the uniform coefficient and frozen-profile bounds,
proves \eqref{lp:ambient-linear-estimates}.
\end{proof}

Define $p_\epsilon(Y,w)=c(Y)+\sigma_\epsilon(Y,w)$ and
\begin{equation}\label{lp:ambient-nonlinearity}
 \begin{split}
 K_\epsilon(Y,w)
 ={}&Q_Y\{R_\epsilon+N_{\epsilon,Y}(w)
           +\sigma_\epsilon(Y,w)(w_z-\epsilon\phi_Y)\}\\
    &-\epsilon p_\epsilon(Y,w)q_Y\dot\ell_Yw.
 \end{split}
\end{equation}
We solve the following system on the fixed product $\T\times W$:
\begin{equation}\label{lp:fixed-system}
 Y'=\epsilon p_\epsilon(Y,w),\qquad
 w_t=\widehat A_\epsilon(Y)w+K_\epsilon(Y,w).
\end{equation}
\begin{lemma}[Large-period ambient estimates]\label{lp:ambient-estimates}
For sufficiently small $\epsilon$ and $r$, 
formula~\eqref{lp:sigma} defines a jointly $C^2$ function on the ambient ball $\T\times B_W(0,r)$.
Moreover, 
whenever $\|w\|_W\leq r$, 
one has
\begin{equation}\label{lp:sigma-est}
 \begin{split}
  |\sigma_\epsilon(Y,w)|&\leq C(\epsilon+\epsilon r+r^2),\\
  \|D_w\sigma_\epsilon(Y,w)\|_{W\to\R}&\leq C(\epsilon+r),
 \end{split}
\end{equation}
and
\begin{equation}\label{lp:K-est}
 \begin{split}
  \|K_\epsilon(Y,w)\|_H&\leq C(\epsilon+r^2),\qquad
  \|D_wK_\epsilon(Y,w)\|_{W\to H}\leq C(\epsilon+r),\\
  \|\partial_YK_\epsilon(Y,w)\|_H
   +\|\partial_Y^2K_\epsilon(Y,w)\|_H&\leq C(\epsilon+r), \qquad
  \|K_\epsilon(\cdot,0)\|_{C^2(\T;H)}\leq C\epsilon.
 \end{split}
\end{equation}
In addition,
all second derivatives of $p_\epsilon$ and $K_\epsilon$ are uniformly bounded, 
and $|\partial_Yp_\epsilon|\leq C$.
Here, 
all derivatives are taken in the ambient product space, 
with \(w\) held fixed when differentiating with respect to \(Y\).
\end{lemma}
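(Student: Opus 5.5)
The plan is to treat \eqref{lp:sigma} as a quotient $\sigma_\epsilon=-\mathcal N_\epsilon(Y,w)/\mathcal D_\epsilon(Y,w)$ and first estimate the denominator. Write
\[
\mathcal D_\epsilon(Y,w)=1-\epsilon\ell_Y\phi_Y+\ell_Yw_z+\epsilon\dot\ell_Yw .
\]
Integrating by parts gives $\ell_Yw_z=-\int(\psi_Y)_zw$. Since $\psi_Y$, $(\psi_Y)_z$ and $\dot\psi_Y$ decay faster than $e^{-\eta|z|}$ by Lemma~\ref{lp:frozen-properties}, the maps $w\mapsto\ell_Yw_z$ and $w\mapsto\dot\ell_Yw$ are bounded linear functionals on $H$, hence on $W$. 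Their $Y$-derivatives up to order two are also bounded, using the $C^3$ dependence of $\phi,c$ on $Y$. Consequently $|\mathcal D_\epsilon-1|\le C(\epsilon+r)$, so $\mathcal D_\epsilon\ge1/2$ for small $\epsilon,r$. In addition, $\mathcal D_\epsilon$ is affine in $w$ and $C^2$ in $Y$, with $D_w\mathcal D_\epsilon=O(1)$ and vanishing second $w$-derivative.

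For the numerator $\mathcal N_\epsilon=\ell_YA_{\epsilon,Y}w+\ell_YR_\epsilon+\ell_YN_{\epsilon,Y}(w)+\epsilon c\,\dot\ell_Yw$ I use three facts from earlier in the paper.
\begin{itemize}
\item By \eqref{lp:leakage}, $\ell_YA_{\epsilon,Y}$ extends to $H$ with $C^2(\T)$ norm at most $C\epsilon$. This extension is what makes \eqref{lp:sigma} meaningful for $w\in W$ rather than only $w\in D$.
\item By \eqref{lp:residual-est}, $|\ell_YR_\epsilon|$ and its first two $Y$-derivatives are bounded by $C\epsilon$.
\item By Lemma~\ref{lp:residual-lemma}, $\ell_YN_{\epsilon,Y}(w)$ is jointly $C^2$ and satisfies $|\ell_YN|\le Cr^2$, $\|D_w\ell_YN\|\le Cr$, with bounded second derivatives.
\end{itemize}
Together these give $|\mathcal N_\epsilon|\le C(\epsilon+\epsilon r+r^2)$ and $\|D_w\mathcal N_\epsilon\|\le C(\epsilon+r)$. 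Since $\mathcal D_\epsilon\ge1/2$, the quotient rule then yields joint $C^2$ regularity of $\sigma_\epsilon$ and the first line of \eqref{lp:sigma-est}. For the second line, write
\[
D_w\sigma_\epsilon=-\frac{D_w\mathcal N_\epsilon}{\mathcal D_\epsilon}+\frac{\mathcal N_\epsilon\,D_w\mathcal D_\epsilon}{\mathcal D_\epsilon^2}.
\]
The first term is $O(\epsilon+r)$ and the second is $O(\epsilon+r^2)$, which proves \eqref{lp:sigma-est}. Because $p_\epsilon=c+\sigma_\epsilon$ and $c\in C^3$, the bound $|\partial_Yp_\epsilon|\le C$ and the boundedness of its second derivatives follow immediately.

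For $K_\epsilon$ I estimate each term of \eqref{lp:ambient-nonlinearity}, using that $Q_Y$, $q_Y$ and $\phi_Y$ are $C^2$ in $Y$ as maps into the bounded operators on $H$, respectively into $H$. Concretely:
\begin{itemize}
\item $Q_YR_\epsilon$ has $C^2(\T;H)$ norm $O(\epsilon)$.
\item $Q_YN$ has size $O(r^2)$, $w$-derivative $O(r)$, and $Y$-derivatives $O(r^2)$, since the $Y$-derivatives of $N$ vanish at $w=0$ and have bounded second derivatives.
\item $\sigma_\epsilon(w_z-\epsilon\phi_Y)$ uses $\|w_z\|_H\le\|w\|_W$. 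Its size is $O((\epsilon+r^2)(r+\epsilon))$ and its $w$-derivative is $O((\epsilon+r)(r+\epsilon)+(\epsilon+r^2))$.
\item $\epsilon p_\epsilon q_Y\dot\ell_Yw$ is $O(\epsilon r)$ in size and $O(\epsilon)$ in its $w$-derivative.
\end{itemize}
The $Y$-derivatives with $w$ fixed are treated the same way. Only the terms containing $R_\epsilon$ or $\epsilon$ survive at $O(\epsilon)$, and the remaining terms are $O(r)$. At $w=0$ we have $N=0$ and $\sigma_\epsilon(Y,0)=-\ell_YR_\epsilon/(1-\epsilon\ell_Y\phi_Y)=O(\epsilon)$ in $C^2$, which gives $\|K_\epsilon(\cdot,0)\|_{C^2(\T;H)}\le C\epsilon$. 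Bounded second derivatives follow from bilinearity and the bounded second derivatives of $\sigma_\epsilon$ and $N$.

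The main obstacle is the term $\ell_YA_{\epsilon,Y}w$, since $A_{\epsilon,Y}w\notin H$ for a general $w\in W$. I handle it by rewriting it through the adjoint pairing $\langle(A^*_{\epsilon,Y}-\Lcal_Y^*)\psi_Y,w\rangle$, as in the proof of Lemma~\ref{lp:stable}. I also need the $Y$-derivatives of this kernel, and not just the kernel itself, to stay in the dual of $H$ with size $O(\epsilon)$. This is where the hypotheses $a,b\in C^3$ and $f\in C^4$, together with the differentiated tail bounds, are used.
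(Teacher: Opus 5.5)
Your proposal is correct and follows essentially the same route as the paper: bound the denominator of \eqref{lp:sigma} below by integrating $\ell_Yw_z$ by parts, control the numerator with the leakage bound \eqref{lp:leakage} together with the residual and nonlinear estimates, apply the quotient rule, and then insert the resulting scalar bounds into \eqref{lp:ambient-nonlinearity}. One small point concerns the $Y$-derivatives of $Q_YN_{\epsilon,Y}(w)$: vanishing of $\partial_YN_{\epsilon,Y}$ at $w=0$ together with bounded second derivatives only gives $O(r)$, and your claimed $O(r^2)$ needs the explicit Taylor formula for $N_{\epsilon,Y}$ instead; this does not matter, since $O(r)$ is all that \eqref{lp:K-est} requires.
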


\begin{proof}
The leakage bound \eqref{lp:leakage} defines $\ell_YA_{\epsilon,Y}$ on $H$, 
and integration by parts gives $\ell_Yw_z=-\int_\R(\partial_z\psi_Y)w$.
The denominator in \eqref{lp:sigma} is therefore $1+O(\epsilon+r)$ and is at least $1/2$ on a uniform ambient ball.
The residual, leakage, and nonlinear estimates give the numerator bound $C(\epsilon+\epsilon r+r^2)$ and its $w$-derivative bound $C(\epsilon+r)$.
The quotient rule proves \eqref{lp:sigma-est} and joint $C^2$ regularity.
In particular, $\sigma_\epsilon(\cdot,0)=O(\epsilon)$ in $C^2$.
The second phase derivative of the denominator uses $\partial_Y^3\ell_Y$, which is available from Lemma~\ref{lp:frozen-properties}.

Finally, insert the scalar estimates into \eqref{lp:ambient-nonlinearity} and differentiate.
The factors $\sigma_\epsilon$ in the transport term and $\epsilon$ in the constraint term give \eqref{lp:K-est} and the remaining second-derivative bounds.
\end{proof}

For regular solutions of \eqref{lp:fixed-system}, pairing the normal equation with $\ell_Y$ and using \eqref{lp:sum-action}
gives $\ell_Yw_t=-\ell_Yw-\epsilon p_\epsilon(Y,w)\dot\ell_Yw$.
Since $Y'=\epsilon p_\epsilon(Y,w)$, the chain rule yields
\begin{equation}\label{lp:constraint-transport}
 \frac{d}{dt}\bigl(\ell_{Y(t)}w(t)\bigr)=-\ell_{Y(t)}w(t).
\end{equation}
Thus the constraint $\ell_Yw=0$ is invariant.
On this constraint, $Q_Yw=w$, and \eqref{lp:sigma}
makes the $P_Y$ component of the normal equation in \eqref{lp:exact} equal to $-\epsilon p_\epsilon(Y,w)q_Y\dot\ell_Yw$.
Its $Q_Y$ component agrees with \eqref{lp:fixed-system}, proving equivalence.

\subsubsection{Parabolic evolution and the differentiable time map}

All evolution estimates below are on the fixed ambient spaces $H,W,D$.
For $T>0$ put $\mathbb E_T=H^1(0,T;H)\cap L^2(0,T;D)$.

\begin{lemma}[Parabolic estimate on fixed ambient spaces]\label{lp:MR-lemma}
The trace embedding $\mathbb E_T\hookrightarrow C([0,T];W)$ is continuous.
Moreover, 
for the frozen operator $B=\widehat A_\epsilon(Y_0)$, 
one has
\begin{equation}\label{lp:maximal-L2}
 \|w\|_{\mathbb E_T}+\|w\|_{C([0,T];W)}
 \leq C_T\bigl(\|w(0)\|_W
       +\|w_t-Bw\|_{L^2(0,T;H)}\bigr),
\end{equation}
and the initial-value problem with these data is uniquely solvable in $\mathbb E_T$.
The constant is uniform in $\epsilon,Y_0$.

For a continuous lifted phase path with $\sup_{[0,T]}|Y(t)-Y_0|\leq C_T\epsilon$, 
the same assertions hold with $\widehat A_\epsilon(Y(t))$ in place of $B$, 
for sufficiently small $\epsilon$.
Its homogeneous solution satisfies
\begin{equation}\label{lp:evolution-comparison}
 \sup_{0\leq t\leq T}\|w(t)-e^{tB}w(0)\|_W
 \leq C_T\epsilon\|w(0)\|_W.
\end{equation}
\end{lemma}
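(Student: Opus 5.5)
The plan is to work entirely in the conjugated picture $J v=e^{\eta\rho}v$, so that $H,W,D$ become $L^2,H^1,H^2$. There Lemma~\ref{lp:stable} supplies, uniformly in $\epsilon,Y$, a sectorial, exponentially stable generator $B=\widehat A_\epsilon(Y_0)$ on a Hilbert space.

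\textbf{Trace embedding and frozen estimate.} The trace embedding $\mathbb E_T\hookrightarrow C([0,T];W)$ follows from the standard Lions--Magenes interpolation result $H^1(0,T;H)\cap L^2(0,T;D)\hookrightarrow C([0,T];(H,D)_{1/2,2})$. Lemma~\ref{lp:stable} already identifies $(H,D)_{1/2,2}=W$. The constant depends only on $T$ and the norm equivalences, which are uniform. For \eqref{lp:maximal-L2} with frozen $B$, I invoke de Simon's theorem: maximal $L^2$-regularity holds for every generator of a bounded analytic semigroup on a Hilbert space. The maximal-regularity constant is controlled by the sectorial constants (sector angle and resolvent bound), which are uniform by \eqref{lp:parameter} and the bounded-$\lambda$ resolvent bound in Lemma~\ref{lp:stable}. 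The initial datum $w(0)\in W=(H,D)_{1/2,2}$ is exactly the trace space, so $t\mapsto e^{tB}w(0)$ lies in $\mathbb E_T$ with norm $\le C\|w(0)\|_W$. Uniqueness holds because the difference of two solutions solves the homogeneous problem with zero data. If an explicit argument is preferred, one can use the Fourier/Plancherel representation of the convolution $\int_0^te^{(t-s)B}g\,ds$ together with the resolvent bound $|\lambda|\|(\lambda-B)^{-1}\|\le C$ on $i\R$, which is valid since $B$ is exponentially stable.

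\textbf{Time-dependent operator.} For $\widehat A_\epsilon(Y(t))$ I treat the problem as a perturbation of the frozen one:
\[
w_t-Bw=g+\bigl(\widehat A_\epsilon(Y(t))-B\bigr)w .
\]
By \eqref{lp:ambient-linear-estimates} with $j=1$ and the mean value theorem,
\[
\|\widehat A_\epsilon(Y(t))-B\|_{D\to H}\le C|Y(t)-Y_0|\le C C_T\epsilon .
\]
The map $\mathcal S:\tilde w\mapsto w$, which solves the frozen problem with right side $g+(\widehat A_\epsilon(Y)-B)\tilde w$ and datum $w(0)$, therefore satisfies
\[
\|\mathcal S\tilde w_1-\mathcal S\tilde w_2\|_{\mathbb E_T}\le C_T' \epsilon\|\tilde w_1-\tilde w_2\|_{L^2(0,T;D)} .
\]
For $\epsilon$ small this is a contraction on $\mathbb E_T$. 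This gives existence, uniqueness and \eqref{lp:maximal-L2} with a doubled constant. Measurability of $t\mapsto \widehat A_\epsilon(Y(t))w(t)$ is immediate from continuity of $Y$ and \eqref{lp:ambient-linear-estimates}.

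\textbf{Comparison estimate.} For \eqref{lp:evolution-comparison}, set $g=0$ and $d=w-e^{tB}w(0)$. Then $d(0)=0$ and
\[
d_t-Bd=\bigl(\widehat A_\epsilon(Y)-B\bigr)w .
\]
Applying \eqref{lp:maximal-L2} to $d$ and the bound just established for $w$ gives
\[
\sup_t\|d\|_W\le C_T\epsilon\|w\|_{L^2(0,T;D)}\le C_T\epsilon\|w(0)\|_W .
\]

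\textbf{Main obstacle.} The delicate point is the uniformity of the maximal-regularity constant in $(\epsilon,Y_0)$. De Simon's theorem is qualitative in its usual statement, so I would check that its proof uses only the sector angle, the resolvent bound, and the bound $\sup_t\|e^{tB}\|$. All three are uniform by Lemma~\ref{lp:stable}. The exponential stability matters here: without it the constant would grow with $T$. It is also needed so that the imaginary axis lies in the uniform resolvent set.
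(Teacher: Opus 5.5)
Your proposal is correct and follows essentially the same route as the paper. For the frozen operator $B$ you use de Simon's maximal $L^2$-regularity together with the nonzero-initial-value and trace theory, using $(H,D)_{1/2,2}=W$. For the moving phase you run a contraction based on $\|\widehat A_\epsilon(Y(t))-B\|_{D\to H}\le C\epsilon$ from \eqref{lp:ambient-linear-estimates}. For \eqref{lp:evolution-comparison} you subtract $e^{tB}w(0)$.

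One ingredient is left implicit in your frozen step: the uniform graph-norm equivalence $\|v\|_D\asymp\|v\|_H+\|\widehat A_\epsilon(Y)v\|_H$ from Lemma~\ref{lp:stable}. It is what converts the maximal-regularity control of $\|Bw\|_{L^2(0,T;H)}$ into an $L^2(0,T;D)$ bound with constants uniform in $\epsilon,Y_0$, and the paper invokes it at exactly this point.
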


\begin{proof}
By Lemma~\ref{lp:stable}, $-B$ is an invertible sectorial operator of angle less than $\pi/2$ on the Hilbert space $H$.
The de Simon maximal $L^2$-regularity theorem, 
together with the nonzero-initial-value and trace theorems
\cite[Theorems~3.4.8, 3.5.5 and~3.5.7]{PrussSimonett2016},
gives \eqref{lp:maximal-L2}, 
unique solvability, 
and the trace embedding, since $(H,D)_{1/2,2}=W$.
The constants are uniform by the uniform stable resolvent bounds and graph-norm equivalence in Lemma~\ref{lp:stable}.

For a varying phase write
\[
 w_t-Bw=[\widehat A_\epsilon(Y(t))-B]w+F.
\]
By \eqref{lp:ambient-linear-estimates}, the additional term has $L^2_tH$ norm at most $C_T\epsilon\|w\|_{L^2_tD}$.
The Picard iteration for the frozen solution map with forcing $F+[\widehat A_\epsilon(Y(t))-B]v$ is a contraction on $\{v\in\mathbb E_T:v(0)=w_0\}$:
by \eqref{lp:ambient-linear-estimates} and \eqref{lp:maximal-L2}, 
its Lipschitz constant is at most $C_T\epsilon<1$.
Its fixed point gives unique solvability, 
and absorption in \eqref{lp:maximal-L2} gives the asserted estimate.
Subtracting the frozen homogeneous solution gives zero initial value and forcing $[\widehat A_\epsilon(Y(t))-B]w$, which proves \eqref{lp:evolution-comparison}.
\end{proof}

\begin{lemma}[Large-period ambient time-map estimates]\label{lp:time-map}
There is a fixed $T>0$ such that, for sufficiently small $\epsilon,r$, the lifted time-$T$ map of \eqref{lp:fixed-system},
\[
 \Phi_\epsilon^T(Y,w)=(\rho(Y,w),\zeta(Y,w)),
 \qquad (Y,w)\in\R\times B_W(0,r),
\]
is jointly $C^2$.
Moreover,
it satisfies $\rho(Y+1,w)=\rho(Y,w)+1$ and $\zeta(Y+1,w)=\zeta(Y,w)$.
For a fixed $\theta<1/8$,
\begin{equation}\label{lp:map-est}
	\begin{gathered}
		|\rho-Y|+|\rho_Y-1|\leq C_T\epsilon,\qquad
		\|\rho_w\|_{W\to\R}\leq C_T\epsilon(\epsilon+r),\\
		\|\zeta_w\|_{W\to W}\leq\theta+C_T(\epsilon+r),\qquad
		\|\zeta_Y\|_W+\|\zeta_{YY}\|_W\leq C_T(\epsilon+r),\\
		\|\zeta(Y,0)\|_W+\|\zeta_Y(Y,0)\|_W
		+\|\zeta_{YY}(Y,0)\|_W\leq C_T\epsilon.
	\end{gathered}
\end{equation}
Furthermore, $\|\zeta_{Yw}\|+\|\zeta_{ww}\|\leq C_T$ and $\|D^2\rho\|\leq C_T\epsilon$, with the natural linear and bilinear operator norms.
For $0\leq t\leq T$, the corresponding bounds hold with a uniform bounded constant in place of the contracting coefficient in $\zeta_w$.
All trajectories used in these maps remain in a ball of radius $C_T(r+\epsilon)$.
\end{lemma}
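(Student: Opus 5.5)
The plan is to obtain the time-$T$ map from well-posedness of \eqref{lp:fixed-system} on $\R\times B_W(0,r)$, and then to read off all bounds from variational equations. Lemma~\ref{lp:MR-lemma} handles the normal linear parts, and the contraction of $e^{T\widehat A_\epsilon(Y_0)}$ from Lemma~\ref{lp:stable} supplies the coefficient $\theta$.

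\textbf{Step 1: well-posedness and smoothness of the flow.} Fix $T$ with $Me^{-\kappa T}\le\theta<1/8$. For fixed data $(Y_0,w_0)$, set up a fixed point on $C([0,T];\R)\times\{v\in\mathbb E_T:v(0)=w_0\}$:
\[
Y(t)=Y_0+\epsilon\int_0^tp_\epsilon(Y,v)\,ds,
\]
together with the solution of $w_t=\widehat A_\epsilon(Y(t))w+K_\epsilon(Y,v)$. By Lemma~\ref{lp:ambient-estimates}, $|p_\epsilon|\le C$, so $|Y(t)-Y_0|\le C_T\epsilon$ and Lemma~\ref{lp:MR-lemma} applies to the varying phase path. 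Combining \eqref{lp:maximal-L2} with $\|K_\epsilon\|_H\le C(\epsilon+r^2)$ and $\|D_wK_\epsilon\|_{W\to H}\le C(\epsilon+r)$ gives a contraction. This also shows that trajectories stay in a ball of radius $C_T(r+\epsilon)$ when $\|w_0\|_W\le r$ (taking $\epsilon,r$ small compared with $C_T$).

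Because $p_\epsilon$ and $K_\epsilon$ are jointly $C^2$ with bounded second derivatives, the implicit function theorem applied to the fixed-point map in $\mathbb E_T$ gives joint $C^2$ dependence on $(Y_0,w_0)$. The $C([0,T];W)$ trace embedding then transfers this to $(\rho,\zeta)$. Periodicity follows because every coefficient is $1$-periodic in $Y$, so $Y\mapsto Y+1$ maps solutions to solutions.

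\textbf{Step 2: variational equations.} The $w$-derivative $(\rho_w,\zeta_w)[\omega]=(\eta_1(T),\omega_1(T))$ satisfies
\[
\eta_1'=\epsilon(\partial_Yp_\epsilon\,\eta_1+D_wp_\epsilon\,\omega_1),\qquad
\omega_1'=\widehat A_\epsilon(Y)\omega_1+\partial_Y\widehat A_\epsilon(Y)\omega\,\eta_1+D_wK_\epsilon\,\omega_1+\partial_YK_\epsilon\,\eta_1,
\]
with $\eta_1(0)=0$ and $\omega_1(0)=\omega$. Note that $D_wp_\epsilon=D_w\sigma_\epsilon=O(\epsilon+r)$. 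Maximal regularity gives $\|\omega_1\|_{\mathbb E_T}\le C_T\|\omega\|_W$, hence $|\eta_1|\le C_T\epsilon(\epsilon+r)\|\omega\|_W$. Inserting these into \eqref{lp:evolution-comparison} and estimating the forcing by $C_T(\epsilon+r)$ yields
\[
\|\omega_1(T)-e^{TB}\omega\|_W\le C_T(\epsilon+r)\|\omega\|_W,
\]
and therefore $\|\zeta_w\|\le\theta+C_T(\epsilon+r)$.

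The $Y$-derivative is handled the same way, with $\eta_1(0)=1$ and $\omega_1(0)=0$. Its forcing is $\partial_YK_\epsilon=O(\epsilon+r)$ plus $(\partial_Y\widehat A_\epsilon)w$, where $\|w\|_{L^2_tD}=O(\epsilon+r)$. This gives $\|\zeta_Y\|_W\le C_T(\epsilon+r)$, and $|\rho_Y-1|\le C_T\epsilon$ from the $\epsilon$ factor in $Y'$. The bound $|\rho-Y|\le C_T\epsilon$ is immediate.

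\textbf{Step 3: bounds at $w_0=0$ and second derivatives.} At $w_0=0$ the solution is driven only by $K_\epsilon(\cdot,0)$, whose $C^2(\T;H)$ norm is $O(\epsilon)$. Hence $w=O(\epsilon)$ in $\mathbb E_T$, and differentiating once and twice in $Y$ gives $\|\zeta(Y,0)\|_W+\|\zeta_Y(Y,0)\|_W+\|\zeta_{YY}(Y,0)\|_W\le C_T\epsilon$. This uses \eqref{lp:ambient-linear-estimates} for $j\le2$ and the fact that the lower-order variational solutions are already $O(\epsilon)$.

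The second-order variational equations have forcing that is bilinear in the first-order solutions, with uniformly bounded second-derivative coefficients. This gives $\|\zeta_{Yw}\|+\|\zeta_{ww}\|\le C_T$. For the phase component, $\epsilon$ multiplies the whole $Y$-equation, which gives $\|D^2\rho\|\le C_T\epsilon$. For intermediate times $0\le t\le T$ the same argument applies, except that $\|e^{tB}\|_{W\to W}\le M$ replaces $\theta$.

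\textbf{Main obstacle.} The delicate point is the $C^2$ regularity in the maximal-regularity class. The generator $\widehat A_\epsilon(Y(t))$ depends on the unknown phase, so differentiating produces the terms $(\partial_Y\widehat A_\epsilon)w\,\eta$. These require $w\in L^2_tD$ rather than only $C_tW$, which is exactly why the argument is carried out in $\mathbb E_T$ and not by a mild-solution formulation.

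I would first verify that the fixed-point map is $C^2$ as a map on $\R\times W\times C([0,T])\times\mathbb E_T$. This uses \eqref{lp:ambient-linear-estimates} for $j\le2$ and the $C^2$ Nemytskii regularity of $K_\epsilon$ on $W\hookrightarrow L^\infty$. Once that holds, the implicit function theorem delivers all the derivatives at once, and the estimates reduce to the linear bounds described above.
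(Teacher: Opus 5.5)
Your proposal is correct and follows essentially the same route as the paper. Both arguments use maximal $L^2$-regularity in $\mathbb E_T$ via Lemma~\ref{lp:MR-lemma}, a contraction plus the implicit function theorem for the $C^2$ time map, and first and second variational equations (with $L^2_tD$ control of the base trajectory for the $(\partial_Y\widehat A_\epsilon)w$ terms) compared against $e^{TB}$ to produce $\theta$. The only difference is cosmetic: you keep $\widehat A_\epsilon(Y(t))$ on the left via the nonautonomous part of Lemma~\ref{lp:MR-lemma}, whereas the paper freezes at $Y_0$ and moves $[\widehat A_\epsilon(Y_0+y)-\widehat A_\epsilon(Y_0)]w$ into the forcing, which is how that nonautonomous part is proved anyway.

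Two small fixes are needed. First, in the $w$-variation the coefficient of $\eta_1$ should be $(\partial_Y\widehat A_\epsilon)(Y)w$, with $w$ the base trajectory, not the initial direction $\omega$. Second, the general bound $\|\zeta_{YY}\|_W\le C_T(\epsilon+r)$ is not covered by your remark that the second-order forcing is bilinear with bounded coefficients. You should state that, for two directions $(1,0)$, the terms $K_{YY}$ and $A_{YY}w$ are themselves $O(\epsilon+r)$ by \eqref{lp:K-est} and the orbit bound.
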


\begin{proof}
Choose $T$ so that $\|e^{T\widehat A_\epsilon(Y_0)}\|_{W\to W}<1/16$ uniformly, using \eqref{lp:semigroup}.
Write $Y(t)=Y_0+y(t)$ and freeze the normal operator at $Y_0$:
\begin{equation}\label{lp:time-map-system}
 \begin{split}
  y'&=\epsilon p_\epsilon(Y_0+y,w),\qquad y(0)=0,\\
  w_t-\widehat A_\epsilon(Y_0)w
  &=[\widehat A_\epsilon(Y_0+y)-\widehat A_\epsilon(Y_0)]w
     +K_\epsilon(Y_0+y,w).
 \end{split}
\end{equation}
Seek $(y,w)\in H^1(0,T;\R)\times\mathbb E_T$ with $w(0)=w_0$.
On a ball where $\|y\|_{H^1}\leq C_T\epsilon$ and $\|w\|_{\mathbb E_T}\leq C_T(r+\epsilon)$, 
the linearized normal forcing has bounds
\[
 \begin{split}
  \|\widehat A_\epsilon(Y)-\widehat A_\epsilon(Y_0)\|_{D\to H}
   &\leq C_T\epsilon,\\
  \|(\partial_Y\widehat A_\epsilon)(Y)w+(K_\epsilon)_Y\|_{L^2_tH}
   &\leq C_T(r+\epsilon),\\
  \|(K_\epsilon)_w\|_{W\to H}&\leq C_T(r+\epsilon).
 \end{split}
\]
The scalar equation has its additional factor $\epsilon$.
Time integration and Lemma~\ref{lp:MR-lemma} therefore turn \eqref{lp:time-map-system} into a contraction on the subspace with zero initial variations.
Choosing the ball constants first and then $r,\epsilon$ small makes this map a self-map and gives
\begin{equation}\label{lp:orbit-bound}
 \|w\|_{\mathbb E_T}+\|w\|_{C_tW}
 \leq C_T(\|w_0\|_W+\epsilon),\qquad
 \|y\|_{H^1}\leq C_T\epsilon.
\end{equation}
The construction takes place in a fixed ambient neighborhood where the scalar denominator is bounded away from zero.
Thus the solution exists on the whole interval $[0,T]$ for the stated data.

The residual map for \eqref{lp:fixed-system}, 
together with the initial conditions, 
is $C^2$ from $H^1_t\R\times\mathbb E_T$ into $L^2_t\R\times L^2_tH\times\R\times W$.
Its derivative in the trajectory variables is invertible by the same contraction argument.
The implicit function theorem gives a $C^2$ solution map and, 
by the trace embedding, a $C^2$ time map into $\R\times W$.

We give the variational estimates to identify the constants in \eqref{lp:map-est}.
Abbreviate $\widehat A_\epsilon(Y)$, $p_\epsilon$, and $K_\epsilon$ by $A(Y)$, $p$, and $K$ within this proof.
A first variation $(\xi,v)$ satisfies
\begin{equation}\label{lp:first-variation}
 \begin{split}
  \xi'&=\epsilon(p_Y\xi+p_wv),\\
  v_t&=A(Y)v+(A_Y(Y)w+K_Y)\xi+K_wv.
 \end{split}
\end{equation}
For initial direction $(1,0)$, the normal forcing has $L^2_tH$ norm $O_T(r+\epsilon)$.
Lemma~\ref{lp:MR-lemma}, absorption, and scalar integration give
\[
 \|\xi-1\|_{C_t}\leq C_T\epsilon,\qquad
 \|v\|_{\mathbb E_T}+\|v\|_{C_tW}\leq C_T(r+\epsilon).
\]
For initial direction $(0,k)$, 
the same argument gives
\[
 \|\xi\|_{C_t}\leq C_T\epsilon(r+\epsilon)\|k\|_W,\qquad
 \|v-e^{tA(Y_0)}k\|_{C_tW}\leq C_T(r+\epsilon)\|k\|_W.
\]
Evaluation at $T$, 
together with the choice of $T$, 
proves the first-order bounds.

For two initial directions $(\xi_i,v_i)$, 
the second variation has zero initial value and satisfies the same linear system with additional scalar forcing $\epsilon D^2p[(\xi_1,v_1),(\xi_2,v_2)]$ and normal forcing
\begin{equation}\label{lp:second-variation-forcing}
 \begin{split}
 F_2={}&A_{YY}(Y)w\,\xi_1\xi_2
       +A_Y(Y)(\xi_1v_2+\xi_2v_1)\\
      &+K_{YY}\xi_1\xi_2
       +K_{Yw}(\xi_1v_2+\xi_2v_1)+K_{ww}[v_1,v_2].
 \end{split}
\end{equation}
All terms belong to $L^2_tH$ with uniform bounds, using $v_i\in\mathbb E_T\subset C_tW\cap L^2_tD$.
When both initial directions are $(1,0)$, 
the first normal variations are $O_T(r+\epsilon)$, 
so the normal forcing is also $O_T(r+\epsilon)$.
Lemma~\ref{lp:MR-lemma} proves the asserted second-derivative estimates.
At $w_0=0$, estimate~\eqref{lp:orbit-bound} and the bound $K(\cdot,0)=O(\epsilon)$ in $C^2$ make the first two phase variations $O_T(\epsilon)$.
This proves the last line of \eqref{lp:map-est}.
Periodicity follows from the equations, 
and the same estimates at intermediate times give the remaining assertions.
\end{proof}

\subsubsection{The constrained invariant graph}

Consider the complete metric space
\[
 \mathfrak G_\epsilon
 =\{h:\T\to W:\ell_Yh(Y)=0,\
       \|h\|_\infty\leq A\epsilon,\
       \operatorname{Lip}(h)\leq K\epsilon\},
\]
with the supremum distance.
The constants $A,K$ are chosen independently of $\epsilon$.
For $h\in\mathfrak G_\epsilon$ put $\rho_h(Y)=\rho(Y,h(Y))$ and $\zeta_h(Y)=\zeta(Y,h(Y))$.
Lemma~\ref{lp:time-map} gives, for the degree-one lifts,
\begin{equation}\label{lp:base-inverse}
 |\rho_h(Y_1)-\rho_h(Y_2)-(Y_1-Y_2)|
 \leq C\epsilon|Y_1-Y_2|.
\end{equation}
Thus $\rho_h$ is an increasing homeomorphism with inverse Lipschitz constant $1+O(\epsilon)$.
Define
\begin{equation}\label{lp:graph-transform}
 (\mathfrak T_\epsilon h)(\beta)=\zeta_h(\rho_h^{-1}(\beta)).
\end{equation}
Constraint preservation in \eqref{lp:constraint-transport} gives $\ell_\beta(\mathfrak T_\epsilon h)(\beta)=0$.

\begin{proposition}[Existence, regularity, and invariance of the large-period graph]
\label{lp:graph-proposition}
For suitable $A,K$ and sufficiently small $\epsilon$, $\mathfrak T_\epsilon$ has a unique fixed point $h_\epsilon\in\mathfrak G_\epsilon$.
It satisfies
\begin{equation}\label{lp:graph-C1}
 h_\epsilon\in C^1(\T;W),\qquad
 \|h_\epsilon\|_{C^1(\T;W)}\leq C\epsilon.
\end{equation}
Moreover, 
its graph is invariant under the full semiflow of \eqref{lp:fixed-system}.
\end{proposition}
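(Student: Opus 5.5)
The plan is the standard graph-transform argument, carried out with the time-map estimates of Lemma~\ref{lp:time-map}. It proceeds in four steps: self-map, contraction, $C^1$ regularity, and invariance under the full semiflow.

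\textbf{Self-map.} For $h\in\mathfrak G_\epsilon$, estimate \eqref{lp:base-inverse} shows that $\rho_h$ is a degree-one lift with $\operatorname{Lip}(\rho_h^{-1})\le 1+C\epsilon$. Hence $\mathfrak T_\epsilon h$ is well defined, $1$-periodic, and satisfies the constraint by \eqref{lp:constraint-transport}. The sup bound follows from
\[
\|\zeta(Y,h(Y))\|_W\le\|\zeta(Y,0)\|_W+\sup\|\zeta_w\|\,\|h(Y)\|_W\le C_T\epsilon+(\theta+C_T(\epsilon+r))A\epsilon .
\]
This is at most $A\epsilon$ once $A\ge 2C_T$ and $\epsilon,r$ are small, with $r\sim A\epsilon$. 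For the Lipschitz bound we use
\[
\operatorname{Lip}(\zeta_h)\le\sup\|\zeta_Y\|+\sup\|\zeta_w\|\,K\epsilon .
\]
A plain $C_T(\epsilon+r)$ bound on $\zeta_Y$ is too crude here. Instead we use $\zeta_Y(Y,w)=\zeta_Y(Y,0)+O(\|\zeta_{Yw}\|\,\|w\|)$, which gives $\|\zeta_Y\|\le C_T\epsilon+C_TA\epsilon$. Then $\operatorname{Lip}(\mathfrak T_\epsilon h)\le(1+C\epsilon)\bigl(C_T(1+A)\epsilon+\tfrac14K\epsilon\bigr)\le K\epsilon$ if $K\ge 4C_T(1+A)$.

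\textbf{Contraction.} Take $h_1,h_2$ and fix $\beta$, with $Y_i=\rho_{h_i}^{-1}(\beta)$. Since $\|\rho_w\|\le C\epsilon(\epsilon+r)$, we get $|Y_1-Y_2|\le C\epsilon\|h_1-h_2\|_\infty$. Then
\[
\|\mathfrak T h_1(\beta)-\mathfrak T h_2(\beta)\|_W\le(\|\zeta_Y\|+\|\zeta_w\|K\epsilon)|Y_1-Y_2|+\|\zeta_w\|\,\|h_1-h_2\|_\infty\le(\theta+C\epsilon)\|h_1-h_2\|_\infty .
\]
So $\mathfrak T_\epsilon$ is a contraction on the complete space $\mathfrak G_\epsilon$, which is closed under uniform limits because the Lipschitz, sup and constraint conditions are all closed. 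This gives a unique fixed point $h_\epsilon$.

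\textbf{$C^1$ regularity.} I expect this step to be the main obstacle. I would use the fiber-contraction method. Consider candidate derivatives $\Gamma:\T\to W$ with $\|\Gamma\|_\infty\le K\epsilon$ and $\ell_Y\Gamma(Y)+\dot\ell_Yh_\epsilon(Y)=0$, and define
\[
(\mathfrak S\Gamma)(\beta)=\bigl[\zeta_Y+\zeta_w\Gamma(Y)\bigr]\bigl[\rho_Y+\rho_w\Gamma(Y)\bigr]^{-1},\qquad Y=\rho_{h_\epsilon}^{-1}(\beta),
\]
with the derivatives evaluated at $(Y,h_\epsilon(Y))$. This map is a contraction with rate about $\theta$, by \eqref{lp:map-est}. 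It is continuous in the base variable because $\Phi^T_\epsilon$ is $C^2$. The fiber contraction theorem, or iterating $\mathfrak T$ from a smooth $h_0$ together with $\mathfrak S$, shows that the iterates $\mathfrak T^n h_0$ converge in $C^1$. Hence $h_\epsilon\in C^1(\T;W)$ with $h_\epsilon'=\Gamma_*$. For the norm, $\|h_\epsilon\|_\infty\le A\epsilon$ already. The fixed-point identity $\Gamma_*=\mathfrak S\Gamma_*$ together with the $\zeta_Y(Y,0)=O(\epsilon)$ bound gives $\|\Gamma_*\|\le C\epsilon$, which is \eqref{lp:graph-C1}. The difficulty is that the derivative only enjoys a weak contraction: the factor $\theta$ from $\zeta_w$ must dominate $\operatorname{Lip}(\rho^{-1})\approx 1$, and this holds because $\theta<1/8$. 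One must also check the continuity of $\Gamma_*$.

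\textbf{Invariance under the full semiflow.} First, the graph is invariant under $\Phi^T_\epsilon$: by construction $\Phi^T_\epsilon(\operatorname{graph}h_\epsilon)=\operatorname{graph}\mathfrak T h_\epsilon=\operatorname{graph}h_\epsilon$. For $0\le t\le T$, the time-$t$ map $\Phi^t$ commutes with $\Phi^T$. Lemma~\ref{lp:time-map} gives uniform bounds for $\Phi^t$ with a bounded constant in place of $\theta$, so the image graph $\Phi^t(\operatorname{graph}h_\epsilon)$ is again a Lipschitz graph $\tilde h$ over $\T$ with constraint and $O(\epsilon)$ bounds. To place it in $\mathfrak G_\epsilon$, enlarge $A,K$ by a fixed factor, or use that the contraction holds on a larger class. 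Then $\tilde h$ is a fixed point of $\mathfrak T$ because $\Phi^T\Phi^t=\Phi^t\Phi^T$. Uniqueness gives $\tilde h=h_\epsilon$, so the graph is invariant for all $t\ge0$. Backward invariance on the graph follows since $\rho_{h_\epsilon}$ is a homeomorphism.
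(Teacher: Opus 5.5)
Your proposal is correct. The self-map, contraction and invariance steps match the paper's. The paper also passes to a larger uniqueness class for the time-$s$ images, and the $C(1+A)\epsilon$ term in its slope bound is exactly your $\zeta_Y$ estimate at radius $r\sim A\epsilon$.

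The genuine difference is the $C^1$ step.
\begin{itemize}
\item \textbf{The paper's route.} It iterates $h_{n+1}=\mathfrak T_\epsilon h_n$ from $h_0=0$. Differentiating $G\circ\rho_h=\zeta_h$ twice gives the recursion $\|h_{n+1}''\|_\infty\le(\theta+C\epsilon)\|h_n''\|_\infty+C_{A,K}\epsilon$, hence a uniform bound $K_2\epsilon$ on $h_n''$. It then upgrades the $C^0$ contraction to $C^1$ convergence by applying the interpolation inequality $\|f'\|_\infty^2\le C\|f\|_\infty(\|f''\|_\infty+\|f\|_\infty)$ to $h_n-h_m$.
\item \textbf{Your route.} Fiber contraction needs only that the first derivatives of $\Phi^T_\epsilon$ be uniformly continuous, which the second-derivative bounds in Lemma~\ref{lp:time-map} supply. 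It avoids the second-order bookkeeping but relies on the Hirsch--Pugh fiber contraction theorem.
\item \textbf{What each buys.} The paper's argument is elementary, and as a byproduct it shows $h_\epsilon'$ is Lipschitz with constant $O(\epsilon)$. Yours is shorter and needs less regularity of the time map.
\end{itemize}

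To make your version rigorous, three adjustments are needed.
\begin{itemize}
\item Define the fiber map $\mathfrak S_h$ for every $h\in\mathfrak G_\epsilon$, with derivatives evaluated at $(Y,h(Y))$ and $Y=\rho_h^{-1}(\beta)$, not only at $h_\epsilon$. You also need continuity of $h\mapsto\mathfrak S_h\Gamma$ in the sup metric.
\item Take the fiber to be all continuous $\Gamma:\T\to W$ with $\|\Gamma\|_\infty\le K\epsilon$. Drop the condition $\ell_Y\Gamma+\dot\ell_Yh=0$: it depends on $h$, the fiber must not depend on the base point, and the condition is not needed.
\item The rate condition is $\theta\,\sup|\rho_Y+\rho_w\Gamma|^{-1}<1$. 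Normal contraction must be dominated by the base rate, not the other way round as you phrased it. Here this holds with room to spare, since $\rho_Y=1+O(\epsilon)$ and $\rho_w=O(\epsilon^2)$ on the graph.
\end{itemize}
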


\begin{proof}
The time-map estimates give
\[
 \begin{split}
  \|\mathfrak T_\epsilon h\|_\infty
  &\leq(\theta+C\epsilon)A\epsilon+C\epsilon,\\
  \operatorname{Lip}(\mathfrak T_\epsilon h)
  &\leq(1+C\epsilon)
       \{(\theta+C\epsilon)K\epsilon+C(1+A)\epsilon\}.
 \end{split}
\]
Choose $A$ large, 
then $K$ large, 
and finally $\epsilon$ small.
These bounds make $\mathfrak T_\epsilon$ a self-map of $\mathfrak G_\epsilon$.
For $d=\|h-\widetilde h\|_\infty$, 
the bound $\rho_w=O(\epsilon^2)$ on the graph radius gives
\[
 \|\rho_h-\rho_{\widetilde h}\|_\infty
 +\|\rho_h^{-1}-\rho_{\widetilde h}^{-1}\|_\infty
 \leq C\epsilon^2d.
\]
At a common final phase $\beta$, put $Y_h=\rho_h^{-1}(\beta)$ and
$Y_{\widetilde h}=\rho_{\widetilde h}^{-1}(\beta)$.
The preceding estimate gives $|Y_h-Y_{\widetilde h}|\leq C\epsilon^2d$.
At a common initial phase, 
the normal images differ by at most $(\theta+C\epsilon)d$,
and $\operatorname{Lip}(\zeta_{\widetilde h})\leq C\epsilon$.
Thus direct comparison at $\beta$ gives
\begin{equation}\label{lp:graph-contraction}
 \|\mathfrak T_\epsilon h-\mathfrak T_\epsilon\widetilde h\|_\infty
 \leq(\theta+C\epsilon+C\epsilon^3)d<\tfrac12d.
\end{equation}
There is a unique fixed graph in $\mathfrak G_\epsilon$.
The same comparison proves uniqueness of any two fixed graphs in a larger class with sufficiently small fixed radius $r_0$ and slope $K_0$.
Indeed, 
its contraction bound is at most
\[
 \theta+C(r_0+\epsilon)
 +C\epsilon(\epsilon+r_0)(r_0+K_0)<1.
\]
To prove $C^1$ regularity, start at $h_0=0$ and set $h_{n+1}=\mathfrak T_\epsilon h_n$.
The ambient time map is $C^2$, so every iterate is $C^2$.
For $G=\mathfrak T_\epsilon h$, differentiation of $G\circ\rho_h=\zeta_h$ gives
\begin{equation}\label{lp:graph-first-derivative}
 (G'\circ\rho_h)(\rho_Y+\rho_wh')=\zeta_Y+\zeta_wh'.
\end{equation}
All time-map derivatives are evaluated at $(Y,h(Y))$.
Thus $\|h_n'\|_\infty\leq K\epsilon$ and $\rho_{h_n}'=1+O(\epsilon)$.
Differentiating once more gives
\[
 \begin{split}
 (G''\circ\rho_h)(\rho_h')^2
 ={}&\zeta_{YY}+2\zeta_{Yw}h'+\zeta_{ww}[h',h']+\zeta_wh''\\
    &-(G'\circ\rho_h)
      \{\rho_{YY}+2\rho_{Yw}h'+\rho_{ww}[h',h']+\rho_wh''\}.
 \end{split}
\]
The coefficient of $h''$, 
after division by $(\rho_h')^2$, 
has norm at most $\theta+C\epsilon$.
All remaining terms are $O_{A,K}(\epsilon)$ by Lemma~\ref{lp:time-map}.
Consequently,
\[
 \|h_{n+1}''\|_\infty
 \leq(\theta+C\epsilon)\|h_n''\|_\infty+C_{A,K}\epsilon.
\]
Hence $\|h_n''\|_\infty\leq K_2\epsilon$ for a fixed $K_2$.
The Banach-valued interpolation inequality
\[
 \|f'\|_\infty^2
 \leq C\|f\|_\infty(\|f''\|_\infty+\|f\|_\infty)
\]
applied to $h_n-h_m$, 
together with \eqref{lp:graph-contraction}, 
proves convergence in $C^1(\T;W)$.
This establishes \eqref{lp:graph-C1}.

For $0\leq s\leq T$, the time-$s$ image of the fixed graph is still a graph, 
since its lifted base map satisfies \eqref{lp:base-inverse}.
Its radius and slope are $O(\epsilon)$ by the intermediate-time bounds.
It satisfies the constraint and lies in the larger fixed uniqueness class above.
Since the time-$s$ and time-$T$ maps commute, 
its image is also fixed by the time-$T$ graph transform.
Uniqueness in that class makes it the same graph.
Iteration gives invariance for every $t\geq0$:
\begin{equation}\label{lp:all-time}
 \Phi_\epsilon^t\{(Y,h_\epsilon(Y)):Y\in\T\}
 =\{(Y,h_\epsilon(Y)):Y\in\T\},\qquad t\geq0.
\end{equation}
The induced base vector field is $C^1$ on the compact circle, so graph trajectories are complete.
\end{proof}

\subsubsection{Profile regularity, range, and normal attraction}

\begin{lemma}[Range of bounded entire solutions]\label{common:range}
Under \eqref{intro:extended-stable-states}, 
every uniformly bounded classical entire solution of \eqref{intro:pde} satisfies $0\leq u\leq1$.
If it is neither constant $0$ nor constant $1$, 
then $0<u<1$ everywhere.
\end{lemma}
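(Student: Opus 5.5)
The plan is a comparison argument against spatially constant solutions of the extended reaction, followed by the strong maximum principle. Let $u$ be a classical entire solution with $|u|\leq M$ on $\R\times\R$. Put $m(t)=\sup_x u(t,x)$ and $\underline m(t)=\inf_x u(t,x)$. First I show $m\leq1$; the bound $\underline m\geq0$ is symmetric, using the extended condition \eqref{intro:extended-stable-states} below $\delta_0$.

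Fix $t_0\in\R$ and $s>0$. Let $\bar w(t)$ solve the ODE $\bar w'=\max_{y\in\T}f(y,\bar w)$ with $\bar w(t_0-s)=M$. Since $f$ has linear growth, $\bar w$ exists globally, and $\max_y f(y,\cdot)$ is Lipschitz. The constant-in-$x$ function $\bar w(t)$ is a supersolution of \eqref{intro:pde}, because the diffusion and advection terms vanish on functions independent of $x$. By the comparison principle on $\R$ for bounded classical solutions, we get $u(t_0,x)\leq \bar w(t_0)$.

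Next I analyze the ODE. For $\bar w\geq 1$, \eqref{intro:extended-stable-states} together with $f(y,1)=0$ gives $f(y,\bar w)\leq-\gamma_0(\bar w-1)$ for every $y$. Hence $(\bar w-1)^+$ decays at least like $e^{-\gamma_0 t}$. More precisely, if $M>1$, then $\bar w(t_0)-1\leq (M-1)e^{-\gamma_0 s}$. There is one subtlety: once $\bar w$ drops below $1$ it might continue below, but that only helps the bound $\bar w(t_0)\leq 1+(M-1)e^{-\gamma_0 s}$. The case $M\leq1$ is trivial. Letting $s\to\infty$, which is allowed because $u$ is entire, gives $u(t_0,\cdot)\leq1$. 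Similarly, the subsolution with $f(y,\underline w)\geq\gamma_0(-\underline w)$ for $\underline w\leq 0$ yields $u\geq0$.

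For strict inequalities, I set $v=1-u\geq0$. Then $v$ satisfies a linear parabolic equation $v_t=\partial_x(a v_x)+b v_x+c(t,x)v$ with $c=-\int_0^1 f_u(x/L,1-\theta v)\,d\theta$, which is bounded. If $v(t_0,x_0)=0$ at some point, the strong maximum principle for nonnegative solutions of linear parabolic equations gives $v\equiv0$ on $(-\infty,t_0]\times\R$. Forward uniqueness of the Cauchy problem for bounded solutions then gives $u\equiv1$ for all times. The same argument applied to $u$ handles $u=0$. Therefore, unless $u$ is constant $0$ or $1$, we have $0<u<1$.

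The main point requiring care is the comparison principle on the unbounded domain. This is standard for bounded classical solutions with bounded coefficients (Phragmén–Lindelöf type), and I would simply cite it. The rest is routine.
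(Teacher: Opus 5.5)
Your proof is correct and is essentially the paper's argument. Both use spatially constant barriers that relax exponentially to $[0,1]$ at rate $\gamma_0$, compare on time intervals started arbitrarily far in the past, and then obtain the strict inequalities from the strong maximum principle and uniqueness. The only difference is that you build the barrier from the ODE $\bar w'=\max_y f(y,\bar w)$, whereas the paper writes $1+Me^{-\gamma_0(t-t_0)}$ directly; it is a supersolution by the same one-sided estimate $f(y,u)\leq-\gamma_0(u-1)$ for $u\geq1$.

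One harmless slip: for $v=1-u$ the zeroth-order coefficient is $c=\int_0^1 f_u(x/L,1-\theta v)\,d\theta$, without the minus sign. This does not affect the argument, because the strong maximum principle holds for any bounded $c$.
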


\begin{proof}
By uniform boundedness, choose $M>0$ such that $-M\leq u(t,x)\leq1+M$ for all $(t,x)\in\R^2$.
For arbitrary $t_0\in\R$ and $t\geq t_0$, the spatially constant functions $-M e^{-\gamma_0(t-t_0)}$ and $1+M e^{-\gamma_0(t-t_0)}$
are respectively a subsolution and a supersolution, by \eqref{intro:extended-stable-states}.
The comparison principle for bounded solutions on $\R$ gives
 \[
 -M e^{-\gamma_0(t-t_0)}\leq u(t,x)
 \leq 1+M e^{-\gamma_0(t-t_0)}.
 \]
For each fixed $t$, letting $t_0\to-\infty$ yields $0\leq u\leq1$.
The strong parabolic maximum principle and uniqueness then make the inequalities strict unless $u$ is one of the two constant states.
\end{proof}

\begin{proof}[Completion of the large-period construction in Theorem~\ref{lp:main}]
Define
\[
 s_\epsilon(Y)=c(Y)+\sigma_\epsilon(Y,h_\epsilon(Y)),\qquad
 U_\epsilon=\phi+h_\epsilon.
\]
The scalar estimates and \eqref{lp:graph-C1} give $\|s_\epsilon-c\|_{C^1(\T)}\leq C\epsilon$.
Constraint preservation identifies every graph trajectory with a physical solution.
Full-time invariance gives, 
initially in distributions in $z$,
\begin{equation}\label{lp:invariance-equation}
 \Dcal_{\epsilon,Y}(U_\epsilon)
 +s_\epsilon(U_\epsilon)_z-\epsilon s_\epsilon(U_\epsilon)_Y=0.
\end{equation}
Equivalently,
\[
 \begin{split}
 A_{\epsilon,Y}h_\epsilon
 ={}&-R_\epsilon-N_{\epsilon,Y}(h_\epsilon)\\
    &-(s_\epsilon-c)(q_Y+(h_\epsilon)_z-\epsilon\phi_Y)
      +\epsilon s_\epsilon(h_\epsilon)_Y.
 \end{split}
\]
Every term on the right is continuous in $Y$ as an $H$-valued function and is uniformly $O(\epsilon)$.
Uniform elliptic regularity first gives $\sup_Y\|h_\epsilon(Y)\|_D\leq C\epsilon$.
Writing the equation as $A_{\epsilon,Y}h_\epsilon(Y)=F(Y)$, 
we have
\[
 \begin{split}
 A_{\epsilon,Y_1}(h_\epsilon(Y_2)-h_\epsilon(Y_1))
 ={}&F(Y_2)-F(Y_1)\\
    &+(A_{\epsilon,Y_1}-A_{\epsilon,Y_2})h_\epsilon(Y_2).
 \end{split}
\]
Continuity of $F$, 
continuity of $A_{\epsilon,Y}$ in $\mathcal L(D,H)$, 
and the uniform $D$ bound make the right side tend to zero in $H$.
Since $h_\epsilon$ is already continuous in $H$, 
the elliptic estimate gives continuity in $D$.
This proves the full estimate \eqref{lp:main-est}.

Put $Y=\epsilon X$ and $Z_{\epsilon,X}(x)=U_\epsilon(Y,x-X)$.
Strong continuity of translations in the weighted Sobolev spaces and the proved regularity make $X\mapsto Z_{\epsilon,X}$ a $C^1$ curve in $\mathcal X_\eta$.
Its tangent is $-(U_\epsilon)_z+\epsilon(U_\epsilon)_Y$ in centered coordinates, 
with $\ell_Y$ pairing $-1+O(\epsilon)$, 
so it is nonzero.
The curve lies in the tube \eqref{lp:tube}, 
and the unique continuous inverse chart in Lemma~\ref{lp:chart-unique} proves that it is embedded.
The equations give \eqref{lp:reduced}, 
and periodicity gives \eqref{lp:covariance}.
Weighted Sobolev embedding and the frozen tails yield uniform exponential limits at the two spatial infinities.
Every graph trajectory is complete and uniformly bounded, 
and its tails exclude the constant states.
Lemma~\ref{common:range} therefore gives $0<U_\epsilon<1$, proving \eqref{lp:range}.

For normal attraction, compare initial states $(Y_0,w_0)$ and $(Y_0,h_\epsilon(Y_0))$, 
and put $d_0=\|w_0-h_\epsilon(Y_0)\|_W$.
Along the first solution set $e(t)=\|w(t)-h_\epsilon(Y(t))\|_W$.
Invariance and Lemma~\ref{lp:time-map} give
\[
 \begin{split}
 &\|\zeta(Y_0,w_0)-h_\epsilon(\rho(Y_0,w_0))\|_W\\
 &\quad\leq\|\zeta(Y_0,w_0)-\zeta(Y_0,h_\epsilon(Y_0))\|_W\\
 &\qquad\quad+K\epsilon\,
   |\rho(Y_0,w_0)-\rho(Y_0,h_\epsilon(Y_0))|\\
 &\quad\leq\{\theta+C_T(\epsilon+r)
                 +C_TK\epsilon^2(\epsilon+r)\}d_0.
 \end{split}
\]
Choose the fixed tube radius and then $\epsilon$ so that the coefficient is $q<1$.
Thus
\begin{equation}\label{lp:normalcontraction}
 \|w(T)-h_\epsilon(Y(T))\|_W
 \leq q\|w(0)-h_\epsilon(Y(0))\|_W.
\end{equation}
The corresponding estimate for $0\leq t\leq T$ has a fixed bounded coefficient.
Choose $r_1<r$ so that this intermediate-time bound and the graph radius $C\epsilon$ leave a positive margin to the boundary of the coordinate tube.
Writing $t=nT+s$, $0\leq s<T$, 
iteration of \eqref{lp:normalcontraction} gives $e(t)\leq Cq^ne(0)\leq Ce^{-\nu t}e(0)$, 
where $\nu=-T^{-1}\log q$.
The margin and intermediate-time bound give global forward continuation for these initial data.
If a complete orbit remains in this tube, 
then $e(0)\leq q^ne(-nT)\to0$;
applying this at every time shows that the orbit lies on the graph.
\end{proof}

\begin{corollary}[Uniform spatial transition width at large periods]
\label{lp:uniform-transition-width}
Under the assumptions of Theorem~\ref{lp:main}, 
there exist $L_0>0$, 
$C\geq1$, 
and $\mu>0$, 
independent of $L$ and the position phase, 
such that
\begin{equation}\label{lp:uniform-tail-bound}
 Z_{1/L,X}(X+z)+1-Z_{1/L,X}(X-z)
 \leq C e^{-\mu z}
 \qquad
 (L\geq L_0,\ X\in\R,\ z\geq0).
\end{equation}
Consequently, for every $\delta\in(0,1/2)$, the number
\[
 R_\delta=\frac{1}{\mu}\log\frac{2C}{\delta}
\]
is independent of $L$, and every complete orbit
\[
 u_L(t,x)=Z_{1/L,X_L(t)}(x)
 \quad\text{on }\mathcal M_{1/L}
\]
satisfies
\begin{equation}\label{lp:uniform-transition-set}
 \{x\in\R:\delta\leq u_L(t,x)\leq1-\delta\}
 \subset
 [X_L(t)-R_\delta,X_L(t)+R_\delta]
 \qquad(t\in\R).
\end{equation}
In particular,
\begin{equation}\label{lp:uniform-transition-diameter}
 \sup_{t\in\R}
 \operatorname{diam}
 \{x\in\R:\delta\leq u_L(t,x)\leq1-\delta\}
 \leq 2R_\delta
 \qquad(L\geq L_0).
\end{equation}
These estimates hold for stationary fronts, connecting transition fronts, 
and propagating pulsating fronts on the invariant family, 
without any sign restriction on the frozen speed $c(Y)$.
\end{corollary}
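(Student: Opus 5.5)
The plan is to read the uniform tail bound \eqref{lp:uniform-tail-bound} directly off the representation $Z_{1/L,X}(X+z)=U_\epsilon(\epsilon X,z)=\phi(Y,z)+h_\epsilon(Y,z)$, with $Y=\epsilon X\pmod1$. Estimate the two pieces separately.

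\emph{Frozen profile.} Lemma~\ref{lp:frozen-properties} gives $\phi(Y,z)\le C_0e^{-\mu_0 z}$ and $1-\phi(Y,-z)\le C_0e^{-\mu_0 z}$ for $z\ge0$. Here $C_0,\mu_0$ depend only on the frozen problem, since $Y$ ranges over the compact $\T$.

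\emph{Perturbation.} By \eqref{lp:main-est}, $\sup_Y\|h_\epsilon(Y)\|_D\le C\epsilon\le C$, hence also $\sup_Y\|h_\epsilon(Y)\|_W\le C$ uniformly for $L\ge L_0$. The one-dimensional Sobolev embedding applied to $e^{\eta\rho}h_\epsilon(Y)\in H^1(\R)$ gives
\[
|h_\epsilon(Y,z)|\le C_{\rm emb}e^{-\eta\rho(z)}\|h_\epsilon(Y)\|_W\le C'e^{-\eta|z|},
\]
using $\rho(z)=|z|$ for $|z|\ge1$ and boundedness of $\rho$ on $[-1,1]$. The weight $\eta$ is fixed independently of $L$ in Theorem~\ref{lp:main}.

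\emph{Combining.} Take $\mu=\min\{\mu_0,\eta\}$. Since $0<U_\epsilon<1$ by \eqref{lp:range}, both $Z_{1/L,X}(X+z)$ and $1-Z_{1/L,X}(X-z)$ are positive. Each is bounded by the frozen tail plus $|h_\epsilon|$, so their sum is at most $Ce^{-\mu z}$ with $C\ge1$. This proves \eqref{lp:uniform-tail-bound}, uniformly in $L\ge L_0$, $X$ and $z\ge0$. No sign information on $c$ or $s_\epsilon$ enters. The only point needing care is that the constants are $L$-independent, and this holds because $\eta$ and the bound in \eqref{lp:main-est} are uniform in $\epsilon$.

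\emph{Transition set.} Fix $\delta\in(0,1/2)$ and suppose $|x-X|>R_\delta$. If $x=X+z$ with $z>R_\delta$, then by \eqref{lp:uniform-tail-bound}
\[
Z_{1/L,X}(x)\le Ce^{-\mu z}<Ce^{-\mu R_\delta}=\tfrac{\delta}{2}<\delta .
\]
If $x=X-z$ with $z>R_\delta$, the same argument gives $1-Z_{1/L,X}(x)<\delta/2$. In either case $x$ lies outside $\{\delta\le Z\le1-\delta\}$.

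By \eqref{lp:reduced}, every complete orbit on $\mathcal M_{1/L}$ has the form $u_L(t,\cdot)=Z_{1/L,X_L(t)}$. Applying the previous paragraph at each $t$ yields \eqref{lp:uniform-transition-set}, and \eqref{lp:uniform-transition-diameter} follows immediately. Stationary, connecting and propagating fronts are all orbits of this form, so the estimates cover each of them.
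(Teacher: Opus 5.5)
Your proposal is correct and follows essentially the same route as the paper: write $Z_{1/L,X}(X+z)=\phi(Y,z)+h_{1/L}(Y,z)$, bound $\phi$ with the uniform frozen tails of Lemma~\ref{lp:frozen-properties}, and bound $h$ with the weighted Sobolev embedding and the $L$-independent estimate \eqref{lp:main-est}. You then take the smaller decay exponent and read off the transition set at $X=X_L(t)$. The paper records the sharper $O(1/L)$ size of $h$, but, as you observe, uniform boundedness is all the corollary needs.
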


\begin{proof}
By \eqref{lp:manifold},
\[
 Z_{1/L,X}(X+z)=\phi(X/L,z)+h_{1/L}(X/L,z).
\]
The weighted estimate \eqref{lp:main-est} and the one-dimensional Sobolev embedding give, after adjusting the smooth weight in the definition of $H^1_\eta$,
\[
 \sup_{Y\in\T,\ z\in\R}e^{\eta|z|}
 |h_{1/L}(Y,z)|\leq \frac{C_1}{L}.
\]
The frozen profiles have uniform exponential tails in $Y$ by Lemma~\ref{lp:frozen-properties}.
Decreasing the decay exponent if necessary and enlarging the constant therefore gives \eqref{lp:uniform-tail-bound}.
For $z\geq R_\delta$, its right-hand side is at most $\delta/2$.
Applying the estimate with $X=X_L(t)$ gives \eqref{lp:uniform-transition-set} and \eqref{lp:uniform-transition-diameter}.
\end{proof}

\begin{remark}[Comparison with earlier large-period width estimates]
\label{lp:width-comparison-DHL}
The width estimate in Corollary~\ref{lp:uniform-transition-width} follows from the weighted $O(L^{-1})$ approximation by frozen profiles and their uniform exponential tails.
It therefore applies under the present assumptions, including vanishing or sign-changing frozen speeds and variable diffusion and drift.
The result in \cite{DHL2025} uses zero-number and intersection arguments under hypotheses that keep the frozen speeds strictly of one sign and uniformly separated from zero.
Our estimate concerns the constructed invariant family;
Proposition~\ref{comp:global-counterexample} shows that stationary fronts outside this family can have several spatial transitions.
\end{remark}

\subsection{Small-period construction}\label{sp:section}

\subsubsection{Phase coordinates and the phase--normal equations}

Assume the common conditions and Assumption~\ref{intro:small-assumption}.
Use $U_\alpha,V_{L,\alpha}$ from \eqref{sp:approx} and the spaces $\mathsf X=L^2(\R)$, $V=H^1(\R)$, $V'=H^{-1}(\R)$.
With $q_{0,\alpha}=-U_\alpha'$, set
\[
 \psi_\alpha(x)=C_0e^{b_h(x-\alpha)/a_h}q_{0,\alpha}(x),
 \qquad \int_\R\psi_\alpha q_{0,\alpha}=1.
\]
Let $\zeta$ be the zero-mean periodic solution of
\begin{equation}\label{sp:zeta}
 a\zeta'=b-b_h,
\end{equation}
which is solvable by \eqref{sp:effective}.
Define
\begin{equation}\label{sp:phasefunctional}
 \begin{split}
 q_{L,\alpha}&=\partial_\alpha V_{L,\alpha},\qquad
 \psi_{L,\alpha}=\psi_\alpha+L\chi(x/L)\psi_\alpha'
                         +L\zeta(x/L)\psi_\alpha,\\
 \ell_{L,\alpha}h&=\langle h,\psi_{L,\alpha}\rangle,\qquad
 d_{L,\alpha}=\ell_{L,\alpha}q_{L,\alpha}.
 \end{split}
\end{equation}
Where $d_{L,\alpha}\ne0$, put $P_\alpha=q_{L,\alpha}\ell_{L,\alpha}/d_{L,\alpha}$, $Q_\alpha=I-P_\alpha$, and $N_\alpha=\ker\ell_{L,\alpha}$.
We seek
\begin{equation}\label{sp:ansatz}
 u(t,x)=V_{L,\alpha(t)}(x)+w(t,x),\qquad \ell_{L,\alpha(t)}w(t)=0
\end{equation}
in the tube
\begin{equation}\label{sp:chart}
 \mathcal U_{L,r}=\{V_{L,\alpha}+w:\alpha\in\R,\
               \ell_{L,\alpha}w=0,\ \|w\|_2<r\}.
\end{equation}
Define $A_{L,\alpha}:V\to V'$ and the residual and nonlinear remainder by
\begin{equation}\label{sp:linearization}
 \begin{split}
 A_{L,\alpha}w&=(a(x/L)w')'+b(x/L)w'+f_u(x/L,V_{L,\alpha})w,\\
 R_{L,\alpha}&=F_L(V_{L,\alpha}),\\
 N_{L,\alpha}(w)&=f(x/L,V_{L,\alpha}+w)-f(x/L,V_{L,\alpha})
                   -f_u(x/L,V_{L,\alpha})w.
 \end{split}
\end{equation}
For regular $w$ with $d_{L,\alpha}-\ell_{L,\alpha}'w\ne0$,
substitution into \eqref{intro:pde} and differentiation of $\ell_{L,\alpha}w=0$ give
\begin{equation}\label{sp:coordinates}
 \begin{split}
 \dot\alpha=\nu_L(\alpha,w)
 &=\frac{\ell_{L,\alpha}(R_{L,\alpha}+A_{L,\alpha}w+N_{L,\alpha}(w))}
         {d_{L,\alpha}-\ell_{L,\alpha}'w},\\
 w_t&=A_{L,\alpha}w+R_{L,\alpha}+N_{L,\alpha}(w)-q_{L,\alpha}\nu_L(\alpha,w).
 \end{split}
\end{equation}
We require uniform residual, 
transverse resolvent, 
and time-map estimates for this system.

\subsubsection{The homogenized front, coordinate chart, and residual estimates}

\begin{lemma}[Homogenized-front spectral gap]\label{sp:limit-spectrum}
The functions $U',\psi_\alpha$ and their spatial derivatives decay exponentially.
Moreover, on $\mathsf X$ with domain $H^2(\R)$, 
the operator $A_{0,\alpha}=a_h\partial_x^2+b_h\partial_x+\bar f'(U_\alpha)$ has an algebraically simple eigenvalue zero, 
with kernels generated by $q_{0,\alpha}$ and $\psi_\alpha$ for $A_{0,\alpha}$ and $A_{0,\alpha}^*$.
Its remaining spectrum lies in $\{\operatorname{Re}\lambda\leq-\sigma\}$ for a uniform $\sigma>0$.
\end{lemma}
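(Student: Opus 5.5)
The plan mirrors the proof of Lemma~\ref{lp:frozen-properties}, with the frozen front replaced by the homogenized stationary front $U$ of \eqref{sp:standing}; the only difference is that no speed parameter has to be solved for.

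\emph{Tails.} Since $\bar f(0)=\bar f(1)=0$ and averaging \eqref{intro:stable-states} gives $\bar f'(u)\le-\gamma_0$ for $u\in[0,\delta_0]\cup[1-\delta_0,1]$, both endpoint states are hyperbolic saddles of the planar system $a_hU''+b_hU'+\bar f(U)=0$. Stable-manifold theory at the endpoints then gives exponential decay of $U'$, $1-U$ at $-\infty$ and $U$ at $+\infty$, together with all derivatives up to the order allowed by $f\in C^4$. The decay of the higher derivatives follows by differentiating the ODE. Fix the weight $e^{b_hz/a_h}$. The characteristic roots at $0$ and $1$ satisfy $a_h\mu^2+b_h\mu+\bar f'(\cdot)=0$, so $\mu_\pm+b_h/a_h=-\mu_\mp$. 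Hence $e^{b_h(x-\alpha)/a_h}U'$ decays like the other root, and $\psi_\alpha$ decays exponentially on both sides. The normalizing constant $C_0$ is finite because $\int e^{b_hz/a_h}(U')^2<\infty$ by the same computation.

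\emph{Kernels.} Differentiating \eqref{sp:standing} gives $A_{0,\alpha}q_{0,\alpha}=0$. The adjoint is $A_{0,\alpha}^*=a_h\partial_x^2-b_h\partial_x+\bar f'(U_\alpha)$, and a direct check shows $A_{0,\alpha}^*\psi_\alpha=0$. To see this, conjugate by $e^{b_hz/(2a_h)}$, which maps both operators to the same self-adjoint Schr\"odinger operator
\[
S=a_h\partial_z^2+\bar f'(U)-\frac{b_h^2}{4a_h}.
\]
This operator has the positive ground state $e^{b_hz/(2a_h)}U'$, up to sign, since $U$ is strictly decreasing.

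\emph{Spectrum.} By Sturm oscillation \cite[Theorem~9.40]{Teschl2009}, $0$ is a simple top eigenvalue of $S$, because its eigenfunction has no zeros. Weyl's theorem places the essential spectrum of $S$ in $\{\lambda\le-\min(\gamma_0,\cdot)-b_h^2/(4a_h)\}<0$. Transferring back to $\mathsf X=L^2$ needs care, since the conjugation weight is unbounded; this is the step I expect to be the main obstacle.

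For the essential spectrum of $A_{0,\alpha}$ in unweighted $L^2$, use Sandstede's criterion \cite[Section~3.4.3]{Sandstede2002}. The Fredholm borders are the curves $\lambda=-a_hk^2+ib_hk+\bar f'(e)$ with $e\in\{0,1\}$, and these lie in $\operatorname{Re}\lambda\le-\gamma_0$. For point spectrum with $\operatorname{Re}\lambda>-\gamma_0'$, an $L^2$ eigenfunction decays with the stable exponents. After conjugation it therefore lies in $L^2$ whenever its decay rate beats $|b_h|/(2a_h)$. This holds for $\lambda$ in a small neighborhood of the closed right half-plane, by the root relation above, since the relevant spatial exponent dominates. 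The conjugated function is then an eigenfunction of $S$, so $\lambda$ is real and either equals $0$ or is negative.

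Algebraic simplicity follows from $\langle q_{0,\alpha},\psi_\alpha\rangle=1\ne0$: a Jordan vector $v$ with $A_{0,\alpha}v=q_{0,\alpha}$ would give $1=\langle A_{0,\alpha}v,\psi_\alpha\rangle=0$. Thus the spectrum consists of $0$, the essential spectrum in $\operatorname{Re}\le-\gamma_0$, and isolated eigenvalues of finite multiplicity that accumulate only at the essential spectrum, none of which lies in $\operatorname{Re}\lambda\ge-\sigma'$ apart from $0$. This yields a gap $\sigma>0$. Uniformity in $\alpha$ is automatic, because $A_{0,\alpha}$ is conjugate to $A_{0,0}$ by the translation $x\mapsto x-\alpha$, which is unitary on $L^2$.
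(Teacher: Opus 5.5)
Your proposal is correct and follows essentially the same route as the paper, which simply reruns the argument of Lemma~\ref{lp:frozen-properties} with coefficients $a_h,b_h,\bar f'(U_\alpha)$. That argument consists of conjugation by $e^{b_hz/(2a_h)}$ to a self-adjoint operator, square integrability of conjugated eigenfunctions via the tail exponents, Sturm oscillation for the positive ground state, Sandstede's criterion for the essential spectrum, algebraic simplicity from $\langle q_{0,\alpha},\psi_\alpha\rangle=1$, and translation conjugacy for uniformity in $\alpha$. Your root relation $\mu_\pm+b_h/a_h=-\mu_\mp$ in fact gives square integrability of the conjugated eigenfunctions for every $\lambda$ to the right of the Fredholm borders, so the point spectrum there is real; together with the isolation of $0$ above $\sigma_{ess}(S)$, this yields the uniform gap directly.
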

\begin{proof}
Apply the unweighted decay and spectral-gap argument of Lemma~\ref{lp:frozen-properties} with coefficients $a_h,b_h,\bar f'(U_\alpha)$.
The adjoint formula follows by differentiation, and translation conjugacy makes the gap uniform in $\alpha$.
\end{proof}

\begin{lemma}[Corrector and residual estimates]\label{sp:corrector-estimates}\label{sp:nonlinear-lemma}
For small $L$, 
$d_{L,\alpha}=1+O(L)\geq1/2$,
and the $V$ norms of $q_{L,\alpha},\psi_{L,\alpha}$ and their first two phase derivatives are uniformly bounded.
Moreover, 
we have
	\begin{align}
		\|R_{L,\alpha}\|_{V'}+
		\|\partial_\alpha R_{L,\alpha}\|_{V'}&\leq CL,
		\label{sp:residual}\\
		\|A_{L,\alpha}^*\psi_{L,\alpha}\|_2+
		\|\partial_\alpha(A_{L,\alpha}^*\psi_{L,\alpha})\|_2&\leq C,
		\label{sp:adjointbound}
	\end{align}
and
	\begin{equation}\label{sp:multiplier}
		\|[f_u(x/L,V_{L,\alpha})-\bar f'(U_\alpha)]w\|_{V'}
		\leq CL\|w\|_V\qquad(w\in V).
	\end{equation}
Finally, 
for $\|w\|_2,\|z\|_2\leq r\leq r_*$ and all phases $\alpha,\beta$, 
one has
\begin{align}\label{sp:nonlinear}
	\|N_{L,\alpha}(w)\|_{V'}&\leq C\|w\|_2^2,\notag\\
	\|N_{L,\alpha}(w)-N_{L,\beta}(z)\|_{V'}
	&\leq Cr\|w-z\|_2+Cr^2|\alpha-\beta|.
\end{align}
\end{lemma}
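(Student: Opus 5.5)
The plan is to treat the lemma as a collection of two-scale expansion estimates for the corrected profile $V_{L,\alpha}=U_\alpha+L\chi(x/L)U_\alpha'$ and the corrected adjoint $\psi_{L,\alpha}$. Throughout, fast-variable periodic functions are controlled in $V'$ by writing each zero-mean periodic function as a derivative of a bounded periodic primitive. The basic device is the identity
\[
g(x/L)\Psi(x)=L\,\partial_x\bigl(G(x/L)\Psi(x)\bigr)-LG(x/L)\Psi'(x),
\qquad G'=g,\ \textstyle\int_0^1 g=0,
\]
which yields $\|g(x/L)\Psi\|_{V'}\leq CL(\|\Psi\|_2+\|\Psi'\|_2)$ whenever $\Psi$ decays exponentially.

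First, the statements about $d_{L,\alpha}$ and the bounded $V$ norms. From \eqref{sp:approx} we get $q_{L,\alpha}=-U_\alpha'-L\chi(x/L)U_\alpha''$. Then $\partial_x$ of the correction term equals $\chi'(x/L)U_\alpha''+O(L)$, which is bounded in $L^2$ by Lemma~\ref{sp:limit-spectrum}. The same holds for $\psi_{L,\alpha}$ and for all phase derivatives, since $\partial_\alpha=-\partial_x$ acting on the $U_\alpha$-dependent factors only. Expanding the pairing gives $d_{L,\alpha}=\int\psi_\alpha q_{0,\alpha}+O(L)=1+O(L)$, because every correction term carries an explicit factor $L$ times a bounded periodic function times an integrable product.

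Second, the residual \eqref{sp:residual}. Compute $F_L(V_{L,\alpha})$ and order the terms by powers of $L$.
\begin{itemize}
\item The $O(L^{-1})$ term is $(a(1+\chi'))'(x/L)\,U_\alpha'/L$, which vanishes by \eqref{sp:chi}.
\item The $O(1)$ term is
\[
a(1+\chi')U_\alpha''+\bigl(a\chi U_\alpha''\bigr)'_{\rm fast}
+b(1+\chi')U_\alpha'+f(x/L,U_\alpha).
\]
Here $a(1+\chi')\equiv a_h$, since it is constant with this mean by \eqref{sp:effective}.
\item The remaining $O(1)$ pieces $(a\chi)'(x/L)U_\alpha''/L\cdot L$, $(b(1+\chi')-b_h)U'_\alpha$ and $f(x/L,U_\alpha)-\bar f(U_\alpha)$ are zero-mean periodic in the fast variable times decaying functions of $x$. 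I will check the mean of $b(1+\chi')$: since $1+\chi'=a_h/a$, its mean is $a_h\int b/a=b_h$. The mean of $(a\chi)'$ is zero.
\item After subtracting \eqref{sp:standing}, all $O(1)$ contributions are therefore of the oscillatory form in the device above, hence $O(L)$ in $V'$.
\item The $O(L)$ terms, and the Taylor remainder of $f(x/L,U_\alpha+L\chi U_\alpha')$, are $O(L)$ in $L^2$.
\end{itemize}
Differentiating in $\alpha$ only replaces $U_\alpha$ by $-U_\alpha'$, so the same argument applies. This is where the $C^4$ regularity of $f$ and the $C^3$ regularity of $a,b$ are used.

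Third, \eqref{sp:multiplier} and \eqref{sp:adjointbound}. Write
\[
f_u(x/L,V_{L,\alpha})-\bar f'(U_\alpha)
=\bigl[f_u(x/L,U_\alpha)-\bar f'(U_\alpha)\bigr]+O(L)_{L^\infty}.
\]
The bracket is $g(x/L,U_\alpha(x))$ with $g$ of zero mean in its first argument. Apply the device with $G(y,u)=\int_0^y g(s,u)\,ds$, which is bounded and periodic. Now $\langle G_x$-term$,w\,\varphi\rangle$ moves the derivative onto $w\varphi$; this gives $CL\|w\|_V\|\varphi\|_V$, and the chain-rule term $G_u U_\alpha'$ is harmless. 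For \eqref{sp:adjointbound}, expand $A^*_{L,\alpha}\psi_{L,\alpha}$. The $O(L^{-1})$ term cancels because the corrector structure is built from \eqref{sp:chi} together with \eqref{sp:zeta}, which kills the adjoint drift mismatch $b-b_h$. The $O(1)$ terms are bounded in $L^2$; I only need boundedness, not smallness, so this is easier than the residual.

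Fourth, the nonlinear bounds \eqref{sp:nonlinear}. Use the Taylor formula
\[
N_{L,\alpha}(w)=w^2\int_0^1(1-s)f_{uu}(x/L,V_{L,\alpha}+sw)\,ds,
\]
together with the globally bounded $f_{uu}$ of the extension. In one dimension $V\hookrightarrow L^\infty$, so by duality $L^1\hookrightarrow V'$, giving $\|w^2\|_{V'}\leq C\|w\|_2^2$. For the difference, split
\[
N_{L,\alpha}(w)-N_{L,\beta}(z)
=\bigl[N_{L,\alpha}(w)-N_{L,\alpha}(z)\bigr]+\bigl[N_{L,\alpha}(z)-N_{L,\beta}(z)\bigr].
\]
The first bracket is $(w-z)\cdot O(|w|+|z|)$ in $L^1$, so it is at most $Cr\|w-z\|_2$. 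The second is $z^2\cdot O(|V_{L,\alpha}-V_{L,\beta}|)$, and $\|\partial_\alpha V_{L,\alpha}\|_\infty\leq C$, giving $Cr^2|\alpha-\beta|$.

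The main obstacle is the residual step: checking that all $O(1)$ oscillatory terms have zero fast mean, including the drift term, and that the $L^{-1}$ term cancels exactly.
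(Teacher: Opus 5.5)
Your proposal is correct and follows essentially the same route as the paper: you use the same decomposition of $R_{L,\alpha}$ into zero-mean two-scale terms plus an explicit $O(L)$ remainder, the same periodic-primitive integration by parts for the $V'$ bounds and for \eqref{sp:multiplier}, the exact cancellation of the $O(L^{-1})$ flux terms via the cell equations for $\chi$ and $\zeta$ for \eqref{sp:adjointbound}, and Taylor's formula with $L^1\hookrightarrow V'$ for \eqref{sp:nonlinear}. One point should be stated explicitly, as the paper does: $f(x/L,U_\alpha)-\bar f(U_\alpha)$ is a genuinely two-scale term rather than a product $g(x/L)\Psi(x)$, and its $y$-primitive $G(y,u)$ vanishes at $u=0,1$ because $f(y,0)=f(y,1)=0$. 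This vanishing is what puts $G(x/L,U_\alpha)$ in $L^2$ in the residual estimate, whereas in the multiplier estimate boundedness of $G$ alone suffices.
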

\begin{proof}
The periodic functions $\chi,\zeta$ and their derivatives are bounded.
Their $L$-weighted factors in \eqref{sp:approx} and \eqref{sp:phasefunctional} are independent of $\alpha$ and have bounded spatial derivatives.
Together with the decay estimates in Lemma~\ref{sp:limit-spectrum}, this gives the stated $V$ bounds for $q_{L,\alpha},\psi_{L,\alpha}$ and their phase derivatives.
Moreover, $d_{L,\alpha}=\langle\psi_\alpha,q_{0,\alpha}\rangle+O(L)=1+O(L)$.
Write $y=x/L$.
The cell identity $a(1+\chi')=a_h$ and \eqref{sp:standing} give
	\[
	R_{L,\alpha}=p_2(y)U_\alpha''+p_1(y)U_\alpha'
	+p_0(y,U_\alpha)+L r_{L,\alpha},
	\]
where
	\[
	p_2=(a\chi)',\quad p_1=a_hb/a-b_h,\quad
	p_0(y,u)=f(y,u)-\bar f(u),
	\]
and
	\[
	\begin{aligned}
	r_{L,\alpha}(x)
	={}&a(y)\chi(y)U_\alpha'''(x)
	+b(y)\chi(y)U_\alpha''(x)\\
	&+\chi(y)U_\alpha'(x)
	\int_0^1 f_u\bigl(y,U_\alpha(x)
	+sL\chi(y)U_\alpha'(x)\bigr)\,ds.
	\end{aligned}
	\]
The bounds on the periodic coefficients and on $f_u,f_{uu}$, 
together with the exponential decay of $U_\alpha'$ and its derivatives,
give $\|r_{L,\alpha}\|_{\mathsf X} + \|\partial_\alpha r_{L,\alpha}\|_{\mathsf X}\leq C$.
All three $p_j$ have zero mean in $y$ with $p_0(y,0)=p_0(y,1)=0$.
If $P_y=p$ is a periodic primitive, 
then
	\[
	p(x/L,U_\alpha)w
	=L\partial_x[P(x/L,U_\alpha)w]
	-LP(x/L,U_\alpha)w'
	-LP_u(x/L,U_\alpha)U_\alpha'w.                 \tag{*}
	\]
For $p_0$ alone its zero-mean primitive also vanishes at $u=0,1$, so that $P(x/L,U_\alpha)$ and $P_u(x/L,U_\alpha)U_\alpha'$ lie in $\mathsf X$ uniformly.
The same primitive argument for $p_2U_\alpha''$ and $p_1U_\alpha'$ proves \eqref{sp:residual}, 
including its derivative.
Apply (*) with $p=f_u-\bar f'$ and $w\in V$.
Its three terms have $V'$ norm at most $CL\|w\|_V$.
Replacing $U_\alpha$ by $V_{L,\alpha}$ in $f_u$ adds at most $CL\|w\|_2$.
This proves \eqref{sp:multiplier}.

For any $C^3$ function $\varphi$ whose derivatives through order three decay exponentially as $x\to\pm\infty$, set $\varphi_L=\varphi+L\chi(x/L)\varphi'+L\zeta(x/L)\varphi$.
The exact flux identity
	\begin{equation}\label{sp:adjointidentity}
		a(x/L)\varphi_L'-b(x/L)\varphi_L
		=a_h\varphi'-b_h\varphi
		+L\{a\chi\varphi''+(a\zeta-b\chi)\varphi'-b\zeta\varphi\}
	\end{equation}
has all periodic factors evaluated at $x/L$.
Differentiating the right side once produces only bounded coefficients multiplying functions with exponential decay as $x\to\pm\infty$.
Taking $\varphi=\psi_\alpha$, 
and then its phase derivative, 
proves \eqref{sp:adjointbound}.

Finally, 
$L^1(\R)\hookrightarrow V'$ and Taylor's formula give the first estimate in \eqref{sp:nonlinear}.
The pointwise mean-value bound
$|N_{L,\alpha}(w)-N_{L,\beta}(z)| \leq C(|w|+|z|)|w-z|+C|z|^2|\alpha-\beta|$
follows from bounded $f_{uu},f_{uuu},q_{L,\alpha}$.
Its $L^1$ norm gives the second estimate.
\end{proof}

\begin{lemma}[Small-period phase--normal decomposition]
\label{sp:chart-unique}
There are $L_0,\rho_0>0$ such that, for $0<L<L_0$, every $u\in\mathcal U_{L,\rho_0}$ defined by \eqref{sp:chart}
has a unique representation $u=V_{L,\alpha}+w$ satisfying $\ell_{L,\alpha}w=0$ and $\|w\|_2<\rho_0$.
The inverse coordinate map $u\mapsto(\alpha,w)$ is continuous.
\end{lemma}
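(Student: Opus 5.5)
I will mirror the proof of Lemma~\ref{lp:chart-unique}, now in the unweighted space $\mathsf X=L^2(\R)$ with the corrected family $V_{L,\alpha}$ and functional $\ell_{L,\alpha}$. For a fixed $u\in\mathcal U_{L,\rho_0}$ and a trial phase $\alpha$, set $w_\alpha=u-V_{L,\alpha}$ and define the scalar phase function
\[
\mathcal P_L(\alpha,u)=\ell_{L,\alpha}w_\alpha=\langle u-V_{L,\alpha},\psi_{L,\alpha}\rangle .
\]
Since $u$ enters only through pairing with $\psi_{L,\alpha}$, and $\psi_{L,\alpha}$ has uniformly bounded phase derivatives in $V\subset\mathsf X$ by Lemma~\ref{sp:corrector-estimates}, $\mathcal P_L$ is $C^1$ in $\alpha$ for every $u\in U_\alpha+\mathsf X$. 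Its derivative is
\[
\partial_\alpha\mathcal P_L(\alpha,u)=-d_{L,\alpha}+\langle w_\alpha,\partial_\alpha\psi_{L,\alpha}\rangle ,
\]
and since $d_{L,\alpha}=1+O(L)$ we get $|\partial_\alpha\mathcal P_L+1|\leq C(L+\|w_\alpha\|_2)$. In contrast with the large-period case, no derivative of $u$ is needed.

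Next I control $w_{\alpha_0+s}$ when $u=V_{L,\alpha_0}+w$ with $\|w\|_2<\rho_0$. We have $w_{\alpha_0+s}=w+V_{L,\alpha_0}-V_{L,\alpha_0+s}$, and the mean-value theorem with the uniform $\mathsf X$ bound on $q_{L,\alpha}$ gives $\|w_{\alpha_0+s}\|_2\leq\rho_0+C|s|$. Choose $\delta$ small, then $\rho_0$ and $L_0$ small. Then $\partial_\alpha\mathcal P_L\leq-1/2$ for $|\alpha-\alpha_0|\leq\delta$, uniformly in $\alpha_0$, because everything depends on $\alpha_0$ only through translation. The phase $\alpha$ appears only through the translated profiles, while $\chi(x/L)$ is fixed; the required bounds are uniform in $\alpha$ by Lemma~\ref{sp:corrector-estimates}. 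The implicit function theorem then gives a local phase depending continuously on $u$ in $\mathsf X$, and $w=u-V_{L,\alpha}$ is continuous as well.

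The main obstacle is global uniqueness. Pointwise comparison at one point is no longer available, since $L^2$ does not embed in $L^\infty$. Suppose $V_{L,\alpha_1}+w_1=V_{L,\alpha_2}+w_2$ with $|\alpha_1-\alpha_2|\geq\delta$. Then $\|V_{L,\alpha_1}-V_{L,\alpha_2}\|_2<2\rho_0$. But $\|V_{L,\alpha}-U_\alpha\|_2\leq CL$, and
\[
\|U_{\alpha_1}-U_{\alpha_2}\|_2\geq m_\delta>0\quad\text{for }|\alpha_1-\alpha_2|\geq\delta .
\]
This lower bound holds because $U$ is strictly decreasing, so $\|U(\cdot-s)-U\|_2$ is positive for $s\neq0$, increasing in $|s|$, and tends to infinity as $|s|\to\infty$. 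Hence $m_\delta\leq 2\rho_0+CL$, which is impossible for small $\rho_0$ and $L$. If instead $|\alpha_1-\alpha_2|<\delta$, both phases are zeros of $\mathcal P_L(\cdot,u)$ on an interval where it is strictly decreasing, so $\alpha_1=\alpha_2$ and then $w_1=w_2$. Continuity of the inverse map follows from the local implicit-function construction together with this uniqueness.
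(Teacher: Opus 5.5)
Your proposal is correct and follows the paper's proof essentially step for step. It uses the same phase function $\alpha\mapsto\ell_{L,\alpha}(u-V_{L,\alpha})$ with derivative $\ell_{L,\alpha}'(u-V_{L,\alpha})-d_{L,\alpha}$, and the implicit function theorem on a phase window of fixed width for local existence and continuity. Global uniqueness comes, as in the paper, from the $L^2$ separation of translates of the strictly decreasing $U$ together with $\|V_{L,\alpha}-U_\alpha\|_2\leq CL$. Your monotonicity argument for $\|U(\cdot-s)-U\|_2$ usefully fills in the paper's one-line appeal to strict decrease. For uniformity in $\alpha_0$, the valid justification is your second one, via Lemma~\ref{sp:corrector-estimates}: $\chi(x/L)$ is not translated, so dependence on $\alpha_0$ is not purely through translation.
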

\begin{proof}
For $u=V_{L,\alpha_0}+w_0$ with $\|w_0\|_2<\rho_0$, Lemma~\ref{sp:corrector-estimates} gives $\|u-V_{L,\alpha}\|_2\leq\rho_0+C|\alpha-\alpha_0|$ and $\|\ell_{L,\alpha}'\|_{\mathsf X^*}\leq C$.
Consequently the phase function $\alpha\mapsto\ell_{L,\alpha}(u-V_{L,\alpha})$ satisfies
\[
\partial_\alpha[\ell_{L,\alpha}(u-V_{L,\alpha})]
=\ell_{L,\alpha}'(u-V_{L,\alpha})-d_{L,\alpha}
\leq-\tfrac12+C(\rho_0+|\alpha-\alpha_0|).
\]
Fix $r>0$ sufficiently small, then choose $\rho_0,L_0>0$ so that this derivative is at most $-1/4$ whenever $|\alpha-\alpha_0|\leq r$.
The interval and constants are independent of $\alpha_0$ and $L$.
The implicit function theorem gives continuous local coordinates.
Strict decrease of $U$ gives $\|U_\alpha-U_\beta\|_2\geq c_r>0$ when $|\alpha-\beta|\geq r$.
Since $\|V_{L,\alpha}-U_\alpha\|_2\leq CL$, two representations with normal norms below $\rho_0$ satisfy $\|U_\alpha-U_\beta\|_2\leq2\rho_0+2CL<c_r$ after reducing $\rho_0,L_0$ if needed.
Hence $|\alpha-\beta|<r$.
Local uniqueness then proves the assertion.
\end{proof}

\subsubsection{Uniform transverse resolvents and the ambient extension}

We use the sectors in \eqref{lp:sector}.

\begin{lemma}[Uniform augmented resolvent estimate]\label{sp:resolvent}
There exist $0<\eta<1$, $0<\theta<\pi/2$, 
$L_0>0$, 
and $C$ such that, 
for all $0<L\leq L_0$, $\alpha\in\R$, $\lambda\in\Omega_{\eta,\theta}$, 
and $h\in V'$, 
the augmented problem
	\begin{equation}\label{sp:augmented}
		(\lambda-A_{L,\alpha})w+\mu q_{L,\alpha}=h,
		\qquad \ell_{L,\alpha}w=0
	\end{equation}
has a unique solution in $V\times\mathbb C$, and
	\begin{equation}\label{sp:resolvent-estimate}
		(1+|\lambda|)^{1/2}\|w\|_2+\|w\|_V+|\mu|
		\leq C\|h\|_{V'}.
	\end{equation}
If, in addition, 
$h\in\mathsf X$, 
then $(1+|\lambda|)\|w\|_2\leq C\|h\|_2$.
\end{lemma}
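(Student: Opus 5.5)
We prove the augmented resolvent estimate in two steps. First we treat $L=0$, with the homogenized operator $A_{0,\alpha}$ and the limit pair $(q_{0,\alpha},\psi_\alpha)$. Then we transfer the bounds to small $L>0$ through a two-scale (oscillating test function) compactness argument. This plays the role that the frozen-operator contradiction argument played in Lemma~\ref{lp:stable}.

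\emph{Step 1: $L=0$.} By Lemma~\ref{sp:limit-spectrum}, zero is algebraically simple for $A_{0,\alpha}$, with kernel spanned by $q_{0,\alpha}$, and the rest of the spectrum lies in $\{\operatorname{Re}\lambda\le-\sigma\}$. Pairing \eqref{sp:augmented} with $\psi_\alpha$ gives
\[
\mu=\langle h,\psi_\alpha\rangle,
\]
and $w$ solves the restricted problem on $\ker\ell_{0,\alpha}$. Choose $\eta<\sigma$ and $\theta$ close to $\pi/2$. On the bounded part of $\Omega_{\eta,\theta}$ the restricted resolvent is then uniformly bounded $V'\to V$, and translation invariance makes the bound uniform in $\alpha$. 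The $(1+|\lambda|)^{1/2}$-weighted bound for large $|\lambda|$ comes from the form identity: pair with $\bar w$, take real and imaginary parts, and use $a_h>0$ together with the boundedness of $b_h$ and $\bar f'$. This is a Lax--Milgram sectorial bound with constants independent of $\alpha$.

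\emph{Step 2: large $|\lambda|$ for $L>0$.} Here no homogenization is needed. Pair \eqref{sp:augmented} with $\bar w$. The constraint $\ell_{L,\alpha}w=0$ and the uniform $V$ bounds on $q_{L,\alpha},\psi_{L,\alpha}$ from Lemma~\ref{sp:corrector-estimates} give $|\mu|\le C(\|h\|_{V'}+\|w\|_V)$. Coercivity of $a(x/L)$, with $a\ge a_*$, then yields \eqref{sp:resolvent-estimate} for $|\lambda|\ge R$ and $\lambda\in\Omega_{\eta,\theta}$, with $R$ and the constants independent of $L$ and $\alpha$. The part of the first-order term $b\,w'$ is absorbed using $|\lambda|^{1/2}$. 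The improved bound for $h\in\mathsf X$ follows by pairing with $\bar w$ again and using $\|h\|_{V'}\le\|h\|_2$ in the imaginary-part estimate.

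\emph{Step 3: bounded $\lambda$ for $L>0$.} Existence follows from the Fredholm index-zero property, as in Lemma~\ref{lp:stable}. The operator $\lambda-A_{L,\alpha}$ plus a large compactly supported potential is coercive $V\to V'$. The border terms given by $\mu q_{L,\alpha}$ and $\ell_{L,\alpha}$ are finite rank. So everything reduces to the uniform a priori bound, which we argue by contradiction. Suppose there are $L_n\to0$, $\lambda_n\to\lambda_*$, and solutions $(w_n,\mu_n)$ normalized so that $\|w_n\|_V+|\mu_n|=1$, while $\|h_n\|_{V'}\to0$. We may translate so that $\alpha_n=0$, at the cost of replacing $x/L$ by $(x+\alpha_n)/L$; the oscillating coefficients keep this form, so all estimates of Lemma~\ref{sp:corrector-estimates} still apply. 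The tail energy argument uses the stability condition \eqref{intro:stable-states}, which gives $f_u\le-\gamma_0$ wherever $V_{L,\alpha}$ is near $0$ or $1$. It keeps a positive amount of mass of $w_n$ in a fixed compact interval. Extract $w_n\rightharpoonup w_*$ weakly in $V$ and strongly in $L^2_{\rm loc}$, and $\mu_n\to\mu_*$.

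The main obstacle is passing to the limit in the divergence-form term $(a(x/L_n)w_n')'$. Weak convergence alone gives the wrong (arithmetic-mean) coefficient. We would use Tartar's oscillating test functions: test against
\[
\varphi+L_n\chi(x/L_n)\varphi',\qquad \varphi\in C_c^\infty,
\]
using the cell problem \eqref{sp:chi}. This produces the flux $a_h w_*'$. The drift is handled the same way with the corrector $\zeta$ from \eqref{sp:zeta}, which produces $b_h w_*'$; the identity \eqref{sp:adjointidentity} is exactly this computation. The potential converges by \eqref{sp:multiplier}, and $\mu_nq_{L_n,\alpha_n}\to\mu_*q_{0}$ and $\ell_{L_n,\alpha_n}w_n\to\ell_{0}w_*$ by Lemma~\ref{sp:corrector-estimates}.

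The limit $(w_*,\mu_*)$ then solves the homogeneous $L=0$ augmented problem, so Step 1 forces $w_*=0$ and $\mu_*=0$. To contradict the normalization we still need strong convergence. The mass bound in the compact interval, combined with strong $L^2_{\rm loc}$ convergence, shows $w_*\ne0$ if $\|w_n\|_V$ stays bounded below. The energy identity, with the tail coercivity, converts $\|w_n\|_{L^2_{\rm loc}}\to0$ and $\|h_n\|_{V'}\to0$ into $\|w_n\|_V\to0$. Pairing with $\psi_{L_n,\alpha_n}$ and using \eqref{sp:adjointbound} then gives $\mu_n\to0$. This contradicts the normalization, and uniformity in $\alpha$ is automatic since the extracted sequences were arbitrary.
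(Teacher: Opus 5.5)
You have a genuine gap, in the bounded-$\lambda$ part of Step~3. Your Steps~1 and~2 are sound: at large $|\lambda|$ the unweighted form suffices, since $|\int_\R b(x/L)w'\bar w\,dx|\le\|b\|_\infty\|w'\|_2\|w\|_2$ is absorbed by $|\lambda|\|w\|_2^2$. Your oscillating-test-function limit in Step~3 also matches the paper. The problem is the tail coercivity at bounded $\lambda$, which you use three times:
\begin{itemize}
\item for the coercive reference operator in the Fredholm argument;
\item to keep mass in a compact interval;
\item to upgrade $\|w_n\|_{L^2_{\mathrm{loc}}}\to0$ to $\|w_n\|_V\to0$.
\end{itemize}
When you pair with $\bar w$, the drift contributes
\[
\operatorname{Re}\int_\R b(x/L)\,w'\bar w\,dx=-\frac1{2L}\int_\R b'(x/L)|w|^2\,dx,
\]
which is of size $L^{-1}$ and has no sign. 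Cauchy--Schwarz absorbs it into $a_*\|w'\|_2^2+\gamma_0\|w\|_2^2$ only if $\|b\|_\infty^2<4a_*\gamma_0$, and no such smallness is assumed.

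The failure is real, not just a weakness of the estimate. Take real $w=\varphi(x)\bigl(1+L\beta(x/L)\bigr)$ with $\beta'=(b-b_h)/(2a)$, which is periodic since $\int_0^1(b-b_h)/a=0$. Let $\varphi$ be real, slowly varying, and supported far in the right tail. For real $\lambda$, as $L\to0$ the quotient $\langle(\lambda-A_{L,\alpha})w,w\rangle/\|w\|_2^2$ tends to
\[
\lambda-\int_0^1 f_u(y,0)\,dy-\frac14\int_0^1\frac{(b-b_h)^2}{a}\,dy,
\]
up to errors controlled by $\|\varphi'\|_2^2/\|\varphi\|_2^2$ and by the size of $V_{L,\alpha}$ on $\operatorname{supp}\varphi$. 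This is negative for a strongly oscillating drift, while other real $w$ give large positive values. So for real $\lambda$ near $0$ the real-valued form takes both signs, and neither a rotation nor a compactly supported potential makes it coercive. The analogy with Lemma~\ref{lp:stable} breaks down because there the drift $b(Y+\epsilon z)$ varies slowly and produces only the $O(\epsilon)$ term $-\frac{\epsilon}{2}b'$ in \eqref{lp:energy}.

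The missing idea is the paper's density weight. The paper tests $\lambda-\widetilde A$ against $m_L\bar w$, where $m_L=1+O(L)$ is a positive $L$-periodic function solving $a(x/L)m_L'-b(x/L)m_L=J_L$ with $J_L$ constant. The two first-order contributions then combine into $J_L\int_\R w'\bar w\,dx$, which is purely imaginary. After the core potential is replaced by $\widetilde W\le-\gamma/2$, the form is uniformly sectorial with real part $\geq c\|w\|_V^2$. Complex Lax--Milgram then gives uniform invertibility of $\lambda-\widetilde A:V\to V'$ on all of $\Omega_{\eta,\theta_*}$, including bounded $\lambda$ with $\operatorname{Re}\lambda$ down to $-\eta$. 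This one fact yields both the Fredholm index-zero property of the augmented operator and the tightness estimate
\[
\|w\|_V\le C\bigl(\|(\lambda-A_{L,\alpha})w\|_{V'}+\|w\|_{L^2(|x-\alpha|\le R)}\bigr),
\]
uniformly in $L$ and $\alpha$. That estimate is exactly what your contradiction argument needs wherever you invoke tail coercivity. With it in place, the rest of your Step~3 goes through essentially as in the paper: the normalization, the test functions $\varphi+L\chi\varphi'+L\zeta\varphi$ via \eqref{sp:adjointidentity}, the $L=0$ augmented inverse forcing $w_*=0$ and $\mu_*=0$, and the recovery of $\mu_n\to0$.
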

\begin{proof}
We follow the sectoriality--compactness argument of Lemma~\ref{lp:stable}.
There is a positive $L$-periodic density $m_L$, normalized by its mean, such that
	\begin{equation}\label{sp:density}
		a(x/L)m_L'-b(x/L)m_L=J_L,\qquad
		m_L=1+O(L),\quad \|m_L'\|_\infty+|J_L|\leq C.
	\end{equation}
To see this directly, 
in the variable $y=x/L$ write $m_L(Ly)=1+Lr(y)$ and impose $\int_0^1r=0$.
The equation is $ar'-b(1+Lr)=J$.
At $L=0$ its solution has $J=-b_h$ and $r'=(b-b_h)/a$.
On zero-mean periodic functions, $T(r,J)=ar'-J$ is an isomorphism onto periodic continuous functions:
integration after division by $a$ determines $J$ and then $r$.
Writing $B(r,J)=br$, the equation becomes $(I-LT^{-1}B)(r,J)=T^{-1}b$.
For small $L$, a Neumann series gives the uniform bounds in \eqref{sp:density}.
In particular, $m_L=1+Lr>0$.

Let $W_{L,\alpha}=f_u(x/L,V_{L,\alpha})$.
There is a fixed $R$ such that $W_{L,\alpha}\leq-\gamma$ for $|x-\alpha|\geq R$.
Choose $\widetilde W_{L,\alpha}\leq-\gamma/2$ equal to $W_{L,\alpha}$ outside that interval, with uniformly bounded magnitude, and let $\widetilde A$ be the corresponding differential operator.
Choose $0<\eta<\min\{\gamma/4,1\}$.
Testing $\lambda-\widetilde A$ against $m_L\bar w$ gives, apart from $(\lambda+\eta)\int m_L|w|^2$, the form
	\[
	\mathfrak q_\eta(w,w) := \int a m_L|w'|^2-\int m_L(\widetilde W+\eta)|w|^2
	+J_L\int w'\bar w.
	\]
Since $\operatorname{Re}\int w'\bar w=0$, its real part is at least $c\|w\|_V^2$, while its imaginary part is bounded by $C\|w\|_V^2$.
Thus its values lie in a fixed sector $|\arg z|\leq\omega<\pi/2$.
Choose $\theta_*\in(\omega,\pi/2)$.
For each $\lambda\in\Omega_{\eta,\theta_*}$, sector geometry gives an angle $\beta_\lambda$ depending only on $\lambda$ such that
	\[
	\operatorname{Re}\left\{e^{-i\beta_\lambda}
	\left[(\lambda+\eta)\int_\R m_L|w|^2\,dx
	+\mathfrak q_\eta(w,w)\right]\right\}
	\ge c_{\theta_*}\bigl(|\lambda+\eta|\|w\|_2^2+\|w\|_V^2\bigr).
	\]
Complex Lax--Milgram makes $\lambda-\widetilde A:V\to V'$ uniformly invertible on $\Omega_{\eta,\theta_*}$.
The operator in \eqref{sp:augmented} has the decomposition
	\[
	\begin{aligned}
	((\lambda-A_{L,\alpha})w+\mu q_{L,\alpha},\,\ell_{L,\alpha}w)
	&=((\lambda-\widetilde A)w,\mu)\\
	&\quad+((\widetilde W-W)w+\mu q_{L,\alpha},
	\,\ell_{L,\alpha}w-\mu).
	\end{aligned}
	\]
The first map is invertible.
In the second, $\widetilde W-W:V\to V'$ is compact by local $H^1\hookrightarrow L^2$ compactness, and the remaining terms have rank at most two.
Thus the augmented operator is Fredholm of index zero, as in Lemma~\ref{lp:stable}.
For bounded $\lambda$ in this sector, the inverse of $\lambda-\widetilde A$ and
	\[
	 \|(W-\widetilde W)w\|_{V'}
	 \leq C\|w\|_{L^2(|x-\alpha|\leq R)}
	\]
give
	\begin{equation}\label{sp:tightness}
		\|w\|_V\leq C\bigl(\|(\lambda-A_{L,\alpha})w\|_{V'}
		+\|w\|_{L^2(|x-\alpha|\leq R)}\bigr).
	\end{equation}

Pairing \eqref{sp:augmented} with $\psi_{L,\alpha}$ gives the exact identity $\mu=(\ell h+\ell(A_{L,\alpha}w))/d_{L,\alpha}$.
In particular, 
when $\ell h=0$, 
$\mu=\ell(A_{L,\alpha}w)/d_{L,\alpha}$ and $|\mu|\leq C\|w\|_2$ by \eqref{sp:adjointbound}.
The projected operator $A_{L,\alpha}-q_{L,\alpha}\ell_{L,\alpha}A_{L,\alpha}/d_{L,\alpha}$ is thus a bounded finite-rank perturbation of $A_{L,\alpha}$.
The argument used in Lemma~\ref{lp:stable}, now on $V\to V'$ with the density-weighted form and a fixed shift $M>0$, gives the large-parameter sector estimate:
for some $0<\theta_0<\pi/2$ and $\lambda\in\Omega_{-M,\theta_0}$,
	\[
	|\lambda-M|\|w\|_2^2+\|w\|_V^2\leq C\|h\|_{V'}^2.
	\]
For $h\in\mathsf X$, the same form estimate gives $|\lambda-M|\|w\|_2\leq C\|h\|_2$ away from a fixed ball.
General $h$ is reduced to this case by replacing it with $h-q\ell h/d$ and replacing $\mu$ with $\mu-\ell h/d$.

By Lemma~\ref{sp:limit-spectrum} and the limiting sectorial bound, decrease $\eta>0$ if necessary and choose $0<\theta_1<\pi/2$ so that the limiting augmented operator
is invertible on the closure of $\Omega_{\eta,\theta_1}$.
Take $\max(\theta_0,\theta_1)<\theta<\pi/2$.
The same two-angle sector geometry used in Lemma~\ref{lp:stable} covers all sufficiently large parameters.
That is, for some $M_1$,
	\begin{equation}\label{sp:sector-cover}
		\Omega_{\eta,\theta}\cap\{|\lambda|>M_1\}
		\subset\Omega_{-M,\theta_0}.
	\end{equation}
The remaining parameters form a compact subset of the resolvent region of the limiting augmented operator.

Suppose the uniform bound on this compact set fails along $L_n\to0$.
The large-$M$ energy inequality permits normalization so that
	\[
	\|w_n\|_2=1,\quad \|h_n\|_{V'}\to0,\quad
	\|w_n\|_V+|\mu_n|\leq C,\quad \lambda_n\to\lambda_*.
	\]
Translate by integer multiples of $L_n$ to arrange $0\leq\alpha_n<L_n$.
Along a subsequence, $w_n\rightharpoonup w_*$ in $V$, $w_n\to w_*$ locally in $\mathsf X$, and $\mu_n\to\mu_*$.
Test the equation against $\varphi_n=\varphi+L_n\chi(x/L_n)\varphi' +L_n\zeta(x/L_n)\varphi$ for $\varphi\in C_c^\infty$.
Write $a_n=a(x/L_n)$, $b_n=b(x/L_n)$, 
and $W_n=f_u(x/L_n,V_{L_n,\alpha_n})$.
By \eqref{sp:adjointidentity}, 
\eqref{sp:multiplier}, 
$\|w_n\|_V\leq C$, 
and $\|\varphi_n-\varphi\|_2=O(L_n)$,
	\[
	\begin{aligned}
	&\int_\R\bigl(a_nw_n'\overline{\varphi_n'}
	    -b_nw_n'\overline{\varphi_n}
	    -W_nw_n\overline{\varphi_n}\bigr)\,dx\\
	&\quad=\int_\R\bigl(a_hw_*'\bar\varphi'
	    -b_hw_*'\bar\varphi
	    -\bar f'(U_0)w_*\bar\varphi\bigr)\,dx+o(1).
	\end{aligned}
	\]
Also $\psi_{L_n,\alpha_n}\to\psi_0$ and $q_{L_n,\alpha_n}\to q_{0,0}$ in $\mathsf X$.
Thus
	\[
	(\lambda_*-A_{0,0})w_*+\mu_*q_{0,0}=0,\qquad
	\langle\psi_0,w_*\rangle=0.
	\]
The inverse of the limiting augmented operator implies $w_*=0$ and $\mu_*=0$.
Since $w_*=0$, compact embedding on the fixed core gives $\|w_n\|_{L^2(|x-\alpha_n|\leq R)}\to0$.
Since also $\mu_n\to0$ and $q_{L_n,\alpha_n}$ is bounded in $V'$, \eqref{sp:tightness}, 
applied with right side $h_n-\mu_nq_{L_n,\alpha_n}$, 
yields $w_n\to0$ in $V$, 
a contradiction.
This proves the estimate on the compact remainder.
Fredholm index zero and injectivity give existence.
Combining this with \eqref{sp:sector-cover} proves all the stated estimates.
\end{proof}

On $N_\alpha$, define $B_\alpha=Q_\alpha A_{L,\alpha}$.
Since $\eta<1$, 
the scalar eigenvalue $-1$ lies in the excluded sector.
Put
\begin{equation}\label{sp:extended}
 \widehat B_\alpha=B_\alpha Q_\alpha-P_\alpha.
\end{equation}

\begin{lemma}[Small-period ambient linear estimates]
\label{sp:ambient-semigroup}\label{sp:operatorlip-lemma}
For sufficiently small $L$, 
$\widehat B_\alpha$ is uniformly sectorial on $V'$ with domain $V$.
Its realization on $\mathsf X$ is also uniformly sectorial, 
with domain
\[
D_{\mathsf X}(\widehat B_\alpha)
:=\{w\in V:\widehat B_\alpha w\in\mathsf X\}.
\]
Furthermore, 
there exist constants $C,\omega>0$ independent of $L$ and $\alpha$, such that, 
for all $t>0$,
\begin{equation}\label{sp:semigroups}
 \|e^{t\widehat B_\alpha}\|_{\mathsf X\to\mathsf X}\leq Ce^{-\omega t},\qquad
 \|e^{t\widehat B_\alpha}\|_{V'\to\mathsf X}
 \leq C(1+t^{-1/2})e^{-\omega t}.
\end{equation}
\begin{equation}\label{sp:operatorlip}
	\|\widehat B_\alpha-\widehat B_\beta\|_{\mathsf X\to V'}
	\leq C|\alpha-\beta|.
\end{equation}
\end{lemma}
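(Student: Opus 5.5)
The resolvent of $\widehat B_\alpha$ will be built from the augmented problem of Lemma~\ref{sp:resolvent}, following the pattern of \eqref{lp:sum-action} in the large-period case. Given $h\in V'$ and $\lambda\in\Omega_{\eta,\theta}$, we split $h=Q_\alpha h+P_\alpha h$. The complement of $N_\alpha$ is spanned by $q_{L,\alpha}$, and there $\widehat B_\alpha q_{L,\alpha}=-q_{L,\alpha}$, so the $P_\alpha$ component has resolvent $(\lambda+1)^{-1}P_\alpha h$. The exclusion of $-1$ from $\Omega_{\eta,\theta}$ (using $\eta<1$) gives $|\lambda+1|\ge c(1+|\lambda|)$.

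On $N_\alpha$, the equation $(\lambda-Q_\alpha A_{L,\alpha})w=Q_\alpha h$ with $\ell_{L,\alpha}w=0$ is exactly \eqref{sp:augmented}. To see this, write $\mu q_{L,\alpha}/d_{L,\alpha}$ for the $P_\alpha$ part of $A_{L,\alpha}w$, which is absorbed into $\mu$. Here $P_\alpha$ and $Q_\alpha$ are extended to $V'$ via $\ell_{L,\alpha}h=\langle h,\psi_{L,\alpha}\rangle$, which is legitimate because $\psi_{L,\alpha}\in V$ by Lemma~\ref{sp:corrector-estimates}. Lemma~\ref{sp:resolvent} then yields unique solvability together with the bounds $(1+|\lambda|)^{1/2}\|w\|_2+\|w\|_V\le C\|h\|_{V'}$ and, for $h\in\mathsf X$, $(1+|\lambda|)\|w\|_2\le C\|h\|_2$. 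The projections are uniformly bounded on both $V'$ and $\mathsf X$, since $d_{L,\alpha}\ge 1/2$ and $q_{L,\alpha},\psi_{L,\alpha}$ are bounded in $V$.

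Summing the two pieces gives uniform sectorial resolvent bounds on the shifted sector $\Omega_{\eta,\theta}$. The first is $\|(\lambda-\widehat B_\alpha)^{-1}\|_{\mathsf X\to\mathsf X}\le C/(1+|\lambda|)$, which gives sectoriality on $\mathsf X$ with the stated maximal domain. The second is $\|(\lambda-\widehat B_\alpha)^{-1}\|_{V'\to\mathsf X}\le C(1+|\lambda|)^{-1/2}$. For sectoriality on $V'$ with domain $V$, we additionally need $|\lambda|\|w\|_{V'}\le C\|h\|_{V'}$. This follows from the equation itself: $\lambda w=h+\widehat B_\alpha w$, with $\|\widehat B_\alpha w\|_{V'}\le C\|w\|_V$, gives the bound for bounded $\lambda$. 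For large $\lambda$ we need better control of $\|w\|_V$. Here I would reuse the density-weighted coercive form from the proof of Lemma~\ref{sp:resolvent}, together with the standard Lax–Milgram bound $|\lambda|\|w\|_{V'}\lesssim\|h\|_{V'}$ for form operators. The finite-rank and compact parts are lower order and are handled by the Neumann argument. I expect this $V'$ sectoriality at large $|\lambda|$ to be the main technical obstacle, because \eqref{sp:resolvent-estimate} alone controls only $\|w\|_V$ rather than $|\lambda|\|w\|_{V'}$.

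The semigroup bounds \eqref{sp:semigroups} then follow from the Dunford integral over the boundary of $\Omega_{\eta',\theta}$ for some $0<\omega<\eta'<\eta$. The $\mathsf X\to\mathsf X$ estimate uses the $(1+|\lambda|)^{-1}$ decay. The $V'\to\mathsf X$ estimate uses the $(1+|\lambda|)^{-1/2}$ decay, and the usual scaling $\lambda\mapsto\lambda/t$ produces the factor $t^{-1/2}$ for small $t$, while the shift supplies $e^{-\omega t}$. All constants are independent of $L$ and $\alpha$ because those in Lemma~\ref{sp:resolvent} are.

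For the Lipschitz bound \eqref{sp:operatorlip}, expand $\widehat B_\alpha=Q_\alpha A_{L,\alpha}Q_\alpha-P_\alpha$ and estimate the differences one factor at a time. For the projections, $\|P_\alpha-P_\beta\|_{\mathsf X\to V}\le C|\alpha-\beta|$ by the uniform bounds on the phase derivatives of $q_{L,\alpha},\psi_{L,\alpha}$ and $d_{L,\alpha}$. For the operator, $A_{L,\alpha}-A_{L,\beta}$ is multiplication by $f_u(x/L,V_{L,\alpha})-f_u(x/L,V_{L,\beta})$, which is bounded by $C|\alpha-\beta|$ in $L^\infty$ and hence maps $\mathsf X\to\mathsf X\subset V'$. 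The remaining compositions are controlled as follows. $A_{L,\alpha}$ applied to the image of $P_\alpha-P_\beta$ is bounded because that image lies in $\mathrm{span}\{q_{L,\cdot}\}\subset V$. The factor $\ell_{L,\alpha}A_{L,\alpha}$ extends boundedly to $\mathsf X$ by \eqref{sp:adjointbound}, with a Lipschitz dependence on $\alpha$ given by its $\alpha$-derivative bound. Together these give \eqref{sp:operatorlip}.
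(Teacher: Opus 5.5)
Your proposal is correct and follows the paper's proof essentially step for step: you use the same $P_\alpha/Q_\alpha$ splitting, with the augmented problem of Lemma~\ref{sp:resolvent} on $N_\alpha$ and $(\lambda+1)^{-1}P_\alpha h$ on the complement, then contour integration for \eqref{sp:semigroups}, and the projection expansion together with \eqref{sp:adjointbound} for \eqref{sp:operatorlip}. The step you flag as the main obstacle is not one. Estimate \eqref{sp:resolvent-estimate} and $\|P_\alpha\|_{V'\to V}\leq C$ give $\|(\lambda-\widehat B_\alpha)^{-1}\|_{V'\to V}\leq C$ uniformly on the whole sector, not only for bounded $\lambda$, so your identity $\lambda w=h+\widehat B_\alpha w$ already yields $|\lambda|\|w\|_{V'}\leq C\|h\|_{V'}$ for every $\lambda$; this is exactly how the paper concludes, and no density-weighted form argument is needed.
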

\begin{proof}
As in \eqref{lp:sum-action}, 
$\ell_{L,\alpha}\widehat B_\alpha=-\ell_{L,\alpha}$ and $\widehat B_\alpha q_{L,\alpha}=-q_{L,\alpha}$.
For a source $h$, 
solve its $Q_\alpha h$ component by the augmented problem in Lemma~\ref{sp:resolvent};
the complementary component is $(\lambda+1)^{-1}P_\alpha h$.
The uniform bound $P_\alpha:V'\to V$ then gives
\[
 \begin{split}
 \|(\lambda-\widehat B_\alpha)^{-1}\|_{V'\to V}&\leq C,\\
 \|(\lambda-\widehat B_\alpha)^{-1}\|_{V'\to\mathsf X}&\leq C(1+|\lambda|)^{-1/2},\\
 \|(\lambda-\widehat B_\alpha)^{-1}\|_{\mathsf X\to\mathsf X}&\leq C(1+|\lambda|)^{-1}.
 \end{split}
\]
Since $\widehat B_\alpha:V\to V'$ is uniformly bounded, the resolvent identity also gives the sectorial $V'\to V'$ bound.
Consistent contour integration gives \eqref{sp:semigroups}.
For \eqref{sp:operatorlip}, expand
\[
 \widehat B_\alpha=A_{L,\alpha}-A_{L,\alpha}P_\alpha-P_\alpha A_{L,\alpha}
                  +P_\alpha A_{L,\alpha}P_\alpha-P_\alpha.
\]
The potential of $A_{L,\alpha}$ is Lipschitz in $\alpha$, and $P_\alpha:\mathsf X\to V$ is uniformly Lipschitz.
The identity $\ell_{L,\alpha}A_{L,\alpha}w=(w,A_{L,\alpha}^*\psi_{L,\alpha})_{\mathsf X}$ and \eqref{sp:adjointbound}
give the same bound for $P_\alpha A_{L,\alpha}:\mathsf X\to V'$.
Each term in the expansion thus has the asserted difference bound.
\end{proof}

Define
\begin{equation}\label{sp:ambient-nonlinearity}
 G_L(\alpha,w)
 =Q_\alpha\bigl(R_{L,\alpha}+N_{L,\alpha}(w)\bigr)
  -\nu_L(\alpha,w)\frac{q_{L,\alpha}}{d_{L,\alpha}}
     \ell_{L,\alpha}'w.
\end{equation}
Together with $\dot\alpha=\nu_L(\alpha,w)$, the ambient normal equation is
\begin{equation}\label{sp:normal}
 w_t=\widehat B_\alpha w+G_L(\alpha,w).
\end{equation}

\begin{lemma}[Constraint preservation and small-period ambient estimates]
\label{sp:ambient-estimates}
For sufficiently small $L_0,r_*>0$, 
the scalar formula in \eqref{sp:coordinates} is well defined and locally Lipschitz on $\R\times B_{\mathsf X}(0,r_*)$, 
with $d_{L,\alpha}-\ell_{L,\alpha}'w\geq1/4$.
Moreover, 
every regular solution of the ambient system satisfies
\begin{equation}\label{sp:constraint-transport}
 \frac{d}{dt}\bigl(\ell_{L,\alpha(t)}w(t)\bigr)
 =-\ell_{L,\alpha(t)}w(t).
\end{equation}
On $\ell_{L,\alpha}w=0$, 
the ambient system agrees with \eqref{sp:coordinates}.
Finally, 
for $0<L<L_0$, 
$0<r\leq r_*$, 
and $\|w\|_2,\|z\|_2\leq r$, 
one has
\begin{align}\label{sp:coordinatebounds}
	|\nu_L(\alpha,w)|&\leq C(L+r),\notag\\
	|\nu_L(\alpha,w)-\nu_L(\beta,z)|
	&\leq C\|w-z\|_2+C(L+r)|\alpha-\beta|,\notag\\
	\|G_L(\alpha,w)\|_{V'}&\leq C(L+r^2),\notag\\
	\|G_L(\alpha,w)-G_L(\beta,z)\|_{V'}
	&\leq C(L+r)\|w-z\|_2+C(L+r^2)|\alpha-\beta|.
\end{align}
\end{lemma}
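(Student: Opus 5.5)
The plan follows the large-period template of Lemma~\ref{lp:ambient-estimates} and equation \eqref{lp:constraint-transport}, with the estimates now taken in $\mathsf X$ and $V'$ and supplied by Lemma~\ref{sp:corrector-estimates}.

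\emph{Denominator and well-definedness.} The phase derivative $\ell_{L,\alpha}'w=\langle w,\partial_\alpha\psi_{L,\alpha}\rangle$ satisfies $|\ell_{L,\alpha}'w|\leq C\|w\|_2$, because $\partial_\alpha\psi_{L,\alpha}$ is uniformly bounded in $V\subset\mathsf X$. Together with $d_{L,\alpha}=1+O(L)\geq1/2$, this gives $d_{L,\alpha}-\ell'_{L,\alpha}w\geq 1/2-Cr_*\geq1/4$ once $r_*$ is small.

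\emph{Bounds on the numerator.} We bound the three pieces separately.
\begin{itemize}
\item The residual piece satisfies $|\ell_{L,\alpha}R_{L,\alpha}|\leq\|\psi_{L,\alpha}\|_V\|R_{L,\alpha}\|_{V'}\leq CL$ by \eqref{sp:residual}.
\item The linear piece is $\ell_{L,\alpha}A_{L,\alpha}w=(w,A_{L,\alpha}^*\psi_{L,\alpha})$. This is bounded by $C\|w\|_2$ by \eqref{sp:adjointbound}, and it is Lipschitz in $\alpha$ by the $\alpha$-derivative part of \eqref{sp:adjointbound}.
\item The nonlinear piece is $O(r^2)$ by \eqref{sp:nonlinear}.
\end{itemize}
These give $|\nu_L|\leq C(L+r)$. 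For the Lipschitz estimate we split
\[
\nu_L(\alpha,w)-\nu_L(\beta,z)=[\nu_L(\alpha,w)-\nu_L(\alpha,z)]+[\nu_L(\alpha,z)-\nu_L(\beta,z)].
\]
In the first bracket the linear term gives $C\|w-z\|_2$. This term is not small, which is why the Lipschitz constant in $w$ is only $C$. In the second bracket, the $\alpha$-derivatives of $\psi_{L,\alpha}$, $R_{L,\alpha}$, $A^*_{L,\alpha}\psi_{L,\alpha}$ and $d_{L,\alpha}$ each multiply a quantity of size $O(L)$, $O(r)$ or $O(r^2)$, which gives $C(L+r)|\alpha-\beta|$. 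The quotient rule then finishes the scalar estimates; the lower bound $1/4$ controls the denominator.

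\emph{Estimates for $G_L$.} Since $P_\alpha=q_{L,\alpha}\ell_{L,\alpha}/d_{L,\alpha}$, the operators $P_\alpha$ and $Q_\alpha$ are bounded on $V'$. They are also Lipschitz in $\alpha$ on $V'$, because $\ell_{L,\alpha}$ pairs with $\psi_{L,\alpha}\in V$ and $q_{L,\alpha}\in V$ depends Lipschitz on $\alpha$. The term $Q_\alpha(R+N)$ is therefore bounded by $C(L+r^2)$, with differences of size $Cr\|w-z\|_2+C(L+r^2)|\alpha-\beta|$. The transport term $\nu_L\,q_{L,\alpha}\ell'_{L,\alpha}w/d_{L,\alpha}$ is bounded by $C(L+r)r\leq C(L+r^2)$, using $Lr\leq L$. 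Its $w$-difference is
\[
C\bigl(|\nu_L|\|w-z\|_2+|\Delta\nu_L|\,r\bigr)\leq C(L+r)\|w-z\|_2,
\]
and its $\alpha$-difference is $C(L+r)r|\alpha-\beta|\leq C(L+r^2)|\alpha-\beta|$. Combining these gives \eqref{sp:coordinatebounds}. Local Lipschitz continuity of $\nu_L$ is contained in the same computation.

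\emph{Constraint transport.} Consider a regular solution of $w_t=\widehat B_\alpha w+G_L$ with $\dot\alpha=\nu_L$. We compute
\[
\frac{d}{dt}\ell_{L,\alpha}w=\ell_{L,\alpha}w_t+\nu_L\,\ell'_{L,\alpha}w.
\]
From \eqref{sp:extended} we have $\ell_{L,\alpha}\widehat B_\alpha w=-\ell_{L,\alpha}w$, since $\ell Q=0$ and $\ell P=\ell$. Also $\ell_{L,\alpha}Q_\alpha=0$ and $\ell_{L,\alpha}q_{L,\alpha}/d_{L,\alpha}=1$, so $\ell_{L,\alpha}G_L=-\nu_L\ell'_{L,\alpha}w$. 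This term cancels the chain-rule term and yields \eqref{sp:constraint-transport}.

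\emph{Agreement with \eqref{sp:coordinates} on the constraint.} On $\ell w=0$ we have $Q_\alpha w=w$, so the ambient equation reads
\[
w_t=Q_\alpha A_{L,\alpha}w+Q_\alpha(R+N)-\nu_L\frac{q}{d}\,\ell'w.
\]
Expanding $Q_\alpha=I-P_\alpha$, the $P_\alpha$-part is $-\frac{q}{d}\bigl[\ell(Aw+R+N)+\nu_L\ell'w\bigr]$. By the definition of $\nu_L$ the bracket equals $\nu_L d$, so this part is $-q\,\nu_L$, which is exactly \eqref{sp:coordinates}.

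\emph{Main obstacle.} The only delicate point is keeping every pairing in the correct duality. The residual, nonlinear and potential terms are controlled only in $V'$, so they must be paired with $\psi_{L,\alpha}\in V$. The operator $A_{L,\alpha}w$ for $w\in\mathsf X$ must be handled through the adjoint bound \eqref{sp:adjointbound}, never directly.
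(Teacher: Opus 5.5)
Your proof is correct and takes the same route as the paper. Both arguments get the denominator bound from $d_{L,\alpha}\geq 1/2$ and $|\ell'_{L,\alpha}w|\leq Cr$. Both control the numerator through the $V$--$V'$ pairing with $\psi_{L,\alpha}$, using the adjoint bound \eqref{sp:adjointbound} for the linear term, and both obtain the $G_L$ estimates by insertion into \eqref{sp:ambient-nonlinearity}. Constraint transport and agreement on the constraint then follow from $\ell_{L,\alpha}\widehat B_\alpha=-\ell_{L,\alpha}$ and $\ell_{L,\alpha}(q_{L,\alpha}/d_{L,\alpha})=1$. You write out the quotient-rule and difference bookkeeping that the paper's terse proof leaves implicit.
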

\begin{proof}
By Lemma~\ref{sp:corrector-estimates}, $\ell_{L,\alpha}A_{L,\alpha}$ extends to $\mathsf X$ by pairing with $A_{L,\alpha}^*\psi_{L,\alpha}$.
Also $d_{L,\alpha}\geq1/2$ and $|\ell_{L,\alpha}'w|\leq Cr$, so the denominator is at least $1/4$ for small $r_*$.
The residual, 
adjoint, 
and nonlinear bounds in that lemma, 
applied to the quotient \eqref{sp:coordinates},
give the estimates for $\nu_L$.
Insertion into \eqref{sp:ambient-nonlinearity} gives the estimates for $G_L$.
The identities $\ell_{L,\alpha}\widehat B_\alpha=-\ell_{L,\alpha}$ and $\ell_{L,\alpha}(q_{L,\alpha}/d_{L,\alpha})=1$
give \eqref{sp:constraint-transport} by the calculation leading to \eqref{lp:constraint-transport}.
On the constraint, \eqref{sp:coordinates} identifies the $P_\alpha$ component, proving equivalence.
\end{proof}

\subsubsection{Parabolic evolution and time-map estimates}

\begin{lemma}[Small-period flow estimates]\label{sp:flow-estimates}
Fix $T>0$.
There are $C_T,\delta_T>0$ such that, for $0<L<\delta_T$ and $0\leq r_0<\delta_T$, the coordinate equations \eqref{sp:coordinates}
have a unique flow on $[0,T]$ for initial data in \eqref{sp:chart} with $\|w(0)\|_2\leq r_0$.
The trajectories remain in a radius-$r$ tube with $r=C_T(r_0+L)<\min\{\rho_0,r_*\}$.
Furthermore, 
the following estimates hold for a single trajectory and for a pair of trajectories, 
respectively,
with $D(t)=\|w_1(t)-w_2(t)\|_2$ and $A(t)=|\alpha_1(t)-\alpha_2(t)|$ in the latter case:
\begin{align}
	\|w(T)\|_2&\leq Ce^{-\omega T}\|w(0)\|_2+C_T(L+r^2),
	\label{sp:oneorbit}\\
	\sup_{0\leq t\leq T}(D(t)+A(t))&\leq C_T(D(0)+A(0)),\notag\\
	D(T)&\leq Ce^{-\omega T}D(0)
	+C_T(L+r)(D(0)+A(0)), \label{sp:twoorbit}\\
	|(\alpha_1(t)-\alpha_2(t))-(\alpha_1(0)-\alpha_2(0))|
	&\leq C_TD(0)+C_T(L+r)A(0),
	\quad 0\leq t\leq T. \label{sp:phaseincrement}
\end{align}
\end{lemma}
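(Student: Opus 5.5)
The plan is to work with the ambient system \eqref{sp:normal} together with $\dot\alpha=\nu_L(\alpha,w)$, using the mild (variation-of-constants) formulation with the semigroup frozen at the initial phase. By Lemma~\ref{sp:ambient-estimates}, on the constraint $\ell_{L,\alpha}w=0$ the ambient system coincides with \eqref{sp:coordinates}. By \eqref{sp:constraint-transport} the constraint is preserved. It therefore suffices to solve the ambient system.

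\textbf{Step 1: frozen-phase fixed point.} Fix $\alpha_0=\alpha(0)$ and write
\[
w(t)=e^{t\widehat B_{\alpha_0}}w(0)+\int_0^t e^{(t-s)\widehat B_{\alpha_0}}\Bigl\{(\widehat B_{\alpha(s)}-\widehat B_{\alpha_0})w(s)+G_L(\alpha(s),w(s))\Bigr\}\,ds,
\]
with $\alpha(t)=\alpha_0+\int_0^t\nu_L(\alpha(s),w(s))\,ds$. We seek a fixed point in $C([0,T];\R\times\mathsf X)$ on the set where $\|w\|_{C_t\mathsf X}\leq r$ and $|\alpha-\alpha_0|\leq C(L+r)T$.

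\textbf{Step 2: the main obstacle.} The forcing lands only in $V'$. The semigroup estimate \eqref{sp:semigroups} gives the integrable singularity $(1+(t-s)^{-1/2})$ from $V'\to\mathsf X$. Combined with \eqref{sp:operatorlip} and \eqref{sp:coordinatebounds}, the Lipschitz constant of the integral map is at most $C T^{1/2}$ times
\[
\sup_t|\alpha(t)-\alpha_0|+L+r,
\]
since the phase displacement is $O_T(L+r)$. This constant is not small for fixed $T$. I would therefore first construct the solution on a short interval of length $\tau$ with $C\tau^{1/2}<1/2$. Because the bounds are uniform in $\alpha_0$, I would then re-freeze at the new phase and iterate $\lceil T/\tau\rceil$ times. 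This gives existence, uniqueness, and the bound $\sup_{[0,T]}\|w\|_2\leq C_T(r_0+L)$. The last point requires $r$ to be chosen a priori as $C_T(r_0+L)$ and $\delta_T$ small so that $r<\min\{\rho_0,r_*\}$; I would close this by a continuation (bootstrap) argument.

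\textbf{Step 3: \eqref{sp:oneorbit}.} To avoid the non-small freezing term over the whole interval, I would compare with the semigroup at the initial phase and write
\[
\widehat B_{\alpha(s)}-\widehat B_{\alpha_0}=O(|\alpha(s)-\alpha_0|)=O_T(L+r)
\]
in $\mathsf X\to V'$. Then
\[
\|w(T)\|_2\leq Ce^{-\omega T}\|w(0)\|_2+C_T(L+r)r+C_T(L+r^2).
\]
The middle term $C_T(L+r)r$ is absorbed into $C_T(L+r^2)$ after enlarging constants, since $Lr\leq L$.

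\textbf{Step 4: the two-orbit estimates.} For two trajectories I would subtract the mild formulas, freezing both at $\alpha_{1}(0)$. The difference of the forcings is controlled by \eqref{sp:operatorlip} and the Lipschitz bounds in \eqref{sp:coordinatebounds}. Specifically,
\[
\|(\widehat B_{\alpha_1}-\widehat B_{\alpha_1(0)})(w_1-w_2)\|_{V'}\lesssim (L+r)D,
\]
while
\[
(\widehat B_{\alpha_1}-\widehat B_{\alpha_2})w_2=O(rA),
\]
and the $G_L$ differences give $(L+r)D+(L+r^2)A$. The initial-data difference contributes $e^{t\widehat B_{\alpha_1(0)}}(w_1(0)-w_2(0))$. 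There is also a term $(e^{t\widehat B_{\alpha_1(0)}}-e^{t\widehat B_{\alpha_2(0)}})w_2(0)$, which is $O_T(A(0)\,r)$ by a Duhamel comparison of the semigroups using \eqref{sp:operatorlip}.

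A Gronwall inequality with the $(t-s)^{-1/2}$ kernel (the Henry singular Gronwall lemma) yields the uniform bound
\[
\sup_{[0,T]}(D+A)\leq C_T(D(0)+A(0)).
\]
Feeding this back in gives \eqref{sp:twoorbit}. Finally, \eqref{sp:phaseincrement} follows by integrating
\[
|\nu_L(\alpha_1,w_1)-\nu_L(\alpha_2,w_2)|\leq CD+C(L+r)A
\]
over $[0,T]$ and inserting the uniform bound.
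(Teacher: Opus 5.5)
Your overall route is the paper's. You freeze the ambient normal equation at the initial phase and use the $V'\to\mathsf X$ kernel from \eqref{sp:semigroups} with \eqref{sp:operatorlip} and \eqref{sp:coordinatebounds}. You close the radius by a first-exit/continuation argument and run a Gronwall argument for pairs; Henry's singular Gronwall in place of the paper's elementary absorption is fine. There is, however, a genuine gap in your last step, the derivation of \eqref{sp:phaseincrement}.

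You integrate $|\nu_L(\alpha_1,w_1)-\nu_L(\alpha_2,w_2)|\le CD+C(L+r)A$ and insert only $\sup_{[0,T]}(D+A)\le C_T(D(0)+A(0))$. Then the term $C\int_0^tD$ contributes $C_T(D(0)+A(0))$. You therefore get an $O(1)$ coefficient on $A(0)$, not the small coefficient $C_T(L+r)$ that the statement asserts. That small coefficient is the whole point of \eqref{sp:phaseincrement}. It is what yields $\operatorname{Lip}(\rho_g-I)\le C_T(K+\rho+L)$ in the graph transform, and the phase control behind \eqref{sp:normalcontraction}. Because $\nu_L$ is Lipschitz in $w$ with an $O(1)$ constant, you need a bound on $\sup D$ in which $A(0)$ enters only through a small factor: $\sup_{[0,T]}D\le C_T\bigl(D(0)+(L+r)A(0)\bigr)$.

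You can get this from your own ingredients. After the Gronwall step, feed the uniform bound back into the $D$-inequality at every $t\in[0,T]$, not only at $t=T$. Every forcing term there carries a factor $L+r$, including the $O_T(rA(0))$ semigroup-difference term. Hence $D(t)\le Ce^{-\omega t}D(0)+C_T(L+r)(D(0)+A(0))$ for all $t$. Integrating this refined bound gives \eqref{sp:phaseincrement}. The paper obtains the same bound differently: it first applies Gronwall to $A$ in terms of $\int D$, then takes the supremum in the $D$-inequality and absorbs $C_T(L+r)\sup D$.

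Three smaller remarks. First, in Step 2 the non-small Lipschitz constant over $[0,T]$ does not come from the freezing term or $G_L$. For fixed $\alpha(\cdot)$ those have constants $C_T(L+r)$, since the phase displacement is $O_T(L+r)$. It comes from the $O(1)$ dependence of $\nu_L$ on $w$ integrated over $[0,T]$. Short intervals or a weighted norm handle this, so your remedy is right even though the diagnosis is off.

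Second, in Step 4 you could write the difference equation as $\delta w_t=\widehat B_{\alpha_1}\delta w+(\widehat B_{\alpha_1}-\widehat B_{\alpha_2})w_2+G_L(\alpha_1,w_1)-G_L(\alpha_2,w_2)$ and freeze it at $\alpha_1(0)$, as the paper does. Then the term $(e^{t\widehat B_{\alpha_1(0)}}-e^{t\widehat B_{\alpha_2(0)}})w_2(0)$ never appears. It is harmless, but it arises only if each orbit is frozen at its own phase.

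Third, \eqref{sp:constraint-transport} is stated for regular solutions. For data merely in $\mathsf X$ you need the regularization/approximation step the paper invokes, both to preserve the constraint and to identify the mild flow with the parabolic solution.
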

\begin{proof}
Fix a tentative radius $r>r_0$.
For each local solution, let
\[
\tau=\min\bigl\{T,\inf\{t>0:\|w(t)\|_2=r\}\bigr\},
\qquad \inf\varnothing=+\infty.
\]
We first work on $[0,\tau]$ within the solution's existence interval.
For two solutions, use the smaller of their stopping times.
Set $S_\alpha(t)=e^{t\widehat B_\alpha}$ and $k(t)=C(1+t^{-1/2})e^{-\omega t}$.
Freezing \eqref{sp:normal} at $\alpha_*=\alpha(0)$ gives, in $\mathsf X$,
\[
 w(t)=S_{\alpha_*}(t)w(0)+\int_0^t S_{\alpha_*}(t-s)
 \bigl[(\widehat B_{\alpha(s)}-\widehat B_{\alpha_*})w(s)
       +G_L(\alpha(s),w(s))\bigr]\,ds.
\]
This uses the consistent $V'\to\mathsf X$ semigroup in \eqref{sp:semigroups}.
The integrable kernel $k$ controls the $V'$ forcing.
By \eqref{sp:coordinatebounds}, $|\alpha(t)-\alpha_*|\leq CT(L+r)$, so \eqref{sp:operatorlip} bounds the first forcing term by $C_T(L+r)r$.
Consequently,
\[
 \|w(t)\|_2\leq Ce^{-\omega t}\|w(0)\|_2+C_T(L+r^2),
 \qquad 0\leq t\leq\tau.
\]

For two solutions, put $\delta w=w_1-w_2$ and freeze their difference at $\alpha_*=\alpha_1(0)$.
Its Duhamel forcing is
\[
 (\widehat B_{\alpha_1}-\widehat B_{\alpha_*})\delta w
 +(\widehat B_{\alpha_1}-\widehat B_{\alpha_2})w_2
 +G_L(\alpha_1,w_1)-G_L(\alpha_2,w_2).
\]
Here and in the next bounds the phases and normal coordinates in the forcing are evaluated at the integration time.
Equations \eqref{sp:operatorlip} and \eqref{sp:coordinatebounds} bound this forcing in $V'$ by $C_T(L+r)(D+A)$, while the phase equations give
\[
\begin{aligned}
D(t)&\leq Ce^{-\omega t}D(0)
 +C_T(L+r)\int_0^t k(t-s)(D(s)+A(s))\,ds,\\
A(t)&\leq A(0)+C\int_0^t D(s)\,ds
 +C(L+r)\int_0^t A(s)\,ds.
\end{aligned}
\]
Gronwall's inequality in the second line gives $A(t)\leq C_T(A(0)+\int_0^tD(s)\,ds)$.
Since $k\in L^1(0,T)$, taking the supremum in the first line and then absorbing $C_T(L+r)\sup_{[0,\tau]}D$ gives $\sup_{[0,\tau]}D\leq C_T(D(0)+(L+r)A(0))$ for small $L+r$.
The bound for $A$ now yields $\sup_{[0,\tau]}(D+A)\leq C_T(D(0)+A(0))$.
Substitution into the first line gives $D(t)\leq Ce^{-\omega t}D(0)+C_T(L+r)(D(0)+A(0))$.
Integrating the difference of the phase equations once more gives \eqref{sp:phaseincrement}.

Choose $r=C_T(r_0+L)$ with $C_T$ large enough to dominate the linear term.
For small $r_0,L$, the bound $C_Tr^2<r/2$ rules out a first exit before $T$.
The short-interval contraction for the integral equation gives existence, uniqueness and continuous dependence;
the first-exit bound gives continuation and \eqref{sp:oneorbit}--\eqref{sp:phaseincrement}.
Regularization and approximation as in \cite{Lunardi1995,PrussSimonett2016} identify this flow with the parabolic solution.
They also pass \eqref{sp:constraint-transport} to initial data in $\mathsf X$, proving constraint preservation.
\end{proof}

\subsubsection{The constrained invariant graph}

Consider the complete graph class
\[
	\mathcal G_{\rho,K}=\{g:\R\to\mathsf X:\ell_{L,\alpha}g(\alpha)=0,
 \ \|g\|_\infty\leq\rho,\ \operatorname{Lip}(g)\leq K\}
\]
with the supremum metric.

\begin{proposition}[Existence and invariance of the small-period graph]
\label{sp:graph-proposition}
For suitable $\rho,K,T>0$ and all sufficiently small $L$, the time-$T$ graph transform on $\mathcal G_{\rho,K}$
has a unique fixed point $g_L$.
Its graph is invariant under the full semiflow of \eqref{sp:normal} with $\dot\alpha=\nu_L(\alpha,w)$, and
\[
 \|g_L\|_\infty+\operatorname{Lip}_{\mathsf X}(g_L)\leq CL.
\]
\end{proposition}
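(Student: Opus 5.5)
I will follow the large-period argument of Proposition~\ref{lp:graph-proposition}. The difference is that the time map is now only Lipschitz, so the $C^2$ machinery is replaced by the Lipschitz pair estimates of Lemma~\ref{sp:flow-estimates}.

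First fix $T$ so that $Ce^{-\omega T}\leq 1/8$ in \eqref{sp:oneorbit}--\eqref{sp:twoorbit}. For $g\in\mathcal G_{\rho,K}$, define the base map $\rho_g(\alpha)=\alpha(T)$ and the normal image $\zeta_g(\alpha)=w(T)$ along the trajectory starting at $(\alpha,g(\alpha))$; this trajectory exists by Lemma~\ref{sp:flow-estimates}, with $r=C_T(\rho+L)$. Apply \eqref{sp:phaseincrement} to two initial points $(\alpha_i,g(\alpha_i))$, with $D(0)\leq K|\alpha_1-\alpha_2|$. This gives
\[
|\rho_g(\alpha_1)-\rho_g(\alpha_2)-(\alpha_1-\alpha_2)|\leq C_T\bigl(K+L+r\bigr)|\alpha_1-\alpha_2|.
\]
So once $K$ and $L+\rho$ are small, $\rho_g$ is a bi-Lipschitz increasing homeomorphism of $\R$. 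Covariance under $\alpha\mapsto\alpha+L$ with spatial translation makes it a degree-one lift, and its inverse has Lipschitz constant $1+O(K+L+r)$. Set $(\mathfrak T g)(\beta)=\zeta_g(\rho_g^{-1}(\beta))$. Constraint preservation \eqref{sp:constraint-transport} gives $\ell_{L,\beta}(\mathfrak Tg)(\beta)=0$.

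Next I check the self-map and contraction properties. From \eqref{sp:oneorbit}, $\|\mathfrak Tg\|_\infty\leq\rho/8+C_T(L+r^2)$. From \eqref{sp:twoorbit}, $\operatorname{Lip}(\mathfrak Tg)\leq(1+o(1))\{K/8+C_T(L+r)(K+1)\}$. The bad term here is the $C_T(L+r)\cdot1$ contribution from $A(0)$. To keep it below $K/2$, I take $\rho=\rho_1 L$ and $K=K_1L$ with $\rho_1,K_1$ large, and then $L$ small. Then $r=O(L)$, and the bounds close with radius and slope both $O(L)$. This yields the asserted $\|g_L\|_\infty+\operatorname{Lip}(g_L)\leq CL$.

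For the contraction, compare $g,\tilde g$ with $d=\|g-\tilde g\|_\infty$. Starting from a common initial phase, \eqref{sp:twoorbit} with $A(0)=0$ bounds the normal images by $(1/8+C_T(L+r))d$, and \eqref{sp:phaseincrement} bounds the base points by $C_Td$. Then compare at a common final phase $\beta$. The preimages satisfy $|\rho_g^{-1}(\beta)-\rho_{\tilde g}^{-1}(\beta)|\leq C_Td$. Since $\operatorname{Lip}(\zeta_{\tilde g})=O(L)$, the extra cost is $C_T L d$. Altogether $\|\mathfrak Tg-\mathfrak T\tilde g\|_\infty\leq(1/8+C_T L)d<d/2$, and Banach's theorem gives the unique fixed point $g_L$.

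\textbf{Main obstacle: the contraction step.} In the large-period case the factor $\rho_w=O(\epsilon^2)$ made the phase discrepancy harmless. Here \eqref{sp:phaseincrement} only gives $C_TD(0)$, so I must ensure the product (phase shift)$\times$(slope of $\zeta_{\tilde g}$) is small. I will get this from the slope bound $O(L)$ established in the self-map step. For this I will make sure that bound uses only \eqref{sp:twoorbit}, which holds for any fixed $T$, and does not rely on any extra regularity.

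Invariance under the full semiflow then follows exactly as in Proposition~\ref{lp:graph-proposition}. For $0\leq s\leq T$, the time-$s$ image of the graph is again a constrained graph. Its radius and slope are $O_T(L)$ by the uniform-in-$t$ bounds of Lemma~\ref{sp:flow-estimates}, so it lies in an enlarged class $\mathcal G_{\rho',K'}$ on which the same contraction estimate still holds, with $\rho',K'$ still $O(L)$. Since the time-$s$ and time-$T$ maps commute, this image is fixed by the time-$T$ transform. Uniqueness in the enlarged class identifies it with the graph of $g_L$. Iterating gives invariance for all $t\geq0$, and the Lipschitz base field $\alpha\mapsto\nu_L(\alpha,g_L(\alpha))$ makes trajectories on the graph complete.
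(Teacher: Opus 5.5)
Your proposal is correct and follows the paper's proof. Both use the same time-$T$ graph transform, the same self-map bounds from \eqref{sp:oneorbit}--\eqref{sp:twoorbit}, the same contraction by comparison at a common final phase (the phase discrepancy $C_Td$ from \eqref{sp:phaseincrement} times a small slope), and the same invariance argument via commuting time maps and uniqueness. One precision in the invariance step: to get an $O(L)$ slope for the time-$s$ images you need, as the paper implicitly does, the intermediate-time bound $D(s)\le Ce^{-\omega s}D(0)+C_T(L+r)(D(0)+A(0))$ from the proof of Lemma~\ref{sp:flow-estimates}, since the stated $\sup_t$ bound only gives slope $O(1)$.

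The only real difference is bookkeeping. You take $\rho,K$ of order $L$, so the $O(L)$ estimate comes directly out of the self-map step. The paper instead fixes $K$ small, $\rho\ll K$, $L\ll\rho$ independently of $L$, which gives uniqueness in an $L$-independent class as the statement's quantifiers suggest; your contraction constant $\tfrac18+C_T(K+\rho+L)$ yields this as well. The paper then recovers $\|g_L\|_\infty+\operatorname{Lip}(g_L)\le CL$ by absorption.
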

\begin{proof}
Fix $T$ with $\vartheta=Ce^{-\omega T}<1/16$.
For the time-$T$ image $(\rho_g,z_g)$ of a graph, Lemma~\ref{sp:flow-estimates} gives
$\operatorname{Lip}(\rho_g-I)\leq C_T(K+\rho+L)$ and $\|\rho_g-I\|_\infty\leq C_T(\rho+L)$.
Choose $K$ small, then $\rho\ll K$, and finally $L\ll\rho$.
Then $\rho_g$ is an increasing homeomorphism with inverse Lipschitz constant at most $2$.
The transform $\mathcal Tg=z_g\circ\rho_g^{-1}$ preserves the constraint and satisfies
\[
 \begin{split}
 \|\mathcal Tg\|_\infty&\leq\vartheta\rho+C_T(L+\rho^2),\\
 \operatorname{Lip}(\mathcal Tg)&\leq2\vartheta K+C_T(L+\rho)(1+K).
 \end{split}
\]
Reducing $\rho,L$ makes it a self-map of $\mathcal G_{\rho,K}$.
For $d=\|g-\tilde g\|_\infty$ and a common final phase,
the inverse base maps differ by at most $C_Td$,
while the normal images at a common initial phase differ by at most
$[\vartheta+C_T(L+\rho)]d$ and have Lipschitz constant at most
$\vartheta K+C_T(L+\rho)(1+K)$.
Comparison at the common final phase gives
$\|\mathcal Tg-\mathcal T\tilde g\|_\infty
\leq[\vartheta+C_T(K+\rho+L)]d<\tfrac12d$,
so the graph transform has a unique fixed point $g_L$.

Put $R_*=\|g_L\|_\infty$ and $K_*=\operatorname{Lip}(g_L)$.
Using the actual radius and slope in the self-map estimates gives
\[
 \begin{split}
 R_*&\leq\vartheta R_*+C_T(L+R_*^2),\\
 K_*&\leq2\vartheta K_*+C_T(L+R_*)(1+K_*).
 \end{split}
\]
Since $R_*\leq\rho$ and $K_*\leq K$, absorption proves $R_*+K_*\leq CL$.
The intermediate-time bounds now make every time-$s$ image, 
$0\leq s\leq T$, 
a constrained graph with radius and slope $O_T(L)$.
For small $L$ these images belong to $\mathcal G_{\rho,K}$.
Commutation of the time maps and uniqueness of the fixed graph therefore give invariance for all $t\geq0$.
\end{proof}

\subsubsection{Profile regularity, range, and normal attraction}

\begin{proof}[Completion of the small-period construction in Theorem~\ref{sp:main}]
Set $Z_{L,\alpha}=V_{L,\alpha}+g_L(\alpha)$ using Proposition~\ref{sp:graph-proposition}.
The unique chart in Lemma~\ref{sp:chart-unique} makes this parametrization injective.
Translating a fixed graph by the simultaneous substitutions $x\mapsto x+L$, $\alpha\mapsto\alpha+L$ preserves its phase condition and graph space.
Uniqueness proves \eqref{sp:covariance}.
Put $s_L(\alpha)=\nu_L(\alpha,g_L(\alpha))$.
Equations \eqref{sp:coordinatebounds} and \eqref{sp:graphbound} prove its asserted bounds and periodicity.
Its scalar flow is global in both time directions, which proves the entire-orbit statement in \eqref{sp:reduction}.

Here $L$ is fixed, and constants in parabolic smoothing may depend on $L$.
Entire graph orbits have uniformly bounded $\mathsf X$ distance from $U_{\alpha(t)}$, and $|\dot\alpha|\leq CL$.
Set $h(t)=u(t)-U_{\alpha(0)}$ on $0\leq t\leq1$.
For fixed $L$, $F_L(U_{\alpha(0)})$ is bounded in $L^2$ uniformly in $\alpha(0)$, and the reaction difference in the equation for $h$ is Lipschitz in $h$.
Testing that equation against $h$ and absorbing the drift term gives
\[
\frac12\frac{d}{dt}\|h\|_2^2+\frac{a_*}2\|h_x\|_2^2
\leq C_L(1+\|h\|_2^2).
\]
The initial $L^2$ bound and Gronwall's inequality control $\sup_{0\leq t\leq1}\|h(t)\|_2^2+\int_0^1\|h_x(t)\|_2^2\,dt$.
Choose a time in the first half-interval at which that seminorm is bounded, differentiate the equation in $x$, and test against its derivative.
Boundedness of the fixed-$L$ coefficient derivatives and Young's inequality give an $H^1$ Gronwall bound at the final time.
This proves
\begin{equation}\label{sp:fixedLregularity}
	\sup_\alpha\|Z_{L,\alpha}-U_\alpha\|_{H^1}\leq C_L.
\end{equation}
Repeating this argument on shorter interior intervals, with the stated coefficient regularity, gives classical regularity and a uniform bound on $u(t,x)$ for all $t,x$ on the curve.
The entire-orbit property allows application of these estimates at every time.

No graph profile is constant $0$ or $1$, because $Z_{L,\alpha}-U_\alpha\in L^2(\R)$ whereas the difference between $U_\alpha$ and either constant is not in $L^2(\R)$.
Thus Lemma~\ref{common:range} gives $0<Z_{L,\alpha}<1$.

The map $\alpha\mapsto Z_{L,\alpha}-U_\alpha$ is continuous into $\mathsf X$.
Restricted to $0\leq\alpha\leq L$ its image is compact, so its $L^2$ tails vanish uniformly.
For $h\in H^1(\R)$ a cutoff and the one-dimensional Sobolev inequality give
\[
\sup_{|x|\geq R+1}|h(x)|^2
\leq C\|h\|_{L^2(|x|\geq R)}\|h\|_{H^1(\R)}.
\]
Apply this to $Z_{L,\alpha}-U_\alpha$, use \eqref{sp:fixedLregularity}, and then use covariance for all other phases.
The exponential tails of $U$ prove \eqref{sp:tails}.
The same interpolation argument shows that the parametrization is continuous in the uniform norm for this fixed $L$.

For attraction in a sufficiently small radius-$r$ tube, compare with the graph orbit starting at the same phase.
Set $e(t)=\|w(t)-g_L(\alpha(t))\|_2$.
For the $T,\vartheta$ chosen above and $K_*=\operatorname{Lip}_{\mathsf X}(g_L)\leq CL$,
estimates \eqref{sp:twoorbit}--\eqref{sp:phaseincrement} give
\begin{equation}\label{sp:normalcontraction}
	\|w(T)-g_L(\alpha(T))\|_2\leq q\|w(0)-g_L(\alpha(0))\|_2,
	\qquad q\leq\vartheta+C_T(L+r+K_*)<1.
\end{equation}
The intermediate-time comparison gives $e(t)\leq C_Te(0)$.
Choose the initial distance small enough that this bound keeps the solution inside the chart.
Iteration of \eqref{sp:normalcontraction} then gives global forward continuation and
$e(t)\leq Ce^{-\nu t}e(0)$, with $\nu=-T^{-1}\log q$.
For a complete orbit remaining in this neighborhood,
$e(0)\leq q^ne(-nT)\to0$; hence it lies on the graph.
\end{proof}

\section{Dynamics on the invariant front families}\label{sec:dynamics}

\subsection{The common phase equation}

The two constructions provide different estimates for the phase velocity, 
but the passage from a periodic phase equation to front dynamics is identical. 
We state that passage once. 
The front-tail condition below is uniform in the phase for each fixed $L$.

\begin{theorem}[Dynamics of a periodic invariant front family]
\label{phase:classification}
 Fix $L>0$. 
 Let $\{Z_p:p\in\R\}$ be an injectively parametrized invariant family of classical front-like profiles, 
 continuous in $p$ in the locally uniform topology, 
 with $0<Z_p<1$ and
 \begin{equation}\label{phase:abstract-hypotheses}
  Z_{p+L}(x+L)=Z_p(x),\qquad
  \lim_{R\to\infty}\sup_p\left\{
   \sup_{x-p\geq R}|Z_p(x)|+
   \sup_{x-p\leq-R}|1-Z_p(x)|\right\}=0.
 \end{equation}
 Suppose the flow on this family is $u(t,x)=Z_{p(t)}(x)$, 
 where $\dot p=v_L(p)$ and $v_L$ is $L$-periodic and Lipschitz.
 Then the complete trajectories on the family have the following classification.
 \begin{enumerate}
  \item If $v_L$ has no zero, 
  all states on the family lie on one pulsating front orbit up to time translation. 
  Its signed speed is given by \eqref{intro:passage}.
  \item Each zero $p_*$ of $v_L$ gives a stationary front $Z_{p_*}$.
  If the zero set is nonempty, 
  each component $(p_-,p_+)$ of its complement gives an entire orbit converging locally uniformly to the stationary fronts $Z_{p_-}$ and $Z_{p_+}$,
  in the direction specified by the sign of $v_L$ there.
  Consequently, 
  no orbit traverses all spatial phases in this case. 
  In particular, 
  if $v_L\equiv0$,
  every member of the family is stationary.
 \end{enumerate}
\end{theorem}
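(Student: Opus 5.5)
The proof reduces everything to the scalar ODE $\dot p=v_L(p)$ on $\R$ with $L$-periodic Lipschitz $v_L$, which has unique global solutions. We then transfer its qualitative behaviour to the family using three facts: the parametrization is injective, it is continuous in the locally uniform topology, and it has the uniform tails \eqref{phase:abstract-hypotheses}. First we check that every complete trajectory $u(t,x)=Z_{p(t)}(x)$ is a transition front with interface position $\xi(t)=p(t)$. The uniform tail condition is exactly the requirement that $u(t,\xi(t)+z)\to1$ as $z\to-\infty$ and $\to0$ as $z\to+\infty$, uniformly in $t$. Together with $0<Z_p<1$ and classical regularity, this makes every orbit on the family a transition front in the sense of the introduction.

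\emph{Case 1: $v_L$ has no zero.} By continuity and periodicity, $v_L$ has a fixed sign and $|v_L|\geq m>0$. Say $v_L>0$. Let $p(t)$ be the solution with $p(0)=0$, and set
\[
T=\int_0^L\frac{dp}{v_L(p)}.
\]
Then $p(t+T)=p(t)+L$ by periodicity and uniqueness, and $p:\R\to\R$ is a bijection. Every state $Z_q$ therefore equals $u(t_q,\cdot)$ for the unique $t_q$ with $p(t_q)=q$, so all states lie on one orbit up to time translation.

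Covariance gives $u(t+T,x+L)=Z_{p(t)+L}(x+L)=u(t,x)$. Define $c_L=L/T$ and $\Phi(\xi,y)=u(t,x)$ with $t=(x-\xi)/c_L$ and $y=x/L$. One checks directly that this is well defined and $1$-periodic in $y$, using $u(t+T,x+L)=u(t,x)$. The limits $\Phi(\mp\infty,y)=1,0$, uniform in $y$, follow from the uniform tails, because $x-p(t)=\xi+O(1)$ uniformly: $p(t)-c_Lt$ is $T$-periodic, hence bounded. The case $v_L<0$ is symmetric, with $c_L<0$; this matches \eqref{intro:passage}.

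\emph{Case 2: $v_L$ has zeros.} Each zero $p_*$ is an equilibrium of the ODE, so $Z_{p_*}$ is time-independent. It is a stationary front by the first paragraph. On a complementary component $(p_-,p_+)$, $v_L$ has constant sign. Both endpoints are finite, because the zero set is nonempty and $L$-periodic. Any solution starting inside stays there for all time, since zeros are equilibria and solutions are unique. It is monotone, and it converges as $t\to\pm\infty$ to the two endpoints, in the order fixed by the sign of $v_L$: limits of monotone bounded solutions must be zeros of $v_L$, and there are none inside.

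Continuity of $p\mapsto Z_p$ in the locally uniform topology then gives $u(t,\cdot)\to Z_{p_\pm}$ locally uniformly. Since every orbit stays in a bounded interval between consecutive zeros, or is constant, no orbit traverses all phases. If $v_L\equiv0$, every point is an equilibrium.

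\emph{Main obstacle.} The only delicate point is the pulsating-front identification in Case 1. We must verify that $\Phi$ is well defined and $1$-periodic in $y$, and that the limits in $\xi$ are uniform in $y$. This reduces to boundedness of $p(t)-c_Lt$, combined with the uniform tail condition \eqref{phase:abstract-hypotheses}. The remaining steps are routine one-dimensional ODE facts.
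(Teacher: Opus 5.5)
Your proposal is correct and follows the paper's proof essentially step for step: the passage time $\int_0^L dp/v_L(p)$, the space--time covariance $u(t+T,x+L)=u(t,x)$, the profile $\Phi(\xi,y)=u((Ly-\xi)/c_L,Ly)$ with limits uniform in $y$ from boundedness of $p(t)-c_Lt$, and monotone convergence to the endpoints of each complementary component. The only step the paper states explicitly and you leave implicit is the use of injectivity of $p\mapsto Z_p$. It shows that every stationary state on the family comes from a zero of $v_L$, and more generally that each complete trajectory corresponds to a single complete solution of $\dot p=v_L(p)$.
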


\begin{remark}
The zeros need not be nondegenerate or isolated.
\end{remark}

\begin{proof}
 If $v_L$ has no zero, 
 it has one sign and every lifted phase traverses $\R$. 
 Separation of variables gives the signed passage time
 \[
  \tau_L=\int_0^L\frac{dp}{v_L(p)},\qquad
  p(t+\tau_L)=p(t)+L.
 \]
 Thus $c_L=L/\tau_L$ is \eqref{intro:passage}. 
 The covariance in \eqref{phase:abstract-hypotheses} yields $u(t+\tau_L,x+L)=u(t,x)$, 
 also when $\tau_L<0$.
 The difference $p(t)-c_Lt$ is bounded and periodic. 
 Define
 \[
  \Phi_L(\xi,y)=u\big((Ly-\xi)/c_L,Ly\big).
 \]
 The space--time covariance makes $\Phi_L$ periodic in $y$ and gives $u(t,x)=\Phi_L(x-c_Lt,x/L)$. 
 Since $Ly-p((Ly-\xi)/c_L)=\xi+O(1)$ uniformly in $y$, 
 the tails in \eqref{phase:abstract-hypotheses} imply that $\Phi_L(\xi,y)$ tends uniformly in $y$ to $1$ 
 as $\xi\to-\infty$ and to $0$ as $\xi\to+\infty$. 
 Every initial phase occurs on this scalar orbit,
 so the front is unique on the family up to time translation.

 At a zero, 
 scalar uniqueness makes $p$ constant. 
 Injectivity also shows that every stationary state on the family arises this way.
 When zeros exist, 
 their periodic lift has bounded complementary components. 
 On any one of them, 
 $p(t)$ is strictly monotone and cannot reach a boundary phase in finite time because $v_L$ is Lipschitz.
 It therefore tends to the two boundary phases as $t\to\pm\infty$ in the order determined by the sign of $v_L$. 
 Continuity of $p\mapsto Z_p$ gives convergence to the corresponding stationary fronts.
\end{proof}

Covariance \eqref{lp:covariance} and the tails \eqref{lp:range} verify these hypotheses for Theorem~\ref{lp:main}, 
with $p=X$ and $v_L(p)=s_{1/L}(p/L)$. 
Likewise,
\eqref{sp:covariance} and \eqref{sp:tails} verify them for Theorem~\ref{sp:main}, 
with $p=\alpha$ and $v_L=s_L$. 
Thus the same classification applies to both limits. 

\begin{lemma}[Strict ordering by phase]\label{phase:ordering}
 After increasing the large-period threshold and decreasing the small-period threshold if necessary, 
 both invariant families are strictly ordered by their lifted position phase:
 \begin{equation}\label{phase:strict-order}
  p_1<p_2\quad\Longrightarrow\quad Z_{p_1}(x)<Z_{p_2}(x)
  \qquad(x\in\R).
 \end{equation}
 Here $Z_p=Z_{\epsilon,p}$ in the large-period regime and $Z_p=Z_{L,p}$ in the small-period regime.
 In the large-period regime, moreover,
 \begin{equation}\label{phase:positive-tangent}
  \partial_X Z_{\epsilon,X}(x)>0\qquad(X,x\in\R).
 \end{equation}
\end{lemma}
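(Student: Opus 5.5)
We plan to treat the two regimes separately. The large-period case rests on a pointwise positivity of the phase tangent. The small-period case rests on a comparison (sliding) argument.

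\emph{Large-period regime.} In centered coordinates the tangent is
\[
\partial_X Z_{\epsilon,X}(X+z)=-(U_\epsilon)_z(Y,z)+\epsilon (U_\epsilon)_Y(Y,z)=-q_Y(z)+\epsilon\phi_Y(Y,z)+O(\epsilon)\ \text{in }W,
\]
with $Y=\epsilon X$. By Lemma~\ref{lp:frozen-properties}, $-q_Y>0$ decays like $e^{-\mu_\pm|z|}$, with rates fixed by the linearizations at $0$ and $1$. The error is only $O(\epsilon)$ in the weight $e^{\eta\rho}$ with small $\eta$. So the error is pointwise small relative to $-q_Y$ on a fixed compact set $|z|\le R$, but not on the tails.

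On $|z|\le R$, positivity follows from the $W\hookrightarrow L^\infty$ bound. On the tails we use a maximum principle. Differentiating the invariance equation \eqref{lp:invariance-equation} along the orbit shows that $v(t,x)=\partial_X Z_{\epsilon,X(t)}(x)\,s_\epsilon(\epsilon X(t))$ solves the linearized equation $v_t=\partial_x(a v_x)+bv_x+f_u(x/L,u)v$. This form is inconvenient where $s_\epsilon$ vanishes. We therefore use the stationary form instead: $\tau:=\partial_X Z_{\epsilon,X}$, viewed as a function of $(X,x)$, satisfies the linear elliptic equation obtained by differentiating \eqref{lp:invariance-equation} in $X$. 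This equation contains the transport term $s_\epsilon\partial_X$ in phase.

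On $x-X\ge R$ (and similarly $\le -R$) the zero-order coefficient is $\le-\gamma_0$, since $u$ lies within $\delta_0$ of $0$ or $1$. The boundary values at $|x-X|=R$ are positive, and $\tau\to0$ at infinity. A maximum principle for this degenerate elliptic--transport operator on the tail region then gives $\tau>0$. This operator is periodic in $X$ up to the covariance \eqref{lp:covariance}, and the principle is applied to a negative minimum of $\tau$: at such a point $\partial_X\tau=0$, $\partial_x\tau=0$ and $\partial_x^2\tau\ge0$, which yields a contradiction. Here the tails of $\tau$ are uniform by \eqref{lp:range}, and compactness in $X$ modulo $L$ ensures the minimum is attained or approached.

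Strict ordering in the large-period regime follows by integrating $\tau>0$ in $X$.

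\emph{Small-period regime.} Only Lipschitz dependence is available, so we use sliding. First, for $0<\alpha_2-\alpha_1\le\delta$, we show $Z_{\alpha_1}\le Z_{\alpha_2}$ from $Z_{\alpha}\approx U_\alpha$, using $U'<0$. We expect this near-diagonal step to be the main obstacle. The $L^2$ smallness $\|g_L\|+\operatorname{Lip}\le CL$ must be upgraded to pointwise control on a compact core, by parabolic smoothing as in \eqref{sp:fixedLregularity}. It cannot rely on constants that blow up as $L\to0$ relative to the gap $U_{\alpha_1}-U_{\alpha_2}\sim|\alpha_1-\alpha_2|$.

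Our first attempt at this step avoids pointwise difference quotients. We compare the orbits of $Z_{\alpha_1}$ and $Z_{\alpha_2}$ under the semiflow and use the tails \eqref{sp:tails} with the stability of $0$ and $1$ to control the far field. On the core we use the comparison principle together with a translation of one profile by a small amount $\kappa$: this gives $Z_{\alpha_1}\le Z_{\alpha_2+\kappa}$ initially, and strictness comes from the strong maximum principle. Invariance and the local completeness of the family keep the compared orbits within it. Finally, sending $\kappa\to0$ by a sliding argument in the $\alpha$ variable gives $Z_{\alpha_1}<Z_{\alpha_2}$.

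For general $\alpha_1<\alpha_2$ we chain the near-diagonal comparisons, and transitivity of the order then completes the small-period case.
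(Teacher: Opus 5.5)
Your large-period argument is a workable stationary counterpart of the paper's. The paper divides the linearized-flow tangent by $J(t)=\partial_{X(0)}X(t)>0$ instead of multiplying by $s_\epsilon$. Then $q(t,x)=\partial_XZ_{\epsilon,X}(x)|_{X=X(t)}$ solves $q_t=\partial_x(aq_x)+bq_x+[f_u-v_L'(X(t))]q$, and the paper compares on the moving tails from time $-T$ with $-Me^{-\gamma_0(t+T)/2}$. Your minimum principle on the periodic $(X,x)$-cylinder uses the same equation, with three caveats. The zero-order coefficient is $f_u-\epsilon s_\epsilon'(\epsilon X)$, not $f_u$. Differentiating \eqref{lp:invariance-equation} in $X$ needs more $X$-regularity than $h_\epsilon\in C^1(\T;W)$ provides, so derive the equation along orbits instead. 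A negative-minimum argument yields only $\tau\geq0$, so strictness needs the strong parabolic maximum principle.

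The small-period part has a genuine gap: the near-diagonal step, which you correctly flag as the crux, is never proved. The tools you cite are not uniform in $L$. Estimate \eqref{sp:fixedLregularity} comes from differentiating the equation and uses derivatives of $a(x/L)$ and $b(x/L)$, so its constant $C_L$ blows up as $L\to0$. Likewise \eqref{sp:tails} is a fixed-$L$ statement, while the core half-width $R$ must be independent of $L$, because the positivity constant on the core depends on it. The shift-and-slide mechanism does not close either. A spatial translation by $\kappa\notin L\Z$ is not a symmetry of \eqref{intro:pde}. With a phase shift instead, $Z_{\alpha_1}\leq Z_{\alpha_2+\kappa}$ is an ordering claim of exactly the kind being proved, and you give no argument for it. Tails of two non-stationary profiles cannot be compared at one instant by an elliptic argument. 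The strong maximum principle yields strictness only at later times, between different phases $\alpha_i(t)$, and only once an ordering is already known. Letting $\kappa\to0$ would need a pointwise core margin of order $\alpha_2-\alpha_1$, uniform in the pair, in time and in $L$, and that is exactly what is missing.

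The missing idea, used in the paper, is to compare two entire orbits directly. Their difference $D=u_2-u_1$ solves a linear divergence-form equation with bounded measurable coefficients. Aronson's two-sided Gaussian bounds for its unit-time fundamental solution depend only on ellipticity and coefficient bounds, hence are uniform in $L$; in particular $\sup_x\|K_t(x,\cdot)\|_2\leq C$. With $A=\alpha_1(t-1)$ and $B=\alpha_2(t-1)$, write $D(t-1,\cdot)=U(\cdot-B)-U(\cdot-A)+E$. The bound $\operatorname{Lip}_{L^2}(Z_{L,\alpha}-U_\alpha)\leq CL$ gives $\|E\|_2\leq CL\min\{B-A,1\}$. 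So $E$ contributes at most $CL\min\{B-A,1\}$ pointwise, while the main term contributes at least $c_R\min\{B-A,1\}$ on the moving core $[\alpha_1(t)-R,\alpha_2(t)+R]$. An $L$-independent $R$ comes from De Giorgi--Nash--Moser H\"older bounds interpolated with $\|Z_{L,\alpha}-U_\alpha\|_2\leq CL$. The tail comparison from $-T$ with $-e^{-\gamma_0(t+T)}$ then gives $D\geq0$ for every pair at once, so no chaining or sliding is needed.
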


\begin{proof}
 For large periods, write $Y=\epsilon X$, $z=x-X$, and
 \[
  Q_X(x):=\partial_X Z_{\epsilon,X}(x)
  =-(U_\epsilon)_z(Y,z)+\epsilon(U_\epsilon)_Y(Y,z).
 \]
 The estimates in \eqref{lp:main-est}, 
 Sobolev embedding, 
 and Lemma~\ref{lp:frozen-properties} give
 $Q_X(x)=-\phi_z(Y,z)+O(\epsilon)$ uniformly.
 For every fixed $R$, the minimum of $-\phi_z$ on $\T\times[-R,R]$ is positive.
 Thus $Q_X>0$ when $|x-X|\leq R$, 
 for sufficiently small $\epsilon$, 
 and $\sup_X\|Q_X\|_\infty\leq M$.
 Choose $R$ using the uniform tails in \eqref{lp:range}, 
 so that the profiles outside this interval lie in the linear stability neighborhoods of $0$ and $1$.

 Along any complete graph orbit $u(t,x)=Z_{\epsilon,X(t)}(x)$, 
 put $q(t,x)=Q_{X(t)}(x)$.
 Differentiating the flow with respect to its initial phase gives the linearized equation for $Jq$, 
 where $J=\partial_{X(0)}X(t)$ and $J'=v_L'(X(t))J$.
 Hence
 \begin{equation}\label{phase:tangent-equation}
  q_t=\partial_x(a(x/L)q_x)+b(x/L)q_x+
  \bigl[f_u(x/L,u)-v_L'(X(t))\bigr]q.
 \end{equation}
 This differentiation is justified by the $C^1$ parametrization and the differentiable parabolic time maps. 
 Positive-time regularization gives the classical linear equation.
 Since $\|v_L'\|_\infty=\epsilon\|s_\epsilon'\|_\infty=O(\epsilon)$,
 the zeroth-order coefficient in \eqref{phase:tangent-equation} is at most $-\gamma_0/2$ on the two moving tails.
 Comparison from time $-T$ with the spatially constant subsolution
 $-M e^{-\gamma_0(t+T)/2}$, using the positive values on $x=X(t)\pm R$, 
 gives
 \[
  q(0,x)\geq-M e^{-\gamma_0T/2}\qquad(|x-X(0)|\geq R).
 \]
 Letting $T\to\infty$ proves $q\geq0$ everywhere.
 The strong maximum principle and positivity in the core give \eqref{phase:positive-tangent}.
 Integration in $X$ proves \eqref{phase:strict-order} for this family.

 For small periods, 
 set $e_L(\alpha)=Z_{L,\alpha}-U_\alpha$.
 Equations \eqref{sp:approx} and \eqref{sp:graphbound} imply
 \begin{equation}\label{phase:small-error}
  \sup_\alpha\|e_L(\alpha)\|_2+
  \operatorname{Lip}_{L^2}(e_L)\leq CL.
 \end{equation}
 Bounded entire graph solutions have a uniform local spatial $C^\beta$ bound, 
 for some $\beta\in(0,1)$, 
 by interior H\"older estimates for uniformly parabolic divergence-form equations with bounded drift and bounded forcing
 \cite[Section~1, Theorem~C]{Aronson1968}.
 These constants depend on ellipticity and coefficient bounds, 
 and are independent of $L$.
 Interpolating this bound with \eqref{phase:small-error} gives
 \[
  \sup_\alpha\|e_L(\alpha)\|_\infty
  \leq C L^{2\beta/(2\beta+1)}\longrightarrow0.
 \]
 Thus we can choose $R$ so that $U(z)\leq\delta_0/2$ for $z\geq R$ and $U(z)\geq1-\delta_0/2$ for $z\leq-R$.
 Decreasing $L$ so that $\sup_\alpha\|e_L(\alpha)\|_\infty\leq\delta_0/2$ then gives, uniformly in $\alpha$,
 \[
  Z_{L,\alpha}(x)\leq\delta_0\quad(x\geq\alpha+R),
  \qquad
  Z_{L,\alpha}(x)\geq1-\delta_0\quad(x\leq\alpha-R).
 \]

 Take two complete phase trajectories $\alpha_1(t)<\alpha_2(t)$, and put
 \[
  u_i(t,x)=Z_{L,\alpha_i(t)}(x),\qquad
  d(t)=\alpha_2(t)-\alpha_1(t)>0,\qquad D=u_2-u_1.
 \]
 Then
 \begin{equation}\label{phase:small-difference}
  \begin{aligned}
   D_t&=\partial_x(a(x/L)D_x)+b(x/L)D_x+c(t,x)D,\\
   c(t,x)&=\int_0^1 f_u(x/L,(1-\theta)u_1+\theta u_2)\,d\theta.
  \end{aligned}
 \end{equation}
 The bound $\|s_L\|_\infty\leq CL$ gives $|\alpha_i(t)-\alpha_i(t-1)|\leq CL$.
 Let $K_t(x,y)$ be the fundamental solution of \eqref{phase:small-difference} from $t-1$ to $t$.
 The two-sided Gaussian bounds \cite[Sections~7--8]{Aronson1968} give, 
 uniformly in $L,t$ and the two orbits,
 \begin{equation}\label{phase:unit-kernel}
  C^{-1}e^{-C|x-y|^2}\leq K_t(x,y)\leq Ce^{-|x-y|^2/C}.
 \end{equation}
 In particular, 
 $\sup_x\|K_t(x,\cdot)\|_2\leq C$.
 At time $t-1$, 
 write $A=\alpha_1(t-1)$ and $B=\alpha_2(t-1)$.
 By \eqref{phase:small-error},
 \[
  D(t-1,y)=U(y-B)-U(y-A)+E(y),\qquad
  \|E\|_2\leq CL\min\{B-A,1\}.
 \]
 The stationary-front equation and the strong maximum principle give $-U'>0$.
 The first term equals $\int_A^B(-U'(y-p))\,dp$ and is positive.
 For $x\in[\alpha_1(t)-R,\alpha_2(t)+R]$, restrict the kernel integral of the positive reference term to $|y-x|\leq1$.
 For small $L$, the unit-time phase displacements are at most $1$, 
 so $A-R-2\leq y\leq B+R+2$.
 If $B-A\leq1$, then $|y-p|\leq R+3$ for every $p\in[A,B]$.
 If $B-A\geq1$, choose a subinterval of length $1$ in $[A,B]$ containing the point of $[A,B]$ closest to $y$.
 Every $p$ in this subinterval also satisfies $|y-p|\leq R+3$.
 With $m_R:=\min_{|z|\leq R+3}(-U'(z))>0$, 
 both cases give
 \[
  \int_A^B(-U'(y-p))\,dp\geq m_R\min\{B-A,1\}.
 \]
 Equation~\eqref{phase:unit-kernel} therefore gives a lower bound $c_R\min\{B-A,1\}$ for the positive term,
 whereas the contribution of $E$ is bounded by $CL\min\{B-A,1\}$.
 Hence, for small $L$,
 \begin{equation}\label{phase:small-core-order}
  D(t,x)\geq c_R'\min\{d(t-1),1\}>0
  \quad\bigl(\alpha_1(t)-R\leq x\leq\alpha_2(t)+R\bigr).
 \end{equation}

 Since $0<u_i<1$, we have $|D|\leq1$ for all $t,x$.
 On $x\leq\alpha_1(t)-R$ both profiles are near $1$, and on $x\geq\alpha_2(t)+R$ both are near $0$.
 Thus $c\leq-\gamma_0$ on these two moving tails.
 On $[-T,0]$, 
 compare $D$ with the spatially constant subsolution $-e^{-\gamma_0(t+T)}$.
 At $t=-T$ it is at most $D$, and \eqref{phase:small-core-order} gives positive values of $D$ on the moving boundaries.
 Comparison therefore yields
 \[
  D(0,x)\geq-e^{-\gamma_0T}
  \qquad\bigl(x\leq\alpha_1(0)-R\ \text{or}\ x\geq\alpha_2(0)+R\bigr).
 \]
 Letting $T\to\infty$ and repeating the argument at any time gives $D\geq0$ everywhere.
 Positivity in the core and the strong maximum principle yield $D(0,x)>0$ for every $x$.
 Since the two initial phases were arbitrary, this proves \eqref{phase:strict-order} in the small-period regime.
\end{proof}

Consequently, 
every propagating orbit on either invariant family is strictly monotone in time. 
It is increasing when $v_L>0$ and decreasing when $v_L<0$ along its phase interval.
In particular, the connecting fronts in Theorem~\ref{phase:classification} are time-monotone connections between strictly ordered stationary fronts.

It remains to estimate the exact velocity $v_L$.
The frozen-front construction gives a first correction at large periods, 
while the stationary-front construction yields a first-order small-period velocity coefficient. 
These are the regime-specific estimates used in the common theorem.

\subsection{Large-period dynamics}

\subsubsection{Passage time in frozen coordinates}

In the large-period parametrization, 
$p=X=LY$ and $v_L(p)=s_\epsilon(Y)$ with $\epsilon=L^{-1}$. 
Changing variables in the common passage-time formula \eqref{intro:passage} gives
\begin{equation}\label{lp:harmonic}
 c_L=\left(\int_0^1\frac{dY}{s_\epsilon(Y)}\right)^{-1}
\end{equation}
whenever $s_\epsilon$ has no zero. 

\subsubsection{Consequences for every sign pattern of the frozen speed}

The exact criterion uses $s_\epsilon$, 
not just its leading term $c$.
We calculate the first correction, 
including the term caused by variation of the frozen profile. 
Taylor expansion of \eqref{lp:residual}, 
with its exponentially localized factors, 
gives
\begin{equation}\label{lp:R1}
	\begin{split}
		R_\epsilon(Y)&=\epsilon R_1(Y)+O(\epsilon^2),\\
		R_1(Y,z)&=a'(Y)(\phi_z+z\phi_{zz})+zb'(Y)\phi_z
		+z f_Y(Y,\phi)-c(Y)\phi_Y.
	\end{split}
\end{equation}
More precisely, 
the remainder satisfies
\begin{equation}\label{lp:R1-remainder}
 \left\|R_\epsilon-\epsilon R_1\right\|_{C^1(\T;H)}
 \leq C\epsilon^2.
\end{equation}
Here $f_Y$ is the partial derivative at fixed $u$. 
For the reaction contribution, 
the integral Taylor formula gives
\[
\begin{aligned}
&f(Y+\epsilon z,\phi)-f(Y,\phi)-\epsilon zf_Y(Y,\phi)\\
&\qquad=\epsilon^2z^2\int_0^1(1-\theta)
 f_{YY}(Y+\theta\epsilon z,\phi)\,d\theta.
\end{aligned}
\]
Lemma~\ref{lp:stable-equilibrium-derivative-decay} and the uniform frozen-tail
estimates bound these terms by $C\epsilon^2z^2e^{-\nu|z|}$ in the weighted region. 
The coefficient remainders satisfy the same type of estimate, 
which proves \eqref{lp:R1-remainder}.

\begin{proposition}[Large-period speed expansion]\label{lp:speed-expansion}
	With
	\begin{equation}\label{lp:d}
		d(Y)=-\ell_YR_1(Y),
	\end{equation}
	the reduced speed satisfies
	\begin{equation}\label{lp:speed-exp}
		s_\epsilon=c+\epsilon d+O(\epsilon^2)
		\quad\hbox{in }C^1(\T).
	\end{equation}
\end{proposition}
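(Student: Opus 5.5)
The plan is to read $s_\epsilon$ off the explicit formula for $\sigma_\epsilon$ and expand its numerator and denominator to first order. By the completion of the large-period construction, $s_\epsilon(Y)=c(Y)+\sigma_\epsilon(Y,h_\epsilon(Y))$, with $\sigma_\epsilon$ given by \eqref{lp:sigma}. Hence it suffices to show
\[
\sigma_\epsilon(\cdot,h_\epsilon(\cdot))=-\epsilon\,\ell_\cdot R_1+O(\epsilon^2)\quad\text{in }C^1(\T).
\]

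\emph{Denominator.} It equals $1-\epsilon\ell_Y\phi_Y+\ell_Y(h_\epsilon)_z+\epsilon\dot\ell_Yh_\epsilon$. Since $\ell_Y(h_\epsilon)_z=-\int(\partial_z\psi_Y)h_\epsilon$, this is $1+O(\epsilon)$ in $C^1(\T)$ by \eqref{lp:graph-C1} and the $C^3$ dependence of $\psi_Y$ from Lemma~\ref{lp:frozen-properties}. It therefore suffices to show that the numerator is $\epsilon\ell_YR_1+O(\epsilon^2)$ in $C^1$, because an $O(\epsilon)$ numerator times an $O(\epsilon)$ correction of the reciprocal is $O(\epsilon^2)$.

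\emph{Numerator.} It is $\ell_Y A_{\epsilon,Y}h_\epsilon+\ell_YR_\epsilon+\ell_YN_{\epsilon,Y}(h_\epsilon)+\epsilon c\,\dot\ell_Yh_\epsilon$, and I would treat the four terms in turn.
\begin{enumerate}
\item By the leakage bound \eqref{lp:leakage}, $\|\ell_\cdot A_{\epsilon,\cdot}\|_{C^2(\T;H\to\R)}\le C\epsilon$. Combined with $\|h_\epsilon\|_{C^1(\T;W)}\le C\epsilon$ and the product rule, the first term is $O(\epsilon^2)$ in $C^1$.
\item By \eqref{lp:R1-remainder} and the $C^1$ boundedness of $\ell_Y$ on $H$, the second term is $\epsilon\ell_YR_1+O(\epsilon^2)$ in $C^1$.
\item By \eqref{lp:nonlinear-C1-est}, the third term is $O(\|h_\epsilon\|^2_{C^1(\T;W)})=O(\epsilon^2)$ in $C^1$.
\item The last term carries an explicit factor $\epsilon$ times $\dot\ell_Yh_\epsilon=O(\epsilon)$ in $C^1$, using $\partial_Y^2\ell_Y$, which is available since $\phi$ is $C^3$ in $Y$.
\end{enumerate}
Combining, $\sigma_\epsilon=-\epsilon\ell_YR_1+O(\epsilon^2)$, which gives $s_\epsilon=c+\epsilon d+O(\epsilon^2)$ with $d=-\ell_YR_1$.

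\emph{Expected obstacle.} The main difficulty is regularity. The $C^1$-in-$Y$ estimates require differentiating $Y\mapsto\ell_YA_{\epsilon,Y}h_\epsilon(Y)$, and $A_{\epsilon,Y}h_\epsilon$ is only defined as a $D\to H$ action. I would avoid this by writing the pairing in adjoint form, $\langle(A^*_{\epsilon,Y}-\Lcal_Y^*)\psi_Y,h_\epsilon(Y)\rangle$, using $\Lcal_Y^*\psi_Y=0$. The kernel there is $C^2$ in $Y$ with $H'$-norm $O(\epsilon)$ by the estimate in the proof of Lemma~\ref{lp:stable}, so only $h_\epsilon\in C^1(\T;H)$ is needed. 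Similarly, one must check that $R_1\in C^1(\T;H)$, which follows from the exponential localization in \eqref{lp:R1} and Lemma~\ref{lp:stable-equilibrium-derivative-decay} applied to the $zf_Y$ term.
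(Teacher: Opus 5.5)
Your proposal is correct and follows essentially the same route as the paper: write $s_\epsilon-c=\sigma_\epsilon(Y,h_\epsilon(Y))=-\mathcal N_\epsilon/\mathcal D_\epsilon$, bound the four numerator terms by the leakage estimate, the residual expansion \eqref{lp:R1-remainder}, the $C^1$ nonlinear estimate and the graph bound \eqref{lp:graph-C1}, and use $\mathcal D_\epsilon=1+O(\epsilon)$ in $C^1(\T)$. Your adjoint-form treatment of $\ell_YA_{\epsilon,Y}h_\epsilon$ is the same device the paper uses to extend $\ell_YA_{\epsilon,Y}$ to $H$ when it proves \eqref{lp:leakage}.
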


\begin{proof}
Since $\ell_Yh_\epsilon(Y)=0$, \eqref{lp:exact}--\eqref{lp:sigma} give $s_\epsilon-c=\sigma_\epsilon(Y,h_\epsilon(Y))$.
Set $h=h_\epsilon$ and introduce the numerator and denominator in \eqref{lp:sigma}:
	\[
	\begin{aligned}
	 \mathcal N_\epsilon(Y)&=
	 \ell_Y\bigl(A_{\epsilon,Y}h(Y)+R_\epsilon(Y)
	 +N_{\epsilon,Y}(h(Y))\bigr)
	 +\epsilon c(Y)\dot\ell_Yh(Y),\\
	 \mathcal D_\epsilon(Y)&=
	 1-\epsilon\ell_Y\phi_Y(Y)+\ell_Yh_z(Y)
	 +\epsilon\dot\ell_Yh(Y).
	\end{aligned}
	\]
	Estimates \eqref{lp:leakage} and \eqref{lp:graph-C1} give
	\[
	 \|\ell_YA_{\epsilon,Y}h(Y)\|_{C^1(\T)}
	 \leq C\epsilon^2.
	\]
	By \eqref{lp:nonlinear-C1-est} and the $C^1$ bound on $h$,
	we have
	\[
	 \|N_{\epsilon,\cdot}(h(\cdot))\|_{C^1(\T;H)}
	 +\|\epsilon c\dot\ell_Yh(Y)\|_{C^1(\T)}
	 \leq C\epsilon^2.
	\]
	By \eqref{lp:R1-remainder},
	\[
	 \|\ell_YR_\epsilon-\epsilon\ell_YR_1\|_{C^1(\T)}
	 \leq C\epsilon^2.
	\]
	Consequently,
	\begin{equation}\label{lp:speed-numerator-estimate}
	 \|\mathcal N_\epsilon-\epsilon\ell_YR_1\|_{C^1(\T)}
	 \leq C\epsilon^2.
	\end{equation}
	The same $C^1$ graph bound gives
	\begin{equation}\label{lp:speed-denominator-estimate}
	 \|\mathcal D_\epsilon-1\|_{C^1(\T)}\leq C\epsilon.
	\end{equation}
	For small $\epsilon$, $\mathcal D_\epsilon$ is bounded away from zero and
	\[
	 \|\mathcal D_\epsilon^{-1}-1\|_{C^1(\T)}\leq C\epsilon.
	\]
	Since $\sigma_\epsilon=-\mathcal N_\epsilon/\mathcal D_\epsilon$,
	\eqref{lp:speed-numerator-estimate} and \eqref{lp:speed-denominator-estimate} imply
	\[
	 \left\|\sigma_\epsilon+\epsilon\ell_YR_1\right\|_{C^1(\T)}
	 \leq C\epsilon^2.
	\]
	Since $d=-\ell_YR_1$, adding $c(Y)$ proves \eqref{lp:speed-exp} in $C^1(\T)$.
\end{proof}

\begin{corollary}\label{lp:all-cases}
	For all sufficiently large $L$ the following conclusions hold.
	\begin{enumerate}
		\item If $c$ is strictly positive or strictly negative on $\T$,
		there is a pulsating front with the same sign of speed, 
		and
		\[
		c_L=c_*+\epsilon c_*^2\int_0^1\frac{d(Y)}{c(Y)^2}\,dY
		+O(\epsilon^2),\qquad
		c_*=\left(\int_0^1\frac{dY}{c(Y)}\right)^{-1}.
		\]
		\item If $c$ takes both positive and negative values, 
		$s_\epsilon$ also takes both signs. 
		In particular the manifold contains stationary fronts and connecting orbits, 
		and no orbit traverses all spatial phases.
		\item Suppose $c$ has a zero but does not change sign. 
		If $c\geq0$ and $c\not\equiv0$, 
		positivity of $d$ on $\mathcal Z=\{Y:c(Y)=0\}$ gives a right-moving pulsating front,
		whereas $d(Y_0)<0$ at some $Y_0\in\mathcal Z$ gives stationary fronts. 
		Reverse the signs when $c\leq0$ and $c\not\equiv0$.
		If $c\equiv0$, 
		a strict constant sign of $d$ gives a pulsating front with
		\[
		c_L=\epsilon\left(\int_0^1\frac{dY}{d(Y)}\right)^{-1}
		+O(\epsilon^2).
		\]
		A sign change of $d$ then gives stationary fronts. 
		In every propagating case in this item, 
		$c_L\to0$.
	\end{enumerate}
	When these sign tests are inconclusive,
	Theorem~\ref{phase:classification} still classifies the dynamics on the invariant family in terms of the zeros and sign of the exact phase velocity $s_\epsilon$.
\end{corollary}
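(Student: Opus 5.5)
The plan is to read everything off the expansion $s_\epsilon=c+\epsilon d+O(\epsilon^2)$ in $C^1(\T)$ from Proposition~\ref{lp:speed-expansion}, together with the phase classification of Theorem~\ref{phase:classification} applied with $v_L(X)=s_\epsilon(X/L)$, and the harmonic formula \eqref{lp:harmonic}.

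\emph{Item 1.} If $|c|\geq c_0>0$, then $|s_\epsilon|\geq c_0/2$ for small $\epsilon$. Theorem~\ref{phase:classification} then gives a single pulsating front with the sign of $c$. Expand
\[
1/s_\epsilon=1/c-\epsilon d/c^2+O(\epsilon^2)
\]
uniformly on $\T$, integrate, and invert $(\int_0^1 dY/s_\epsilon)^{-1}$. This gives $c_L=c_*+\epsilon c_*^2\int_0^1 d/c^2+O(\epsilon^2)$.

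\emph{Item 2.} If $c(Y_+)>0>c(Y_-)$, the $C^0$ bound $|s_\epsilon-c|\leq C\epsilon$ keeps these signs for small $\epsilon$. By continuity $s_\epsilon$ has a zero, so part 2 of Theorem~\ref{phase:classification} gives stationary fronts and connecting orbits, and no orbit traverses all phases.

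\emph{Item 3, $c\geq0$, $c\not\equiv0$.} This is the main obstacle. We must show that $d>0$ on the compact zero set $\mathcal Z$ forces $s_\epsilon>0$ everywhere, uniformly.
\begin{itemize}
\item Choose $\delta>0$ with $d\geq d_0/2$ on the neighborhood $\mathcal Z_\delta=\{\operatorname{dist}(Y,\mathcal Z)<\delta\}$, where $d_0=\min_{\mathcal Z}d>0$.
\item Off $\mathcal Z_\delta$ we have $c\geq c_\delta>0$, so $s_\epsilon\geq c_\delta-C\epsilon>0$.
\item On $\mathcal Z_\delta$ we have $s_\epsilon\geq c+\epsilon d_0/2-C\epsilon^2\geq\epsilon d_0/4>0$.
\end{itemize}
Hence $s_\epsilon$ has no zero, and we get a right-moving front. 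Only the $C^0$ part of the expansion is needed here.

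For the converse in item 3, suppose $d(Y_0)<0$ at some $Y_0\in\mathcal Z$. Then $s_\epsilon(Y_0)=\epsilon d(Y_0)+O(\epsilon^2)<0$, while $s_\epsilon>0$ where $c>0$. So $s_\epsilon$ changes sign and item 2's argument applies. The case $c\leq0$ follows by symmetry.

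\emph{Item 3, $c\equiv0$.} Here $s_\epsilon=\epsilon(d+O(\epsilon))$. If $d$ has a strict constant sign, then
\[
\int_0^1 dY/s_\epsilon=\epsilon^{-1}\Bigl(\int_0^1 dY/d+O(\epsilon)\Bigr),
\]
which gives the stated $c_L$. If $d$ changes sign, so does $s_\epsilon$, and we get stationary fronts.

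\emph{Limit $c_L\to0$.} In every propagating case of item 3, $c_L\to0$. For $c\equiv0$ this is immediate from the formula. Otherwise $s_\epsilon\leq c+C\epsilon$, and $\int_0^1 dY/(c+C\epsilon)\to\int_0^1 dY/c=\infty$ by monotone convergence. The divergence holds because $c\in C^3$, $c\geq0$ vanishes at a point $Y_0$, and so $c(Y)\leq C|Y-Y_0|^2$ near $Y_0$ (since $c'(Y_0)=0$), which is not integrable against $1/c$. Therefore $c_L=(\int_0^1 dY/s_\epsilon)^{-1}\to0$.

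The final sentence of the statement is just Theorem~\ref{phase:classification} itself.
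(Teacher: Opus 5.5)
Your proposal is correct and follows essentially the same route as the paper: it reads every item off the $C^0$ and $C^1$ expansion $s_\epsilon=c+\epsilon d+O(\epsilon^2)$, the harmonic formula \eqref{lp:harmonic}, and Theorem~\ref{phase:classification}, using the same compactness argument near $\mathcal Z$. The only difference is in the $c_L\to0$ step. You use monotone convergence with $s_\epsilon\le c+C\epsilon$ and a quadratic bound on $c$ near an interior zero, whereas the paper applies Fatou's lemma and needs only the Lipschitz bound $c(Y)\le C|Y-Y_0|$; either suffices to make $\int_0^1 dY/c=\infty$.
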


\begin{proof}
	The first assertion follows by uniformly expanding the reciprocal of $c+\epsilon d+O(\epsilon^2)$. 
	The second follows by evaluating at one positive and one negative point of $c$ and applying continuity.
	For the third item with $c\geq0$ and $c\not\equiv0$, 
	compactness makes $d$ uniformly positive on a neighborhood of $\mathcal Z$ when it is positive on $\mathcal Z$.
	There $c+\epsilon d+O(\epsilon^2)>0$, 
	while on the complement $c$ is bounded below by a positive number. 
	A negative value of $d$ at a zero gives a negative value of $s_\epsilon$ there, 
	while a fixed point with $c>0$ gives a positive value. 
	If propagation persists,
	the Lipschitz bound near a zero $Y_0$ gives $c(Y)\leq C|Y-Y_0|$, 
	so $\int c(Y)^{-1}\,dY=+\infty$.
	Fatou's lemma applied to the positive reciprocals of $s_\epsilon$ then shows that \eqref{lp:harmonic} tends to zero.
	The negative case is identical after changing signs. 
	When $c\equiv0$,
	the claims follow by factoring $\epsilon$ from \eqref{lp:speed-exp}.
	The displayed speed formula also gives $c_L\to0$.
\end{proof}

When the frozen speed touches zero without changing sign, propagation is decided by the correction term. 
The following explicit pair of examples shows why this correction is needed.

\begin{example}[Same frozen speed, different large-period dynamics]\label{lp:touching-example}
	Set
	\[
	 c(Y)=1-\cos(2\pi Y),
	 \qquad b(Y)=-c(Y).
	\]
	First take
	\begin{equation*}
	 u_t=\partial_x\!\left(a\!\left(\frac{x}{L}\right)u_x\right)
	 +b\!\left(\frac{x}{L}\right)u_x
	 +f\!\left(\frac{x}{L},u\right),
	\end{equation*}
	where 
	\[
	a(Y)=2+\sin(2\pi Y),
	\qquad f(Y,u)=a(Y)u(1-u)(2u-1).
	\]
	A direct computation yields that $d(Y)=-a'(Y)=-2\pi\cos(2\pi Y)$.  
	Hence Proposition~\ref{lp:speed-expansion} gives
	\[
	 s_{\epsilon}(Y)
	 =1-\cos(2\pi Y)-2\pi\epsilon\cos(2\pi Y)+O(\epsilon^2)
	 \quad\text{in }C^1(\mathbb T).
	\]
	Since $s_{\epsilon}(0)<0$ and $s_{\epsilon}(1/2)>0$ for small $\epsilon$,
	the large-period reduced speed changes sign.  
	Thus this equation has stationary fronts and connecting transition fronts in the invariant family,
	but no pulsating front traversing all spatial phases.

	Now replace $a,f$ by
	\[
	 \widetilde a(Y)=2-\sin(2\pi Y),
	 \qquad
	 \widetilde f(Y,u)=\widetilde a(Y)u(1-u)(2u-1).
	\]
	The frozen equation again has the same profile $\phi$ and speed $c(Y)$.
	Its first-order correction is
	$\widetilde d(Y)=-\widetilde a'(Y)=2\pi\cos(2\pi Y)$, so
	\[
	 \widetilde s_{\epsilon}(Y)
	 =1-\cos(2\pi Y)+2\pi\epsilon\cos(2\pi Y)+O(\epsilon^2)
	 \quad\text{in }C^1(\mathbb T).
	\]
	Because $c\geq0$, its zero set is $\{0\}$ and $\widetilde d(0)=2\pi>0$,
	Corollary~\ref{lp:all-cases} implies that $\widetilde s_{\epsilon}>0$ for all sufficiently small $\epsilon$. 
	Thus the modified equation has a right-moving pulsating front.
	Its speed is
	\[
	 \widetilde c_L=\left(\int_0^1\frac{dY}{\widetilde s_{\epsilon}(Y)}\right)^{-1},
	 \qquad \epsilon=L^{-1},
	\]
	and the quadratic-touching asymptotics give
	\[
	 \widetilde c_L\sim 2\sqrt{\pi}\,L^{-1/2}\qquad(L\to\infty).
	\]
	Replacing $(a,f)$ by $(\widetilde a,\widetilde f)$ leaves the frozen profile and speed unchanged,
	but reverses the first-order correction at the touching phase and changes the large-period conclusion from stationary/connecting dynamics to a propagating pulsating front.
\end{example}

\subsubsection{Quantitative propagation near degenerate zero speeds}\label{bn:section}
We use the expansion in Proposition~\ref{lp:speed-expansion} to quantify
propagation near finite-order zeros of the frozen speed.
All statements in this subsection refer to the invariant family constructed in Theorem~\ref{lp:main}.

\begin{theorem}[Propagation through finite-order zero-speed phases]\label{bn:running}
	Suppose $c\geq0$ and its zero set on $\T$ consists of finitely many points $Y_1,\ldots,Y_J$, 
	with $J\geq1$. 
	Assume that, in a local coordinate $r$ at $Y_j$,
	\[
	c(Y_j+r)=A_j r^{2m_j}+o(r^{2m_j}),\qquad
	A_j>0,\quad m_j\in\N,\quad m_j\geq1,\qquad d_j:=d(Y_j)>0.
	\]
	Let $m_*:=\max_jm_j$ and set
	\[
	I_m:=\int_{\R}\frac{dt}{1+t^{2m}}
	=\frac{\pi}{m}\csc\frac{\pi}{2m},\qquad
	K_*:=I_{m_*}\sum_{j:m_j=m_*}
	A_j^{-1/(2m_*)}d_j^{1/(2m_*)-1}.
	\]
	Then, 
	for all sufficiently large $L$ there is a propagating front in the invariant family, 
	and its speed satisfies
	\begin{equation}\label{bn:speed}
		c_L\sim K_*^{-1} L^{-(1-1/(2m_*))}.
	\end{equation}
	In particular, if $J=1$ and $m_1=1$, then, writing $A=A_1$ and $d_0=d_1$,
	\begin{equation}\label{bn:quadratic}
		c_L\sim\frac{\sqrt{A d_0}}{\pi\sqrt L}.
	\end{equation}
\end{theorem}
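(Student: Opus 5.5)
The proof rests on the expansion $s_\epsilon=c+\epsilon d+O(\epsilon^2)$ in $C^1(\T)$ from Proposition~\ref{lp:speed-expansion}, combined with the harmonic-mean formula \eqref{lp:harmonic}. The plan has four steps: show that $s_\epsilon$ has no zero; localize the integral of $1/s_\epsilon$ near the zeros $Y_j$; evaluate each local integral by a rescaling; and sum the contributions, keeping the dominant order.

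\emph{Positivity.} By Corollary~\ref{lp:all-cases}(3), positivity of $d$ on $\mathcal Z=\{Y_1,\dots,Y_J\}$ already gives $s_\epsilon>0$ for large $L$. Quantitatively, choose $\delta>0$ such that on $|r|\le\delta$ around each $Y_j$ we have
\[
(1-\eta)A_jr^{2m_j}\le c(Y_j+r)\le(1+\eta)A_jr^{2m_j},\qquad |d-d_j|\le\eta d_j,
\]
for a given $\eta>0$. Off these neighborhoods $c\ge c_\delta>0$. Consequently, on $|r|\le\delta$,
\[
s_\epsilon(Y_j+r)\ge(1-\eta)A_jr^{2m_j}+\epsilon(1-\eta)d_j-C\epsilon^2 ,
\]
together with the matching upper bound with $1+\eta$. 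Since $d_j>0$, the term $C\epsilon^2$ is absorbed into $\eta\epsilon d_j$ for small $\epsilon$. Hence a propagating front exists, and by \eqref{lp:harmonic} it suffices to compute $T_\epsilon=\int_0^1 dY/s_\epsilon(Y)$ asymptotically.

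\emph{Local integrals.} The complement of the neighborhoods contributes $O(1)$. Near $Y_j$, set $r=(\epsilon d_j/A_j)^{1/(2m_j)}t$. Then
\[
\int_{-\delta}^{\delta}\frac{dr}{A_jr^{2m_j}+\epsilon d_j}
=(\epsilon d_j)^{-1+1/(2m_j)}A_j^{-1/(2m_j)}\int_{|t|\le\delta(A_j/\epsilon d_j)^{1/(2m_j)}}\frac{dt}{1+t^{2m_j}} ,
\]
and the truncated integral tends to $I_{m_j}$ as $\epsilon\to0$ because $m_j\ge1$. The two-sided bounds replace $A_j,d_j$ by $(1\pm\eta)A_j,(1\pm\eta)d_j$ at the cost of factors $1+O(\eta)$. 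The value $I_m=\frac{\pi}{m}\csc\frac{\pi}{2m}$ is the standard beta-function integral.

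\emph{Summation.} Because $m_j\ge1$, every exponent $1-1/(2m_j)$ is positive, so each local contribution diverges as $\epsilon\to0$ and dominates the $O(1)$ remainder. The largest exponent $1-1/(2m_*)$ dominates, so
\[
T_\epsilon=(1+O(\eta)+o(1))\,K_*\,\epsilon^{-(1-1/(2m_*))}.
\]
Letting $\eta\to0$ gives $c_L=T_\epsilon^{-1}\sim K_*^{-1}L^{-(1-1/(2m_*))}$, which is \eqref{bn:speed}. For $J=1$ and $m_1=1$ we have $I_1=\pi$ and $K_*=\pi/\sqrt{Ad_0}$, which yields \eqref{bn:quadratic}.

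The main obstacle is uniformity. The $O(\epsilon^2)$ error must be small relative to $\epsilon d_j$ across the whole inner region, and this holds because the error is uniform in $Y$. The $o(r^{2m_j})$ term must be controlled inside the rescaled window; this is handled by fixing $\delta$ before letting $\epsilon\to0$, so the window $|t|\lesssim\delta\epsilon^{-1/(2m_j)}$ still exhausts $\R$. No derivative information beyond the $C^0$ part of the expansion is needed.
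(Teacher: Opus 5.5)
Your proposal is correct and follows essentially the paper's own proof. The steps match: positivity of $s_\epsilon$ from the lower bound near the zeros, localization of $\int_0^1 dY/s_\epsilon$, rescaling by $(\epsilon d_j/A_j)^{1/(2m_j)}$, and keeping only the indices with $m_j=m_*$. The one difference is cosmetic: you sandwich $s_\epsilon$ between $(1\pm\eta)$-perturbed model denominators and let $\eta\to0$, whereas the paper uses the fixed lower bound $\tfrac12A_jr^{2m_j}+\tfrac14\epsilon d_j$ as an integrable majorant and applies dominated convergence.
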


\begin{proof}
	Write $s_\epsilon=c+\epsilon d+\epsilon^2r_\epsilon$, 
	where $\sup_{0<\epsilon\leq\epsilon_0}\|r_\epsilon\|_{C^1}<\infty$.
	Choose disjoint coordinate neighborhoods $|r|<\delta$ of the zeros,
	small enough that, 
	in the $j$th neighborhood,
	\[
	\tfrac12A_jr^{2m_j}\leq c(Y_j+r)\leq\tfrac32A_jr^{2m_j},
	\qquad \tfrac12d_j\leq d(Y_j+r)\leq\tfrac32d_j.
	\]
	After reducing $\epsilon_0$,
	\begin{equation}\label{bn:lower}
		s_\epsilon(Y_j+r)\geq\tfrac12A_jr^{2m_j}
		+\tfrac14\epsilon d_j.
	\end{equation}
	On the compact complement $c$ is bounded below by a positive constant,
	so $s_\epsilon>0$ everywhere. Theorem~\ref{phase:classification} applies.
	
	For one neighborhood put $m=m_j$, $A=A_j$, $d_0=d_j$ and $\lambda_\epsilon=(\epsilon d_0/A)^{1/(2m)}$. 
	Substitution yields
	\[
	\int_{-\delta}^{\delta}\frac{dr}{s_\epsilon(Y_j+r)}
	=\frac{\lambda_\epsilon}{\epsilon d_0}
	\int_{\R}
	\frac{\epsilon d_0\,\mathbf1_{\{|t|<\delta/\lambda_\epsilon\}}}
	{s_\epsilon(Y_j+\lambda_\epsilon t)}\,dt.
	\]
	For fixed $t$, 
	the integrand converges to $(1+t^{2m})^{-1}$.
	Estimate~\eqref{bn:lower} bounds it by $4/(1+t^{2m})$, 
	which is integrable.
	 Dominated convergence therefore gives
	\[
	\int_{-\delta}^{\delta}\frac{dr}{s_\epsilon(Y_j+r)}
	\sim I_{m_j}A_j^{-1/(2m_j)}d_j^{1/(2m_j)-1}
	\epsilon^{1/(2m_j)-1}.
	\]
	The integral over the complement is $O(1)$.
	When these estimates are summed, 
	exactly the indices with largest $m_j$ contribute to the leading term. 
	Inverting the exact formula~\eqref{lp:harmonic} proves \eqref{bn:speed}. 
	Finally, 
	setting $v=t^{2m}$ on the positive half-line and using the Euler beta integral gives the stated value of $I_m$.
\end{proof}

\begin{remark}
	The passage-time integral and square-root speed law near the propagation threshold are established mechanisms.
	See, for example, \cite{ST2017}. 
	An infinite zero set, 
	a zero of infinite order, 
	or $d=0$ at a frozen zero requires information beyond this finite-order argument.
\end{remark}

\subsection{Small-period dynamics}\label{sp:dynamics}

We now estimate the variation of $s_L$ over one period and compute its first-order coefficient.

\subsubsection{Phase velocity and its mean}

\begin{proposition}[Small-period phase velocity and its first-order mean]
\label{sp:phase-average}\label{sp:first-order-speed}
	Let $s_L$ be the velocity in Theorem~\ref{sp:main} and set
$\bar s_L=L^{-1}\int_0^L s_L(\alpha)\,d\alpha$.
	Let 
	\[
	\langle h\rangle :=\int_0^1h(y)\,dy,
	\] 
	and let $\zeta$ and $\psi=\psi_0$ be defined by \eqref{sp:zeta} and Lemma~\ref{sp:limit-spectrum} respectively.
	Then the constant
	\begin{equation}\label{sp:kappa}
		\kappa=\int_\R\psi(z)
		\big\langle\zeta(y)f(y,U(z))\big\rangle\,dz
		-\frac{a_hb_h}{2}\left\langle\frac{\zeta}{a}\right\rangle
	\end{equation}
	satisfies, 
	uniformly in $\alpha$,
	\begin{equation}\label{sp:first-order-expansion}
		s_L(\alpha)=\kappa L+O(L^2),\qquad
		\bar s_L=\kappa L+O(L^2).
	\end{equation}
	Moreover,
	\begin{equation}\label{sp:phase-oscillation}
		\|s_L-\bar s_L\|_\infty\leq C_0L^2.
	\end{equation}
	If $\kappa\ne0$, 
	the invariant family consists of the time slices of a single propagating pulsating front for all sufficiently small $L$,
	and its signed speed satisfies $c_L=\kappa L+O(L^2)$.
\end{proposition}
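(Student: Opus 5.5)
Since $\ell_{L,\alpha}g_L(\alpha)=0$, the velocity is $s_L(\alpha)=\nu_L(\alpha,g_L(\alpha))$, given by the quotient in \eqref{sp:coordinates}. We expand numerator and denominator to order $L^2$. By Lemma~\ref{sp:corrector-estimates}, the denominator is $d_{L,\alpha}-\ell'_{L,\alpha}g_L=1+O(L)$. Hence it suffices to show
\[
\ell_{L,\alpha}\bigl(R_{L,\alpha}+A_{L,\alpha}g_L(\alpha)+N_{L,\alpha}(g_L(\alpha))\bigr)=\kappa L+O(L^2),
\]
since the $O(L)$ denominator correction then only contributes at order $L^2$. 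The nonlinear term is $O(\|g_L\|_2^2)=O(L^2)$ by \eqref{sp:nonlinear} and \eqref{sp:graphbound}.

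For the linear term, write $\ell_{L,\alpha}A_{L,\alpha}g=(g,A^*_{L,\alpha}\psi_{L,\alpha})$. The flux identity \eqref{sp:adjointidentity} shows that $A^*_{L,\alpha}\psi_{L,\alpha}$ equals $A^*_{0,\alpha}\psi_\alpha=0$ plus two kinds of terms. The first kind is $L\times$(bounded, exponentially decaying) terms. The second kind is oscillatory terms $p(x/L)\varphi(x-\alpha)$ with zero-mean $p$, coming from $f_u(x/L,V)-\bar f'(U)$ and the derivative of the flux correction. We aim to show $\|A^*_{L,\alpha}\psi_{L,\alpha}\|_{V'}\le CL$ using the primitive trick $(*)$. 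Pairing then requires $g_L\in V$ with $\|g_L\|_V\le CL$, which the $\mathsf X$ bound alone does not give. Two routes are available. The first is to improve the graph estimate in $V$ via parabolic smoothing from \eqref{sp:semigroups}, applying $V'\to\mathsf X$ smoothing to the invariance equation $\widehat B_\alpha g+G_L=s_Lg'$ and using resolvent bounds $V'\to V$ from Lemma~\ref{sp:resolvent}. The second is to identify the leading part of $g_L$: it is $(\widehat B_\alpha)^{-1}Q_\alpha R_{L,\alpha}$ to leading order, and the pairing of an $O(L)$ $V$-function with the $O(L)$ $V'$-function is $O(L^2)$. I expect this step, showing that the linear contribution is genuinely $O(L^2)$ rather than $O(L)$, to be the main obstacle. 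The subtle point is that the oscillatory parts of $R_{L,\alpha}$ and of $A^*\psi_{L}$ might resonate to produce an $O(L)$ contribution. If that occurs, it must be computed and absorbed into $\kappa$. The term $-\tfrac{a_hb_h}{2}\langle\zeta/a\rangle$ suggests that exactly such a drift-corrector interaction appears.

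For the residual term, compute $\ell_{L,\alpha}R_{L,\alpha}=\langle R_{L,\alpha},\psi_\alpha+L\chi\psi_\alpha'+L\zeta\psi_\alpha\rangle$ using the decomposition in the proof of Lemma~\ref{sp:corrector-estimates}: $R=p_2U''+p_1U'+p_0(y,U)+Lr_{L,\alpha}$. Pairing a zero-mean oscillation $p(x/L)\Phi(x)$ against a smooth decaying function yields $O(L^2)$ after two integrations by parts with the periodic primitive, so $\langle p_j,\psi_\alpha\rangle=O(L^2)$. The order-$L$ contributions are therefore the following. First, $\langle Lr_{L,\alpha},\psi_\alpha\rangle$, whose oscillatory part averages, leaving $L\int\psi\,\langle a\chi\rangle U'''+\dots$. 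Second, the cross terms $L\langle p_j(x/L)\,\chi(x/L)\psi'\rangle$ and $L\langle p_j\zeta\psi\rangle$, which average to $L\langle p_j\chi\rangle$ and $L\langle p_j\zeta\rangle$ times the homogenized pairings. Applying the averaging lemma $\int p(x/L)\Phi\,dx=\langle p\rangle\int\Phi+O(L)$ for bounded-variation decaying $\Phi$, and using the cell relations $a(1+\chi')=a_h$, $a\zeta'=b-b_h$, and $p_0=f-\bar f$, the $\chi$-terms should cancel by the symmetry $\langle(a\chi)'\chi\rangle=-\langle a\chi\chi'\rangle$ and the profile equation. The surviving terms should be $\int\psi\langle\zeta f(y,U)\rangle$ and the drift contribution $-\tfrac{a_hb_h}{2}\langle\zeta/a\rangle$, the latter obtained via $\int\psi U''$ and $\int\psi U'$ relations from $A_0^*\psi=0$. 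Translation in $\alpha$ only shifts the oscillation phase, which leaves the averages unchanged, so the estimates are uniform in $\alpha$. This gives $s_L=\kappa L+O(L^2)$ uniformly, and \eqref{sp:phase-oscillation} and $\bar s_L=\kappa L+O(L^2)$ follow immediately. When $\kappa\neq0$, $s_L$ has a strict sign for small $L$, and Theorem~\ref{phase:classification} gives a single pulsating front. Finally, $c_L$ is the harmonic mean of $s_L$, which equals $\kappa L+O(L^2)$ by \eqref{intro:passage}.
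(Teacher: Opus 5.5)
Your outline is essentially the paper's proof. It has the same ingredients: the bound $\|g_L\|_V\le CL$ from the augmented resolvent at $\lambda=0$ applied to the invariance equation (your first route), $\|A_{L,\alpha}^*\psi_{L,\alpha}\|_{V'}\le CL$ from the flux identity so that $\ell_{L,\alpha}A_{L,\alpha}g_L(\alpha)=O(L^2)$, nonlinear and denominator corrections at $O(L^2)$, and two-scale averaging of $\ell_{L,\alpha}R_{L,\alpha}$ with the cell identities. Two clarifications. The resonance you worry about cannot occur, because $|\langle g_L,A_{L,\alpha}^*\psi_{L,\alpha}\rangle|\le\|g_L\|_V\|A_{L,\alpha}^*\psi_{L,\alpha}\|_{V'}=O(L^2)$, so the whole drift constant comes from the residual. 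There, not every $\chi$-term cancels: the cross term $\langle(a_hb/a-b_h)\chi\rangle\int\psi'U'=\tfrac{a_hb_h}{2}\langle\zeta/a\rangle$ survives, and it combines with $\langle(a_hb/a-b_h)\zeta\rangle\int\psi U'=-a_hb_h\langle\zeta/a\rangle$ to give $-\tfrac{a_hb_h}{2}\langle\zeta/a\rangle$.
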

\begin{proof}
	First we improve the bound on the invariant graph from $\mathsf X$ to $V$.
	Along a complete graph orbit write $w(t)=g_L(\alpha(t))$.
	By \eqref{sp:graphbound} and $|s_L|\leq CL$, 
	this is an $\mathsf X$-valued Lipschitz function with $\|w_t\|_2\leq CL^2$ at almost every time.
	At such times the parabolic equation gives, 
	in $V'$,
	\begin{equation}\label{sp:graph-regularity-equation}
		-A_{L,\alpha}g_L(\alpha)+s_L(\alpha)q_{L,\alpha}
		=R_{L,\alpha}+N_{L,\alpha}(g_L(\alpha))-w_t,
		\qquad \ell_{L,\alpha}g_L(\alpha)=0.
	\end{equation}
	The right side is $O(L)$ in $V'$ by \eqref{sp:residual},
	\eqref{sp:nonlinear}, 
	and \eqref{sp:graphbound}.
	Apply Lemma~\ref{sp:resolvent} at $\lambda=0$ with $\mu=s_L(\alpha)$ to obtain $\|g_L(\alpha)\|_V\leq CL$.
	This holds at every phase: at a zero of $s_L$ the graph orbit is stationary and $w_t=0$; 
	elsewhere, 
	approach the phase along times at which $w_t$ exists and use $\mathsf X$ continuity and weak lower semicontinuity of the $V$ norm.

	The flux identity \eqref{sp:adjointidentity} with $\varphi=\psi_\alpha$ gives
	\[
	 A_{L,\alpha}^*\psi_{L,\alpha}
	 =L\partial_x\bigl[a\chi\psi_\alpha''
	 +(a\zeta-b\chi)\psi_\alpha'-b\zeta\psi_\alpha\bigr]
	 +f_u(x/L,V_{L,\alpha})\psi_{L,\alpha}
	 -\bar f'(U_\alpha)\psi_\alpha,
	\]
	where the periodic factors are evaluated at $x/L$.
	The bracket is uniformly bounded in $\mathsf X$.
	Estimate~\eqref{sp:multiplier}, 
	applied to $\psi_\alpha$,
	and $\|\psi_{L,\alpha}-\psi_\alpha\|_2\leq CL$ therefore give $\|A_{L,\alpha}^*\psi_{L,\alpha}\|_{V'}\leq CL$.
	Consequently,
	$|\ell_{L,\alpha}A_{L,\alpha}g_L(\alpha)|\leq CL^2$.
	In \eqref{sp:coordinates}, the nonlinear projection is $O(L^2)$ and the denominator is $1+O(L)$.
	Hence
	\begin{equation}\label{sp:velocity-projection}
	 s_L(\alpha)=\ell_{L,\alpha}R_{L,\alpha}+O(L^2).
	\end{equation}

	It remains to average the explicit residual. 
	Put $z=x-\alpha$ and $y=x/L$, 
	and use primes on $U,\psi$ for $z$ derivatives and on periodic coefficients for $y$ derivatives. 
	Taylor expansion of $R_{L,\alpha}=F_L(V_{L,\alpha})$ gives
	\[
	 R_{L,\alpha}=R_0(y,z)+LR_1(y,z)+O_{\mathsf X}(L^2),
	\]
	where
	\begin{align*}
	 R_0&=(a\chi)'U''+(a_hb/a-b_h)U'+f(y,U)-\bar f(U),\\
	 R_1&=a\chi U'''+b\chi U''+\chi U'f_u(y,U).
	\end{align*}
	Each periodic factor in $R_0$ has zero mean. 
	Twice integrating by parts with periodic primitives, 
	using the exponential decay of $U',\psi$ and their derivatives,
	yields $\int_\R\psi(z)R_0((z+\alpha)/L,z)\,dz=O(L^2)$ uniformly in $\alpha$.
	A single periodic averaging of the terms already multiplied by $L$ then gives
	\begin{equation}\label{sp:kappa-unsimplified}
	 \ell_{L,\alpha}R_{L,\alpha}
	 =L\int_\R\Big\langle
	 \psi R_1+(\chi\psi'+\zeta\psi)R_0\Big\rangle\,dz+O(L^2).
	\end{equation}
	Periodic integration by parts gives $\langle\chi(a\chi)'\rangle=\langle a\chi\rangle$ 
	and $\langle\zeta(a\chi)'\rangle=-\langle b\chi\rangle$.
	The corresponding $\langle a\chi\rangle$, $\langle b\chi\rangle$,
	and $\langle\chi f(y,U)\rangle$ terms in \eqref{sp:kappa-unsimplified} are total $z$ derivatives or cancel.
	Its remaining coefficient is
	\[
	 a_h\left\langle\frac{\chi b}{a}\right\rangle
	 \int_\R\psi'U'\,dz
	 +a_h\left\langle\frac{\zeta b}{a}\right\rangle
	 \int_\R\psi U'\,dz
	 +\int_\R\psi\langle\zeta f(y,U)\rangle\,dz.
	\]
	The cell equations and their zero-mean normalizations imply
	\[
	 \left\langle\frac{\chi}{a}\right\rangle=0,
	 \quad\left\langle\frac{\chi b}{a}\right\rangle
	 =-a_h\left\langle\frac{\zeta}{a}\right\rangle,
	 \quad\left\langle\frac{\zeta b}{a}\right\rangle
	 =b_h\left\langle\frac{\zeta}{a}\right\rangle.
	\]
	Finally, 
	$\int_\R\psi U'=-1$ and the adjoint formula in Lemma~\ref{sp:limit-spectrum} gives $\int_\R\psi'U'=-b_h/(2a_h)$.
	Thus \eqref{sp:kappa-unsimplified} reduces to $\ell_{L,\alpha}R_{L,\alpha}=\kappa L+O(L^2)$ with \eqref{sp:kappa}, 
	and \eqref{sp:velocity-projection} proves the expansion for $s_L$ in \eqref{sp:first-order-expansion}.
	Averaging gives the expansion for $\bar s_L$ and, after increasing $C_0$, \eqref{sp:phase-oscillation}.
	If $\kappa\ne0$, $s_L$ has its sign for small $L$.
	The exact passage-time formula \eqref{intro:passage} then yields
	$c_L=\kappa L+O(L^2)$.
\end{proof}


\section{Stability and global attraction}\label{sec:stability-uniqueness}

Section~\ref{sec:existence} established local exponential attraction and local completeness of both invariant front families.
The normal errors decay in weighted $H^1$ for large periods and in $L^2$ for small periods,
with constants uniform in the respective regimes.
Stability of an individual stationary front also depends on the phase dynamics:
if the phase velocity changes from negative to positive at its phase,
nearby profiles move away along the family.

When $s_L\equiv0$, the small-period family consists entirely of stationary fronts,
and nearby solutions converge exponentially to individual members.
Indeed, let $T>0$ and $q<1$ be as in \eqref{sp:normalcontraction},
and put $e_0=\|w(0)-g_L(\alpha(0))\|_2$.
Comparison with the stationary graph orbit at phase $\alpha(nT)$,
using \eqref{sp:phaseincrement} and normal contraction, gives
$|\alpha((n+1)T)-\alpha(nT)|\leq Cq^ne_0$.
Summation yields a limit $\alpha_\infty$ with
$|\alpha(nT)-\alpha_\infty|\leq Cq^ne_0$.
The intermediate-time estimates and the Lipschitz graph parametrization then give
$\|u(t)-Z_{L,\alpha_\infty}\|_2\leq Ce^{-\nu t}e_0$,
where $\nu=-T^{-1}\log q>0$.
Since $|\alpha_\infty-\alpha(0)|\leq Ce_0$, for every fixed $\alpha_*\in\R$ these estimates give
\[
 \sup_{t\geq0}\|u(t)-Z_{L,\alpha_*}\|_2
 \leq C\bigl(e_0+|\alpha(0)-\alpha_*|\bigr).
\]
By the inverse-chart continuity in Lemma~\ref{sp:chart-unique} and the Lipschitz continuity of $g_L$,
the right-hand side tends to zero as $\|u(0)-Z_{L,\alpha_*}\|_2\to0$.
Thus every $Z_{L,\alpha_*}$ is Lyapunov stable.

The remainder of this section establishes global exponential orbital stability of propagating pulsating fronts.
It then constructs initial data with limits $1$ and $0$ whose stationary solutions remain a positive uniform distance from the large-period invariant family.

\subsection{Global exponential orbital stability of propagating fronts}

In the following,
we denote $u(t,\cdot;g)$ the solution of \eqref{intro:pde} with initial datum $g$.

\begin{theorem}[Global exponential orbital stability of a propagating front]
\label{comp:global-running-attraction}
 Under the common assumptions of Subsection~\ref{intro:assumptions},
 fix any $L>0$ and suppose \eqref{intro:pde} has a pulsating front $U_L(t,x)$ with speed $c_L\ne 0$. 
 Then there exists $\delta\in(0,\delta_0]$ and $\mu_L>0$ such that every $g\in L^\infty(\R,[0,1])$ satisfying
 \[
  \liminf_{x\to-\infty}g(x)>1-\delta,\qquad
  \limsup_{x\to+\infty}g(x)<\delta
 \]
 has a solution obeying
 \begin{equation}\label{comp:global-attraction}
  \|u(t,\cdot;g)-U_L(t+\tau_g,\cdot)\|_\infty
  \leq C_g e^{-\mu_Lt},\qquad t\geq0,
 \end{equation}
 for some $\tau_g\in\R$ and $C_g>0$.
 Here, 
 the rate $\mu_L$ is independent of $g$ at this fixed period.
\end{theorem}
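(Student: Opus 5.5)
We follow the Fife--McLeod squeezing strategy, adapted to pulsating fronts as in \cite{DHZ,CLT2021}. Write $U_L(t,x)=\Phi(x-c_Lt,x/L)$ and assume, after reflection, $c_L>0$. The argument has five steps: a monotonicity and steepness property of the front, sub/supersolutions built from shifted fronts, a squeezing of the solution between two translates, and contraction and convergence of the time shifts.

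\textbf{Step 1: time monotonicity and steepness.} We first show $\partial_tU_L>0$. This follows by a sliding argument in $\tau$: one compares $U_L(t+\tau,\cdot)$ with $U_L(t,\cdot)$, using the stable tails \eqref{intro:stable-states} and the strong maximum principle. Next we prove a uniform positive lower bound for $\partial_tU_L$ on the moving core $\{\delta\le U_L\le 1-\delta\}$. This uses the space--time periodicity $U_L(t+L/c_L,x+L)=U_L(t,x)$ and compactness over one period, together with the exponential tails of $\partial_tU_L$.

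\textbf{Step 2: sub- and supersolutions.} We set
\[
 u^\pm(t,x)=U_L\bigl(t+\tau_0\pm\sigma(1-e^{-\beta t}),x\bigr)\pm q_0e^{-\beta t},
\]
with $0<\beta<\gamma_0/2$. The verification splits into three regions. On the tails $U_L\le\delta$ or $U_L\ge1-\delta$ (taking $\delta\le\delta_0/2$ and $q_0\le\delta_0/2$), the term $\pm q_0e^{-\beta t}$ is absorbed by $f_u\le-\gamma_0$. On the core, the term $\sigma\beta e^{-\beta t}\partial_tU_L$ dominates $Cq_0e^{-\beta t}$ once $\sigma\ge Cq_0/\beta$, by Step 1.

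\textbf{Step 3: initial trapping.} Since $g$ is front-like with the stated liminf and limsup, solutions of the ODE $\dot u=\max_y f(y,u)$ and $\min_y f$, together with the stability of $0$ and $1$, give the following. After a fixed time $t_0$, the solution $u(t_0,\cdot)$ lies between $U_L(-T_1,\cdot)-q_0$ and $U_L(T_1,\cdot)+q_0$ for some large $T_1$. Here we use $U_L(\pm T,\cdot)\to 1$ or $0$ locally uniformly as $T\to\pm\infty$, because $c_L\ne0$; this is exactly where $c_L\ne0$ enters. Comparison with Step 2 then traps $u$ between two time-translates of $U_L$, up to exponentially small errors, for all $t\ge t_0$.

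\textbf{Step 4: squeezing (main obstacle).} The key step is to show that the gap in time shifts contracts by a fixed factor over a fixed time $T_*$. Suppose
\[
 U_L(t+\tau_1)-\epsilon\le u\le U_L(t+\tau_2)+\epsilon .
\]
Then after time $T_*$ there is a new pair $\tau_1',\tau_2'$ with $\tau_2'-\tau_1'\le(1-\kappa)(\tau_2-\tau_1)+C\epsilon$. To prove this we use the space--time periodicity to reduce to a compact family of configurations, and then argue by contradiction with a Harnack inequality on the core. At least one of $u-U_L(\cdot+\tau_1)$ and $U_L(\cdot+\tau_2)-u$ is bounded below by $c(\tau_2-\tau_1)$ on a fixed core interval, and Step 2 propagates this gain. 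A delicate point is making $\kappa$ independent of $g$ and of the current shifts; translation covariance handles this, since the configuration only matters modulo $(L/c_L,L)$.

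\textbf{Step 5: convergence and rate.} Iterating the contraction gives a Cauchy sequence of shifts converging to a limit $\tau_g$ at rate $(1-\kappa)^n$. This yields \eqref{comp:global-attraction} with $\mu_L=\min\{\beta,-T_*^{-1}\log(1-\kappa)\}$, which is independent of $g$; the constant $C_g$ depends on $g$ only through $T_1$ and $t_0$.
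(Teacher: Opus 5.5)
Your Steps 1--3 agree with the paper: time monotonicity with a positive lower bound for $\partial_tU_L$ on the core, shifted barriers $U_L(t+\sigma_\pm(t),x)\pm q(t)$, and initial trapping. Step 4, however, has a genuine gap.

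The claimed contraction $\tau_2'-\tau_1'\le(1-\kappa)(\tau_2-\tau_1)+C\epsilon$ over a fixed time $T_*$, with $\kappa$ independent of the current shifts, is false when $\tau_2-\tau_1$ is large. After Step 3 the gap is of size about $2T_1$, which is unbounded over admissible $g$. To see the failure, take $g$ with three well-separated transitions $1\to0\to1\to0$ at mutual distance $D$, coinciding with translates of $U_L$ near the first and the last one. This $g$ is admissible and is trapped with small error between translates whose shifts differ by about $D/c_L$. The middle $0$-region is annihilated only after a time of order $D/c_L$. Until then, the trailing and leading transitions remain translates of $U_L$ up to terms exponentially small in $D$. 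So at time $T_*$ every lower and upper barrier with error of order $\epsilon$ stays within $O(\epsilon)$ of the original shifts, up to such terms, and the gap cannot shrink by a fixed fraction.

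The Harnack gain you invoke is of order one on a bounded core near some point between the two fronts. It converts into a shift of a barrier only if that core lies within a bounded distance of the corresponding front, i.e. only when $\tau_2-\tau_1\le h_0$. Translation covariance compactifies the position modulo $(L/c_L,L)$, not the size of the gap, so your contradiction argument does not close for large gaps.

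What is missing is a separate step showing that the solution eventually enters the small-gap, small-error regime. The entry time is not uniform in the data (of order $D/c_L$ in the example) and must be absorbed into $C_g$. The paper supplies this step by phase selection. In the frame $\xi=x-c_Lt$, the translates of $U_L$ form an ordered, Lyapunov-stable curve of fixed points of the period map. The sandwich and parabolic smoothing make the discrete orbit precompact. The order-interval argument of \cite{DHZ}, using strong comparison together with the barriers, then forces the $\omega$-limit to be a single profile, giving uniform convergence to some $U_L(t+\tau_g)$.

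Once that step is in place, your squeezing restricted to small gaps (the Fife--McLeod/Chen lemma) is a legitimate and more elementary substitute for the paper's exponential step. The paper instead uses the spectrum of the linearized period map: an algebraically simple multiplier $1$, with all other spectrum, including the essential part, strictly inside the unit disk, followed by local stability with asymptotic phase. Either route gives a rate $\mu_L$ independent of $g$.
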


\begin{proof}
 Reflection $u(t,x)\mapsto1-u(t,-x)$ reduces $c_L<0$ to the case $c_L>0$ and preserves the common assumptions. 
 Write $U=U_L$ and $c=c_L>0$. The monotonicity theorem \cite[Theorem~1.10]{BH2012} applies to this invasion front:
 the constant equilibria $0$ and $1$ are uniformly stable and \eqref{intro:extended-stable-states} 
 supplies the required monotonicity of the reaction near them. 
 Hence $U_t>0$. 
 Fix $0<\delta_*<\delta_0/4$ and define the transition region by
 \[
  \mathcal C_{\delta_*}
   =\{(t,x)\in\R^2:\delta_*\leq U(t,x)\leq1-\delta_*\}.
 \]
 Writing $U(t,x)=\Phi(x-ct,x/L)$, 
 the uniform limits of $\Phi$ as its first argument tends to $\pm\infty$ place $\mathcal C_{\delta_*}$ within $|x-ct|\leq R_{\delta_*}$.
 Since $(\xi,y)\mapsto-c\Phi_\xi(\xi,y)$ is positive and continuous,
 its restriction to the compact set $[-R_{\delta_*},R_{\delta_*}]\times\T$ has a positive minimum. 
 In particular,
 \begin{equation}\label{comp:core-monotonicity}
  \beta:=\inf_{(t,x)\in\mathcal C_{\delta_*}}U_t(t,x)>0.
 \end{equation}
 Use $\delta=\delta_*/2$ in the theorem. For a given $g$, 
 choose $q_0\in(\delta,\delta_*)$ larger than both of its tail deficits.
 Put $\mu=\gamma_0/2$, $q(t)=q_0e^{-\mu t}$, 
 and choose $K>(M+\mu)/\beta$, 
 where
 $M=\sup_{y\in\T,\,-q_0\leq s\leq1+q_0}|f_u(y,s)|$.
 For constants $\tau_-<\tau_+$ to be fixed, 
 set
 \[
  \sigma_\pm(t)=\tau_\pm\pm K\int_0^tq(s)\,ds,\qquad
  U^\pm(t,x)=U(t+\sigma_\pm(t),x)\pm q(t).
 \]
 With $\mathcal Pw=w_t-\partial_x(a(x/L)w_x)-b(x/L)w_x -f(x/L,w)$ and $V_\pm=U(t+\sigma_\pm(t),x)$, 
 direct substitution gives
 \[
  \mathcal P U^\pm
   =\pm Kq\,U_t(t+\sigma_\pm(t),x)\mp\mu q
     +f(x/L,V_\pm)-f(x/L,V_\pm\pm q).
 \]
 Wherever $\delta_*\leq V_\pm\leq1-\delta_*$,
 \eqref{comp:core-monotonicity} and the choice of $K$ give $\mathcal P U^-\leq0\leq\mathcal P U^+$.
 If $V_\pm<\delta_*$ or $V_\pm>1-\delta_*$, 
 then $q<\delta_*$ and $2\delta_*<\delta_0$ keep the intervals between $V_\pm$ and $V_\pm\pm q$ in a region where \eqref{intro:extended-stable-states} applies.
 Hence,
 in the respective tail regions,
 \[
  f(x/L,V_+)-f(x/L,V_++q)\geq\gamma_0q,\qquad
  f(x/L,V_-)-f(x/L,V_--q)\leq-\gamma_0q.
 \]
 Since $U_t>0$ and $\mu<\gamma_0$, 
 the preceding formula gives $\mathcal P U^+\geq(\gamma_0-\mu)q\geq0$ and $\mathcal P U^-\leq-(\gamma_0-\mu)q\leq0$.
 The added terms $\pm q(t)$ have no spatial derivatives, 
 so neither the diffusion nor the periodic drift changes this calculation.

 On the far left and right, 
 the tail condition on $g$ and the choice of $q_0$ give the desired initial ordering. 
 Moving the front sufficiently far in time gives the same ordering on the remaining bounded interval. 
 Thus we may choose $\tau_-\ll0\ll\tau_+$ such that $U^-(0,\cdot)\leq g\leq U^+(0,\cdot)$.
 Comparison then gives
 \begin{equation}\label{comp:global-sandwich}
  U(t+\sigma_-(t),x)-q(t)\leq u(t,x;g)
  \leq U(t+\sigma_+(t),x)+q(t),\qquad t\geq0.
 \end{equation}
 Both shifts have finite limits. 
 This yields uniformly controlled tails and a bounded transition region in the moving coordinate $\xi=x-ct$, 
 while parabolic regularity makes the orbit precompact there in the uniform topology.

 For phase selection, 
 set $v(t,\xi)=u(t,\xi+ct)$,
 $T=L/c$, 
 and $V^\tau(t,\xi)=U(t+\tau,\xi+ct)$.
 The equation for $v$ is $T$-periodic.  
 Let $\mathsf X_\infty=BUC(\R)$, equipped with the supremum norm, 
 and let $\mathscr P_T:\mathsf X_\infty\to\mathsf X_\infty$ denote its time-$T$ map.
 Although the initial datum $g$ is only assumed to be bounded measurable,
 parabolic smoothing gives $v(t,\cdot)\in\mathsf X_\infty$ for every $t>0$.
 The ordered profiles $V^\tau(0,\cdot)$ are fixed points of $\mathscr P_T$.
 Repeating the barrier construction for initial data near any $V^\tau(0,\cdot)$, 
 with arbitrarily small $q_0>0$, 
 shows that these fixed points are Lyapunov stable. 
 Moreover, 
 \eqref{comp:global-sandwich} and parabolic smoothing give a precompact discrete orbit whose omega limit lies between two such profiles. 
 The order-interval argument in \cite[proof of Proposition~4.4]{DHZ} then makes the omega limit a single profile $V^{\tau_g}(0,\cdot)$: 
 strong comparison and the cutoff barriers rule out a persistent gap between its limiting phases.
 The additional drift $b((\xi+ct)/L)v_\xi$ enters the barrier calculation through a bounded multiple of the cutoff derivative, 
 which is absorbed by the cutoff speed. 
 The argument uses the uniform stability of $0$ and $1$ and the positive lower bound for $U_t$ in \eqref{comp:core-monotonicity}.
 Continuity of the semiflow over one period therefore gives
 \[
  \|u(t,\cdot;g)-U(t+\tau_g,\cdot)\|_\infty\longrightarrow0.
 \]

 We finally recall the exponential step of \cite[Section~4.2]{DHZ}, 
 including its drift modification. 
 The derivative
 $D\mathscr P_T(V^\tau(0,\cdot))\in\mathcal L(\mathsf X_\infty,\mathsf X_\infty)$ has the positive neutral eigenfunction $\partial_\tau V^\tau(0,\cdot)$ with multiplier $1$.
 Its linear equation is
 \[
  z_t=\partial_\xi\!\bigl(a((\xi+ct)/L)z_\xi\bigr)
    +\bigl(b((\xi+ct)/L)+c\bigr)z_\xi
    +f_u((\xi+ct)/L,V^\tau)z.
 \]
 The positive neutral eigenfunction and parabolic comparison give an algebraically simple multiplier $1$
 and put every other eigenvalue inside the unit disk, 
 by the argument of \cite[Lemma~4.5]{DHZ}. 
 The negativity of $f_u$ near $0$ and $1$ places the essential spectral radius strictly below $1$: 
 compare with the stable tail evolution and use compactness of the core perturbation as in \cite[Lemma~4.6]{DHZ}. 
 The bounded drift adds a first-order term to these linear equations; 
 its contribution to the tail barriers is bounded by a constant times their spatial derivative. 
 Local exponential stability with asymptotic phase for this fixed-point curve now applies.
 The covariance identifies phases modulo $T$ by spatial translation.
 Compactness of this phase interval gives a common local exponential rate $\mu_L>0$.
 The uniform convergence just proved brings the orbit into that neighborhood, 
 giving \eqref{comp:global-attraction} after enlarging $C_g$ over the initial finite-time interval.
\end{proof}

\subsection{Failure of global attraction to the invariant family}

The global attraction result above assumes the existence of a propagating front.
Under the hypotheses of the invariant-family construction alone,
even smooth initial data $0<g<1$ with $g(-\infty)=1$ and $g(+\infty)=0$ need not approach that family.
We construct such data as a stationary solution with three separated transitions.

\begin{proposition}[Failure of global attraction for front-like data]
\label{comp:global-counterexample}
 There are smooth coefficients satisfying the common hypotheses and both regime-specific assumptions in Subsection~\ref{intro:assumptions} such that, 
 for every sufficiently large $L$,
 \eqref{intro:pde} has a stationary solution $0<u<1$ with $u(-\infty)=1$ and $u(+\infty)=0$ satisfying
 \begin{equation}\label{comp:no-global-attraction}
 	\inf_{X\in\R}\|u-Z_{1/L,X}\|_\infty>0.
 \end{equation}
 Consequently, 
 the solution with initial datum $g=u$ is stationary and therefore does not approach $\Mcal_{1/L}$ in the uniform norm.
\end{proposition}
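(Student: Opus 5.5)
We will build a stationary solution with three well-separated transitions $1\to0\to1\to0$. It cannot be uniformly close to any member of $\Mcal_{1/L}$, because every such member has a single transition of bounded width by Corollary~\ref{lp:uniform-transition-width}.

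\textbf{Choice of coefficients.} Take $a\equiv1$ and $b\equiv0$, and a reaction $f(y,u)$ satisfying the common conditions and Assumption~\ref{intro:large-assumption}, with reaction integral $I(y)=\int_0^1f(y,u)\,du$ changing sign. Concretely, choose $f(y,u)=u(1-u)(u-\vartheta(y))$ with $\vartheta$ smooth, $\vartheta<1/2$ near $y=0$ and $\vartheta>1/2$ near $y=1/2$. We tune $\vartheta$ so that $\int_0^1 I(y)\,dy=0$ exactly. Then $\bar f$ is a balanced bistable cubic-type nonlinearity and the homogenized equation has a strictly decreasing stationary front, so Assumption~\ref{intro:small-assumption} also holds. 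The frozen speed $c(Y)$ has the sign of $-I(Y)$, so $c$ changes sign. It has simple zeros at some $Y_-<Y_+$, with $c'(Y_-)>0$ and $c'(Y_+)<0$.

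\textbf{Construction for large $L$.} A decreasing stationary front wants to sit near a phase where $c$ changes from positive to negative, i.e.\ near $Y_+$, which attracts. An increasing front $0\to1$ corresponds to the reflected problem $u\mapsto u(-x)$, whose frozen speed is $-c(Y)$; it therefore sits near $Y_-$. Use the phases $X_1=LY_+$, $X_2=L(Y_-+1)$ and $X_3=L(Y_++1)$, which are separated by order $L$. Glue the decreasing frozen profile at $X_1$, an increasing frozen profile at $X_2$ and a decreasing one at $X_3$, using the cutoffs in the plateaus where $u\approx0$ or $u\approx1$. We then find an exact stationary solution nearby. The cleanest route is sub- and supersolutions. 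Build a subsolution $\underline u$ and a supersolution $\overline u$, each made of frozen profiles shifted by $\pm\delta L$ around the three target phases, with small exponential corrections $\pm q e^{-\mu|x-X_i|}$ as in Theorem~\ref{comp:global-running-attraction}, and ordered so that $\underline u\le\overline u$. The sign of $c$ on either side of each target phase supplies the required sign of $\mathcal P$. For instance, a decreasing profile placed at $L(Y_+-\delta)$ has $c>0$ and would move right, which yields a subsolution inequality. Far tails and plateaus are handled by \eqref{intro:extended-stable-states}. Monotone iteration, or the parabolic flow started from $\underline u$, which increases to a stationary limit, then gives a stationary solution with $\underline u\le u\le\overline u$. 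By Lemma~\ref{common:range}, $0<u<1$ and $u(\pm\infty)$ equal $1$ and $0$ respectively.

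This approach is in the spirit of \cite{DHZ2015}, which constructs multiple stationary fronts when the reaction integrals change sign. Alternatively, we could apply a Lyapunov--Schmidt step using the spectral gap of Lemma~\ref{lp:frozen-properties} at each transition, together with the simple zeros of $c$ (the $\epsilon d$ correction is harmless), to solve for the three positions exactly.

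\textbf{Separation from the family.} The barriers force $u\le\delta'$ at some point $x_0\in(X_1+R,X_2-R)$, with the distances $x_0-X_1$ and $X_2-x_0$ both of order $L$. They also force $u\ge1-\delta'$ at all points $x\le X_1-R$ and $x\in[X_2+R,X_3-R]$. Now let $X\in\R$ be arbitrary. By \eqref{lp:uniform-tail-bound}, $Z_{1/L,X}\ge 1-\delta'$ for $x\le X-R$ and $Z_{1/L,X}\le\delta'$ for $x\ge X+R$. If $X\le x_0$, compare $u$ and $Z_{1/L,X}$ at a point of $[X_2+R,X_3-R]$ lying beyond $X+R$: there $u\ge 1-\delta'$ while $Z_{1/L,X}\le\delta'$, giving a difference of at least $1-2\delta'$. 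If $X>x_0$, compare at $x_0$ or at a point to the left of $X-R$: there $Z_{1/L,X}\ge1-\delta'$ while $u\le\delta'$. In either case $\|u-Z_{1/L,X}\|_\infty\ge1-2\delta'$, which proves \eqref{comp:no-global-attraction}.

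\textbf{Main obstacle.} The hard step is the barrier construction for the increasing transition, in particular matching the plateaus between transitions while keeping $\underline u\le\overline u$. What makes this work is that the slowly varying medium has a sign-changing frozen speed at scale $L$, which gives an $O(\delta)$ margin in $\mathcal P$ that dominates the $O(1/L)$ errors and the exponentially small interaction terms.
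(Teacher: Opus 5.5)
Your proposal is correct, but it follows a genuinely different route from the paper.

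\textbf{The paper's route.} The paper uses the variational model $f=-A(y)G'(u)$ with $G=\sin^2(\pi u)$. There every frozen reaction is balanced, so $c\equiv0$, and pinning appears only through the $O(\epsilon)$ correction $d=-A'/(2A)$. It obtains the three-transition solution from the Montecchiari--Rabinowitz constrained-minimization theorem. This requires checking that basic minimizers are isolated translates of the minimum-phase front and that $c_1(1)=C_1(1,0)$. It then uses square completion to get $0\le u\le1$. Finally it proves \eqref{comp:no-global-attraction} by contradiction: the uniform tails bound the phases $X_n$, sup-norm continuity of $X\mapsto Z_{1/L,X}$ gives a limit profile, and the fact that $u$ lies outside the single-interface tube gives the contradiction.

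\textbf{Your route.} You make the frozen speed change sign at order one and use only comparison, and this works. Since $b\equiv0$, the function $\max\{\phi(Y,\pm(x-p))-\eta,0\}$ with $p=LY$ and $c(Y)\ge c_0>0$ is a stationary subsolution for $L\ge L_0(\eta)$. In the core, $-c\phi_z$ dominates the $O(\eta+R/L)$ errors, and in the tails the margin $\gamma_0\eta$ does. The required sign of $c$ is the same for increasing and decreasing transitions. The pieces $\min\{\phi+\eta,1\}$ with $c\le-c_0$ are supersolutions. Smooth gluing on the plateaus costs only $O(e^{-\mu L})$ against $\gamma_0\eta$. Your placement of barrier transitions on either side of the two kinds of sign change gives $\underline u\le\overline u$.

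\textbf{Separation step.} Your argument is more direct than the paper's. It yields the explicit bound $\inf_X\|u-Z_{1/L,X}\|_\infty\ge1-O(\eta)-\delta'$ from the plateau values and Corollary~\ref{lp:uniform-transition-width} alone.

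\textbf{Small corrections.} Multiplying the frozen equation by $\phi_z$ gives $c\int\phi_z^2=\int_0^1f\,du$, so $c$ has the sign of $I$, not $-I$; this only relabels $Y_\pm$. The limits $u(\pm\infty)$ come from the barriers plus stability of $0$ and $1$, not from Lemma~\ref{common:range}. In your case split, use that $u\le\delta'$ on the whole order-$L$ plateau of $\overline u$, not just at $x_0$, so that the case $x_0<X\le x_0+R$ is covered.

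\textbf{What each approach buys.} Yours is elementary and localizes the three transitions explicitly. The paper's heavier machinery works even when $c\equiv0$, where no order-one barrier margin exists, and it identifies the solution as a constrained action minimizer.
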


\begin{proof}
 Take the periodic variational Allen--Cahn subclass
 \begin{equation}\label{comp:multitransition-example}
  a\equiv1,\qquad b\equiv0,\qquad
  f(y,u)=-A(y)G'(u),\qquad G(u)=\sin^2(\pi u),
 \end{equation}
 with $A(y)=1+\rho\cos(2\pi y)$ and fixed small $\rho>0$.
 This has the stated bistable properties on $[0,1]$ and a decreasing stationary front for its spatial average. 
 As in Subsection~\ref{intro:assumptions}, 
 extend $f$ outside $[0,1]$ with stable exterior tails. 
 The solutions below remain in $(0,1)$.
 To apply the multi-transition existence theorem \cite[Theorem~3.27]{MR2016}, 
 use the original periodic-in-$u$ potential, 
 which agrees with this extension on $[0,1]$.
 Its action in the original spatial coordinate and the rescaled action used by that theorem are
 \begin{equation}\label{comp:multitransition-energy}
  \begin{aligned}
   \mathcal E_L(h)
    &=\int_\R\left(\tfrac12|h_x|^2+A(x/L)G(h)\right)\,dx,\\
   \mathcal J_L(v)
    &=\int_\R\left(\tfrac12|v_y|^2+L^2A(y)G(v)\right)\,dy,
       \qquad v(y)=h(Ly).
  \end{aligned}
 \end{equation}
Thus $\mathcal J_L(v)=L\mathcal E_L(h)$. 
The periodic ground states are exactly the integer constant functions,
and their energy is zero.
Thus $\mathcal J_L$ is the renormalized action of \cite{MR2016} on finite-action connections. 
From this point we use the notation of \cite{MR2016}. 
$M_0$ denotes the set of periodic zero-energy ground states. 
We suppress the fixed parameter $L$ and
write the levels directly in terms of the rescaled action $\mathcal J_L$.
For $v,w\in M_0$, 
let $\Gamma_1(v,w)$ be the class of finite-action connections whose traces converge to $v$ as $y\to-\infty$ and to $w$ as $y\to+\infty$,
and set
\[
 C_1(v,w):=\inf_{q\in\Gamma_1(v,w)}\mathcal J_L(q).
\]
The corresponding free-endpoint class allows the terminal ground state to vary over $M_0\setminus\{v\}$. 
Its level is
\[
 c_1(v):=\inf_{w\in M_0\setminus\{v\}}C_1(v,w).
\]
Thus $C_1(1,0)$ is the least action among connections with prescribed limits $1$ and $0$, 
whereas $c_1(1)$ is the least one-transition action leaving $1$ when the terminal ground state is not prescribed.  
In the present example we verify that these levels coincide.  
These are the definitions in \cite[(2.6), (2.9)]{MR2016}. 
The infimum need not in general be attained by a single transition and may instead split into a heteroclinic chain.

A basic action minimizer means a minimizer of $\mathcal J_L$ among finite-action connections with $v(-\infty)=1$, $v(+\infty)=0$,
or with these limits reversed.
 The large-period calculation gives $c(Y)\equiv0$ and
 \[
  d(Y)=-A'(Y)
  \frac{\int_\R G(\phi(Y,z))\,dz}
       {\int_\R\phi_z(Y,z)^2\,dz}
       =-\frac{A'(Y)}{2A(Y)},
 \]
 by \eqref{lp:R1}--\eqref{lp:d} and $\phi_z^2=2A(Y)G(\phi)$. 
 Thus $s_{1/L}$ has exactly two isolated zeros per cell for large $L$, 
 near the minimum and maximum of $A$.

 We check the cited theorem's hypotheses. 
 The rescaled potential $L^2A(y)G(u)$ is smooth and periodic in $y,u$, 
 even in $y$, 
 and has exactly the integer constants as periodic zero-energy minimizers. 
 Thus $v=1$ and $w=0$ belong to $M_0$, 
 and the potential satisfies (F1), (F2), and (N0) of \cite{MR2016}.
 Clipping a connection from $1$ to $0$ to $[0,1]$ preserves its limits at spatial infinity 
 and cannot increase either part of the nonnegative action. 
 Hence every basic action minimizer takes values in $[0,1]$. 
Put $A_{\min}=1-\rho$, $A_{\max}=1+\rho$, and
$E_0=\int_0^1\sqrt{2G(u)}\,du$. 
 For any descending connection $h$ with values in $[0,1]$, 
 square completion yields
 \[
  \begin{split}
   \mathcal E_L(h)
      -\sqrt{A_{\min}}E_0
   =\tfrac12\int_\R
      \left(h_x+\sqrt{2A_{\min}G(h)}\right)^2\,dx
      +\int_\R(A(x/L)-A_{\min})G(h)\,dx.
  \end{split}
 \]
 A frozen-front profile centered at a minimum of $A$ gives an upper bound $\sqrt{A_{\min}}E_0+O(L^{-2})$ for the basic transition energy.
 Both terms on the right are therefore $O(L^{-2})$ for any basic minimizer. 
 The first excludes an additional excursion of fixed height. 
 The stationary ODE and $0\leq h\leq1$ give $\|h_x\|_\infty\leq C$ uniformly in $L$. 
 Thus, 
 around any half-level point $x_*$, 
 there is an $L$-independent interval on which $1/4\leq h\leq3/4$ and $G(h)\geq1/2$.
 Since $A(x/L)$ varies by $O(L^{-1})$ on this interval, 
 the $O(L^{-2})$ bound on the second integral implies $A(x_*/L)\to A_{\min}$.
 ODE compactness after centering at a half-level point and the vanishing square term give local $C^1$ convergence to the decreasing frozen-front profile at $A_{\min}$.
 A return from a stable tail neighborhood would require an increasing excursion of fixed height,
 contradicting the same square-term bound.
 Thus both tails stay in fixed stable neighborhoods beyond a uniformly bounded distance from the center.
 Comparison in $h_{xx}=A(x/L)G'(h)$ then gives uniform exponential tails,
 including those of $h_x$.
 These bounds upgrade local convergence to weighted $H^1$ convergence,
 placing each basic minimizer in the single-interface tube \eqref{lp:tube}.
 Since it is stationary,
 the complete-orbit assertion of Theorem~\ref{lp:main} puts it on $\Mcal_{1/L}$, 
 so its phase is a zero of $s_{1/L}$.
 The other zero per period lies near a maximum of $A$ and has energy $\sqrt{A_{\max}}E_0+o(1)$, 
 so the upper bound above excludes it as a minimizer. 
 Hence all basic minimizers are spatial-period translates of the front at the minimum-phase zero.
 In the rescaled coordinate, 
 their traces on a fixed unit cell form a discrete sequence with only the constant states $0$ and $1$ as accumulation points. 
 The symmetry $u\mapsto1-u$ gives the same conclusion for the reverse transition. 
 Hence alternative (ii) of \cite[Proposition~2.43]{MR2016} holds in both orientations.
 A transition to a nonadjacent integer well crosses an adjacent one and has larger positive action.
 The two adjacent costs agree by $u\mapsto2-u$.
 Hence $c_1(1)=C_1(1,0)$.

 For $k=3$ and $(v,w)=(1,0)$, use the constrained class $\mathcal A_3$ of \cite[(3.1), (3.26)]{MR2016}.
 Choose six ordered blocks with centers $m_1,\ldots,m_6\in\mathbb Z$ and width $l>0$ such that
 \[
  l\geq m_0,\qquad m_{i+1}-m_i-6l\geq m_0
 \]
 for the adjacent centers specified in \cite[(3.28)]{MR2016},
 with successive trace constraints near $1,0,0,1,1,0$.
 The equal-cost and isolation conditions verified above allow us to apply \cite[Theorem~3.27]{MR2016}.
 It yields a constrained action minimizer that is a classical heteroclinic from $1$ to $0$
 with three separated transitions $1\to0\to1\to0$.
 By \cite[Proposition~3.29]{MR2016}, 
 sufficiently large separation makes the solution $H^1$-close to the respective constants on intermediate blocks in the rescaled coordinate $y=x/L$. 
 In one dimension it is also uniformly close there.
 In the original $x$ coordinate, 
 the three prescribed transitions have action at least $3\sqrt{A_{\min}}E_0-o(1)$, 
 while gluing three basic minimizers far apart gives an upper bound $3\sqrt{A_{\min}}E_0+O(L^{-2})$ for the three-transition constrained minimizer. 
 Any excursion outside $[0,1]$ must reach $3/2$ or $-1/2$: a maximum in $(1,3/2)$ or a minimum in $(-1/2,0)$ contradicts $u_{xx}=A(x/L)G'(u)$. 
 By square completion,
 reaching either level and returning costs at least another $2\sqrt{A_{\min}}\int_1^{3/2}\sqrt{2G(u)}\,du-o(1) =\sqrt{A_{\min}}E_0-o(1)$, contradicting the upper bound for large $L$. 
 Thus $0\leq u\leq1$, 
 and the strong maximum principle gives $0<u<1$. 
 Its three transitions place it outside the single-interface tube and hence outside $\Mcal_{1/L}$.
 To prove \eqref{comp:no-global-attraction},
 suppose that $\|u-Z_{1/L,X_n}\|_\infty\to0$ for a sequence of phases $X_n$.
 Fix $x_0$ with $u(x_0)=1/2$.
 The uniform tails in \eqref{lp:uniform-tail-bound} force $(X_n)$ to be bounded.
 After passing to a subsequence, $X_n\to X_*$,
 and continuity of the parametrization in the uniform norm gives $u=Z_{1/L,X_*}$,
 a contradiction.
 Taking this stationary profile as initial datum proves the failure of global attraction.
\end{proof}

\section{Realization of arbitrary smooth circle flows}\label{sec:realization}

Sections~\ref{sec:existence} and \ref{sec:dynamics} describe front dynamics through a scalar periodic flow.
We now solve the inverse problem posed in Theorem~\ref{real:circle-flow}. 
Starting from a prescribed smooth circle flow, 
we construct a locally bistable reaction and an invariant front family whose quotient dynamics are conjugate to that flow.
The construction uses the large-period family when the prescribed flow has singularities, 
and a homogeneous traveling-front family when it is rotational.

\subsection{Construction and time-preserving conjugacy}

\begin{proof}[Proof of Theorem~\ref{real:circle-flow}]
Fix $q\in C^\infty(\T)$. 
We use the reference reaction and front
\begin{equation}\label{real:logistic-front}
 g(u)=u(1-u)(2u-1),\qquad
 U(z)=\frac{1}{1+e^z},\qquad U_p(x)=U(x-p),
\end{equation}
and write $P(u)=u(1-u)$, 
so that $g=-PP'$ and $U'=-P(U)$.
Choose a nonnegative $\chi\in C^\infty(\R)$ with $\supp\chi\subset(1/3,2/3)$ and $\chi(1/2)>0$.
All reactions below are given a smooth stable extension outside
$[0,1]$ as in Subsection~\ref{intro:assumptions}.

\medskip
\noindent\emph{Step 1: homogeneous realizations of the stationary and rotational flows.}
If $q\equiv0$, 
take $f=g$ and $L=1$.
The family of translates $U_p$ is stationary, 
and $\mathcal H(\theta)=[U_\theta]$ gives \eqref{real:conjugacy}.

If $q$ is nowhere zero, 
let $\sigma$ be its sign and put
\[
 T_q=\int_0^1\frac{d\theta}{|q(\theta)|}.
\]
Choose a sufficiently small $\delta>0$ and the homogeneous bistable reaction $f=g+\delta\sigma\chi$.
Let $(\varphi_\sigma,c_\sigma)$ be its normalized decreasing traveling-front profile--speed pair. 
The front equation, 
multiplied by $\varphi_\sigma'$ and integrated, 
gives
\[
 c_\sigma\int_\R|\varphi_\sigma'|^2\,dx
 =\int_0^1f(u)\,du
 =\delta\sigma\int_0^1\chi(u)\,du,
\]
so $c_\sigma$ has sign $\sigma$.
Since a homogeneous equation is periodic with any positive period,
choose
\[
 L=|c_\sigma|T_q,\qquad
 Z_p(x)=\varphi_\sigma(x-p).
\]
The induced circle flow on this family has constant velocity $c_\sigma/L=\sigma/T_q$, 
and hence the same minimal period as $\Phi_t^q$. 
The increasing lift
\[
 \beta(\theta)=\frac{\sigma}{T_q}
              \int_0^\theta\frac{d\zeta}{q(\zeta)}
 \quad\text{satisfies}\quad
 \beta(\theta+1)=\beta(\theta)+1,\qquad
 \beta'(\theta)q(\theta)=\frac{\sigma}{T_q}.
\]
It follows that $\mathcal H(\theta)=[Z_{L\beta(\theta)}]$ is a time-preserving conjugacy.
Both homogeneous translation families are strictly ordered and normally exponentially attracting: 
the translation-mode spectral gap and the phase--normal contraction in the homogeneous case of Section~\ref{sec:large-complete} give these assertions.

It remains to treat a nontrivial flow with singularities. 
For the rest of the proof, 
write
\[
 K=q^{-1}(0),\qquad \emptyset \ne K\ne\T,
\]
and let $\sigma_I$ be the sign of $q$ on a component $I$ of $\T\setminus K$.

\medskip
\noindent\emph{Step 2: a smooth locally bistable reaction encoding the prescribed flow.}
Define
\begin{equation}\label{real:flat-encoding}
 H(y)=
 \begin{cases}
  \exp\bigl(-1/q(y)^2\bigr),&q(y)\ne0,\\
  0,&q(y)=0,
 \end{cases}
 \qquad
 Q(y)=
 \begin{cases}
  \operatorname{sgn}(q(y))H(y),&q(y)\ne0,\\
  0,&q(y)=0.
 \end{cases}
\end{equation}
Both functions are smooth and flat on $K$.
Indeed, 
they are compositions of $q$ with smooth functions on $\R$ that are flat at the origin.
Moreover, 
$H>0$ off $K$ and $Q$ has the sign of $q$ there.
Put
\[
 r(u)=\log\frac{1-u}{u}\qquad(0<u<1),
\]
and, 
for $0\leq\lambda\leq1$, 
set
\begin{equation}\label{real:reaction-family}
 f_\lambda(y,u)
 =g(u)+\delta\chi(u)H(y)Q(y-\lambda r(u)).
\end{equation}
The support of $\chi$ makes this a smooth family on $\T\times[0,1]$, 
with uniform $C^4$ bounds.
Fix $\delta>0$ sufficiently small. 
The strict signs of $g$ away from $1/2$, 
together with $g'(1/2)>0$, 
imply that each $f_\lambda(y,\cdot)$ has a unique intermediate zero and the strict bistable signs in Assumption~\ref{intro:large-assumption}.
The reaction agrees with $g$ near $0$ and $1$, 
so the stability constants at $0$ and $1$ can be chosen uniformly in $\lambda$.

The construction of Theorem~\ref{lp:main} and the ordering in Lemma~\ref{phase:ordering} apply uniformly to this family.
Indeed, 
the frozen-front implicit-function,
spectral,
time-map,
and ordering estimates are locally uniform in $\lambda$.
Compactness of $[0,1]$ and the uniform bistability and linear stability at $0$ and $1$ just verified give a common threshold $\epsilon_0>0$.
Choose
\begin{equation}\label{real:parameter-choice}
 L>\max\{1,\epsilon_0^{-1}\},\qquad
 \lambda_*=L^{-1},\qquad f=f_{\lambda_*}.
\end{equation}
The large-period invariant family for this reaction will be denoted by
\[
 Z_p=Z_{1/L,p},\qquad \mathcal M=\mathcal M_{1/L}.
\]
Its exact position equation is $\dot p=s_{1/L}(p/L)$.
The normalized circle velocity is
\begin{equation}\label{real:normalized-velocity}
 \omega(Y)=\frac1L s_{1/L}(Y),\qquad Y\in\T.
\end{equation}
Theorem~\ref{lp:main} already supplies normal exponential attraction,
uniform front tails,
and complete trajectories on this family.

\medskip
\noindent\emph{Step 3: identification of the orbit intervals.}
Since $U''+g(U)=0$ and $r(U_p(x))=x-p$, direct substitution gives
\begin{equation}\label{real:exact-profile-residual}
 U_p''+f(x/L,U_p)
 =\delta\chi(U_p)H(x/L)Q(p/L).
\end{equation}
Thus $U_p$ is stationary if and only if $p/L\pmod1\in K$.
Sufficiency is immediate; 
necessity follows by evaluating the residual at $x=p$.
At a phase in $K$ the frozen reaction is $g$, 
so $U_p$ has phase $p$ and zero normal coordinate in the tube \eqref{lp:tube}.
Local completeness in Theorem~\ref{lp:main} and uniqueness of the phase chart in Lemma~\ref{lp:chart-unique} therefore give $Z_p=U_p$.

To exclude extra singularities of the induced flow, 
let $(p_-,p_+)$ be a component of the complement of the lifted set
\[
 \widehat K_L=\{p\in\R:p/L\pmod1\in K\},
\]
and denote its assigned sign by $\sigma_I$.
If some $X\in(p_-,p_+)$ were stationary, 
strict ordering would give
\[
 U_{p_-}(x)<w(x):=Z_X(x)<U_{p_+}(x).
\]
Hence, with
\[
 \vartheta(x)=x-r(w(x)),\qquad
 \mathcal R(x)=\delta\chi(w(x))H(x/L)Q(\vartheta(x)/L),
\]
we have $p_-<\vartheta(x)<p_+$ and $\sigma_I\mathcal R(x)\geq0$.
At any point $x_0$ where $w(x_0)=1/2$, 
one has $\vartheta(x_0)=x_0\in(p_-,p_+)$, 
so $\sigma_I\mathcal R(x_0)>0$.
The stationary equation is $w''+g(w)+\mathcal R=0$.
For $B=w'+P(w)$ it gives
\[
 B'=P'(w)B-\mathcal R.
\]
The spatial limits and exponential tails yield $B(\pm\infty)=0$,
while $P'(w)\to-1$ at $-\infty$ and $P'(w)\to1$ at $+\infty$.
Variation of constants from both ends gives
\[
 \begin{aligned}
 B(x)&=-\int_{-\infty}^x
  \exp\left(\int_s^xP'(w(\tau))\,d\tau\right)\mathcal R(s)\,ds,\\
 B(x)&=\int_x^\infty
  \exp\left(-\int_x^sP'(w(\tau))\,d\tau\right)\mathcal R(s)\,ds.
 \end{aligned}
\]
The first formula implies $\sigma_I B\leq0$, 
and the second implies $\sigma_I B\geq0$.
Thus $B\equiv0$ and then $\mathcal R\equiv0$, 
a contradiction.
Consequently, 
$\omega^{-1}(0)=K$, 
and the orbit intervals of
$\omega$ and $q$ agree.

\medskip
\noindent\emph{Step 4: orientation of the induced flow.}
Set $\mathcal F_L(x,u)=\int_0^u f(x/L,s)\,ds$ and define
\begin{equation}\label{real:renormalized-energy}
 \mathcal E_L(w)=\int_\R
 \left\{\frac12|w_x|^2-\mathcal F_L(x,w)
       +\mathcal F_L(x,1)\mathbf1_{(-\infty,0)}(x)\right\}\,dx.
\end{equation}
The primitive of $g$ is $-\tfrac12P^2$, 
the perturbation is supported in the interior of the $u$ interval, 
and the profiles have exponential tails. 
Thus this energy is finite on both $\mathcal M$ and the comparison family $U_p$.
Along a classical orbit on $\mathcal M$, 
integration by parts gives
\begin{equation}\label{real:energy-dissipation}
 \frac{d}{dt}\mathcal E_L(u(t))
 =-\int_\R|u_t|^2\,dx\leq0.
\end{equation}
Along the comparison family, 
\eqref{real:exact-profile-residual} gives
\begin{equation}\label{real:comparison-energy-slope}
 \begin{aligned}
 \frac{d}{dp}\mathcal E_L(U_p)&=-\delta Q(p/L)A_L(p),\\
 A_L(p)&=\int_\R\chi(U_p)H(x/L)(-U_p')\,dx.
 \end{aligned}
\end{equation}
If $p/L\pmod1\notin K$, then $A_L(p)>0$, 
as can be seen near $x=p$.
For a complementary lifted interval $(p_-,p_+)$ it follows that
\begin{equation}\label{real:ordered-stationary-energies}
 \sigma_I\bigl(\mathcal E_L(U_{p_+}) - \mathcal E_L(U_{p_-})\bigr)<0.
\end{equation}
By Step 3 the exact velocity has a constant strict sign in this interval. Theorem~\ref{phase:classification} gives a complete orbit connecting the two stationary phases $p_-$ and $p_+$ in the direction of that sign.
Weighted $H^1$ continuity of the profiles and boundedness of the phase on the interval give convergence of the energies at both ends.
The dissipation identity and \eqref{real:ordered-stationary-energies} force this direction to have sign $\sigma_I$.
We have proved
\begin{equation}\label{real:matching-orbits}
 \omega^{-1}(0)=q^{-1}(0),\qquad
 \omega(Y)q(Y)>0\quad\text{whenever }q(Y)\ne0.
\end{equation}

\medskip
\noindent\emph{Step 5: a time-preserving conjugacy of the complete flows.}
We now construct the time-preserving conjugacy.
Use the periodic lifts of $q$ and $\omega$ to $\R$.
Let
\[
 \widetilde K=\{\theta\in\R:\theta\pmod1\in K\}
\]
be the lifted zero set. 
On each component $I=(a,b)$ of $\R\setminus\widetilde K$, 
choose the midpoint $\theta_I$, 
and define
\[
 A_I(\theta)=\int_{\theta_I}^{\theta}\frac{d\zeta}{q(\zeta)},
 \qquad
 B_I(Y)=\int_{\theta_I}^{Y}\frac{d\zeta}{\omega(\zeta)}.
\]
Since $q$ is smooth and $\omega$ is $C^1$, 
neither flow can reach a boundary point in finite time.
Both maps are monotone bijections from $I$ onto $\R$, 
with the same orientation. 
Therefore
\begin{equation}\label{real:phase-conjugacy}
 \beta_I=B_I^{-1}\circ A_I
\end{equation}
 is an increasing homeomorphism of $I$ onto itself and fixes its boundary points by continuous extension.
Set $\beta=\beta_I$ on each such interval and $\beta(\theta)=\theta$ on
$\widetilde K$.
This defines an increasing homeomorphism of $\R$ with $\beta(\theta+1)=\beta(\theta)+1$.
Continuity at an accumulation of intervals follows 
because $\beta$ maps every interval into itself 
and the lengths of distinct intervals in a compact set tend to zero.
The same argument applies to the inverse.
Thus $\beta$ induces an orientation-preserving homeomorphism of $\T$.

Let $\Theta_t^\omega$ be the flow of $\dot Y=\omega(Y)$.
The integral coordinates increase at unit rate along their respective flows, 
so
\[
 B_I\bigl(\Theta_t^\omega(\beta_I(\theta))\bigr)
 =A_I(\theta)+t
 =A_I\bigl(\Phi_t^q(\theta)\bigr).
\]
Hence
\[
 \beta\circ\Phi_t^q=\Theta_t^\omega\circ\beta
 \qquad(t\in\R),
\]
including points in $K$, 
where both flows are stationary.

Finally, 
covariance and the embedded parametrization identify $\widehat{\mathcal M}$ with the phase circle by $Y\mapsto[Z_{LY}]$.
The map
\[
 \mathcal H(\theta)=[Z_{L\beta(\theta)}]
\]
is an orientation-preserving homeomorphism and satisfies \eqref{real:conjugacy}.
The front-tail estimates and Theorem~\ref{phase:classification} show that every complete trajectory on $\mathcal M$ is a transition front. 
This completes the realization.
\end{proof}

\subsection{Sensitivity to the spatial period}\label{real:sensitivity-section}

\begin{proof}[Proof of Proposition~\ref{real:period-sensitivity}]
Since $q\geq0$, the encoding in \eqref{real:flat-encoding} has $Q=H\geq0$.
Keep $f=f_{1/L_0}$ from \eqref{real:reaction-family} fixed.
The uniform threshold in the proof of Theorem~\ref{real:circle-flow}
gives its normally attracting invariant family for every period $\Lambda$ sufficiently close to $L_0$.
At $L_0$, \eqref{real:matching-orbits} identifies the stationary phases with $K$.
If a profile $w$ on this family were stationary, its equation would be
\[
 w''+g(w)+\mathcal R_\Lambda=0,\qquad
 \mathcal R_\Lambda(x)
 =\delta\chi(w)H(x/\Lambda)
 Q\bigl(x/\Lambda-r(w)/L_0\bigr)\geq0.
\]
The two variation-of-constants formulas in Step 3 imply $\mathcal R_\Lambda\equiv0$.
Choose an open interval $J$ around a half-level point where $\chi(w)>0$.
Since $K$ is nowhere dense, the points with $H(x/\Lambda)>0$ are dense in $J$.
Hence continuity gives $x/\Lambda-r(w(x))/L_0\in\widetilde K$ throughout $J$,
where $\widetilde K=\{\theta\in\R:\theta\pmod1\in K\}$.
The set $\widetilde K$ is totally disconnected, so this expression is constant on $J$.
Consequently $w(x)=U((L_0/\Lambda)x-C)$ there.
Substitution into $w''+g(w)=0$ gives
$[(L_0/\Lambda)^2-1]U''=0$ on $J$,
which is impossible if $\Lambda\ne L_0$.
Thus the exact phase velocity has no zeros for all sufficiently close $\Lambda\ne L_0$,
and Theorem~\ref{phase:classification} gives the asserted rotation flow and propagating front.
\end{proof}






\end{document}